\definecolor{Gray}{gray}{0.35}
\definecolor{LightCyan}{rgb}{0.88,1,1}
\tikzset{%
    symbol/.style={%
        draw=none,
        every to/.append style={%
            edge node={node [sloped, allow upside down, auto=false]{$#1$}}}
    }
}
\newcommand{\RHom}{R\mathcal{H}om}
\numberwithin{equation}{section}
\newtheorem{thm}{Theorem}[section]
\newaliascnt{lem}{thm}
\newtheorem{lem}[lem]{Lemma}
\newaliascnt{sublem}{thm}
\newaliascnt{warn}{thm}
\newaliascnt{ass}{thm}
\newaliascnt{prop}{thm}
\newaliascnt{cor}{thm}
\newtheorem{cor}[cor]{Corollary}
\newaliascnt{defn}{thm}
\newtheorem{defn}[defn]{Definition}
\newaliascnt{ex}{thm}
\newtheorem{ex}[ex]{Example}
\newaliascnt{obs}{thm}
\newtheorem{obs}[obs]{Observation}
\tikzset{%
    symbol/.style={%
        draw=none,
        every to/.append style={%
            edge node={node [sloped, allow upside down, auto=false]{$#1$}}}
    }
}
\newaliascnt{rmk}{thm}
\newtheorem{rmk}[rmk]{Remark}
\newaliascnt{notate}{thm}
\newtheorem{notate}[notate]{Notation}
\newaliascnt{rem}{thm}
\newaliascnt{coords}{thm}
\newaliascnt{interpret}{thm}
\newtheorem{interpret}[interpret]{Interpretations}
\newaliascnt{term}{thm}
\newtheorem{term}[term]{Terminology}
\newaliascnt{infmdefn}{thm}
\newaliascnt{cons}{thm}
\newtheorem{cons}[cons]{Construction}
\numberwithin{equation}{section}
\newcommand{\CC}{\mathbb{C}}
\renewcommand{\P}{\mathbb{P}}
\newcommand{\tX}{\widetilde{X}}
\newcommand{\I}{\mathcal{F}}
\newcommand{\V}{\mathcal{V}}
\renewcommand{\O}{\mathcal{O}}
\newcommand{\T}{\mathbb{T}}
\newcommand{\ttL}{\mathbb{L}}
\newcommand{\F}{\mathcal{F}}
\newcommand{\E}{\mathcal{E}}
\newcommand{\G}{\mathcal{G}}
\newcommand{\FF}{\mathbb{F}}
\newcommand{\M}{\mathcal{M}}
\newcommand{\rC}{\mathfrak{C}}
\newcommand{\Hom}{\operatorname{Hom}}
\newcommand{\ext}{\operatorname{Ext}}
\newcommand{\W}{\mathfrak{W}}
\newcommand{\Y}{\mathfrak{Y}}
\newcommand{\quot}{\operatorname{Quot}}
\newcommand{\A}{\mathbb{A}}
\newcommand{\fA}{\mathfrak{A}}
\tikzset{%
    symbol/.style={%
        draw=none,
        every to/.append style={%
            edge node={node [sloped, allow upside down, auto=false]{$#1$}}}
    }
}
\newaliascnt{assumption}{thm}
\newtheorem{assumption}[assumption]{Assumptions}
\newaliascnt{conjecture}{thm}
\newtheorem{conjecture}[conjecture]{Conjecture}
\newaliascnt{problem}{thm}
 \flushleft \begin{tabular}{@{}l@{}}%
\begin{document}

\title{\LARGE{Tyurin Degenerations, Derived Lagrangians and Categorification of DT Invariants}}

\author{%
\name{\normalsize{Jacob Kryczka and Artan Sheshmani}}}

\abstract{
    We consider the moduli space of rigidified perfect complexes with support on a general complete intersection Calabi-Yau threefold $X$ and its Tyurin degeneration $X\rightsquigarrow X_1\cup_SX_2$ to a complete intersection of Fano threefolds $X_1,X_2$ meeting along their anti-canonical divisor $S$. The corresponding derived dg moduli scheme over the generic fiber degenerates to the (Fano) moduli spaces $\mathcal{M}_{1}, \mathcal{M}_{2},$ of perfect complexes supported on each Fano which glue after derived restriction to the relative divisor $S$. We prove that the total moduli space of the degeneration family carries a relative Lagrangian foliation structure, which implies the existence of a flat Gauss-Manin connection on periodic cyclic homology of the category of the matrix factorizations associated with fiber-wise moduli spaces, realized locally as the derived critical loci of suitable potential functions. The Fano moduli spaces each define derived Lagrangians in the (ambient) moduli space of restricted complexes to the relative divisor $S$. The flatness of the Gauss-Manin connection implies the derived geometric deformation invariance of the categorified DT-invariants associated to fiberwise matrix factorization categories, hence, the categorified DT-invariants of the generic fiber are expressed in terms of a derived intersection cohomology of the corresponding Fano moduli spaces on the special fiber.}

\date{\today}

\keywords{Tyurin Degenerations, Derived Lagrangian Foliations, Categorification of DT-Invariants}

\maketitle
\tableofcontents

\section{Introduction}
\label{sec: Introduction}
In the earlier paper \cite{BG} inspired by a related construction of \cite{BF}, the authors constructed for a pair of smooth Lagrangians $L_1, L_2$ in a smooth algebraic symplectic variety $M$, a Gerstenhaber
bracket on the sheaf $\mathcal{T}or_\bullet(\mathcal{O}_{L_1}, \mathcal{O}_{L_2})$ and a compatible BV differential over its sheaf of modules 
$\mathcal{E}xt^\bullet (\mathcal{O}_{L_1}, \mathcal{O}_{L_2})$. 

The BV differential presents interest since the cohomology of the
$\mathcal{E}xt$-complex endowed with it, even though not being term-by-term deformation invariant, provides a categorification of Donaldson-Thomas (DT) invariants \cite{BBDJS}. 

The specific case of interest is when $M$ is given as a moduli space of sheaves
on a $K3$ surface $S$ and each $L_i$ is the moduli space of sheaves
on a quasi-Fano variety $X_i$ containing $S$ (or rather its image with respect to 
restriction of sheaves from $X_i$ to $S$). Following the original approach of 
Tyurin \cite{T} we assume the tensor product of normal bundles of
$S$ in $X_{1}$ and $X_{2}$, are trivial. This ensures that the singular variety 
$X_{1} \cup_S X_{2}$ admits a deformation to a smooth Calabi-Yau variety $X$ containing
$S$, such that the total space of the deformation is smooth.

One can reverse engineer this construction and consider a smooth degenerating family of a Calabi-Yau variety into union of two quasi Fanos glued along their anti-canonical divisor, $X\to X_{1}\cup_{S}X_{2}$, and study the induced degeneration of the family of moduli spaces of sheaves $\M(X) \to \M(X_{1})\times_{\M(S)} \M(X_{2})$. Assuming deformation invariance, ideally this allows one to relate the DT-invariants of the generic fiber $\M(X),$ to those induced by $\M(X_{i})$ relative to $\M(S)$ for $i=1,2.$ In this article we aim to construct a degeneration formula for categorification of DT invariants. %Moreover, we study a special concrete case of degeneration formula for non-categorical DT invariants of torsion sheaves with support on hyperplane sections of Quintic threefold, which leads to proving modular property of such torsion invariants as predicted by S-duality conjecture in string theory. 

\begin{term}
\label{term: Normal}
\normalfont 
The term `relativity' means considering subsets of sheaves in $\M(X_{i}), i=1,2$ which are ``\textit{homologically}'' transverse to the divisor $S$. It was the content of the program studied by Li-Wu \cite{a87} to construct a theory that would relate sheaves $F$ on $X$, to sheaves $F_{1},F_2$ on $X_{1}$ and $X_{2}$ respectively, satisfying the ``\textit{relativity condition}'' given by $Tor_{1}^{\mathcal{O}_{X_{i}}}(F_{i}, \mathcal{O}_{S})\simeq 0$ for $i=1,2$. 
\end{term}

The corresponding sub-loci of relative sheaves, denoted by $\M(X_{i}/S),$ are open in $\M(X_{i})$
for $i=1,2$ and therefore require suitable compactification in order to define the relative DT invariants. Li-Wu succeeded in achieving this compactification by adding boundary points to the (open) relative sheaf loci, $\M(X_{i}/S), i=1,2$, which are represented by an extension of the sheaves into the multifold expanded degenerations $X_{i}[k_{i}], i=1,2$, for which they constructed well-behaved Quot schemes of sheaves with support on expanded degenerations for the entire degenerating family. Their construction however was limited to the case of ideal sheaves and Pandharipande-Thomas stable pairs, due to certain issues regarding the stability of sheaves. We elaborate the latter further below.

\subsubsection{Li-Wu machinery of degenerations and obstructions for its generalizations}

Let $q:\tX\to \A^1$ be a \emph{good degeneration} of the projective threefolds, i.e.
\begin{enumerate}
\item $\tX$ is smooth, 
\item all the fibers except $\pi^{-1}(0)$ are smooth projective threefolds, 
\item  $\pi^{-1}(0)=X_1\cup_S X_2$ where $W_i$ is a smooth threefold, $S\subset X_i$ is smooth divisor, and $\pi^{-1}(0)$ is a normal crossing divisor in $\tX$.
\end{enumerate}
Li and Wu \cite{a87} construct the Artin stack of expanded degenerations
$$
\xymatrix{\mathfrak{X} \ar[r]^p \ar[d] &\tX\ar[d]^{q}\\
\rC \ar[r]^r& \A^1.}
$$
Away from $r^{-1}(0)$ the family $\mathfrak{X}$ is isomorphic to the original family $$\tX \backslash \pi^{-1}(0)\to \A^1 \backslash 0.$$ The central fiber $\pi^{-1}(0)$ of the original family $\tX\to \A^1$ is replaced in $\mathfrak{X}$ by the union over all $k$ of the $k$-step degenerations $$X[k]=X_1 \cup_S \P(\O\oplus N_S^\vee) \cup_S  \P(\O \oplus N_S^\vee)\cup_S \cdots \cup_S \P(\O \oplus N_S^\vee) \cup_S X_2,$$ together with the automorphisms $\CC^{*k}$ induced from the $\CC^*$-action along the fibers of the standard ruled variety $\P(\O \oplus N_S^\vee)$. Similarly, for a pair of a smooth projective threefold $Y$ and a smooth divisor $S\subset Y$, the Artin stack of expanded degenerations 
$$
\xymatrix{\Y \ar[r]^p \ar[d] &Y\ar[d]\\
\fA \ar[r]& \text{Spec } \CC,}
$$
is defined in \emph{loc.cit.} using the $k$-step degeneration  
$$Y[k]=Y \cup_S \P(\O\oplus N_S^\vee) \cup_S  \P(\O \oplus N_S^\vee)\cup_S \cdots \cup_S \P(\O \oplus N_S^\vee),$$ together with the automorphisms $\CC^{*k}$ as above. 
We refer to the $j$-th ruled component (from left to right) of $Y[k]$ or $X[k]$ by $\P(\O\oplus N_{S_{j-1}}^\vee)$, and we refer to the zero and infinity sections of the $j$-th ruled component by $S_j$ and $S_{j-1}$, respectively (if $k=0$ we take $S_0=S$).

\begin{defn}\emph{\cite[Definition 3.1, 3.9, 3.12]{a87}}
\label{def:rel}
\normalfont 
Let $\F$ be a coherent sheaf on a $\CC$-scheme $T$ of finite type, and suppose that $Z\subset T$ is a closed subscheme. $\F$ is called \emph{normal} to $Z$ if $\text{Tor}^{\O_T}_1(\F,\O_Z)=0$. A coherent sheaf $\F$ on $Y[k]$ (resp. $X[k]$) is called \emph{relative} (resp. \emph{admissible}) if $\F$ is normal to all $S_j$'s. %Similarly, a coherent sheaf $\F$ on $X[k]$ is called \emph{relative} if $\F$ is normal to all $D_j$ as well as the divisor at infinity of the last copy of $\P(\O \oplus N_S^\vee).$   
\end{defn}

Suppose that $\V$ is a locally free sheaf on $X$ (resp. on $W$), and let $\O(1)$ be an ample invertible sheaf on $Y$ (resp. $\tX\to \A^1$). Li and Wu construct the Quot schemes $\quot^{\V,P}_{Y/S}$ \footnote{\cite{a87} instead uses the notation $\quot^{\V,P}_{\mathfrak{D}_+\subset \mathfrak{Y}}$.} (resp. $\quot^{\V,P}_{\W/\rC}$) of quotients $\phi:p^*\V\to \F$ on $X[k]$ (resp. on $W[k]$) for some $k$ satisfying:
\begin{enumerate}
\item $\F$ is relative (resp. admissible), 
\item $\phi$ has finitely many automorphisms covering the automorphisms $\CC^{*k}$,
\item The Hilbert polynomial of $\F$ with respect to $p^*\O(1)$ is $P$. 
\end{enumerate}
Moreover, they show that $\quot^{\V,P}_{Y/S}$ (resp. $\quot^{\V,P}_{\mathfrak{X}/\rC}$) is a separated, proper (resp. separated, proper over $\A^1$), Deligne-Mumford (DM) stack of finite type \cite[Theorems 4.14 and 4.15]{a87}. 

\begin{defn}
\normalfont 
A relative sheaf $\F$ on $X[k]$ (resp. an admissible sheaf $\F$ on $W[k]$), is \emph{piecewise Gieseker (semi)-stable} if  all the restrictions $$\F|_Y \text{ (resp. $\F|_{X_i}$)}, \quad \F|_{\P(\O \oplus N_{S_j}^\vee)}, \quad \F|_{S_j}$$ are Gieseker (semi)-stable.\footnote{Note that in general this is different from the notion of Gieseker semistability of $\F$ on $Y[k]$ (resp. $X[k]$).} 
\end{defn}
It can be easily seen that any piecewise Gieseker stable sheaf $\F$ is simple i.e. $\Hom(\F,\F)=\CC$.

Now let us introduce a strong assumption:
\begin{assumption} \label{relmod} 
\normalfont Having fixed the Hilbert polynomial $P$, $N\gg 0$, and $\V=\oplus_{i=1}^{P(N)}\O(-N)$,  
\begin{enumerate}
\item Suppose that  $ p^*\V \to \E$ is a flat family of quotients over a punctured nonsingular curve $C\setminus 0$ that is induced by the morphism $\epsilon: C \setminus 0 \to \quot^{\V,P}_{Y/S}$ (resp. $\epsilon: C\setminus 0\to \quot^{\V,P}_{\mathfrak{X}/\rC}$). Suppose that for any $c\in C\setminus 0$,  the fiber $\E_c$ is piecewise Gieseker semistable. Let $\E_0$ be the central fiber of the family obtained from the extension of $\epsilon$ to $C$.\footnote{By the properness of Li-Wu Quot scheme, we know that this family can be extended to a flat family of quotients over $C$ (after possibly a base change).} Then if $\E_0|_{D_j}$ is pure for all $j$ then $\E_0$ is piecewise Gieseker semistable.
\item For any $\CC$-point $p^*\V\to \F$ of $\quot^{\V,P}_{Y/S}$ (resp. $\quot^{\V,P}_{\mathfrak{X}/\rC}$), piecewise Gieseker semistability implies piecewise Gieseker stability, and it also implies the Gieseker semistability for a choice of a polarization.
\end{enumerate}

\end{assumption}

Supposing that Assumptions \ref{relmod} are satisfied, the constructions of Li-Wu for the Quot schemes can be modified to work for the moduli space of all coherent sheaves. We can then define the moduli stack 
$\mathfrak{M}(Y/S,P)$ of relative piecewise Gieseker stable sheaves $\F$ admitting a quotient  $p^*\V\to \F \in \quot^{\V,P}_{Y/S}$. 
By the simplicity of the piecewise Gieseker stable sheaves, and via similar proofs to \cite[Proposition 4.4 and Proposition 5.2]{a87}, one observes that this moduli stack is a $\CC^*$-gerbe over a finite type algebraic space, denoted by $\M(Y/S,P)$. By the separatedness and properness of the moduli spaces of Gieseker semistable sheaves on projective schemes and the Quot schemes of Li-Wu mentioned above, we can see that  $\M(Y/S,P)$ is separated and proper.

\subsubsection{Properness of the moduli spaces}
 As an illustration, to sketch the proof of the properness, suppose for simplicity that $\F$ is a flat family of semistable sheaves on $Y$ defined over a punctured nonsingular curve $C\setminus 0$. By the properness of $\M(Y,P)$, $\F$ can be extended to a flat family over $C$ with the semistable central fiber $\F_0$. If necessary, one can use the procedure of Li-Wu (in the proof of \cite[Proposition 5.1]{a87}) multiple  times  to replace it in the family with $\widetilde{\F}_0$ defined over $Y[k]$ for some $k$ and which is relative (Definition \ref{def:rel}), and $\widetilde{\F}_0|_{S_j}$ is pure for all $j$. Then, by the Assumptions \ref{relmod}, $\widetilde{\F_0}$ is piecewise stable and hence gives a point of $\M(Y/S,P)$.  
 
\begin{term}
    \normalfont 
    We call $\M(Y/S,P)$ the moduli space of \emph{relative (to $S$) stable sheaves.} 
\end{term}

Similarly, if Assumptions \ref{relmod} are satisfied, we can define the moduli space $\M(\mathfrak{X}/\rC,P)$ of \emph{admissible stable sheaves} and prove that it is a finite type proper algebraic space over $\A^1$. 

We know that any $\CC$-point $\F \in \M(Y/S,P)$ (resp. $\M(\mathfrak{X}/\rC,P)$) is simple, so if we additionally have $\ext^3(\F,\F)_0=0$ then, by the result of \cite{MPT}, there is a perfect obstruction theory relative to the base $\fA$ (resp. $\rC$) given by 
\begin{equation} \label{equ:rel theory}\left(\tau^{[1,2]}R\pi_{*}(R\mathcal{H}om(\mathbb{F},\mathbb{F}))\right)^{\vee}[-1],
\end{equation} where $\FF$ is the universal sheaf and $(-)^{\vee}$ is the derived dual. 
Hence we get the virtual cycles  $$[\M(Y/S,P)]^{vir}\in A_{n}(\M(Y/S,P)),\quad  \quad [\M(\mathfrak{X}/\rC,P)]^{vir}\in A_{n+1}(\M(\mathfrak{X}/\rC,P)),$$ where $n$ is the rank of the obstruction theory (\ref{equ:rel theory}).

Assuming that $n=0$, and that $\M(Y/S,P)$ is proper, we can then define the \emph{relative DT invariant} by $$DT(Y/S, P):=\deg [\M(
Y/S,P)]^{vir}.$$

By naturality of the virtual cycle, the restriction of  $[\M(\mathfrak{X}/\rC,P)]^{vir}$ to a general fiber $\tX_t$ of $q:\tX\to \A^1$ is $[\M(\tX_t,P)]^{vir}\in A_0(\M(\tX_t,P))$, and if we have fiberwise properness,  
the degeneration formula for DT invariants can be expressed as  \begin{equation}\label{degfor}\deg [\M(\tX_t,P)]^{vir}= \deg [\M(\mathfrak{X}^\dagger_0/\rC^\dagger_0,P)]^{vir},\end{equation}
where 
$$
\xymatrix{\W_0^\dagger \ar[r] \ar[d] &\W\ar[d]\\
\rC_0^\dagger \ar[r]& \rC}
$$
is the \emph{stack of node marking objects} in $\W_0$ as defined in \cite[Section 2.4]{a87}. 

In the case that the moduli space of stable sheaves on the smooth surface $D$, containing all restrictions $\F|_S$ is smooth (where $\F$ is as in Assumptions \ref{relmod}), by the method of \cite[Section 6]{a87} (see also \cite{MPT}), the virtual cycle $[\M(\mathfrak{X}^\dagger_0/\rC^\dagger_0,P)]^{vir}$ appearing in \eqref{degfor} can be expressed as a finite weighted sum of the products $$[\M(X_1/S,P_1)]^{vir}\times [\M(X_2/S,P_2)]^{vir},$$ over all possible decompositions of the Hilbert polynomial $P$, provided that all the virtual cycles $[\M(X_i/S,P_i)]^{vir}$ exist.

\begin{rmk}[Obstructions to generalizations]
\label{rmk: Obstructions to generalizations}
%\normalfont 
As elaborated above, one crucial requirement for extending the Li-Wu machinery to treat moduli spaces of Gieseker stable sheaves are the Assumptions \ref{relmod}. In particular, we require the sheaves $F_{i}$ to remain Gieseker stable throughout the expanded degeneration of the varieties $X_{i}, i=1,2.$ A priori, this is a strong assumption, which depends on an un-achievable choice of a suitable polarization over the entire family of multifold expanded degenerations, unless the stability of the sheaves under consideration does not depend on such polarization, for instance ideal sheaves satisfy this property, whose corresponding relative moduli space is separated and proper, following construction of Li-Wu \cite{a87}. Furthermore, the authors showed that their construction applies to the case of Pandharipande-Thomas stable pairs \cite{PT}, which would also remain stable throughout the expanded degenerations without requiring the above assumptions. 
\end{rmk}
In light of Remark \ref{rmk: Obstructions to generalizations}, we emphasize that, generally speaking, the Assumptions \ref{relmod} may not hold true for sheaves throughout degenerating families which obstructs many interesting features of DT theory of sheaves on Tyurin degenerations \cite{T}. Hence, in what follows, we take a rather different approach and define a categorification of DT invariants of general coherent sheaves over degeneration families and investigate their deformation invariance. This approach avoids the need to work with admissible sheaves and Li-Wu compactification.
\subsection{Structure of the paper and main results}
The main insight allowing us to study relative DT-invariants is the existence of certain structures on the cohomological intersection of the moduli spaces of sheaves. We exploit the \emph{derived} geometry of such spaces to overcome the fact that, as it was mentioned above, we can not generalize the techniques of Li-Wu to our current setting.

We define the categorification of DT invariants via a derived Lagrangian intersection of the moduli spaces of simple perfect complexes (c.f. Subsec. \ref{ssec: Dg schemes of rigidified simple perfect complexes}) $\M(X_{i}), i=1,2$ in $\M(S)$. 
By using an analog of deformation invariance arguments, we show that the categorified DT invariants induced by such derived Lagrangian intersections remain invariant over the entire degenerating family, and are therefore equal to the categorification of DT invariants of sheaves with support over the generic fiber. 
\begin{rmk}
%\normalfont
Another incentive to write this article is that from a computational point of view, the constructions of the Gerstenhaber
bracket and BV-module formalism over the intersection of Lagrangians as worked out in  \cite{BG} are not explicit enough - since each relies on a local choice that must be made on the moduli of sheaves\footnote{For 
\cite{BG} one could try to globalize such choice by proving existence of a 
second-order deformation quantization of moduli space - but that remains an 
open problem at the moment.} and we hope to provide more computational detail on their construction here.
\end{rmk}
In Section \ref{sec: shifted-structure}, after briefly recalling basic facts about shifted symplectic structures in derived algebraic geometry \cite{PTVV},  we discuss how they pertain to the geometry of degenerating families of Calabi-Yau threefolds. Without any loss of generality, we pick the easiest example which still exhibits all the complexity involved with generalization of Li-Wu technology to the case of general coherent sheaves. That is, we consider the moduli space of simple perfect complexes supported on divisors, or even better, hyperplane sections of a smooth Calabi Yau threefold. We consider a Tyurin degeneration of the Calabi Yau, and study the degeneration of moduli space of objects on the smooth Calabi-Yau  to the moduli spaces of rigidified perfect complexes supported on hyperplane sections of two Fano varities glued along their anti-canonical divisor, and show explicitly that the resulting Fano moduli spaces define derived Lagrangians in the ambient moduli space of restricted complexes to the relative divisor. We prove a theorem, which is not surprising at all, yet we include it as we provide explicit arguments to prove it.
\\

\noindent\textbf{Theorem.}\hspace{1.5mm}(Theorem \ref{Lagrangian})\emph{
Consider a Tyurin good degeneration $\P:=X\to X_{1}\cup_{S} X_{2}$ of a Calabi-Yau threefold $X$ to two quasi Fano varieties $X_{1}$ and $X_{2}$, glued over their anti-canonical divisor $S$. 
Then, the corresponding derived restriction morphisms 
  $r_{i}: \M(X_{i})\to \M(S)$ for $i=1,2$ given by \eqref{eqn: Restriction of moduli} and \eqref{derived-restrict}, satisfy the condition of inducing a derived Lagrangian structure. That is,  let $r_{i}: \M(X_{i})\to \M(S)$ denote the derived restriction morphisms \eqref{eqn: Restriction of moduli}, induced from  \eqref{derived-restrict}. Then $r_{i}$ satisfies the condition of inducing a Lagrangian structure, i.e. $$\Theta_{r_{i}}: \mathbb{T}_{r_{i}}\to \mathbb{L}_{\M(X_{i})}[-1]$$is a quasi-isomorphism of perfect complexes.} 
 \\
 
It then naturally follows by \cite{PTVV} that the derived Lagrangian intersection of $\M(X_{i}), i=1,2$ is given by a $(-1)$-shifted symplectic stack, hence it is locally given as the derived critical locus of a potential function by \cite{BBBJ}.

Sections \ref{CY-structure} and \ref{ssec: Relative lag} serve as the core sections of the article. In section 3 we show that the category of vertical objects, that is, stable (with respect to a suitable poynomial stability condition) perfect complexes with scheme theoretic support on Calabi Yau fibers of a Fano  fourfold, $\mathbb{P}$,  makes up a Calabi-Yau 4 category structure, provided that the Fano fourfold $\mathbb{P}$ satisfies a geometric condition in Lemma \ref{lem: Pull-back divisor}.
 We prove an important Lemma
\\

\noindent\textbf{Lemma.}\hspace{1.5mm}(Lemma \ref{CY4-category})
    \emph{Let $f:\P\rightarrow C$ be a Fano $4$-fold fibered over a curve $C$ with generic smooth Calabi-Yau fibers. Let $\mathsf{Perf}^{vert}(\P)\hookrightarrow \mathsf{Perf}(\P)$ denote the sub-category of perfect complexes supported on the fibers. Then $\mathsf{Perf}^{vert}(\P)$, has the structure of a Calabi-Yau category of dimension $4$ i.e. its Serre functor is isomorphic to $\mathrm{Id}_{\mathsf{Perf}^{vert}(\P)}[4].$}
\\

Note that, had it been that $\P$ was given as a Calabi Yau 4 fold, the $L_{\infty}$-algebra induced by deformation theory of perfect complexes in $\M^{vert}(\P)$ would exhibit  self symmetry, reflecting on a $(-2)$-shifted symplectic structure. Furthermore, in such a case due to results in \cite{BKSY}, one would expect existence of a derived Lagrangian foliation structure and globally well-behaved $(-1)$-shifted potential function on $\M^{vert}(\P)$. We show in Lemma \ref{CY4-category} however, that although $\P$ is a Fano variety, objects in $\M^{vert}(\P)$ enjoy having a Calabi-Yau category structure. We then show that such a Calabi-Yau-4 category structure induces a relative analog of Lagrangian foliation structure in \cite[Proposition 12]{BKSY}. Our main Theorem in Section 4 is
\\

\noindent\textbf{Theorem. }(Theorem \ref{thm: Global LagFol})\emph{
There exists a globally defined foliation on (the stable locus of) $[\![\mathbf{DQuot}^{vert}(\mathbb{P})/\underline{G}]\!]$ and an isotropic structure with respect to the imaginary part of the symplectic structure \emph{(\ref{eqn: Imaginary})}, whose corresponding real component \emph{(\ref{eqn: Real})} is negative definite.}
\\

Following \cite{BKSY}, this result has an important consequence, that is the moduli space of vertical objects in $\P$ is realized as derived critical locus of a globally defined $(-1)$-shifted potential function.
\\

\noindent\textbf{Corollary. }(Corollary \ref{shifted potential dCrit})
\emph{There exists a $\underline{G}$-linearized bundle $E^-$ over $\underline{\mathbf{DQuot}}_{vert}^0(\P)$ together with $G$-invariant section. Moreover, there exists a (shifted) section $\nu$ of the dual bundle $(E^-)^*$, such that $\mathrm{dCrit}(\nu)$ recovers the original quotient moduli stack.}
\\

 In Section \ref{ssec: Local models and shifted symplectic structures} we give a local description of potential functions realizing the fiberwise moduli space, $\M(X)$ and the moduli space of objects on total space, $\M^{vert}(\P)$ as derived critical loci. This can be done by careful analysis of deformation theory of objects on the fibers $X$, and in the total space, $\P$. The two are obviously related to each other, since the deformation theory of objects in the total space $\P$ can be decomposed to their respective deformations in the fiber and deformations in direction normal to the fiber. This reflects an $L_{\infty}$-algebra structure for the (rigidified) $Ext$-algebras describing deformation theory of perfect complexes on $\M(X)$ and  $\M^{vert}(\P)$. We denote the $(-1)$-shifted potential function on $\M^{vert}(\P)$, resulting from Corollary \ref{shifted potential dCrit}, by $\mathbb{W}$, and the one on $\M(X)$ by $f$. Since $X$ is a Calabi-Yau threefold,  the $L_{\infty}$-algebra structures on $\M(X)$ has self symmetry, which is a manifestation of $(-1)$-shifted symplectic structure on $\M(X)$. This immediately implies that by \cite{BBBJ} the fiberwise moduli space is described as the derived critical locus of a 0-shifted potential function $f$. Lemma \ref{lem: A of X} elaborates on this function. 
\\

\noindent\textbf{Lemma. }(Lemma \ref{lem: A of X})
\emph{The dg-algebra \emph{(\ref{eqn: Koszul X})} is isomorphic to the Koszul algebra of $df$, that is, the dg-algebra of functions on the derived critical locus of $f$:
$$A_{X}^{\bullet}\simeq \mathcal{O}(\mathrm{dCrit}_{X}(f)).$$
where $$f:= \sum_{k \geq 2} \frac{1}{(k+1)!}
f_{k+1}  \in \mathrm{Sym}(U^\vee),\,\,\, U=Ext^1_{X}(F,F).$$ }
\vspace{1.5mm}

We similarly describe the local model for the $(-1)$-shifted potential function $\mathbb{W}$ on $\M^{vert}(\P)$, and elaborate how it is related to the fiberwise potential function $f$ around the Calabi-Yau fibers $X\subset \P$.
\\

\noindent\textbf{Lemma.}\hspace{1.5mm}(Lemma \ref{lem: f+g})
\emph{Locally around a fiber $X_{t}\subset \mathbb{P}, t\in \mathbb{A}^1$ the dg-algebra \emph{(\ref{eqn: Koszul X})} is quasi-isomorphic to the dg-algebra of functions $\mathcal{O}_{\mathrm{dCrit}(\mathbb{W})}$ of the derived critical locus of the function $\mathbb{W}:=f\cdot g$ on $U \oplus W_2^\vee \oplus W_1$. Here \begin{equation*}
W_1^\vee = Ext^3_X(\I \otimes K^\vee_\P, \I), \quad
W_2^\vee = Ext^2_X(\I \otimes K^\vee_\P, \I).
\end{equation*}
and $$g: U \oplus W_1 \oplus 
W_2^\vee  \to \mathbb{C}$$ is a function whose partial derivatives possess the natural $L_{\infty}$-product structure on the Koszul dg-algebra induced by Ext algebra of perfect complexes on $\P$, $$A_\mathbb{P}^{\bullet} := \big(\mathrm{Sym}^\bullet(U[1] \oplus W_2^\vee[1]) \otimes \mathrm{Sym}^{\bullet}(U^\vee \oplus W_1^\vee), 
d_{\Psi}\big).$$}
\vspace{1.0mm}

Since $\mathbb{W}$ is globally defined, and is equal to $f\cdot g$ locally, it immediately implies that the functions $f,g$ remain unchanged and in particular, the matrix factorization category of $f$, $\mathsf{MF}(f),$ sheafifies on the whole moduli space. In particular its induced periodic cyclic homology, $HP(\mathsf{MF}(f_{t}))$ remains constant in the ``$t$-direction'' that is along the base of the fibration $X\subset \P\to C$.
\\

\noindent\textbf{Theorem.}\hspace{1.5mm}(Theorem \ref{thm: GrDim of HP(S)})
\emph{Consider
a good degeneration $\P: X \to X_1 \cup X_2$. Then there exists a vector bundle over $\mathbb{A}^1$ with a flat connection, whose fiber is  $HP(\mathsf{MF}(f_{t}))$, for $t \in \mathbb{A}^1$. In particular, the graded dimension of $HP(\mathsf{MF}(f_{t}))$ is constant in the family.}
\\

One can perform a different task, and study the matrix factorization category of the function $\mathbb{W}$ itself \footnote{We do not need it for our purposes as Theorem \ref{thm: GrDim of HP(S)} suffices to prove the deformation invariance of fiberwise categorical DT invariants, which are identified with periodic cyclic homology of $\mathsf{MF}(f_{t})$ due to the result of Efimov \cite{E}.}. This is a difficult notion, and has not yet been suitably defined, since $\mathbb{W}$ is $(-1)$-shifted. The only recent proposal is due to Toën-Vezzosi \cite{TV3}. We assume their proposal for the matrix factorization category of $\mathbb{W}$ and show that, if it exists, its induced periodic cyclic homology relates to that of the sheaf of the fiberwise matrix factorization categories $HP(\mathsf{MF}(f_{t}))$. The latter is somewhat expected due to Lemma \ref{lem: f+g}. 
\\

\noindent\textbf{Theorem. }(Theorem \ref{thm: HP(S)})
\emph{Consider the pair $\big(\M^{vert}(\P),\mathbb{W}(t)\big)$ as above, and the category of matrix factorizations $\mathsf{MF}(\mathbb{W}_t)$, with $s(t)\in H^0(\mathbb{P}\times\mathbb{A}^1,K_{\mathbb{P}}^{-1}\boxtimes\mathbb{A}^1).$ Then in the Zariski topology, 
$HP(\mathsf{MF}(\mathbb{W}_{t}))$ is isomorphic to the cohomology of the complex 
$(\Omega^\bullet(\!(u)\!), - d\mathbb{W}(t) + u\cdot d_{DR})$. In fact, there is a quasi-isomorphism between the Hochschild complex of $\mathsf{MF}(\mathbb{W}_t)$ and the twisted de Rham complex and this quasi-isomorphism is compatible with the natural $u$-connections.}
\\

\noindent\textbf{Theorem.}\hspace{1.5mm}(Theorem \ref{thm: GrDim of HP(S)-W})
\emph{Consider
a good degeneration $\P: X \to X_1 \cup X_2$, and Theorem \emph{\ref{thm: HP(S)}}.
Then there exists a vector bundle over $\mathbb{A}^1$ with a flat connection, whose fiber is  $HP(\mathsf{MF}(\mathbb{W}_{t}))$, for $t \in \mathbb{A}^1$. In particular, the graded dimension of $HP(\mathsf{MF}(\mathbb{W}_{t}))$ is constant in the family.}
\\

The result in \cite{E} is general and proves that the periodic cyclic homology induced by above theorem, are given by the cohomology of de Rham complex of $\M^{vert}(\P)$ as a $\mathbb{Z}_2$-graded vector space, hence they are equal to the categorified DT-invariants associated to $\M^{vert}(\P)$. Together with Lemma \ref{lem: f+g} it leads to a conjecture that relates the categorical DT invariants induced by $(-1)$-shifted potential function $\mathbb{W}$ to those induced by the zero-shifted potential function $f$ on $\M(X)$.
\\

\noindent\textbf{Conjecture. }(Conjecture \ref{conjecture}) \emph{The periodic cyclic homology associated to the category of shifted-singularities of $\M^{vert}(\P)$ in \emph{(\ref{eqn: ShiftSing})} can be obtained as a flow, in the $t$ direction (induced by the function $g$), of the period cyclic homology associated to the fiber-wise matrix factorization categories $\mathsf{MF}(\M(X_t),f_t)$, induced by the function $f$.}
\\

By Theorem \ref{thm: GrDim of HP(S)}, the fiberwise categorical DT invariants of a generic fiber of degenerating family $\P$ are isomorphic (as vector spaces) to the categorical DT invariants of the special fiber. 
Hence, we explicitly compute the categorical DT invariants on the special fiber $X_{1}\cup_{S} X_{2}$. We use Theorem \ref{Lagrangian} which proves that $\M(X_{i}), i=1,2$  are realized as derived Lagrangian DG schemes in the moduli space of the restriction of perfect complexes to the middle divisor, $\M(S)$. Hence we develop derived Lagrangian intersection $\M(X_{1})\cap \M(X_{2})\subset \M(S)$, using the idea of geometric quantization of universal orientation bundles and computing the cohomology via a double-sided bar resolution over the special fiber.
\\

\noindent\textbf{Theorem. }(Theorem \ref{spectral})
\emph{
    Consider the derived intersection of two Lagrangians $L_1,L_2$ and let $J_{L_1} \subset \mathcal{O}_S$, and $J_{L_2} \subset \mathcal{O}_S$, 
be the ideal sheaves corresponding to $L_1$ and $L_2$, respectively. Denote by $\mathcal{J}_{L_1}$,
resp. $\mathcal{J}_{L_2}$, their preimages in $\mathcal{O}_\hbar$
with respect to $\hbar \mapsto 0$. Assume they are smooth and projective and 
their canonical bundles admit a choice of square roots $K_{L_1}^{1/2}, K_{L_2}^{1/2}$ which admit deformations to
 $\mathbb{C}[\![\hbar]\!]$-flat 
modules $\mathcal{L}_1, \mathcal{L}_2$
over $\mathcal{O}_\hbar$. Then, there exists a spectral sequence of sheaves converging to $
Tor_\bullet^{\mathcal{O}_\hbar} (\mathcal{L}_1, \mathcal{L}_2^{\vee}),$ degenerating on the second page and whose first page is given by $$E_{1}^{p,q}\simeq Tor_{p}^{\O_{S}}(\bigwedge^{q}(\mathcal{J}_{\mathcal{L}_2}/\mathcal{J}^{2}_{\mathcal{L}_2})\otimes K_{\mathcal{L}_2}^{1/2}, \hbar^{q}\bigwedge^{q}(\mathcal{J}_{\mathcal{L}_1}/\mathcal{J}^{2}_{\mathcal{L}_1})\otimes K_{\mathcal{L}_1}^{1/2})[\![\hbar]\!].$$}
\vspace{1mm}

In Subsection \ref{ssec: SpecialCase} we elaborate existence of homotopy Gerstenhaber structures and BV-module structures with additional Assumptions \ref{assumption: Stronger condition} that both factors in the derived Lagrangian intersection possess local models endowed with shifted Poisson structures. 
\\

%\subsubsection{Plan of the paper}
%In  Section \ref{shifted-structure} we review shifted symplectic structures on degenerating family of Calabi Yau threefolds. This will involve elaborating on some of the needed definitions, the structure of the moduli spaces involved, and proving, in subsection \ref{lagrangian-moduli}, that there exists a derived Lagrangian structure on the moduli spaces of sheaves over the special fiber of the family. In Section \ref{sec: quasi-bps} we discuss quasi BPS categories. In Section \ref{def-quant} we discuss the computation of categorical DT invariants, using different approaches such as deformation quantization approach, and in Section \ref{Computations} we discuss some further computational examples.

\noindent\textbf{Acknowledgements.} 
The authors are grateful to Maxim Kontsevich, Ludmil Katzarkov, and Vladimir Baranovsky who were initially joined as coauthors in the earlier stages of this project, for many valuable discussions and comments that helped shape the work in the article in later stages. J.K. is supported by the Postdoctoral International Exchange Program of Beijing Municipal Human Resources and Social Security Bureau. He would like to thank V.Rubtsov for helpful discussions. A.S. is supported by grants from Beijing Institute of Mathematical Sciences and Applications (BIMSA), the Beijing NSF BJNSF-IS24005, and the China National Science Foundation (NSFC) NSFC-RFIS program W2432008. He would like to thank China's National Program of Overseas High Level Talent for generous support. He would like to thank Sheldon Katz, Charles Doran, Tony Pantev, Shing-Tung Yau, Martijn Kool, Marco Robalo and Yukinobu Toda for many valuable discussions. Special thanks go to Bertrand To\"{e}n for comments on the earlier version of this draft and many invaluable comments on shifted symplectic structures, which led to significant improvements of the later versions. He would also like to thank Simons Center for Geometry and Physics for hospitality.

%Finally we study the derived poisson structure of generic and special fibers of the induced degeneration family of sheaves with support on $X, X_{1}, X_{2}$, and in particular, the Gernstenhaber 
%bracket and the BV differential in the case where the two quasi-Fano varieties $X_{1}, X_{2}$ 
%each contaiin the $K3$ surface $S$, that is the relative divisor of the degenerating family.  An alternative and powerful approach was suggested later by \cite{PTVV} and 
%was made somewhat more specific in \cite{S2}, but that version still remained 
%not explicit enough for any concrete computations. First some preliminaries.

%\subsection{General definitions}

\section{Shifted symplectic structures over degenerating families}\label{sec: shifted-structure}

\subsection{Calabi-Yau hypersurfaces and good degenerations}

Let $\P$ be a 4 dimensional smooth projective Fano variety with the
ample anticanonical bundle $K_\P^\vee$. Let $\mathbb{A}^1 = Spec(\mathbb{C}[t])$.
Consider the trivial family 
$\pi_\P: \P \times \mathbb{A}^1  \to \mathbb{A}^1$
and the section $s(t) \in H^0(\P \times \mathbb{A}^1 , K^\vee_\P 
\boxtimes \mathcal{O}_{\mathbb{A}^1})$ which we view as a section of
$K^\vee_\P$ depending on the parameter $t$. We also fix a splitting 
$K_X^\vee \simeq L_1 \otimes L_2$ into a tensor product of two ample line
bundles. 

\begin{term}
\label{term: Good degeneration}
\normalfont
We will say that $s(t)$ is a \textit{good degeneration} if for $t\neq 0$ 
the zero scheme $X(t) \subset \P$ of $s(t)$ is a smooth 3 dimensional 
Calabi-Yau variety and for $t=0$ we have a splitting 
$s(0) = s_1s_2$ with $s_i \in H^0(\P, L_i)$, $i = 1,2$, such that 
the zero scheme $X_i \subset \P$ of $s_i$ is a three dimensional Fano 
variety and $X_1, X_2$ intersect transversally along a smooth $K3$
surface $S$.
\end{term}
In this case the anticanonical bundle of $X_1$
is given by the restriction of $L_1$ and similarly for $X_2$ the 
restriction of $L_2$. We will write $X:=s(1)^{-1}(0)$ for the zero scheme of the section $s(1)$, and will denote a 
good degeneration by $\P: X \to X_1 \cup X_2$.

The so-called \emph{model example} we study throughout the paper concerns the case when: $X_{1}$ is a quartic threefold in $\mathbb{P}^4$, 
$S$ is its hyperplane section (thus, a quartic surface in $\mathbb{P}^3$, and therefore a $K3$ surface), and
$X_{2}$ is a blow up of $\mathbb{P}^3$. Reversing the picture with deformation, 
we could start with a quintic Calabi-Yau in $\mathbb{P}^4$, degenerate it into 
a union of a quartic $X_1$ in $\mathbb{P}^{4}$ with a hyperplane $\mathbb{P}^3$, and then resolve the singularities of the 
total space of degeneration by blowing up $\mathbb{P}^3$ along its singular locus.

\subsection{$\mathcal{O}$-compactness over good degenerations}
\label{ssec: O-compactness}
Let $X_{1}$ and $X_{2}$ be smooth projective $d-$dimensional varieties 
which intersect transversally in a 
$(d-1)$-dimensional smooth and projective 
Calabi-Yau variety $S$ forming the singular variety $X$. The 
embeddings $\iota_{1},\iota_2$ of $S$ into
$X_{1}$ and $X_{2}$ respectively, provide natural isomorphisms $\iota_{1}^{*}K_{X_{1}} \to \iota_{1}^{*}\mathcal{O}_{X_{1}}(-S)$ 
and $\iota_{2}^{*}K_{X_{2}} \to \iota_{2}^{*}\mathcal{O}_{X_{2}}(-S)$. In fact, assuming that
$$K_{X_{1}}=\mathcal{O}_{X_{1}}(-S),\hspace{1mm} \text{and }\hspace{2mm}K_{X_{2}}=\mathcal{O}_{X_{2}}(-S),$$
the normal bundle of $S$ in $X_{i}$ is $\iota_{i}^{*}(\mathcal{O}_{X_{i}}(S))=K_{X_{i}}^{-1}$ for $i=1,2.$

Calculating the canonical bundle of $\pi:\mathbb{P}(\mathcal{O}_{S} \oplus \mathcal{N})\to S$, we find it is given by the inverse of the vertical tangent line bundle which has a natural section coming from the relative Euler vector field, which is thought of as a scale invariant section of the tangent bundle of $Tot(\mathcal{O}_{S} \oplus \mathcal{N})$. This section has no poles and its divisor of zeros is $\mathbb{P}(\mathcal{N})+ \mathbb{P}(\mathcal{O})$. By the Euler sequence, the canonical bundle is given by $\mathcal{O}_{\mathbb{P}(\mathcal{O}_{S} \oplus \mathcal{N})}(-2)\otimes \pi^{*}\mathcal{N}^{-1}$.

If $\P$ is a family over $\mathbb{A}^{1}$ which degenerates into an expanded degeneration $X[n]_{0}$, then  we can restrict the dualizing sheaf of $X[n]_{0}$ to one of the components of $\mathbb{P}(\mathcal{O}_{S} \oplus \mathcal{N})$ and thus obtain
\[\mathcal{O}_{\mathbb{P}(\mathcal{O}_{S} \oplus \mathcal{N})}(-2)\otimes \pi^{*}\mathcal{N}^{-1} \otimes \mathcal{O}_{\mathbb{P}(\mathcal{O}_{S} \oplus \mathcal{N})}(\mathbb{P}(\mathcal{N})+ \mathbb{P}(\mathcal{O})) = \mathcal{O}_{\mathbb{P}(\mathcal{O}_{S} \oplus \mathcal{N})}.
\]

Therefore we can see that the dualizing sheaf of $X[n]_{0}$ is trivial. As $X$ is a projective variety over a field, $X$ has a dualizing sheaf $\omega_{X}$ given by a line bundle. Since $X$ is Cohen-Macaulay and equi-dimensional, by \cite[Theorem 7.6]{H} there are natural functorial isomorphisms
\[
Ext^{i}(\mathcal{F},\omega_{X}) \to H^{n-i}(X,\mathcal{F})^{\vee},
\]
for any coherent sheaf $\mathcal{F}$.

In fact, the dualizing sheaf $\omega_{X}$ is also trivial. Indeed, let $j$ be the embedding of $X$ into the family $\P\to Spec(\mathbb{C}[t])$ where $\P$ is smooth and $j_{i}$ the embedding of $X_{i}$ into $X$ for $i=1,2.$ Then,
\begin{align}
j_{1}^{*}\omega_{X}&=j_{1}^{*}(j^{*}(\omega_{\P}(X))) \notag
\\
&= ((j \circ j_{1})^{*}\omega_{\P}(X_{1})) \otimes ((j \circ j_{1})^{*}\mathcal{O}_{\P}(X_{2}))\notag\\
&
=K_{X_{1}} \otimes \mathcal{O}_{X_{1}}(S)\notag
\\
&=\mathcal{O}_{X_{1}}.
\end{align}
Similarly, $j_{2}^{*}\omega_{X}=\mathcal{O}_{X_{2}}.$  Since any automorphism of the trivial line bundle on $S$ is the restriction of an automorphism of the trivial line bundle on $X_{1}$ (or $X_{2}$), we can conclude that $\omega_{X}=\mathcal{O}_{X}$. 
\vspace{1.5mm}

\noindent\textbf{A version for families.}
We can see this property in a family as follows.
Let $\P$ and $\mathcal{A}$ be smooth varieties and suppose that $\pi$ is a proper, flat, locally projective, finitely presentable morphism  $\P \to \mathcal{A}$ which is a degenerating family with special fiber given by $X_{1}\cup X_{2}$ meeting along $S$. Let $\omega_{\pi}$ denote the relative dualizing sheaf of $\pi$. By definition, it comes with a trace isomorphism of $H^{0}(\mathcal{A}, \mathcal{O}_{\mathcal{A}})$-modules $tr: H^{d}(\P,\omega_{\pi}) \to H^{0}(\mathcal{A}, \mathcal{O}_{\mathcal{A}})$. Assume that every fiber except the fiber over $0$ is smooth with trivial canonical bundle. We have shown that the dualizing sheaf of the fiber over zero is simply $\mathcal{O}_{X}$. In fact, the relative dualizing sheaf $\omega_{\pi}$ is the line bundle $K_{\P} \otimes \pi^{*}K^{-1}_{\mathcal{A}}$. 

By functoriality of the relative dualizing sheaf, this line bundle restricts to the trivial line bundle on every fiber. By the see-saw principal it must be a pullback of a line bundle from $\mathcal{A}$ and by assumption it must be trivial.  

Here, $\P$ is $\mathcal{O}$ compact in the sense of \cite[Definition 2.1]{PTVV} as a derived stack over $\mathcal{A}$ and has a $\mathcal{O}$-orientation over $\mathcal{A}$ induced by the isomorphism
$$H^{d}(\P,\mathcal{O}) \to H^{d}(\P,\omega_{\pi})\to H^{0}(\mathcal{A}, \mathcal{O}_{\mathcal{A}}),$$
of perfect $H^{0}(\mathcal{A}, \mathcal{O}_{\mathcal{A}})$-modules.
\subsection{Dg schemes of rigidified simple perfect complexes}
\label{ssec: Dg schemes of rigidified simple perfect complexes}
When $X$ is a smooth 3 dimensional projective
Calabi Yau variety, following \cite{STV}, we consider the moduli
stack $\mathcal{M}(X)$ of perfect complexes $\I$ on $X$ which 
are \textit{rigidified}\footnote{In \cite{STV} this derived stack was denoted by $\mathbb{R}\mathsf{Perf}(X)_{\mathcal{L}}^{si,>0}$.}, i.e. equipped with an isomorphsm of $\det(\I)$
and a fixed line bundle on $X$ - which we take to be trivial in applications
- and in addition satisfy the following:
\begin{enumerate}
\item $Ext^i_X(\I, \I) = 0$ for $i < 0,$
\item the trace map $Ext^0_X(\F, \F) \to \mathbb{C}$ is an isomorphism (i.e.
the sheaf $\I$ is simple).
\end{enumerate}

For practical applications, fixing a suitable polynomial stability condition for perfect complexes would guarantee the conditions of rigidity. By \cite[Theorem 5.11]{STV}, the derived stack $\mathcal{M}(X)$ is quasi-smooth\footnote{
The cotangent complex has amplitude $[-1,0]$.}. 
Recall that if $\mathsf{dSt}_{\mathbb{C}}$ is the $\infty$-category of derived stacks, it is common to 
denote by $\tau_0:\mathsf{dSt}_{\mathbb{C}}\rightarrow \mathsf{St}_{\mathbb{C}}$, the truncation functor between derived and underived stacks over
$\mathbb{C}$ for the étale topologies (see \cite[Definition 2.2.4.3]{TV2}). We write $M(X):=\tau_0(\M(X)).$

\subsubsection{Basic example}
Assume that $\P = \P(V)$ is the projective space of lines in a 
5-dimensional vector space $V$ and that $\M(\P)$ is the moduli space of 
sheaves of the type $\I = \mathcal{O}_Z$ where $Z$ is a complete 
intersection of a quintic and a hyperplane (as before, we consider sheaves
with trivialized determinant). Letting $q, h$ be the homogeneous
equations of these two hypersurfaces, we may observe that $h \in V^\vee$ is well defined up to a scalar, while $q$
may be viewed as an element of $\mathrm{Sym}^5(V^\vee/\mathbb{C}h)$. 

The $0$-truncation $M(\P)$ of the dg scheme $\mathcal{M}(\P)$ is thus the projectivisation
$\P(\mathrm{Sym}^5(Q)) \to \P(V^\vee)$ of the symmetric power $\mathrm{Sym}^5(Q)$ of the 
universal quotient bundle $Q$ on $\P(V^\vee)$.

If $s=s(1)$ is a degree
5 polynomial defining a hypersurface $X$, we can view $s$ as a section
$\P(V^\vee) \to \P(\mathrm{Sym}^5(Q))$ which we also denote by $s$. Its image
$s(\P(V^\vee))$ may then be identified with the 0-truncation $M(X)$ of the 
dg-scheme $\mathcal{M}(X)$ which maps to $\mathcal{M}(\P)$ by 
the direct image with respect to the closed embedding $i: X \to \P$. 

To understand the dg structures on these schemes we need the following 
computation. 

\begin{lem}
Consider $\M(\P)$ as above. Thus, when $\I \simeq \mathcal{O}_Z$ we have
$
Ext^0_\P(\I, \I)  = Ext^3_P (\I, \I)  = \mathbb{C}.
$
Furthermore, setting $Q_h:= V^\vee/\mathbb{C} h$, we have
$$\begin{cases}
Ext^1_\P(\I, \I)  \simeq Q_h \oplus \mathrm{Sym}^5 (Q_h)/\mathbb{C} q,
\\
Ext^2_\P(\I, \I)  \simeq Q_h^\vee \oplus \mathrm{Sym}^6 (Q_h)/Q_h q. 
\end{cases}$$
The first terms of the direct sums are 
also isomorphic to $Ext^{1, resp.\ 2}_X(\I, \I)$, while the second terms give
the fibers of the normal complex of $\M(Q)$ in $\M(\P)$. 
\end{lem}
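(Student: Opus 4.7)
The plan is to compute $R\Hom_{\mathbb{P}}(\I,\I)$ directly via the Koszul resolution of the complete intersection $Z=\{h=q=0\}$,
$$K^{\bullet}:\quad \mathcal{O}_{\mathbb{P}}(-6)\longrightarrow \mathcal{O}_{\mathbb{P}}(-1)\oplus \mathcal{O}_{\mathbb{P}}(-5)\longrightarrow \mathcal{O}_{\mathbb{P}},$$
concentrated in degrees $-2,-1,0$, with differentials induced by the regular sequence $(h,q)$. Applying $\mathcal{H}om(-,\mathcal{O}_Z)$ produces a complex whose differentials are again induced by $h$ and $q$; the key observation is that both of these vanish identically on $Z$, so every restricted differential is zero. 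Consequently the local Ext-sheaves split canonically as line bundles,
$$\mathcal{E}xt^{0}=\mathcal{O}_Z,\qquad \mathcal{E}xt^{1}=\mathcal{O}_Z(1)\oplus \mathcal{O}_Z(5),\qquad \mathcal{E}xt^{2}=\mathcal{O}_Z(6),$$
and the global Ext groups are obtained by applying $R\Gamma(Z,-)$ to each summand and regrouping by total degree.

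Next I would compute $H^{*}(Z,\mathcal{O}_Z(k))$ for $k\in\{0,1,5,6\}$ by feeding the short exact sequence $0\to \mathcal{O}_H(k-5)\xrightarrow{\cdot q}\mathcal{O}_H(k)\to \mathcal{O}_Z(k)\to 0$ on $H\cong\mathbb{P}^{3}$ into its long exact sequence, and applying Bott vanishing on $\mathbb{P}^{3}$ together with the adjunction $K_Z=\mathcal{O}_Z(1)$. This yields $H^{0}(\mathcal{O}_Z)=\mathbb{C}$ and $H^{2}(\mathcal{O}_Z)=Q_h^{\vee}$ (Serre duality on $Z$); $H^{0}(\mathcal{O}_Z(1))=Q_h$ and $H^{2}(\mathcal{O}_Z(1))=\mathbb{C}$; $H^{0}(\mathcal{O}_Z(5))=\mathrm{Sym}^{5}(Q_h)/\mathbb{C}q$; $H^{0}(\mathcal{O}_Z(6))=\mathrm{Sym}^{6}(Q_h)/(Q_h\cdot q)$; with all other relevant cohomology groups vanishing. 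Assembling these in each total degree immediately produces $Ext^{0}=Ext^{3}=\mathbb{C}$ (with the top-degree contribution coming entirely from $H^{2}(Z,\mathcal{O}_Z(1))$) together with the claimed direct-sum descriptions of $Ext^{1}$ and $Ext^{2}$.

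Finally, to single out the first summands as $Ext^{1,2}_{X}(\I,\I)$ and the complementary summands as fibers of the normal complex of $\mathcal{M}(X)\hookrightarrow\mathcal{M}(\mathbb{P})$, I would rerun the same Koszul argument on the shorter resolution $0\to \mathcal{O}_X(-1)\xrightarrow{\cdot h}\mathcal{O}_X\to \mathcal{O}_Z\to 0$ on the quintic threefold $X$. Since $h|_Z=0$, the restricted differential again vanishes and one reads off $Ext^{1}_{X}=Q_h$ and $Ext^{2}_{X}=Q_h^{\vee}$; functoriality of the two Koszul resolutions under $i:X\hookrightarrow \mathbb{P}$ realises these as the canonical first summands of $Ext^{*}_{\mathbb{P}}$. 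The complementary summands $\mathrm{Sym}^{5}(Q_h)/\mathbb{C}q$ and $\mathrm{Sym}^{6}(Q_h)/(Q_h\cdot q)$ are then matched with the normal-complex fibers using the explicit description $M(\mathbb{P})=\mathbb{P}(\mathrm{Sym}^{5}Q)\to\mathbb{P}(V^{\vee})$ with $M(X)$ its section cut out by the quintic $s$: the fiberwise tangent space at $([h],[q])$ is $\Hom(\mathbb{C}q,\mathrm{Sym}^{5}(Q_h)/\mathbb{C}q)=\mathrm{Sym}^{5}(Q_h)/\mathbb{C}q$, and the distinguished triangle $\mathbb{T}_{\mathcal{M}(X)}\to \mathbb{T}_{\mathcal{M}(\mathbb{P})}|_{\mathcal{M}(X)}\to N_{\mathcal{M}(X)/\mathcal{M}(\mathbb{P})}$ places the quintic-normal contributions in the correct slots. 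I expect the only non-mechanical point to be checking that this identification with the normal complex is natural at the level of the cotangent triangle rather than merely at the level of vector spaces; this is automatic here because every splitting comes directly from the Koszul resolution and not from any auxiliary choice.
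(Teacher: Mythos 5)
Your proposal is correct and follows essentially the same route as the paper: the Koszul resolution of $Z$ in $\P$, reduction to the cohomology of $\mathcal{O}_Z(p)$ for $p\in\{0,1,5,6\}$ computed from the resolution $0\to\mathcal{O}_H(p-5)\to\mathcal{O}_H(p)\to\mathcal{O}_Z(p)\to 0$ on $H\simeq\P^3$, and assembly via a spectral sequence that degenerates (your observation that the differentials vanish because $h,q$ restrict to zero on $Z$ is the cleaner way of seeing what the paper attributes to degree reasons). Your added verification of the last sentence of the lemma, via the shorter Koszul resolution on $X$ and the description $M(\P)=\P(\mathrm{Sym}^5 Q)\to\P(V^\vee)$, is a reasonable elaboration of a point the paper merely asserts.
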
 

\begin{proof}
To compute $Ext^\bullet_\P(\mathcal{O}_Z, \mathcal{O}_Z)$ we
replace the first copy of $\mathcal{O}_Z$ with its Koszul resolution:
$$
 0 \to \mathcal{O}_\P(-6) \to \mathcal{O}_\P(-5) \oplus \mathcal{O}_\P(-1)  \to
 \mathcal{O}_\P \to 0.
$$
This reduces the computation to a spectral sequence with $E_2$ term containing 
$H^q(\P, \mathcal{O}_Z(p))$ for $p = 0, 1, 5, 6$ and $q = 0, 1, 2$ (since $Z$ has dimension 2). 
To compute the cohomology we can view $\mathcal{O}_Z$ as a sheaf supported on the 
hyperplane $H \simeq \P^3$. Then we can use the long exact sequence induced by the 
resolution 
$$
0 \to \mathcal{O}_H(-5) \to \mathcal{O}_H \to \mathcal{O}_Z \to 0.
$$
As the intermediate cohomology groups $H^q (H, \mathcal{O}_H(p))$ vanish for 
$q = 1, 2$ and all $p$, the long exact sequence reduces to short exact sequences 
$$
0 \to H^0(H, \mathcal{O}_H(p-5) )\to H^0(H, \mathcal{O}_H(p))\to H^0(H, \mathcal{O}_Z(p) )
\to 0,
$$
and 
$$
0 \to H^2(H, \mathcal{O}_Z(p))  \to H^3(H, \mathcal{O}_H(p-5)) \to H^3(H, \mathcal{O}_H(p))
\to 0,
$$
with $p = 0, 1, 5, 6$.  In particular, $H^1(H, \mathcal{O}_Z(p))$ is zero for all $p$. 
The results for the other cases are given as follows:
\begin{itemize}

\item For $p = 0$, we have  $H^0(H, \mathcal{O}_Z) = \mathbb{C},$ and $
H^2(H, \mathcal{O}_Z) = Q_h^\vee$;

\item For $p = 1$, we have $H^0(H, \mathcal{O}_Z(1) ) = Q_h, 
H^2(H, \mathcal{O}_Z(1) ) = \mathbb{C}$;

\item For $p = 5$, we have $H^0(H, \mathcal{O}_Z(5) ) = \mathrm{Sym}^5(Q_h)/\mathbb{C}q,$ and $ 
H^2(H, \mathcal{O}_Z(5) ) = 0$;

\item For $p = 1$, we have $H^0(H, \mathcal{O}_Z(6) ) = \mathrm{Sym}^6(Q_h)/Q_h q,$ and $
H^2(H, \mathcal{O}_Z(6) ) = 0$.
\end{itemize}

Substituting these values in the above spectral sequence with only nonzero terms
$$
\begin{cases}

E_2^{0, 0} = \mathbb{C},
\\
E_2^{1, 0} = Q_h \oplus \mathrm{Sym}^5(Q_h)/\mathbb{C}q, 
\\
E_2^{2, 0}
= \mathrm{Sym}^6(Q_h)/Q_h q,
\\

E_2^{0, 2}= Q_h^\vee,
\\
E_2^{1, 2} =\mathbb{C},
\end{cases}
$$
we see that all further spectral sequence differentials must vanish for degree reasons, which 
gives the result claimed. 
\end{proof}

\subsection{The derived quasi Fano Lagrangian moduli spaces}\label{lagrangian-moduli}
Let $\P:=X\to X_{1}\cup_{S} X_{2}$ be the good degeneration of a smooth Calabi Yau threefold to normal crossing singular Calabi Yau, given by Fano varieties $X_{1}$ and $X_{2}$ glued along their anti-canonical divisor $S$. 
\begin{rmk}\label{exceptional-divisor}
    In practical examples, the Tyurin degeneration family $\P$ might have a locus of singularity. A practical example is the case where $X$ is given as Quintic threefold in $\P^4$, $Y_{1}\subset \P^4$ is a Fano quartic and $Y_{2}\subset \P^4$ is $\P^3$. The total space of such family has a singular locus given by a complete intersection subvariety of type $(5,4,1)$ in $\P$, i.e. a curve $C\subset S\subset Y_{2}$. A remedy would be modifying the family by replacing $Y_{2}$ with $Bl_{C}(Y_{2})$. It will produce a smooth family hence a good degeneration. In such cases we denote by $X_{i}, i=1,2$, the modification of $Y_{i}, i=1,2$, and by $E$ the exceptional locus appearing from such blow up construction, here for instance the one associated to $X_2=Bl_{C}(Y_{2})\to Y_{2}$.
\end{rmk}

Let $\M(X_{1}), \M(X_{2}), \M(S)$ denote the corresponding moduli spaces of simple perfect complexes (c.f. Subsection \ref{ssec: Dg schemes of rigidified simple perfect complexes}) on $X_{1}, X_{2}, S$ respectively, and let us assume that they are each equipped with universal families $\tilde{\I}_{1}, \tilde{\I}_{2}, \tilde{\I}_{S}$. 

Since $S\subset X_{i}, i=1,2$ is a divisor, there exist natural restriction maps 
\begin{equation}
\label{eqn: Restriction of moduli}
r_{i}: \M(X_{i})\to \M(S),
\end{equation}
induced for each $\F_i\in \M(X_i)$ by the map which is denoted the same,
\begin{equation}\label{derived-restrict}
r_i:\F_{i}\rightarrow \F_{i}\otimes^{L}\mathcal{O}_{S\times \M(S)}.
\end{equation}
The degenerating family $\P$ induces a degenerating family of moduli spaces with generic fiber given by $\M(X)$ and special fiber $\M(X_{1})\times_{r_{1}, r_{2}}\M(X_{2})$.  Now for every perfect $\I$ on $X$, denote the derived endomorphism complex by $End(\I):=RHom_{\mathcal{O}_X} (\I, \I)$, and the derived dual by $\F^{\vee}:=R\mathcal{H}om(\mathcal{F},\mathcal{O}_X).$ Then for $i: X\hookrightarrow \P$
the perfect 
complex $i_* End(\I)$ satisfies the following
property involving the Serre functor $\mathcal{S}_\P$ on $\P$
$$
\mathcal{S}_\P(i_* RHom_{\mathcal{O}_X} (\I, \I))
\simeq i_* RHom_{\mathcal{O}_X} (\I, \I) [1].
$$
This is a direct consequence of Grothendieck duality and 
the fact that $i^! \mathcal{O}_\P  = i^* K^\vee_\P[1],$ as $i$
is the closed lci-embedding of codimension 1 with normal bundle 
$K_{\P}^{\vee}|_X$. The same argument applies also to the singular fiber $X_1 \cup X_2$. 
\vspace{1mm}

Next we prove a result which states that the restriction morphisms (\ref{eqn: Restriction of moduli}) induced from (\ref{derived-restrict}), satisfy the property of inducing derived Lagrangian structures. For this we need to work with spaces of $p$-forms and closed $p$-forms on Artin stacks.

\subsubsection{Shifted symplectic geometry}
Let $S$ be a derived stack, and assuming it exists, denote by $\mathbb{L}_S$ its cotangent complex and by $\mathsf{DR}(S)$ the de Rham complex, understood as a graded mixed complex over $\mathbb{C}.$ In \cite{PTVV} the authors define a space $\mathcal{A}^{p}(S,n)$ of $n$-shifted $p$-forms on $S$ and space of $n$-shifted closed $p$-forms $\mathcal{A}^{p,cl}(S,n)$. They are defined in the affine case as follows. Letting $A$ be a commutative non-positively graded dg-algebra, one may define the simplicial sets
\begin{eqnarray*}
    \mathcal{A}^{p}(A, n) &=& |(\Lambda^{p}\mathbb{L}_{A}[n], d)|,
    \\
\mathcal{A}^{p,cl}(A, n) &=& |(\prod_{i\geq 0} |\Lambda^{p+i}\mathbb{L}_{A}[-i + n], d + d_{dR})|.
\end{eqnarray*}
Here $d$ is the differential induced by the differential in $\mathbb{L}_{A}$, $d_{dR}$ is the de Rham differential and $|-|$ is the realization functor. The $n$-shifted $p$-forms and closed $p$-forms define $\infty$-functors on the $\infty$-category of commutative dg-algebras which satisfies a dg-version of étale descent. They may then be extended to two $\infty$-functors from the opposite category of the $\infty$-category of derived stacks $\mathsf{dSt}$, to the $\infty$-category of simplicial sets  $$\mathcal{A}^{p}(-, n), \mathcal{A}^{p,cl}(-, n) : \mathsf{dSt}^{op} \to \mathbb{S}.$$

To be explicit, a $k$-shifted $p$-form on $S$ will be denoted by $\omega_{S}\in \big(\Lambda^p\mathbb{L}_{S}\big)^k$ such that $d\omega_S=0$ in $(\Lambda^p\mathbb{L}_{S})^{k+1}$. In particular, it defines a cohomology class $[\omega_{S}]\in H^k(\Lambda^p\mathbb{L}_S).$

A $k$-shifted closed $p$-form is a series $\omega_{S}^{\bullet}=(\omega_{S}^0,\omega_{S}^1,\ldots)$ with $\omega_S^m\in (\Lambda^{p+m}\mathbb{L}_{S})^{k-m}$ for $m\geq 0,$ satisfying 
$d\omega_S^0=0$ and $d\omega_S^{1+m}+d_{dR}\omega_S^m=0$ in $(\Lambda^{p+m-1}\mathbb{L}_{S})^{k-m}$ for all $m\geq 0.$

\begin{term}
\label{term: Underlying form}
\normalfont 
For $S=Spec(A)$, the natural morphism given by projection to the first component $\prod_{i\geq0} \Lambda^{p+i}\mathbb{L}_{A}[-i+n] \to \Lambda^{p}\mathbb{L}_{A}[n]$ induces a morphism $\mathcal{A}^{p,cl}(S,n)\to \mathcal{A}^{p}(S,n)$ of simplicial sets and given
a closed $p$-form $\omega_S^{\bullet}$ we call its image under this map the \emph{underlying $p$-form} and denote it by $\underline{\omega}_{S}:=\omega_S^0.$ 
\end{term}
If $S$ is a derived Artin stack the space of ($n$-shifted) $p$-forms is described as the mapping space
$$\mathcal{A}^{p}(S,n) = Map_{\mathbf{L}_{qcoh}(S)}(\mathcal{O}_{S},\Lambda^{p}\mathbb{L}_{S}[n]),$$
where $\mathbf{L}_{qcoh}(S)$ is the $\infty$-category of quasi-coherent sheaves on $S$. 
\begin{defn}\emph{(\cite[Definition 1.18]{PTVV})}
\label{defn: n-shifted form}
\normalfont 
Let $S$ be a derived Artin stack. An element $\omega\in \mathcal{A}^{2,cl}(S,n)$ is called an \emph{$n$-shifted symplectic form} if its underlying 2-form $\omega^{0}$ is non-degenerate. That is, the map induced by $\omega^{0}$,$$\Theta_{\omega}: \mathbb{T}_{S}\to \mathbb{L}_{S}[n]$$is a quasi-isomorphism. We will denote by $Symp(S,n)$ the space of all symplectic forms on $S$. A given $n$-symplectic derived stack is denoted by a pair $(S,\omega)$.
\end{defn}
We also make use of well-known and straightforward generalizations to the relative setting, without going into details and referring instead to \cite{CPTVV}. We require another definition \cite[Definition. 2.7]{PTVV}.
\begin{defn}
\normalfont 
Let $f:X\to S$ be a morphism of (derived) Artin stacks, where $S$ has an $n$-shifted symplectic 2-form $\omega\in Symp(S,n)$. Let $f^*\omega\in \mathcal{A}^{2,cl}(X,n)$ denote the pullback to a two form on $X$.
An \emph{isotropic structure} on $f$ (relative to $\omega$) is a path between $0$ and $f^*(\omega)$ in the space $\mathcal{A}^{2,cl}(X,n)$. The space of isotropic structures on $f$ relative to $\omega$ is $$Isot(f, \omega)=Path_{0, f^*\omega}(\mathcal{A}^{2,cl}(X,n)).$$
\end{defn}
The next result is crucial for our applications and makes use of induced shifted symplectic forms on mapping stacks by \cite[Theorem 2.5]{PTVV}.
\begin{lem}\label{isotropic}
Suppose that $\mathfrak{M}$ is a derived Artin stack equipped with an $n$-shifted symplectic form $\omega$. Let $S$ be an $\mathcal{O}$-compact derived stack equipped with an $\mathcal{O}$-orientation of degree $d$, $[S]:C(S,\mathcal{O}_{S}) \to k[-d]$, where $C(S,F):=R\mathcal{H}om(\mathcal{O}_{S}, F)$ for a complex $F$ on $S$. Let $X$ be a derived stack such that $H^{d}(X,\mathcal{O})=0$ and let $f:X \to S$ a morphism of derived stacks.  Assume that $Map(S,\mathfrak{M})$ and $Map(X,\mathfrak{M})$ are derived Artin stacks, locally of finite presentation. Then the morphism $f^{*}:Map(X,\mathfrak{M}) \to Map(S,\mathfrak{M})$ has an isotropic structure with respect to the $(n-d)$-shifted symplectic form defined on $Map(S,\mathfrak{M})$.
\end{lem}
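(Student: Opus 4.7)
The plan is to build the isotropic structure explicitly by transgression, exploiting the naturality of the PTVV integration map with respect to the orientation class on the source.

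First, I would recall the transgression construction of the shifted symplectic form on $Map(S,\mathfrak{M})$. Writing $ev_S: S\times Map(S,\mathfrak{M})\to \mathfrak{M}$ for the evaluation map, one obtains $ev_S^*\omega \in \mathcal{A}^{2,cl}(S\times Map(S,\mathfrak{M}),n)$, and the PTVV integration map induced by the orientation $[S]:C(S,\mathcal{O}_S)\to k[-d]$,
\begin{equation*}
\int_{[S]}: \mathcal{A}^{2,cl}(S\times Map(S,\mathfrak{M}),n) \longrightarrow \mathcal{A}^{2,cl}(Map(S,\mathfrak{M}),n-d),
\end{equation*}
produces the $(n-d)$-shifted closed $2$-form $\omega_S = \int_{[S]} ev_S^*\omega$ whose non-degeneracy is the content of \cite[Theorem 2.5]{PTVV}.

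Second, I would analyze the pullback $(f^*)^*\omega_S$. The morphism $f$ fits into the evident commutative square
\begin{equation*}
\begin{tikzcd}
S\times Map(X,\mathfrak{M}) \arrow{r}{f\times \mathrm{id}} \arrow{d}[swap]{\mathrm{id}\times f^*} & X\times Map(X,\mathfrak{M}) \arrow{d}{ev_X} \\
S\times Map(S,\mathfrak{M}) \arrow{r}[swap]{ev_S} & \mathfrak{M},
\end{tikzcd}
\end{equation*}
so that $ev_S\circ(\mathrm{id}\times f^*) = ev_X\circ(f\times \mathrm{id})$. By base change and naturality of the integration map, this identifies $(f^*)^*\omega_S$ with the integration of $ev_X^*\omega$ against the composite orientation class
\begin{equation*}
f^*[S] := \bigl(C(X,\mathcal{O}_X) \xrightarrow{f^*} C(S,\mathcal{O}_S) \xrightarrow{[S]} k[-d]\bigr),
\end{equation*}
viewed as a (potentially zero) $d$-orientation on $X$.

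The key step is to observe that $f^*[S]$ is null-homotopic. Indeed, the mapping complex $\mathrm{Hom}(C(X,\mathcal{O}_X), k[-d])$ has zeroth cohomology equal to the linear dual $H^d(X,\mathcal{O}_X)^\vee$, which vanishes by assumption. A choice of null-homotopy $h:C(X,\mathcal{O}_X)\to k[-d-1]$ is equivalently a $1$-simplex in the mapping space between $f^*[S]$ and $0$, and is transported by $\int_{(-)}ev_X^*\omega$ to a path
\begin{equation*}
\alpha_h :\, (f^*)^*\omega_S \rightsquigarrow 0 \quad \text{in } \mathcal{A}^{2,cl}(Map(X,\mathfrak{M}),n-d),
\end{equation*}
which is by definition an isotropic structure on $f^*$ with respect to $\omega_S$.

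The main subtlety lies in upgrading the cochain-level null-homotopy $h$ to a genuine path in the simplicial space of closed shifted $2$-forms, in a manner compatible with both the internal differential and the de Rham differential of the graded mixed complex underlying $\mathcal{A}^{2,cl}$. This coherence is guaranteed by working in the $\infty$-categorical framework of \cite{PTVV, CPTVV}, where the integration map $\int_{(-)}$ is realized as a morphism of simplicial sets which is functorial in the orientation datum, and therefore transports any null-homotopy of the cochain map $f^*[S]$ to the required path between closed shifted $2$-forms, completing the construction of the isotropic structure.
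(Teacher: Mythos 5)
Your proposal is correct and follows essentially the same route as the paper's proof: both transgress $\omega$ via the evaluation map and the orientation $[S]$, identify $(f^*)^*\int_{[S]}ev_S^*\omega$ with integration against the composite class $\eta = [S]\circ f^*$ on $C(X,\mathcal{O}_X)$ (denoted $f^*[S]$ in your write-up), and use $H^d(X,\mathcal{O})=0$ to conclude this class is null-homotopic, transporting the null-homotopy through the integration map to a path from the pulled-back form to $0$. Your version merely makes explicit the choice of cochain-level null-homotopy $h$ where the paper records the same conclusion at the level of $\pi_0(\mathcal{A}^{2,cl}(Map(X,\mathfrak{M}),n-d))$, citing \cite[Theorem 2.11]{CPTVV}.
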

\begin{proof}
The proof given below is given by \cite[Theorem 2.11]{CPTVV} and is essentially a relative (to $\mathfrak{M}$) version of constructions given in \cite[Theorem 2.5]{PTVV}. Define $\eta$ as the composition
\[\eta:C(X,\mathcal{O}_{X}) \to C(S,\mathcal{O}_{S}) \stackrel{[S]}\to k[-d],
\]
Then $\eta$ can be used to define a morphism 
\begin{align}
\int_{\eta}:NC^{w}(M \times Map(X,\mathfrak{M}))&=C(X,\mathcal{O}_{X}) \otimes_{k}NC^{w}(Map(X,\mathfrak{M}))\notag\\
& \to NC^{w}(Map(X,\mathfrak{M})).
\end{align}

Consider the homotopy commutative diagram 
\[
\adjustbox{scale=.94}{
\xymatrix{k[2-n](2) \ar[r]_{\omega} & NC^{w}(\mathfrak{M}) \ar@/_/[rdd]_{\pi_{X}^{*}} \ar[r]_{\pi_{S}^{*}} & NC^{w}(S \times Map(S,\mathfrak{M})) \ar[d]_{NC^{w}(\text{id}_{S},f^{*})} \ar[r]^{\int_{[S]}} & NC^{w}(Map(S,\mathfrak{M}))[-d] \ar[d]_{NC^{w}(f^{*})} \\
& & NC^{w}(Y \times Map(X,\mathfrak{M})) \ar[r]^{\int_{[S]}} & NC^{w}(Map(X,\mathfrak{M}))[-d] \\
& & NC^{w}(X \times Map(X,\mathfrak{M})) \ar[u]_{NC^{w}(f,\text{id}_{Map(X,\mathfrak{M})})} \ar@/_/[ur]_{\int_{\eta}}
}}
\]
\normalsize
The first row is indeed the $(n-d)$-shifted symplectic form $\int_{[S]}\omega$ which is defined by \cite{PTVV} on $Map(S,\mathfrak{M})$ and therefore,  \[f^{*}\int_{[S]}\omega = NC^{w}(f^{*}) \circ \int_{[S]}\omega = \int_{\eta} \pi_{X}^{*} \omega.
\]
Note that since $H^{d}(X,\mathcal{O})=0$, then we have $H^{0}(\eta)=0$ and hence $H^{0}(f^{*}\int_{[S]}\omega) = H^{0}(\int_{\eta} \pi_{X}^{*} \omega) =0$.  Therefore, $f^{*}\int_{[S]}\omega$ and $0$ agree in $$\pi_{0}(\mathcal{A}^{2,cl}(Map(X,\mathfrak{M}),n-d)),$$ and so $f$ is isotropic with respect to $\int_{[S]}\omega$.
\end{proof}

When the morphism $f:X\to S$ satisfies isotropicity structure as in Lemma \ref{isotropic}, it is said to \emph{induce boundary structure}, as in \cite[Definition 2.6]{C}. A Lagrangian structure on a morphism between derived Artin stacks with induced boundary structure, is the one which satisfies a non-degeneracy condition.
\begin{defn}\emph{(\cite[Definition 2.8]{PTVV})}
\normalfont 
Let $f:X\to S$ be a morphism of derived Artin stacks and $\omega$ an $n$-shifted symplectic form on $S$. An isotropic structure $h$ on $f$ is called a \emph{Lagrangian structure} on $f$ relative to $\omega$ if the following morphism in the derived category $$\Theta_{h}:\mathbb{T}_{f}\to \mathbb{L}_{X}[n-1], $$ is a quasi-isomorphism of perfect complexes on $X$.
\end{defn}
Note that the morphism $\Theta_h$ depends on the isotropic structure $h$ in the usual way. That is, there is an induced loop pointed at $0$  in the space 
$Maps_{\mathbf{L}_{qcoh}(X)}(\mathbb{T}_f\otimes\mathbb{T}_X,\mathcal{O}_X[n]),$ thus defines an element of $\pi_1$ of this simplicial set, yielding a morphism in the derived category $\mathbb{T}_f\otimes\mathbb{T}_X\rightarrow \mathcal{O}_X[n-1],$ corresponding to $\Theta_h$ by adjunction.

We now write the Lagrangian structure in our geometric situation more explicitly.

 \begin{thm}\label{Lagrangian}
  Let $r_{i}: \M(X_{i})\to \M(S)$ denote the derived restriction morphisms \eqref{eqn: Restriction of moduli}, induced from  \eqref{derived-restrict}. Then $r_{i}$ satisfies the condition of inducing a Lagrangian structure, i.e. $$\Theta_{r_{i}}: \mathbb{T}_{r_{i}}\to \mathbb{L}_{\M(X_{i})}[-1]$$is a quasi-isomorphism of perfect complexes. 
  \end{thm}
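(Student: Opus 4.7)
The plan is to establish the Lagrangian structure in two steps: first produce an isotropic structure on $r_i$ with respect to the canonical $0$-shifted symplectic form on $\M(S)$, and then verify non-degeneracy by explicitly identifying $\mathbb{T}_{r_i}$ with $\mathbb{L}_{\M(X_i)}[-1]$ via Serre duality on the quasi-Fano threefold $X_i$.

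For the shifted symplectic form on $\M(S)$, I would appeal to PTVV: the stack $\M(S) = \mathsf{Map}(S,\mathsf{Perf})$ inherits a canonical $0$-shifted symplectic form from the universal $2$-shifted symplectic structure on $\mathsf{Perf}$, via the transfer theorem applied to the $\mathcal{O}$-orientation of degree $2$ on the K3 surface $S$ coming from the trivialization of $K_S$. For the isotropic structure on $r_i$, I would use the relative orientation $K_{X_i}\simeq \mathcal{O}_{X_i}(-S)$: the adjunction $i_S^!\mathcal{O}_{X_i}\simeq \mathcal{O}_S[-1]$ combined with the $\mathcal{O}$-orientation of $S$ presents $X_i$ as a log-Calabi--Yau threefold with boundary $S$, and by a relative version of Lemma \ref{isotropic} (following \cite{CPTVV} and the $\mathcal{O}$-compactness analysis of Subsection \ref{ssec: O-compactness}), this produces a null-homotopy of $r_i^*\omega_{\M(S)}$ inside $\mathcal{A}^{2,cl}(\M(X_i),0)$.

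For non-degeneracy, I would work pointwise at a rigidified simple perfect complex $\mathcal{I}\in\M(X_i)$, where $\mathbb{T}_{\M(X_i),\mathcal{I}}\simeq R\mathcal{H}om_{X_i}(\mathcal{I},\mathcal{I})[1]$ and similarly $\mathbb{T}_{\M(S),\mathcal{I}|_S}\simeq R\mathcal{H}om_S(\mathcal{I}|_S,\mathcal{I}|_S)[1]$. Applying $R\mathcal{H}om_{X_i}(\mathcal{I},-)$ to the canonical short exact sequence $0\to \mathcal{I}\otimes\mathcal{O}_{X_i}(-S)\to \mathcal{I}\to \mathcal{I}|_S\to 0$, and invoking the base change identification $R\mathcal{H}om_{X_i}(\mathcal{I},\mathcal{I}|_S)\simeq R\mathcal{H}om_S(\mathcal{I}|_S,\mathcal{I}|_S)$, produces a distinguished triangle
\[
R\mathcal{H}om_{X_i}(\mathcal{I},\mathcal{I}\otimes K_{X_i})\to R\mathcal{H}om_{X_i}(\mathcal{I},\mathcal{I})\to R\mathcal{H}om_S(\mathcal{I}|_S,\mathcal{I}|_S).
\]
By Serre duality on the smooth projective $3$-fold $X_i$ the leftmost term is $R\mathcal{H}om_{X_i}(\mathcal{I},\mathcal{I})^{\vee}[-3]$, so shifting by $[1]$ identifies the fiber of $\mathbb{T}_{\M(X_i),\mathcal{I}}\to r_i^*\mathbb{T}_{\M(S),\mathcal{I}|_S}$ with $R\mathcal{H}om_{X_i}(\mathcal{I},\mathcal{I})^{\vee}[-2]$, which in turn coincides with $\mathbb{T}_{\M(X_i),\mathcal{I}}^{\vee}[-1]\simeq \mathbb{L}_{\M(X_i),\mathcal{I}}[-1]$ as perfect complexes.

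The main obstacle is to verify that the comparison map $\Theta_{r_i}$ obtained from the isotropic structure of the first step agrees (up to coherent homotopy) with the Serre duality identification produced in the second step. This reduces to a compatibility of the relative trace pairing $\int_{X_i/S}$ used to build the orientation of $X_i$ over $S$ with the Yoneda pairing $R\mathcal{H}om(\mathcal{I},\mathcal{I})\otimes R\mathcal{H}om(\mathcal{I},\mathcal{I}\otimes K_{X_i})\to K_{X_i}[3]$, and can be carried out universally by working with the restriction of the universal family $\tilde{\mathcal{I}}_i$ along $S\times\M(X_i)\hookrightarrow X_i\times\M(X_i)$ and invoking the projection formula together with the defining properties of the PTVV orientation. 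Once this bookkeeping is completed, the quasi-isomorphism claimed in the theorem follows immediately for both $i=1,2$.
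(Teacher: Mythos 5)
Your proposal follows essentially the same route as the paper: the $0$-shifted form on $\M(S)$ via PTVV transfer along the degree-$2$ orientation of the K3, isotropicity from the boundary structure $\iota_{i*}[S]\rightsquigarrow 0$ (equivalently $R^2p_{i*}\mathcal{O}_{X_i\times\M(X_i)}=0$ since $X_i$ is quasi-Fano), and non-degeneracy by identifying the fiber of $\mathbb{T}_{\M(X_i)}\to r_i^*\mathbb{T}_{\M(S)}$ with $Rp_{i*}R\mathcal{H}om(\tilde{\I}_i,\tilde{\I}_i\otimes q_i^*\mathcal{O}_{X_i}(-S))[1]$ and applying Serre duality on the threefold — this is exactly the triangle and the diagram \eqref{square} in the paper's argument, and your flagged ``main obstacle'' (compatibility of the isotropic homotopy with the Serre-duality identification) is the same point the paper handles by checking commutativity of the middle square. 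The one case you omit is the modification of Remark \ref{exceptional-divisor}, where $X_2=Bl_C(Y_2)$ and $K_{X_2}\cong\mathcal{O}_{X_2}(-S+E)$ rather than $\mathcal{O}_{X_2}(-S)$; the paper closes this using $\operatorname{Hom}(\mathcal{O}_{X_2},\mathcal{O}_{X_2}(E))\cong\mathbb{C}$ to show the map \eqref{map} is still a quasi-isomorphism, and your argument would need that extra step to cover the model example.
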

  
\begin{proof}
The proof is deduced from \cite[Theorem 2.11]{C}. The moduli space, $\M(S)$ is $\mathcal{O}$-compact and is realized as the mapping stack to the moduli stack of perfect complexes, i.e. $\M(S)= Map(S, \mathbb{R}\textbf{Perf})$, where the latter has $2$-shifted symplectic structure. Then by \cite[Theorem 2.5]{PTVV}, $\M(S)$ is zero-shifted symplectic derived stack, and in order to prove the statement one needs to show that firstly the maps $r_{i}, i=1,2$ induce isotropic structures on $\M(X_{i})$ and that the induced underlying 2-form corresponding to the pullback of the 0-shifted symplectic form on $\M(S)$ given by $r_{i}^{*}\omega_{\M(S)}$, is non-degenerate. For this, one may show that the dual morphisms in the derived category
\begin{equation}\label{relative-quas}
\Theta^{\vee}_{r_{i}}:=\mathbb{L}_{r_{i}}[-1]\to\T_{\M(X_{i})},
\end{equation}
are quasi-isomorphisms. The isotropicity property follows from Lemma \ref{isotropic} for the case where $n=d=2$, $\mathfrak{M}=\mathbb{R}\textbf{Perf}$ is the derived stack of perfect complexes and $X, S$ as in Lemma \ref{isotropic} are replaced by divisor $S$ and $X_{i}, i=1,2$ respectively. More precisely, note that the natural inclusion $\iota_{i}: S\hookrightarrow X_{i}, i=1,2$ induces the restriction morphisms \eqref{eqn: Restriction of moduli} as usual $Map(X_{i}, \mathbb{R}\textbf{Perf})\to  Map(S, \mathbb{R}\textbf{Perf}),$
for $i=1,2.$ Then, the morphism $\iota_{i} : S \to X_{i}$ induces a boundary structure given by a path from $\iota_{i*}[S] \to 0$, where $[S]: C(S, \mathcal{O}_{S})\to k[-2]$. This induces a path from $\int_{\iota_{i*}[S]} \pi^{*}\omega$ to $0$. Moreover, we have
$$r_{i}^{*} \int_{[S]}\pi^{*}\omega = \int_{[X_{i}]} (id \times r_{i})^{*}\pi^{*}\omega = \int_{\iota_{i*}[S]}\pi^{*}\omega,$$
hence we obtain a path from $r_{i}^{*}\int_{[S]}\pi^{*}\omega$ to $0$, and hence an isotropic structure on $r_{i}$.

There is an illuminating different point of view which can be used to deduce the isotropic structure, which is mentioned now. Let $p_{i}:X_{i}\times \M(X_{i})\to \M(X_{i})$, $q_{i}: X_{i}\times \M(X_{i})\to X_{i}$ and $\pi_{S}: S\times \M(S)\to \M(S)$ be the natural projection maps. There exists a natural exact triangle of tangent complexes $$\T_{r_{i}}\rightarrow \T_{\M(X_{i})}\rightarrow r_{i}^*\T_{\M(S)},$$ which corresponds explicitly to \begin{eqnarray*}
Rp_{i*}R\mathcal{H}om(\tilde{\I}, \tilde{\I}\otimes q_{i}^*\O_{X_{i}}(-S))[1]&\to& Rp_{i*}R\mathcal{H}om(\tilde{\I}_{i}, \tilde{\I}_{i})[1]
\\
&\to& Rp_{i*} \left(R\iota_{i*}L\iota_{i}^{*}R\mathcal{H}om(\tilde{\I}_{i}, \tilde{\I}_{i})\right)[1].
\end{eqnarray*}
Considering the commutative diagram 
\begin{equation}
\label{eqn: Diag}
\xymatrix@C=1.em{
 \T_{\M(X_{i})}\otimes  \T_{\M(X_{i})}\ar[r]\ar[d]  &R^{2}p_{i*}\O_{X_{i}\times \M(X_{i})}\ar[d]\\
  r_{i}^*\T_{\M(S)} \otimes r_{i}^*\T_{\M(S)}\ar[r]&r_{i}^*R^{2}p_{i*}\O_{X_{i}\times \M(S)},}
\end{equation}
one may observe that (\ref{eqn: Diag}) is equivalent to
\begin{equation*}
%\label{cone-off} 
\xymatrix@C=1.em{
Rp_{i*}R\mathcal{H}om(\tilde{\I}_{i}, \tilde{\I}_{i})[1]\otimes Rp_{i*}R\mathcal{H}om(\tilde{\I}_{i}, \tilde{\I}_{i})[1]\ar[r]\ar[d]  &R^{2}p_{i*}\O_{X_{i}\times \M(X_{i})}\ar[d]\\
  r_{i}^*\left(R\pi_{S*}R\mathcal{H}om(\I_{S}, \I_{S})[1]\right) \otimes r_{i}^*\left(R\pi_{S*}R\mathcal{H}om(\I_{S}, \I_{S})[1]\right)\ar[r]&r_{i}^*\O_{\M(S)}.}
\end{equation*}
However, since each $X_i$ is quasi-Fano, we have that $R^{2}p_{i*}\O_{X_{i}\times \M(X_{i})}=0,$ which means that there is a homotopy map between $r_{i}^*\omega_{\M(S)}$ and $0$.

We now establish the non-degeneracy condition and show that the isotropic structure on $r_{i}$ is indeed a Lagrangian structure as required. In fact, since
$$\mathbb{T}_{r_{i}}\cong \text{Cone}[\T_{\M(X_{i})}\to r_{i}^*\T_{\M(S)}][-1],$$
proving the quasi-isomorphism in \eqref{relative-quas} is equivalent to proving that the composite morphism 
\begin{equation}
\T_{\M(X_{i})}\to r_{i}^*\T_{\M(S)} \xrightarrow{\omega} r_{i}^*\ttL_{\M(S)}\to \ttL_{\M(X_{i})}
\end{equation}
is quasi-isomorphic to zero. This means that morphism $\alpha$ in (\ref{square}) below exists and moreover, the middle square is commutative
\begin{equation}\label{square}
%\label{cone-off} 
\xymatrix@C=1.em{
\mathbb{T}_{r_{i}}\ar[r]  &\mathbb{T}_{\M(X_{i})}\ar^{\alpha}@{-->}[d]\ar[r] & r_{i}^{*}\mathbb{T}_{\M(S)\ar^{\cong}_{\omega}[d]}&\\
& \mathbb{L}_{r_{i}}[-1]\ar[r]&r_{i}^*\mathbb{L}_{\M(S)}\ar[r] &\mathbb{L}_{\M(X_{i})}.}
\end{equation}
This can be seen by noting that the bottom row of \eqref{square} is given by 
\begin{align}
(Rp_{i*}R\mathcal{H}om(\tilde{\I}_{i}, \tilde{\I}_{i}\otimes q_{i}^*\O_{X_{i}}(-S)))^{\vee}[-2]&\to Rp_{i*} \left(R\iota_{i*}L\iota_{i}^{*}R\mathcal{H}om(\tilde{\I}_{i}, \tilde{\I}_{i})\right)^{\vee} [-1]\notag\\
&
\xrightarrow{(r^{*}_{i})^{\vee}} Rp_{i*}R\mathcal{H}om(\tilde{\I}_{i}, \tilde{\I}_{i})^{\vee}[-1],
\end{align}
where the left-most term is obtained as $\text{Cone}((r^{*}_{i})^{\vee})[-1]$. Now, for $i=1$ notice that since $S\xrightarrow{\iota_{1}}X_{1}$ is the anticanonical divisor, hence $K_{X_{1}}\cong \mathcal{O}_{X_{1}}(-S)$ and by using Serre duality we have that
\begin{align}
\mathbb{T}_{\M(X_{i})}&\cong Rp_{1*}R\mathcal{H}om(\tilde{\I}_{1}, \tilde{\I}_{1})[1] \notag
\\
&\cong (Rp_{1*}R\mathcal{H}om(\tilde{\I}_{1}, \tilde{\I}_{1}\otimes q^{*}_{1}K_{X_{1}})[2])^{\vee}\notag\\
&=(Rp_{1*}R\mathcal{H}om(\tilde{\I}_{1}, \tilde{\I}_{1}\otimes q_{i}^*\O_{X_{1}}(-S)))^{\vee}[-2]\notag
\\
&=\mathbb{L}_{r_{1}}[-1].
\end{align}
This implies the morphism $\alpha$ in diagram \eqref{square} exists and is a quasi-isomorphism. Let us consider the case where $X_{2}$ as the quasi-Fano variety obtained in Remark \ref{exceptional-divisor}. The difference for $i=2$ is  that $K_{X_{2}}\cong \mathcal{O}_{X_{2}}(-S+E)$ where $E$ is the exceptional divisor of the blowup $Bl_{C}(X_{2})\to X_{2}$. 
However, we have the isomorphism $$\operatorname{Hom}(\O_{X_{2}}, \O_{X_{2}}(E))\cong \CC,$$
which immediately implies that the morphism
\begin{equation}\label{map}
\ttL_{\M(X_{2})}[-1]\rightarrow \left(Rp_{2*}R\mathcal{H}om(\tilde{\I}_{2}, \tilde{\I}_{2})\otimes q_{2}^{*}\O_{X_{2}}(-S+E)\right)^{\vee}[-2],
\end{equation}
is a quasi-isomorphism. In turn, this guarantees that for both cases of $i=1,2$, taking the cone of the second map in the upper row of diagram \eqref{square}, one obtains a quasi-isomorphism $\mathbb{T}_{r_{i}}[1]\to \mathbb{L}_{\M(X_{i})}$ or equivalently $\mathbb{T}_{r_{i}}\to \mathbb{L}_{\M(X_{i})}[-1]$ which finishes the proof of the theorem.
\end{proof} 
Let $\pi^{i}:\M(X):=\M(X_{1})\times_{\M(S)}\M(X_{2})\to \M(X_{i})$ be the induced projection morphisms from the fibered product via the restriction maps $r_{i}, i=1,2$. Moreover let $\pi^{S}: \M(X)\to \M(S)$ be the natural projection. We obtain a diagram 
\begin{equation}
\label{Adiagram}
\adjustbox{scale=.88}{
\begin{tikzcd}
\mathbb{T}_{\M(X)}\arrow[d,"\simeq"] \arrow[r]
& \oplus_{i=1}^2\pi^{i*}\mathbb{T}_{\M(X_i)}\arrow[d,"\simeq"]\arrow[r] & \pi^{S*}\mathbb{T}_{\M(S)}
\arrow[d,"\simeq"]
\\
\big(Rp_*\mathcal{E}nd(\widetilde{\F})\big)^{\vee}[-1]\arrow[r] & \oplus_{i=1}^2\pi^{i*}(Rp_{i*}\mathcal{E}nd(\widetilde{\F}_i)\otimes q_i^*K_{X_i})^{\vee}[-1]\arrow[r] & \pi^{S*}(R\pi_{S*}\mathcal{E}nd(\F_S))^{\vee}[-1].
\end{tikzcd}}
\end{equation}
By Theorem \ref{Lagrangian}, the diagram (\ref{Adiagram}) implies there exists a commutative diagram 
\begin{align}\label{Bdiagram}
%\label{cone-off} 
\xymatrix @R=1.3em@C=1.6em{
 \T_{\M(X)}\ar[r]\ar[d]  &\pi^{1*}\T_{\M(X_{1})}\oplus \pi^{2*}\T_{\M(X_{2})}\ar[d]^{\cong}\ar[r]&\pi^{S*}\T_{\M(S)}\ar[d]^{\cong}\\
\mathbb{L}_{\M(X)}[-1]\ar[r]&\pi^{1*}\mathbb{L}_{r_{1}}[-1]\oplus \pi^{2*}\mathbb{L}_{r_{2}}[-1]\ar[r]&\pi^{S*}\mathbb{L}_{\M(S)}.}\notag\\
\end{align}
The bottom row in (\ref{Bdiagram}) is an exact triangle induced from the diagram 
\begin{align}\label{distinguished}
%\label{cone-off} 
\xymatrix@C=2.6em{
 &\pi^{1*}\mathbb{L}_{r_{1}}[-1]\ar[d]\ar[r]&0\ar[d]\\ \pi^{2*}\mathbb{L}_{r_{2}}[-1]\ar[d]\ar[r]&\pi^{S*}\mathbb{L}_{\M(S)}\ar[d]\ar[r]&\pi^{2*}\mathbb{L}_{\M(X_{2})}\ar[d]\\
0\ar[r]&\pi^{1*}\mathbb{L}_{\M(X_{1})}\ar[r]&\pi^{S*}\mathbb{L}_{\M(S)}.}\notag\\
\end{align}

Now the second and third vertical quasi-isomorphisms in diagram \eqref{Bdiagram} imply that the left vertical morphism is a quasi-isomorphism which in turn implies that $\M(X)$ has a $(-1)$-shifted symplectic structure.

\section{Calabi-Yau category structure on $\mathsf{Perf}^{vert}(\P)$}\label{CY-structure}
In this section we prove that in the setting of Subsection \ref{lagrangian-moduli}, even though the canonical bundle of the total space of the degeneration is not trivialized, we have a (relative) analog of a Calabi-Yau category structure (of dimension $d=4$) on the sub-category of so-called \emph{vertical} perfect complexes $\mathsf{Perf}^{vert}(\P)\hookrightarrow \mathsf{Perf}(\P),$ consisting of those perfect complexes with scheme-theoretic support along the fibers of our projection. Further details are given in Subsection \ref{ssec: RelativeAtiyah}.

The corresponding moduli space of \emph{vertical} rigidified perfect complexes $\M^{vert}(\P)$, obtained by considering the moduli of objects in $\mathsf{Perf}^{vert}(\P)$, following constructions of \cite{TV}, will possess a $(-2)$-shifted Poisson structure. Importantly, this Poisson structure is degenerate on $\M(\P)$, but corresponds to a $(-2)$-shifted symplectic structure on $\M^{vert}(\P)$ (proven in Lemma \ref{lem: Atiyah and -2 structure} below). In particular, this realizes the $(-2)$-shifted symplectic foliation associated to the $(-2)$-shifted Poisson structure.

\subsection{Calabi-Yau categories and shifted Lagrangian morphisms}
 Let $\mathcal{C}$ be a $k$-linear monoidal dg-category, and assume $k$ is of characteristic zero.

There is a notion of moduli of objects in $\mathcal{C}$ \cite{TV} and we let $\mathcal{D}$
be a $\mathcal{C}$-module dg-category. For any object $d$ in $\mathcal{D}$ denote the right adjoint to the functor $\mathcal{C}\rightarrow \mathcal{D}$ of tensoring (on the right) with $d$ by $\underline{\mathrm{Hom}}_{\mathcal{C}}(d,-).$
For the next definition, we refer to \cite{BD2}.
\begin{defn}
\normalfont 
By a \emph{Calabi-Yau structure} (on $\mathcal{C}$) of dimension $d$, we mean a negative cyclic chain $\theta:k[d]\rightarrow HC^-(\mathcal{C})$, 
satisfying a certain non-degeneracy condition.
\end{defn}
Similarly, a \emph{relative} Calabi-Yau structure of dimension $d$ on a continuous functor $F:\mathcal{C}\rightarrow \mathcal{D}$ with right-adjoint $F^R$, is a class $\eta:k[d]\rightarrow fib\big(HC^-(\mathcal{C})\rightarrow HC^-(\mathcal{D})),$ for which the induced diagram (c.f. \cite[Proposition 4.4, equations 4.15,5.7]{BD2}):
\begin{equation}
\begin{tikzcd}
    \label{eqn: RelCY}
    Id_{\mathcal{D}}^![d]\arrow[r]\arrow[d] & FId_{\mathcal{C}}^![d] F^R\arrow[d]\arrow[r] & cofib\arrow[d]
    \\
    fib\arrow[r] & FF^R\arrow[r] & Id_{\mathcal{D}},
    \end{tikzcd}
\end{equation}
has all vertical morphisms given by isomorphisms. We now recall the structures appearing in (\ref{eqn: RelCY}).

Given a smooth dg-category $\mathcal{C}$ as above, the natural functor $ev_{\mathcal{C}}:\mathcal{C}^{\vee}\otimes\mathcal{C}\rightarrow \mathsf{Vect}_k,$ has a left-adjoint $ev_{\mathcal{C}}^{L}:\mathsf{Vect}_k\rightarrow \mathcal{C}^{\vee}\otimes \mathcal{C}$.

Since $\mathcal{C}^{\vee}\otimes \mathcal{C}\simeq\mathrm{End}(\mathcal{C}),$ one has $ev_{\mathcal{C}}^L(k)$ corresponds to a continuous endofunctor $\mathrm{Id}_{\mathcal{C}}^!$ of $\mathcal{C}.$ Its action, is by composition:
$$Id_{\mathcal{C}}^!:\mathcal{C}\xrightarrow{Id\otimes ev_C^L}\mathcal{C}\otimes\mathcal{C}^{\vee}\otimes\mathcal{C}\xrightarrow{\sigma\otimes id}\mathcal{C}^{\vee}\otimes\mathcal{C}\otimes\mathcal{C}\xrightarrow{ev_{\mathcal{C}}\otimes id}\mathcal{C}.$$

Now, one has
\begin{equation}
    \label{eqn: HH(C)}
HH(\mathcal{C})\simeq \underline{Hom}_k(k,ev_{\mathcal{C}}\circ ev_{\mathcal{C}}^L(k))\simeq\mathrm{Hom}_{\mathcal{C}^{\vee}\otimes \mathcal{C}}(ev_{\mathcal{C}}^L(k),ev_{\mathcal{C}}^{\vee}(k))\simeq \mathrm{Hom}_{End(\mathcal{C})}(Id_{\mathcal{C}}^!,Id_{\mathcal{C}}).
\end{equation}

To prove there exists a Calabi-Yau category structure, one may also prove there exists a Serre functor and an isomorphism between Serre functor and a shift of the identity functor. 
Some definitions are required.

\begin{defn}
\normalfont
Consider a category $\mathcal{C}$ as above. The \emph{diagonal bi-module category} associated with $\mathcal{C},$ denoted $\mathcal{C}_{\Delta}$ is given by the functor 
$\mathcal{C}_{\Delta}:\mathcal{C}^{op}\otimes \mathcal{C}\rightarrow \mathsf{Vect}_k,$
defined according to $\mathcal{C}_{\Delta}(c_1,c_2):=Maps_{\mathcal{C}}(c_1,c_2),$ where $Maps_{\mathcal{C}}$ denotes the $k$-module spectrum of maps in $\mathcal{C}.$ The \emph{right dual} of the identity bimodule is the functor
$$\mathcal{C}^{\vee}:\mathcal{C}\otimes \mathcal{C}^{op}\rightarrow \mathsf{Vect}_k,$$
defined by $\mathcal{C}^{\vee}(c_1,c_2):=Maps_{\mathcal{C}}(c_1,c_2)^{\vee},$ for all objects $c_1,c_2$ in $\mathcal{C}.$
\end{defn}

Thus in particular, (\ref{eqn: HH(C)}) is equivalently, 
$HH(\mathcal{C})\simeq \mathcal{C}_{\Delta}\otimes_{\mathcal{C}^e}\mathcal{C}_{\Delta}.$

\begin{lem}
\label{lem: MapsPerf}
If $Maps(x,y)$ is perfect for all $x,y$ a functor representing $\mathcal{C}^{\vee}$ is equivalent to a Serre functor for $\mathcal{C}.$
\end{lem}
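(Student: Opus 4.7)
The plan is to unpack what it means to represent the bimodule $\mathcal{C}^{\vee}$ and then match the resulting defining identity against the definition of a Serre functor. A functor $F:\mathcal{C}\to \mathcal{C}$ gives rise to a bimodule
$$B_F:\mathcal{C}\otimes \mathcal{C}^{op}\to \mathsf{Vect}_k,\qquad B_F(c_1,c_2):=\mathrm{Maps}_{\mathcal{C}}(c_2,F(c_1)),$$
and by the enriched Yoneda lemma the assignment $F\mapsto B_F$ is fully faithful onto the bimodules which are representable on the second variable. To say that a functor $S$ represents $\mathcal{C}^{\vee}$ is therefore to exhibit a natural equivalence
$$\mathrm{Maps}_{\mathcal{C}}(c_2,S(c_1))\simeq \mathcal{C}^{\vee}(c_1,c_2) = \mathrm{Maps}_{\mathcal{C}}(c_1,c_2)^{\vee}, \qquad \text{for all }c_1,c_2\in\mathcal{C}. $$
This is precisely the defining identity of a Serre functor. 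So the first two steps are: (i) write the representability hypothesis as the above equivalence, and (ii) observe that this is functorial in both $c_1$ and $c_2$ because it is an equivalence of bimodules.

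Next I would use the perfectness hypothesis. The point at which this really matters is to upgrade $S$ from an arbitrary functor to an \emph{equivalence}. Using that $\mathrm{Maps}_{\mathcal{C}}(x,y)$ is a perfect $k$-complex for all $x,y$, the functor $(-)^{\vee}:\mathsf{Perf}_k\to \mathsf{Perf}_k$ is an involutive equivalence, so $\mathcal{C}^{\vee\vee}\simeq \mathcal{C}_{\Delta}$ as bimodules. Representing both sides yields $S^{-1}$ on the right-hand side and shows that $S$ admits a quasi-inverse; hence it is an autoequivalence. The non-degeneracy of the resulting pairing $\mathrm{Maps}_{\mathcal{C}}(c_1,c_2)\otimes \mathrm{Maps}_{\mathcal{C}}(c_2,S(c_1))\to k$ is then a direct translation of the perfect duality $M\otimes M^{\vee}\to k$ on $\mathsf{Perf}_k$.

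Conversely, given a Serre functor $S$, the identity $\mathrm{Maps}_{\mathcal{C}}(c_2,S(c_1))\simeq \mathrm{Maps}_{\mathcal{C}}(c_1,c_2)^{\vee}$ exhibits $B_S$ as a representation of the bimodule $\mathcal{C}^{\vee}$. Combining both directions gives the claimed equivalence between the datum of a representation of $\mathcal{C}^{\vee}$ and a Serre functor on $\mathcal{C}$.

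The main technical obstacle is the second step: verifying that the naturally-arising functor $S$ is an equivalence rather than merely a functor. This is precisely where perfectness of the mapping complexes enters, since one needs reflexivity of $\mathrm{Maps}$-objects under $(-)^{\vee}$ in order to conclude that $\mathcal{C}^{\vee\vee}\simeq \mathcal{C}_{\Delta}$ as bimodules and thereby obtain a quasi-inverse $S^{-1}$. Without the perfectness hypothesis one only recovers a functor satisfying half of the Serre duality identity.
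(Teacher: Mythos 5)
Your proposal follows essentially the same route as the paper: unpack representability of $\mathcal{C}^{\vee}$ as the natural equivalence $Maps(c_2,S(c_1))\simeq Maps(c_1,c_2)^{\vee}$ and identify this with the Serre duality identity, with perfectness of the mapping complexes supplying the biduality $Maps(x,y)\simeq (Maps(x,y)^{\vee})^{\vee}$ needed to pass between the two equivalent forms of that identity. The one place you go beyond the paper is the claim that perfectness upgrades $S$ to an autoequivalence via $\mathcal{C}^{\vee\vee}\simeq\mathcal{C}_{\Delta}$: as sketched this only yields full faithfulness of $S$ (from $Maps(Sx,Sy)\simeq Maps(y,Sx)^{\vee}\simeq Maps(x,y)$); producing a genuine quasi-inverse requires representability of the \emph{left} dual bimodule as well (i.e.\ a smoothness-type hypothesis), which is not assumed here. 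Since the lemma as stated, and the paper's own proof, only assert the Serre duality identity and not invertibility, this does not create a gap, but the invertibility step should not be presented as a consequence of perfectness of $Maps$ alone.
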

\begin{proof}
Suppose $\mathcal{C}^{\vee}$ is represented by some $\mathcal{S}_{\mathcal{C}}:\mathcal{C}\rightarrow \mathcal{C}.$
In other words 
$\mathcal{C}^{\vee}(-,-)\simeq Maps(-,\mathcal{S}(-)).$ Then, by definition of $\mathcal{C}^{\vee}$ we have an equivalence
$Maps(x,y)^{\vee}\simeq Maps(y,\mathcal{S}(x)),$ for every $x,y.$ Since $Maps(x,y)$ is perfect by hypothesis, biduality induces equivalences,
$$Maps(x,y)\simeq (Maps(x,y)^{\vee})^{\vee}\simeq Maps(y,\mathcal{S}x)^{\vee}.$$
Thus $\mathcal{S}$ is a Serre functor. Now, since $HH(\mathcal{C})\simeq \mathcal{C}_{\Delta}\otimes_{\mathcal{C}^e}\mathcal{C}_{\Delta},$ via the tensor-hom adjunction there are equivalences of chain complexes:
$$HH(\mathcal{C})^{\vee}\simeq Maps_k(\mathcal{C}_{\Delta}\otimes_{\mathcal{C}^e}\mathcal{C}_{\Delta},k)\simeq Maps_{\mathcal{C}\otimes\mathcal{C}^{op}}(\mathcal{C}_{\Delta},Maps_k(\mathcal{C}_{\Delta},k)\big)\simeq Maps_{\mathcal{C}\otimes\mathcal{C}^{op}}(\mathcal{C}_{\Delta},\mathcal{C}^{\vee}).$$
As $\mathcal{C}_{\Delta}$ is representable by the identity functor on $\mathcal{C}$, then if $\mathcal{C}^{\vee}$ is representable, it must be by the Serre functor $\mathcal{S}_{\mathcal{C}}.$
Therefore,
$$Maps_{\mathcal{C}\otimes\mathcal{C}^{op}}(\mathcal{C}_{\Delta},\mathcal{C}^{\vee})\simeq Maps_{End(\mathcal{C})}(Id_{\mathcal{C}},\mathcal{S}_{\mathcal{C}}).$$
Consequently, there is a correspondence between maps $\mathrm{Id}_{\mathcal{C}}[d]\rightarrow \mathcal{S}_{\mathcal{C}}$ and morphisms $HH(\mathcal{C})\rightarrow k[-d].$
\end{proof}

A Calabi-Yau structure $\theta$ of dimension $d$
induces a closed $2$-form of degree $(2-d)$, which is moreover non-degenerate, given by the composition
\begin{equation}
    \label{eqn: HKR}
k[d]\xrightarrow{\theta}HC^-(\mathcal{C})\rightarrow HC^-(\mathrm{Perf}(\mathcal{M}_{\mathcal{C}}))\rightarrow \mathcal{A}^{2,cl}(\mathcal{M}_{\mathcal{C}},2),
\end{equation}
where the latter map is a certain incarnation of HKR theorem.

For example, if $S$ is an affine derived scheme and $\Upsilon:\mathsf{QCoh}(S)\rightarrow \mathsf{IndCoh}(S)$ is the natural functor given by tensoring with dualizing sheaf $\omega_S$ on the right, this notion can be applied as follows.

By \cite{GR2}, there is a natural transformation $\mathsf{QCoh}(-)^*\rightarrow \mathsf{IndCoh}(-)^!$ intertwining various $*$ and $!$-pullbacks.
\begin{lem}\emph{(\cite[Proposition 3.3]{BD2})}
The shifted tangent complex of the moduli of perfect objects in $\mathcal{C}$ is given by
$\mathbb{T}(\mathcal{M}_{\mathcal{C}})[-1]\simeq \underline{\mathrm{Hom}}(\Upsilon F,\Upsilon F).$
\end{lem}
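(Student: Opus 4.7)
The plan is to reduce the statement to the fundamental theorem of To\"en--Vaqui\'e \cite{TV} on the moduli of objects in a dg-category, which identifies the (shifted) tangent complex of $\mathcal{M}_{\mathcal{C}}$ at a perfect object $F$ with the endomorphism complex of $F$. First, I would recall that $\mathcal{M}_{\mathcal{C}}$ is locally geometric and locally of finite presentation, with deformation theory governed by Hochschild-type cochains: first-order deformations of $F$ are classified by $\mathrm{Ext}^{1}_{\mathcal{C}}(F,F)$ and obstructions by $\mathrm{Ext}^{2}_{\mathcal{C}}(F,F)$. This packages into the quasi-isomorphism $\mathbb{T}_{F}\mathcal{M}_{\mathcal{C}}\simeq \mathrm{End}_{\mathcal{C}}(F)[1]$, so that after shifting one obtains $\mathbb{T}(\mathcal{M}_{\mathcal{C}})[-1]\simeq \mathrm{End}_{\mathcal{C}}(F)$, and the problem reduces to matching $\mathrm{End}_{\mathcal{C}}(F)$ with $\underline{\mathrm{Hom}}(\Upsilon F,\Upsilon F)$.

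For this second step I would use the setup introduced earlier in the paper: the internal hom $\underline{\mathrm{Hom}}_{\mathcal{C}}(d,-)$ is characterized as the right adjoint to tensoring (on the right) with $d$, so $\underline{\mathrm{Hom}}(F,F)$ recovers $\mathrm{End}_{\mathcal{C}}(F)$ by definition. To bring in $\Upsilon$, I would invoke the natural transformation $\mathsf{QCoh}(-)^{*}\to\mathsf{IndCoh}(-)^{!}$ of \cite{GR2} together with the fact that $\Upsilon$ is (up to renormalization) fully faithful on the perfect subcategory, since tensoring with the dualizing sheaf $\omega_{S}$ is invertible on perfect complexes on an affine derived scheme. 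The twist by $\omega_{S}$ on source and target of the internal hom then cancels, yielding a canonical equivalence $\underline{\mathrm{Hom}}_{\mathcal{C}}(F,F)\simeq \underline{\mathrm{Hom}}(\Upsilon F,\Upsilon F)$ in $\mathsf{IndCoh}(S)$. Combining both steps gives the claimed identification.

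The main obstacle is bookkeeping in the passage between $\mathsf{QCoh}$ and $\mathsf{IndCoh}$: one must check that the shift conventions on internal homs in $\mathsf{IndCoh}$ match those used by To\"en--Vaqui\'e in their $\mathsf{QCoh}$-formulated moduli of objects, and that no spurious shift is introduced by the $\Upsilon$-twist once the two copies of $\omega_{S}$ (on source and target) are identified. A subtlety to watch for is the compatibility with the identification $\mathcal{C}^{\vee}\otimes\mathcal{C}\simeq\mathrm{End}(\mathcal{C})$ used in the preceding discussion of Serre functors, where the Serre self-duality could contribute an additional shift; one must verify that in the current setting this shift is precisely absorbed by the $[-1]$ placement of $\mathbb{T}(\mathcal{M}_{\mathcal{C}})[-1]$. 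Once these compatibilities are pinned down, the lemma follows directly, reproducing \cite[Proposition 3.3]{BD2}.
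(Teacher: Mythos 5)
The paper does not actually prove this lemma: it is imported verbatim as \cite[Proposition 3.3]{BD2}, so there is no internal argument to compare against. On its own terms, your sketch identifies the two correct ingredients --- the To\"en--Vaqui\'e computation of the tangent complex of the moduli of objects, and the harmlessness of the $\Upsilon$-twist --- and these are indeed what the Brav--Dyckerhoff proof runs on.

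Two caveats. First, in the statement $F$ is the \emph{universal} object, i.e.\ the tautological functor $\mathcal{C}\rightarrow \mathsf{Perf}(\mathcal{M}_{\mathcal{C}})$, and $\underline{\mathrm{Hom}}(\Upsilon F,\Upsilon F)$ is an object of $\mathsf{IndCoh}(\mathcal{M}_{\mathcal{C}})$ --- the relative endomorphisms of the universal family --- not a single complex of vector spaces. Your argument is phrased pointwise, at a fixed perfect object: the To\"en--Vaqui\'e statement $\mathbb{T}_{[E]}\mathcal{M}_{\mathcal{C}}\simeq \mathrm{End}(E)[1]$ only gives the fibers of the claimed equivalence. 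To obtain the lemma as stated one must make the identification functorial in test maps $\mathrm{Spec}(A)\rightarrow\mathcal{M}_{\mathcal{C}}$, identifying $A$-linear deformations of the pulled-back family with its $A$-linear endomorphism object; this base-change bookkeeping is the actual content of \cite[Proposition 3.3]{BD2}, and it is precisely where the $\Upsilon$-twisted, $!$-pullback-compatible formulation from \cite{GR2} earns its keep (the untwisted $\mathrm{End}_{\mathcal{C}}(F)$ does not behave well under pullback along non-smooth maps of the base). Second, your worry about a shift contributed by Serre self-duality is a red herring: the lemma assumes neither smoothness nor properness of $\mathcal{C}$ and never invokes a Serre functor. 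The $[-1]$ is exactly the To\"en--Vaqui\'e shift (negative and zeroth Ext groups of an object account for the stacky automorphism directions of $\mathcal{M}_{\mathcal{C}}$), and nothing further needs to be absorbed.
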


\subsubsection{Relative Calabi-Yau structure on a generic fiber}

Let us consider a single  fiber of the degeneration, i.e. the zero-locus $X_{t\neq 0}\simeq s(t\neq 0)^{-1}(0)$ where $s(t)\in H^0(\P\times \mathbb{A}^{1},K_{\P}^{-1}\otimes \mathcal{O}_{\mathbb{A}^{1}}),\,\, t\in \mathbb{A}^{1}$, which is a smooth Calabi-Yau threefold. Its dg-category of quasi-coherent complexes has a natural Calabi-Yau structure of dimension $3.$ Then, since $i_s:X_s\hookrightarrow \P$ is proper, there is a continuous adjunction 
$$i_{s*}:\mathsf{IndCoh}(X_s)\rightleftharpoons \mathsf{IndCoh}(\P):i_{s}^!.$$
\begin{rmk}
We use notations $\mathrm{Coh}$ (resp. $\mathrm{QCoh}$) for dg-enhancements of $D_{coh}^b$ (resp. $D_{qcoh}^b$), and denote by $\mathsf{IndCoh}$ the dg-category of ind-coherent sheaves.
In fact, since our varieties are smooth, $\mathsf{IndCoh}(X_s)=\mathrm{QCoh}(X_s)$ and $\mathrm{Coh}(X_s)=\mathrm{Perf}(X_s).$ Similarly for $\P.$
\end{rmk}
Push-forward $i_{s*}$ has a continuous right-adjoint $i_{s}^!.$ This adjunction defines for us a relative Calabi-Yau structure of dimension $4,$ and upon passing to moduli of perfect complexes in these categories, we get an induced morphism of derived stacks,
$\mathcal{M}_{\P}\rightarrow \mathcal{M}(X_s).$ Adapting results of \cite{BD2}, it is Lagrangian.
\begin{lem}
\label{lem: Lagrangian P to X}
Consider $X_s\subset \P$ a Calabi-Yau hypersurface in a projective Fano $4$-fold, given as the zero scheme of $s\in K_{\P}^{\vee}.$ The induced morphism
    $\mathcal{M}({\P})\rightarrow \mathcal{M}(X_s),$
    of moduli stacks possesses a Lagrangian structure.
\end{lem}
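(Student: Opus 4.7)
The strategy is to transplant the argument of Theorem \ref{Lagrangian} into the ambient Fano setting. One first realises the map $r:\M(\P)\to \M(X_s)$ as the map on moduli induced by the continuous left-adjoint functor $i_s^{*}:\mathsf{Perf}(\P)\to \mathsf{Perf}(X_s)$, and observes that $\M(X_s)\simeq \mathrm{Map}(X_s,\mathbb{R}\mathsf{Perf})$ carries a canonical $(-1)$-shifted symplectic form by PTVV applied to the $\mathcal{O}$-orientation $[X_s]:C(X_s,\mathcal{O}_{X_s})\to k[-3]$ determined by the Calabi-Yau structure on $X_s$, exactly as explained in Subsection~\ref{ssec: O-compactness}. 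The task splits into producing an isotropic structure on $r$ and verifying the non-degeneracy of the associated morphism $\Theta_r:\mathbb{T}_r\to \mathbb{L}_{\M(\P)}[-2]$.

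For the isotropy, I would invoke Lemma \ref{isotropic} with $d=3$, $n=2$, $\mathfrak{M}=\mathbb{R}\mathsf{Perf}$, taking $(X,S)$ in the statement of that lemma to be $(\P,X_s)$. The required cohomological hypothesis is $H^{4}(\P,\mathcal{O}_\P)=0$, which is automatic because $\P$ is smooth projective Fano of dimension four so Kodaira vanishing (or Hodge symmetry together with simple connectedness in the Fano case) gives $H^{i}(\P,\mathcal{O}_\P)=0$ for $i>0$. The lemma then produces a canonical path from $r^{*}\int_{[X_s]}\pi^{*}\omega_{\mathbb{R}\mathsf{Perf}}$ to $0$ inside $\mathcal{A}^{2,cl}(\M(\P),-1)$, i.e.\ an isotropic structure on $r$ with respect to the $(-1)$-shifted symplectic form on $\M(X_s)$. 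This is the direct analogue of the second route spelled out inside the proof of Theorem \ref{Lagrangian}, where the vanishing $R^{2}p_{i*}\mathcal{O}=0$ for the quasi-Fano threefolds was used.

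For the non-degeneracy, one computes the relative tangent complex using the Koszul-type resolution $0\to K_\P \to \mathcal{O}_\P \to i_{s*}\mathcal{O}_{X_s}\to 0$ arising from the vanishing of $s\in H^{0}(\P,K_\P^{\vee})$. Tensoring with $R\mathcal{H}om(\widetilde{\F},\widetilde{\F})$ and pushing down along $p:\P\times \M(\P)\to \M(\P)$ yields an exact triangle
\[
Rp_{*}R\mathcal{H}om(\widetilde{\F},\widetilde{\F}\otimes q^{*}K_\P)[1]\longrightarrow \mathbb{T}_{\M(\P)} \longrightarrow r^{*}\mathbb{T}_{\M(X_s)},
\]
which identifies $\mathbb{T}_r\simeq Rp_{*}R\mathcal{H}om(\widetilde{\F},\widetilde{\F}\otimes q^{*}K_\P)[1]$. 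Relative Serre duality for the smooth proper morphism $p$, applied to the rigidified endomorphism complex, gives $\mathbb{L}_{\M(\P)}\simeq (Rp_{*}R\mathcal{H}om(\widetilde{\F},\widetilde{\F}))^{\vee}[-1]\simeq Rp_{*}R\mathcal{H}om(\widetilde{\F},\widetilde{\F}\otimes q^{*}K_\P)[3]$, so $\mathbb{L}_{\M(\P)}[-2]\simeq \mathbb{T}_r$. Under these identifications the canonical map $\Theta_r$ attached to the isotropic datum becomes a quasi-isomorphism, establishing non-degeneracy.

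The delicate point I expect to be the main obstacle is verifying that $\Theta_r$, as constructed from the isotropic structure produced in Lemma \ref{isotropic}, genuinely coincides with the Serre-duality equivalence exhibited in the last step, rather than being only abstractly quasi-isomorphic to it. This coherence is precisely what the relative Calabi-Yau formalism of Brav–Dyckerhoff \cite{BD2} is designed to encode: the adjunction $i_s^{*}\dashv i_{s*}$ together with the trivialisation $\omega_{X_s}\simeq \mathcal{O}_{X_s}$ endows $i_s^{*}$ with a relative Calabi-Yau structure of dimension $4$ whose boundary is the absolute $\mathrm{CY}$-$3$ structure on $\mathsf{Perf}(X_s)$, and the moduli-of-objects functor transports this via the HKR-type map \eqref{eqn: HKR} to a Lagrangian structure on $r$, giving the cleanest route to closing the argument.
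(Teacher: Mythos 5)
Your proposal is correct in substance but its main line of argument is not the one the paper uses for this lemma. The paper proves Lemma \ref{lem: Lagrangian P to X} entirely inside the Brav--Dyckerhoff formalism: the section $s$ produces the cofiber sequence $\mathcal{O}_{\P}\to i_{s*}\mathcal{O}_{X_s}\to\omega_{\P}[1]$, hence an equivalence $\theta_s:\mathcal{O}_{X_s}\simeq\omega_{X_s}$ giving a class in $HC_3^-(\mathrm{Coh}(X_s))$; this upgrades the adjunction $(i_{s*},i_s^!)$ to a relative Calabi--Yau structure of dimension $4$, and the Lagrangian structure on $m_s:\M(\P)\to\M(X_s)$ is then read off from the diagram \eqref{eqn: RelLagDiag} (obtained by evaluating the functor-level diagram \eqref{eqn: RelCY} on $\mathcal{O}_{\M(\P)}$), with the homotopy fiber of the shifted tangent map identified as $\mathbb{R}\mathrm{End}_{\P}(\mathcal{E})\to\mathbb{R}\mathrm{End}_{X_s}(i_s^!\mathcal{E})$. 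You instead transplant the strategy of Theorem \ref{Lagrangian}: isotropy from the PTVV/mapping-stack Lemma \ref{isotropic}, and non-degeneracy by hand from the Koszul triangle $K_\P\to\mathcal{O}_\P\to i_{s*}\mathcal{O}_{X_s}$ together with relative Serre duality, yielding $\mathbb{T}_r\simeq Rp_*R\mathcal{H}om(\widetilde{\F},\widetilde{\F}\otimes q^*K_\P)[1]\simeq\mathbb{L}_{\M(\P)}[-2]$. That computation is correct and is exactly the fiber identification the paper also records, so the two routes buy different things: yours is more concrete and makes the shift bookkeeping transparent, while the paper's categorical route packages the coherence between the isotropic homotopy and the Serre-duality equivalence automatically --- which is precisely the ``delicate point'' you flag at the end, and your closing paragraph correctly identifies the paper's actual mechanism for resolving it. Two small corrections: the hypothesis of Lemma \ref{isotropic} is $H^{d}(X,\mathcal{O})=0$ with $d$ the degree of the orientation on the target, so with the orientation $[X_s]:C(X_s,\mathcal{O}_{X_s})\to k[-3]$ the vanishing you need is $H^{3}(\P,\mathcal{O}_\P)=0$, not $H^{4}$ (both hold for a smooth Fano $4$-fold, so nothing breaks); and the paper's map on moduli is induced by $i_s^!$ rather than $i_s^*$ --- these differ only by tensoring with $K_\P^\vee|_{X_s}$ and a shift, which does not affect the endomorphism complexes, so your tangent computation is unaffected.
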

\begin{proof}
    Choosing $s\in K_{\P}^{\vee}$ induces a cofiber sequence on $\P$,
$\mathcal{O}_{\P}\rightarrow i_{s*}\mathcal{O}_{X_s}\rightarrow \omega_{\P}[1],$
to which we apply $i_s^!$ which gives us by Calabi-Yau property both a sequence 
$i_s^!\mathcal{O}_{\P}\rightarrow i_s^!i_{s*}\mathcal{O}_{X_s}\rightarrow \omega_{X_s},$
via the unit of map of $(i_{s*},i_s^!)$-adjunction, and a map $\theta_s:\mathcal{O}_{X_s}\simeq \omega_{X_s}$ which is automatically an equivalence.
This map $\theta_s$ gives a class in 
$HC_3^{-}(Coh(X_s)),$ or equivalently, a class in $HH_3(Coh(X_s)).$
In particular, we obtain a Calabi-Yau structure of dimension $3$ and consequently 
we may view $\theta_s$ as a Hocschild class via 
$$\Delta_*\theta_s\in Ext^{-3}(\Delta_*\mathcal{O}_{X_s},\Delta_*\omega_{X_s}),$$
but this is exactly $HH_3$, and equivalently $HC_3^{-}.$ We must show there exists a relative Calabi-Yau structure (of dimension $4$) on
$$i_{s*}:\mathsf{IndCoh}(X_s)\rightarrow \mathsf{IndCoh}(\P),$$
with continuous right adjoint $i_{s*}^r\simeq i_s^!.$
Letting $m_s:\mathcal{M}_{\P}\rightarrow \mathcal{M}_{X_s},$ denote the map of moduli spaces, and set 
$T(m_s)[-1]$ for the shifted tangent map $T\mathcal{M}(\P)[-1]\rightarrow m_s^!T\mathcal{M}(X_s)[-1].$ Remark for each $p\in \mathcal{M}(\P)$ corresponding to $\mathcal{E}$ we have an equivalence of functors of smooth dg-categories
$$m_{s,p}(-)\simeq \mathrm{Hom}_{\P}(-,\mathcal{E})^*:\mathrm{Coh}(\P)\rightarrow \mathrm{Vect}.$$
The homotopy fiber of the shifted tangent map at the point $p$, corresponding to $\mathcal{E}$ is 
$$\mathbb{R}\mathrm{End}_{\P}(\mathcal{E})\rightarrow \mathbb{R}\mathrm{End}_{X_s}(m_s^!\mathcal{E}).$$

Moreover, we have 
$\Phi_{X_s}\simeq\Phi_{\P}(i_{s*}),$
such that for each $\mathcal{G}\in \mathrm{Coh}(X_s),$ we have
$$\Phi_{X_s}(\mathcal{G})\simeq \mathrm{Hom}_{\P}(i_{s*}\mathcal{G},\mathcal{E})^*\simeq \mathrm{Hom}_{X_s}(\mathcal{G},i_{s}^!\mathcal{E})^*,$$
thus may identify this functor with $\mathrm{Hom}_{X_s}(-,i_s^!\mathcal{E})^*.$
We need to prove that there exists a diagram 
\begin{equation}
    \label{eqn: RelLagDiag}
\xymatrix{
\mathbb{T}_{\mathcal{M}(\P)}[3]\ar[d]\ar[r]^{t} & m_s^!\mathbb{T}_{\mathcal{M}_{X_s}}[3]\ar[d]\ar[r] & \mathbb{T}(\mathcal{M}_{\P}/\mathcal{M}_{X_s})[3]
\ar[d]
\\
\Upsilon\mathbb{L}(\mathcal{M}_{\P)/\mathcal{M}_{X_s}})[1]\ar[r] & m_s^!\Upsilon\mathbb{L}_{\mathcal{M}(X_s)}[1]\ar[r]^{\tau} & \Upsilon\mathbb{L}(\mathcal{M}_{\P})[1]
}
\end{equation}
whose vertical maps are equivalences. Note that $t$ is the shifted tangent map and $\tau$ its dual. Indeed, given such a diagram, it is enough to check it on $k$-points $F\in \mathcal{M}(\P)$, which is, due to the description of the homotopy fiber of the shifted tangent map as
$REnd_{\P}(F)[3]\rightarrow REnd_{X_s}(i_{s}^*F)[3],$
For this consider
\begin{equation*}
\xymatrix{
    \mathbb{T}(\mathcal{M}(\P)/\mathcal{M}(X_s))\ar[d]\ar[r] & m_s^!\mathbb{L}_{\mathcal{M}(X_s)}[2]\ar[d]\ar[r] & 0\ar[d]
    \\
    \mathbb{T}_{\mathcal{M}(\P)}\ar[r] & \mathbb{L}_{\mathcal{M}(\P)}\ar[r] & \mathbb{L}_{\mathcal{M}(\P)/\mathcal{M}(X_s)}}.
\end{equation*}

Regarding the existence of this diagram, it follows from the diagram of functors (c.f. \ref{eqn: RelCY}) above)
\begin{equation*}
\xymatrix{\Phi_{\P}\mathrm{Id}_{\P}^!\Phi_{\P}^R[d+1]\ar[d]\ar[r] & m_s^*\Phi_{X_s}\mathrm{Id}_{X_s}^!\Phi_{X_s}^R[d+1]\ar[d]^{\simeq} \ar[r]& \mathrm{hofib}\ar[d]
    \\
    \mathrm{hocofib}\ar[r] & m_s^!\Phi_{X_s}\Phi_{X_s}^R\ar[r] & \Phi_{\P}\Phi_{\P}^R.}
\end{equation*}

In particular, this diagram may be evaluated on $\mathcal{O}_{\mathcal{M}(\P)},$ which gives precisely (\ref{eqn: RelLagDiag}).
\end{proof}
\begin{rmk}
In Section \ref{ssec: Relative lag}, we will show how the Calabi-Yau category structure, discussed above, can globalize beyond one fiber into the whole degenerating family, when the Fano fourfold $\P$ satisfies a geometric structure described in Lemma \ref{lem: Pull-back divisor}. We will study objects in $\M(\P)$ which have a (relative to the base of fibration) Calabi-Yau category structure. This sub-category consists of perfect complexes $F\in \M(\P)$ with scheme-theoretic support on the fibers of a projection for which the canonical bundle $K_{\P}$ is a unit object (non-canonically). Such a relative Calabi-Yau category structure then induces a relative version of a Lagrangian foliation structure when we consider the dg-categories of complexes\footnote{Exactly, the equivariant derived infinity
category of $\mathcal{O}_{\mathbf{DQuot}}$-modules.} on derived quotient stacks $\mathfrak{X}$ of the form $[\![\mathbf{M}/G]\!]$, with $\mathbf{M}$ the finite-type dg-manifold given by a derived Quot-scheme $\mathbf{DQuot}$ and where $G$ is a reductive algebraic group.
\end{rmk}

\subsubsection{Calabi-Yau structure on the special fiber}
We now turn our attention to describe what happens over the special (singular) fiber, $X_0:=X_1\cup_S X_2,$ in the setting of Subsect. \ref{lagrangian-moduli}. 
 
 Roughly speaking, we think of $\mathsf{Perf}(X_1)\times_{\mathsf{Perf}(S)}\mathsf{Perf}(X_2)$ as being suitably deformed to $\mathsf{Perf}(X).$ Since we are degenerating a Calabi-Yau $X$, one should expect that dg-category of perfect complexes over the special fiber naturally has the structure of a Calabi-Yau category, as $\mathsf{Perf}(X)$ does.

This follows from a more general fact that if $\mathcal{C}_1\rightarrow \mathcal{D},\mathcal{C}_2\rightarrow \mathcal{D}$ are two dg-functors which carry relative Calabi-Yau structures, then the categorical pushout $\mathcal{C}_1\times_{\mathcal{D}}\mathcal{C},$ itself has a natural
Calabi-Yau structure.

The following result describing the Calabi-Yau category structure over the special fiber $X_0$ is proven by showing the natural restriction functors on categories of perfect complexes $\mathsf{Perf}(X_j), j=1,2,$ have the property that they are \emph{spherical functors}, in the sense of Anno
and Logvinenko \cite{AL}, and later generalized by Kontsevich-Katzarkov-Pantev \cite{KKP}.

\begin{rmk}
Here we use $\infty$-categorical language, as opposed to dg-categorical one, as it permits certain technical simplifications for the required proof of the central result of this subsection, Lemma \ref{lem: CY special fiber}. For instance, we require existence of canonical cones, and moreover the notion of a collection of spherical adjunctions make sense as $\infty$-categories but in the dg-setting, the latter do not form
dg-categories themselves (see e.g. \cite[Section 1.4]{DKSS}). Of course, the dg-enhancements natural include to the $\infty$-setting \cite{F}.
\end{rmk}
Recall that a functor between stable $\infty$-categories $F:\mathcal{C}\rightarrow \mathcal{D}$ is spherical if it admits left and right adjoints $F^L,F^R$ where the so-called twist, defined as the homotopy cofiber $T_{F}$ of the counit $\F\circ F^R\xrightarrow{\epsilon}id_{\mathcal{D}},$ and the cotwist $C_{F},$ defined by the unit, are auto-equivalences of $\mathcal{D},\mathcal{C}$, respectively. Moreover they are subject to certain natural compatibilities. Namely, consider the $\mathrm{cone}(id\rightarrow F^R\circ F)$ is an auto-equivalence, where moreover, 
$$F^R\rightarrow F^R\circ F\circ F^L\rightarrow C_F\circ F^L,$$
is an equivalence.
The situation of interest as it pertains to our model example (as in Subsect. \ref{ssec: O-compactness}) has additional structure which we exploit. Namely, the target of our functor has a $d$-dimensional Calabi-Yau structure, which for us is $d=2.$ In this case, $F$ is additionally called \emph{spherical compatible} if there is an equivalence $\varphi:C\simeq \mathcal{S}[-d],$ and if $\varphi$
induces an equivalence $\varphi'$ such that the following diagram
\[
\begin{tikzcd}
    F^R\arrow[d,"\simeq"] \arrow[r,"\simeq"] & \mathcal{S}\circ F^R\circ S_{\mathcal{D}}^{-1}\arrow[d,"\simeq"]
    \\
    C\circ F^L\arrow[r,"\varphi'"] & \mathcal{S}_{\mathcal{C}}[-d]\circ F^L,
\end{tikzcd}
\]
commutes. Note that if $\mathcal{D}$ has a $d$-dimensional Calabi-Yau structure, a spherical functor $F:\mathcal{C}\rightarrow \mathcal{D}$ with left and right adjoints $F^L,F^R$, respectively, satisfies $T\circ F^L\simeq F^R\simeq \mathcal{S}_{\mathcal{C}}[-d]\circ F^L.$

%Recall $F:\mathcal{C}\rightarrow \mathcal{D}$ is spherical if it possess both left and right adjoints $F^L,F^R$, respectively and Moreover, they should satisfy certain natural compatibilities e.g. $F\circ C_F\simeq T_F\circ F$ and $C_F\circ F^L\simeq F^L\circ T_F.$ When $\mathcal{D}$ has a CY category structure of dimension $d$, one says (c.f. \cite{KKP}) the spherical functor $F$ is compatible with the CY-structure if $T_{\F}\simeq \mathcal{S}[-(d+1)],$ with $\mathcal{S}$ the Serre-functor. 

\begin{lem}
\label{lem: CY special fiber}
    The category of perfect complexes on the special fiber $\mathsf{Perf}(X_1\cup_SX_2)$ has the structure of a Calabi-Yau $3$ category.
\end{lem}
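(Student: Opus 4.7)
The plan is to exploit the homotopy-pullback description
$$\mathsf{Perf}(X_1\cup_SX_2)\simeq \mathsf{Perf}(X_1)\times_{\mathsf{Perf}(S)}\mathsf{Perf}(X_2),$$
arising from $X_1\cup_SX_2$ being the pushout of schemes along the transverse intersection $S$ and the descent properties of derived restriction for the Mayer-Vietoris cover. I would then construct an absolute Calabi-Yau structure of dimension $3$ on this pullback by gluing two relative Calabi-Yau structures along their common boundary $\mathsf{Perf}(S)$, following the Brav-Dyckerhoff gluing formalism hinted at in the paragraph preceding the lemma.

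First I would check that each derived restriction functor $r_i:\mathsf{Perf}(X_i)\to\mathsf{Perf}(S)$ is spherical and carries a relative Calabi-Yau structure of dimension $3$, relative to the $2$-dimensional absolute Calabi-Yau structure on $\mathsf{Perf}(S)$ coming from the trivialization $\omega_S\simeq\mathcal{O}_S$ of the K3 surface. The left and right adjoints are $r_i^L\simeq \iota_{i*}$ and $r_i^R\simeq \iota_i^{!}$, and Grothendieck duality for the regular codimension-one embedding $\iota_i$, together with the anti-canonical identification $N_{S/X_i}\simeq K_{X_i}^{\vee}|_S$, is enough to show that both the twist $T_{r_i}$ and cotwist $C_{r_i}$ are auto-equivalences fulfilling the spherical-compatibility condition recalled in the excerpt. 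Equivalently and more directly, the trivializing section $s_i\in H^0(X_i,\mathcal{O}(S))$ of $K_{X_i}^{\vee}$ produces a cofiber sequence $\mathcal{O}_{X_i}\to\iota_{i*}\mathcal{O}_S\to K_{X_i}^{\vee}[1]$, and applying $\iota_i^{!}$ exactly as in the proof of Lemma \ref{lem: Lagrangian P to X} yields a class $\theta_i:k[3]\to \mathrm{fib}\bigl(HC^{-}(\mathsf{Perf}(X_i))\to HC^{-}(\mathsf{Perf}(S))\bigr)$, i.e.\ a relative Calabi-Yau structure of dimension $3$ on $r_i$.

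Next I would invoke the general gluing principle that if $F_i:\mathcal{C}_i\to\mathcal{D}$ are two functors of smooth dg-categories carrying relative Calabi-Yau structures of dimension $d$ whose boundary restrictions in $\mathcal{D}$ are opposite in $HC^{-}_{d-1}(\mathcal{D})$, then the homotopy pullback $\mathcal{C}_1\times_{\mathcal{D}}\mathcal{C}_2$ inherits a canonical absolute Calabi-Yau structure of dimension $d$, the class being extracted from the cofiber-of-sum in cyclic homology. Applied to $r_1,r_2$ this yields an absolute Calabi-Yau structure of dimension $3$ on $\mathsf{Perf}(X_1)\times_{\mathsf{Perf}(S)}\mathsf{Perf}(X_2)$, which, via the pullback identification, gives the desired Calabi-Yau $3$ structure on $\mathsf{Perf}(X_1\cup_SX_2)$.

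The hard part will be verifying the opposite-orientation compatibility: the boundary restrictions of $\theta_1$ and $\theta_2$ in $HC^{-}_2(\mathsf{Perf}(S))$ must cancel, so that the glued class descends to an absolute class on the pullback. This essentially reduces to exhibiting a globally coherent pair of choices of the isomorphisms $\iota_i^{!}\mathcal{O}_{X_i}\simeq\omega_S$ differing by a sign, which is exactly what is encoded in the triviality of $\omega_{X_1\cup_SX_2}$ already established earlier in Subsection \ref{ssec: O-compactness} through the identifications $j_i^{*}\omega_{X_1\cup_SX_2}\simeq K_{X_i}\otimes\mathcal{O}_{X_i}(S)\simeq\mathcal{O}_{X_i}$ together with the observation that any automorphism of the trivial line bundle on $S$ extends to $X_i$, which lets the two local trivializations be glued with opposite orientations in a consistent manner.
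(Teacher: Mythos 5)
Your proposal is correct and follows essentially the same route as the paper: both equip the restriction functors to $\mathsf{Perf}(S)$ with relative Calabi--Yau structures of dimension $3$ (the paper by verifying the compatible-spherical-functor diagrams and invoking \cite[Proposition 2.20]{KPS}, you by constructing the class directly from the anticanonical section as in Lemma \ref{lem: Lagrangian P to X}) and then glue the two relative structures over the K3 boundary to obtain the absolute Calabi--Yau $3$ structure on $\mathsf{Perf}(X_1\cup_S X_2)$. Your explicit treatment of the opposite-orientation compatibility of the two boundary classes in $HC^-_2(\mathsf{Perf}(S))$ addresses a point the paper's gluing step leaves implicit, but it is a refinement of the same argument rather than a different one.
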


\begin{proof}
Considering the notation of Subsect. \ref{ssec: O-compactness}, we have $\omega_{X_i}\simeq\mathcal{O}_{X_i}(-S),i=1,2.$ 
Let $\iota_j^*:\mathsf{Perf}(X_j)\rightarrow \mathsf{Perf}(S),j=1,2.$ Then, since $X_j$ are quasi-Fano and $S$ is an anti-canonical divisor it is Calabi-Yau, thus $\mathsf{Perf}(S)$ has a Calabi-Yau structure. It will suffice to prove that $\iota_j^*$ are compatible spherical functors. To this end, note the relevant left (resp. right) adjoint functors to $\iota_j^*$ are given by $\iota_{j*}$ (resp. $\iota_{j!}$). By Grothendieck-duality, since $\omega_{S/X_i}\simeq \mathcal{O}_S(-S)\simeq \mathcal{O}_S,$ the latter differ by a shift by $[-1].$
Following as in (c.f. \cite[Theorem 2.13]{Ku}), we identify the cofiber of the the unit $T_{\iota_j}=Cofib(id_{\mathsf{Perf}_{X_j}}\rightarrow \iota_{j*}\iota_j^*)$ with an endofunctor whose corresponding bimodule is $Maps_{X_j}(E_1,T_{\iota_j}E_2):=Maps_{\mathsf{Perf}(X_j)}(E_1,T_{\iota_j}E_2)$ for $j=1,2,$ and where $E_1,E_2$ are objects of $\mathsf{Perf}(X_j).$ There is an evident composition 
$$Id_{\mathsf{Perf}(X_j)}\rightarrow \iota_{j*}\circ \iota_j^*\rightarrow \iota_{j*}\circ \mathcal{S}_S\circ \iota_j^*\simeq \mathcal{S}_{X_j}\circ \iota_{j!}\circ \iota_j^*\simeq \mathcal{S}_{X_j},$$
and since $S$ is K3, $\mathcal{S}_S\simeq Id_{\mathsf{Perf}(S)}[2]$ via the class $\theta_S:HH_*(\mathsf{Perf}(S))\rightarrow k[-2].$ Thus, we note that $\iota_{j*}\circ \iota_j^*\rightarrow Cofib(id_{\mathsf{Perf}(X_j)}\rightarrow \iota_{j*}\iota_j^*)\rightarrow \mathcal{S}_{X_j}$ identifies with the morphism 
$\iota_{j*}\iota_j^*\simeq\mathcal{S}_{X_j}\iota_{j!}\iota_j^*\rightarrow\mathcal{S}_{X_j}$ for $j=1,2.$
Now, one may use these facts to establish the required non-degeneracy of the Calabi-Yau structures by considering the
commutative diagrams
\begin{equation}
\label{eqn: Square1}
\begin{tikzcd}
Maps_{X_j}(E_1,E_2)\arrow[d,"="] \arrow[r] & Maps_{X_j}(E_1,\iota_{j*}\iota_j^*E_2)\arrow[d,"\simeq"]\arrow[r] & Maps_{X_j}(E_1,\mathcal{S}_{X_j}[-d]E_2)\arrow[d,"\simeq"]
\\
Maps_{X_j}(E_1,E_2)\arrow[r] & Maps_{S}(\iota_j^*E_1,\iota_j^*E_2)\arrow[r] & Maps_{X_j}(E_2,E_1)^{\vee}[-d],
\end{tikzcd}
\end{equation}
associated with each quasi-Fano variety $X_j,j=1,2$.
Commutativity of the first square is clear, as we just applied the functors $\iota_j^*,$ and used standard adjunctions. For the remaining part of the diagram, the result follows by noticing the right-most square in (\ref{eqn: Square1}) may be expanded to a diagram
\[
\adjustbox{scale=.90}{
\begin{tikzcd}
    Maps_{X_j}(E_1,\iota_{j*}\iota_j^*E_2)\arrow[d,"\simeq"]\arrow[r,"\simeq"] & Maps_{X_j}(E_1,\mathcal{S}_{X_j}[-d]\iota_{j!}\iota_j^*E_2)\arrow[d,"\simeq"]\arrow[r] & Maps_{X_j}(E_1,\mathcal{S}_{X_j}[-d]E_2)\arrow[d,"\simeq"]
    \\
    Maps_{S}(\iota_j^*E_1,\iota_j^*E_2) \arrow[d,"="] \arrow[r,"\simeq"] & Maps_{X_j}(\iota_{j!}\iota_j^*E_2,E_1)^{\vee}[-d]\arrow[d,"\simeq"]\arrow[r] & Maps_{X_j}(E_2,E_1)^{\vee}[-d]\arrow[d,"="]
    \\
    Maps_{S}(\iota_j^*E_1,\iota_j^*E_2)\arrow[r,"\simeq"] & Maps_{S}(\iota_j^*E_2,\iota_j^*E_1)^{\vee}[-d]\arrow[r]& Maps_{X_j}(E_2,E_1)^{\vee}[-d].
\end{tikzcd}}
\]
This diagram commutes and this fact proves $\iota_j^*$ are compatible spherical functors. Then, by \cite[Proposition 2.20]{KPS}, since both $\mathsf{Perf}(X_j),\mathsf{Perf}(S)$ are smooth and proper, we have that $\iota_j^*,j=1,2$ carry relative Calabi-Yau structures. Then, by gluing these relative Calabi-Yau structures on functors $\iota_1^*,\iota_2^*$ (of the same dimension), one obtains an absolute Calabi–Yau structure on the categorical pushout, as required.

%OLD PROOF%
%Then, it suffices to check that the restriction functors
%$\iota_j^*:\mathsf{Perf}(X_1)\rightarrow \mathsf{Perf}(S),j=1,2$ are spherical, where $\iota_1,\iota_2$ are the embeddings of $S$ into $X_1$ and $X_2,$ respectively, as before. 
%In this case, the relevant adjoint functors to $\iota_j^*$ are given by $\iota_{j*}$ and $\iota_{j!}.$ By Grothendieck-duality, since $\omega_{S/X_i}\simeq \mathcal{O}_S(-S)\simeq \mathcal{O}_S,$ the latter differ by a shift by $[-1].$ Then $T_{\iota_{j}^*}$ and $C_{\iota_j^*},j=1,2$ are obtained by considering $\iota_{j*}\circ \iota_j^*\rightarrow id$ and  $id\rightarrow \iota_j^*\circ \iota_{j*}$, respectively. 
%One may verify the claim by straightforward check noting that the Serre functors for the quasi-Fano varieties are $\mathcal{S}_{X_i}:=(-)\otimes \omega_{X_i}[dim_{X_i}]$, and the one associated with $S$ is indeed isomorphic to its appropriate shift, so one has 
%$\mathcal{S}_{X_i}\circ T_{\iota_j^*}\simeq \iota_j^*\circ \mathcal{S}_S\circ C_{\iota_j}^{-1},$ which simplifies using the twist and cotiwst auto-equivalences to provide an isomorphism of dg-functors $\mathcal{S}_{X_0}\simeq Id[3],$ as required.
\end{proof}

\subsubsection{Calabi-Yau structure on total space (the case of a pull-back divisor)}\label{sssec: Pull-back divisor}

Our main result in this section is Lemma \ref{CY4-category}. We now mention a technical result of use later, which pertains to our model example in Remark \ref{exceptional-divisor}, where we considered  Tyurin degeneration fourfold, defined by a degree $(1,5)$ hypersurface in $\P^1\times \P^4$.  If its canonical divisor is a restriction of a divisor having degree zero along the fibres over $\P^1,$ then it is a pull-back divisor.
\begin{lem}
\label{lem: Pull-back divisor}
    Let $f:X\rightarrow C$ be a projective morphism over a smooth curve $C$. Fix a marked point $c_0\in C$ such that the following hold:
    \begin{enumerate}
    \item The total space $X$ is $\mathbb{Q}$-Gorenstein with terminal singularities of dimension $\geq 4,$

    \item The fibres $X_c$ are simply-connected smooth Calabi-Yau varieties for $c\neq c_0;$

    \item The fibre $X_{c_0}$ is a union $X_1\cup_S X_2$ of two Fano varieties, such that $S$ is an anti-canonical divisor of both $X_1$ and $X_2.$

    \end{enumerate}
    Then $K_X\sim_C 0.$
\end{lem}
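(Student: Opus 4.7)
The plan is to show that $\omega_X$ restricts trivially to every fibre of $f$, and then descend a sufficiently divisible Cartier multiple of $\omega_X$ along $f$ by cohomology and base change; the conclusion $K_X\sim_C 0$ is then immediate.

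First, I would establish fibrewise triviality. Since $C$ is a smooth curve, every fibre $X_c=f^{-1}(c)$ is a principal Cartier divisor cut out by $f^*t_c$ for a local uniformiser $t_c\in \mathcal{O}_{C,c}$, so $\mathcal{O}_X(X_c)=f^*\mathcal{O}_C(c)$ and hence $\mathcal{O}_X(X_c)|_{X_c}$ is trivial. Adjunction then yields $\omega_{X_c}\simeq \omega_X(X_c)|_{X_c}\simeq \omega_X|_{X_c}$, interpreted via $m$-th reflexive powers in the $\mathbb{Q}$-Gorenstein setting. For $c\neq c_0$ the right side is trivial by hypothesis (2). For $c=c_0$, I would invoke the explicit trivialisation of $\omega_{X_1\cup_S X_2}$ spelled out already in Subsection \ref{ssec: O-compactness}: the two identifications $K_{X_i}\simeq\mathcal{O}_{X_i}(-S)$ supplied by hypothesis (3) produce canonical sections of $\omega_{X_i}\otimes \mathcal{O}_{X_i}(S)$ that glue along $S$ to a nowhere-vanishing section of $\omega_{X_1\cup_SX_2}$. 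Hence $\omega_X|_{X_c}\simeq\mathcal{O}_{X_c}$ for every $c\in C$.

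For the descent, since $X$ is $\mathbb{Q}$-Gorenstein pick $m\geq 1$ such that $\mathcal{L}:=\mathcal{O}_X(mK_X)$ is an honest line bundle; the first step forces $\mathcal{L}|_{X_c}\simeq\mathcal{O}_{X_c}$ for every $c$. To apply Grauert's theorem to $R^0f_*\mathcal{L}$, I check the uniform vanishing $h^1(X_c,\mathcal{L}|_{X_c}) = h^1(X_c,\mathcal{O}_{X_c})=0$. For $c\neq c_0$ this is immediate: a simply-connected smooth K\"ahler manifold has $b_1=0$, whence Hodge symmetry yields $h^{0,1}=0$. For $c=c_0$, the Mayer-Vietoris sequence $0\to\mathcal{O}_{X_{c_0}}\to\mathcal{O}_{X_1}\oplus\mathcal{O}_{X_2}\to\mathcal{O}_S\to 0$ yields $H^1(X_{c_0},\mathcal{O})=0$, since $H^1(X_i,\mathcal{O}_{X_i})=0$ by Kodaira vanishing on each Fano component and the map $H^0(X_1)\oplus H^0(X_2)\to H^0(S)$ is the surjection $\mathbb{C}\oplus\mathbb{C}\twoheadrightarrow\mathbb{C}$ coming from connectedness of $S$. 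Consequently $\mathcal{M}:=f_*\mathcal{L}$ is a line bundle on $C$ and the evaluation map $f^*\mathcal{M}\to\mathcal{L}$ is an isomorphism at every fibre (it is a map of trivial rank-one line bundles), so by Nakayama it is an isomorphism globally. Therefore $mK_X\sim f^*D$ for some Cartier divisor $D$ on $C$, which gives $K_X\sim_{\mathbb{Q}} f^*(D/m)$ and thus $K_X\sim_C 0$.

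The hard part will be treating the reducible special fibre $X_{c_0}$ on an equal footing with the smooth ones: both the triviality of $\omega_{X_{c_0}}$ and the vanishing $H^1(X_{c_0},\mathcal{O}) = 0$ depend on the specific form of hypotheses (2) and (3). The first is precisely where the anti-canonical condition enters the picture and the computation has already been carried out in Subsection \ref{ssec: O-compactness}; the second is a Mayer-Vietoris computation using Kodaira vanishing on the Fano components and connectedness of $S$. A secondary technicality is that $X$ is only $\mathbb{Q}$-Gorenstein, so one must work with reflexive powers of $\omega_X$ and pass to an $m$ such that $mK_X$ is Cartier before comparing line bundles; terminality of the singularities guarantees that such an $m$ exists.
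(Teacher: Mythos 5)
Your proof is correct, and it reaches the conclusion by a route that is close in spirit to the paper's but mechanically different. The paper twists by multiples of a general fibre $F$ and studies the restriction sequence $0\to\mathcal{O}_X(K_X+nF)\to\mathcal{O}_X(K_X+(n+1)F)\to\mathcal{O}_F(K_F)\to 0$; it then combines Grauert, the projection formula, Serre vanishing on $C$ and the Leray spectral sequence to get $H^1(X,\mathcal{O}_X(K_X+nF))=0$ for $n\gg 0$, lifts the trivialising section of $K_F$ to an effective divisor $D\sim K_X+nF$, and concludes $D=f^*M$ because $D$ is trivial on fibres. You instead pass to a Cartier multiple $mK_X$, prove $\mathcal{O}_X(mK_X)|_{X_c}\simeq\mathcal{O}_{X_c}$ for \emph{every} $c$ (including $c_0$, via the gluing of the two anti-canonical trivialisations along $S$ as in Subsection \ref{ssec: O-compactness}), and then run cohomology-and-base-change/see-saw directly on $f_*\mathcal{O}_X(mK_X)$ to identify $f^*f_*\mathcal{O}_X(mK_X)\to\mathcal{O}_X(mK_X)$ as an isomorphism. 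Both arguments rest on the same two fibrewise inputs — triviality of the (relative) canonical on each fibre and the Mayer--Vietoris/Kodaira computation of $h^0,h^1$ on $X_1\cup_S X_2$ — but your version avoids the twisting, the Serre-vanishing step and the Leray argument, and is more careful about the $\mathbb{Q}$-Gorenstein issue by working with a Cartier multiple from the outset; the paper's version has the minor advantage of producing an explicit effective representative of $K_X+nF$. One small remark: once you know $h^0(X_c,\mathcal{O}_{X_c})=1$ for all $c$ (connected reduced fibres), Grauert already applies to $R^0f_*$, so your $h^1$-vanishing is not strictly needed — though it is exactly the computation the paper uses for its own version of the descent. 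The only point at which both proofs are equally (and mildly) informal is the adjunction identification $\omega_{X_{c_0}}\simeq\omega_X|_{X_{c_0}}$ across the singular locus of $X$ sitting inside the special fibre, which is where hypothesis (1) is really consumed.
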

\begin{proof}
Let $F$ denote a general fibre and consider the following exact sequence
$$0\rightarrow \mathcal{O}_X(K_X+nF)\rightarrow \mathcal{O}_X(K_X+(n+1)F)\rightarrow \mathcal{O}_X(K_X+(n+1)F)|_{F}\rightarrow 0.$$
By assumption $(1)$ we have $X$ is terminal, so that $(K_X+F)|_F=K_F.$
Therefore, this exact sequence is given by 
$$0\rightarrow \mathcal{O}_X(K_X+nF)\rightarrow \mathcal{O}_X(K_X+(n+1)F)\rightarrow \mathcal{O}_F(K_F)\rightarrow 0.$$
Again, by our assumptions, we have that for $c\neq c_0$ that $K_{X_c}\sim 0,$ and so it follows that
$$h^0(X_c,\mathcal{O}_{X_c}(K_{X_c}))=1,\hspace{2mm} h^1(X_c,\mathcal{O}_{X_c}(K_{X_c}))=0.$$
For $c_0$, we have that 
$K_{X_1\cup X_2}|_{X_1}=K_{X_1}+S\sim 0,$ and $K_{X_1\cup X_2}|_{X_2}=K_{X_2}+S\sim 0.$
We may use the Mayer-Vietoris sequence 
\begin{eqnarray*}
H^i(X_1\cup X_2,\mathcal{O}_{K_{X_1\cup X_2}}(K_{X_1\cup X_2}))&\rightarrow &H^i(X_1,\mathcal{O}_{K_{X_1}}(K_{X_1}))\oplus H^i(X_2,\mathcal{O}_{K_{X_2}}(K_{X_2}))
\\
&\rightarrow&H^i(S,\mathcal{O}_S(K_S))\rightarrow \cdots,\end{eqnarray*}
to show that $h^1(X_1\cup X_2,\mathcal{O}_{K_{X_1\cup X_2}}(K_{X_1\cup X_2}))=0,$ and thus by Grauert's vanishing theorem, that $f_*(\mathcal{O}_X(K_X+nF))$ is an invertible sheaf and that $R^1f_*\mathcal{O}_X(K_X+nF)=0.$
Via the projection formula, we have that 
$$f_*(\mathcal{O}_X(K_X+nF))\sim f_*(\mathcal{O}_X(K_X))\otimes \mathcal{O}_C(np),$$
and thus by Serre vanishing, obtain that for $n$ sufficiently large, 
$H^1(C,f_*(\mathcal{O}_X(K_X+nF)))=0.$
Thus we conclude, using standard spectral sequence arguments applied to the Leray spectral sequence, that 
$$H^1(X,\mathcal{O}_X(K_X+nF))=0.$$
The trivial divisor on $S$ lifts to a divisor $0\leq D\sim K_X+nF$ and since $D$ is trivial on fibres, it is of the form $D=f^*(M)$ for a some divisor $M$ on $C.$
\end{proof}
It is clear that the fiber-wise category of perfect complexes has a canonical Serre functor which is isomorphic to the identity functor shifted by $[3].$ 
We now prove that the dg-category of vertical perfect complexes is `relatively Calabi-Yau' in the sense that it has a Calabi-Yau category structure of dimension $4,$ however, relative to $C$, it is of dimension $3$, agreeing with the fiber-wise structure. Intuitively, because the support is on fibers, the dg-category $\mathsf{Perf}^{vert}(\mathbb{P})$ is (locally over $C$), a family of smooth and proper dg-categories.
\begin{lem}
\label{CY4-category}
    Let $f:\P\rightarrow C$ be a Fano $4$-fold fibered over a curve $C$ with generic smooth Calabi-Yau fibers. Let $\mathsf{Perf}^{vert}(\P)\hookrightarrow \mathsf{Perf}(\P)$ denote the sub-category of perfect complexes supported on the fibers. Then $\mathsf{Perf}^{vert}(\P)$, has the structure of a Calabi-Yau category of dimension $4$ i.e. its Serre functor is isomorphic to $\mathrm{Id}_{\mathsf{Perf}^{vert}(\P)}[4].$
\end{lem}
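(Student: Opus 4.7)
The plan is to exploit the fact that for a smooth projective $4$-fold $\P$, Grothendieck-Serre duality equips $\mathsf{Perf}(\P)$ with a Serre functor of the form $\mathcal{S}_{\mathsf{Perf}(\P)}(-)\simeq (-)\otimes K_{\P}[4]$. Since tensoring with the line bundle $K_{\P}$ preserves scheme-theoretic support, the full subcategory $\mathsf{Perf}^{vert}(\P)\hookrightarrow \mathsf{Perf}(\P)$ is closed under $\mathcal{S}_{\mathsf{Perf}(\P)}$, hence inherits a Serre functor. The task reduces to producing a natural isomorphism $(-)\otimes K_{\P}\simeq \mathrm{id}$ on vertical objects.

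The crucial geometric input is Lemma \ref{lem: Pull-back divisor}: under the Tyurin-type hypotheses in force, $K_{\P}\sim_{C}0$, so there exists a line bundle $M$ on the curve $C$ together with an isomorphism $K_{\P}\simeq f^{\ast}M$. For every $F\in \mathsf{Perf}^{vert}(\P)$ with set-theoretic support contained in $f^{-1}(Z)$ for some finite subset $Z\subset C$, the projection formula gives
\begin{equation*}
F\otimes K_{\P}\;\simeq\; F\otimes f^{\ast}M \;\simeq\; F\otimes f^{\ast}\bigl(M|_{Z}\bigr).
\end{equation*}
Since $Z$ is finite (hence semilocal with trivial Picard group), $M|_{Z}$ is free; any trivialization produces an isomorphism $F\otimes K_{\P}\simeq F$, and therefore $\mathcal{S}_{\mathsf{Perf}^{vert}(\P)}(F)\simeq F[4]$.

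To promote this pointwise statement to a natural isomorphism of functors, and thereby to an honest Calabi-Yau structure in the sense of a non-degenerate class $\theta:k[4]\to HH_{\ast}(\mathsf{Perf}^{vert}(\P))$, I would filter $\mathsf{Perf}^{vert}(\P)$ by the full subcategories $\mathsf{Perf}_{f^{-1}(Z)}(\P)$ indexed by finite subsets $Z\subset C$, realizing it as a filtered colimit. On each piece, a trivialization of $M$ on the formal neighborhood $\widehat{C}_{Z}$ -- available since $\widehat{\mathcal{O}}_{C,Z}$ is semilocal -- gives a well-defined isomorphism $(-)\otimes K_{\P}\simeq \mathrm{id}$ of endofunctors. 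Composing with the relative Grothendieck-Serre trace for the proper morphism $f$, together with the fiberwise Calabi-Yau property $K_{X_{c}}\simeq \mathcal{O}_{X_{c}}$ on smooth fibers, yields the desired non-degenerate Hochschild class realizing the Calabi-Yau $4$ structure.

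The main obstacle is the compatibility of these trivializations as $Z$ varies: distinct choices differ by units in $\mathcal{O}_{C}^{\times}$ on overlapping neighborhoods, and one must verify that the resulting system of isomorphisms assembles coherently. Once the filtered-colimit description is in place, this reduces to the fact that any two trivializations of $M|_{\widehat{C}_{Z}}$ differ by an invertible scalar on each connected component, inducing an invertible scaling on the Calabi-Yau trace that leaves the induced equivalence $\mathcal{S}\simeq \mathrm{id}[4]$ well-defined up to an overall automorphism. The non-degeneracy is then inherited from the generic fibre via the relative Calabi-Yau structure of Lemma \ref{lem: Lagrangian P to X}, so no further work is required.
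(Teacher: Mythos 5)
Your proof is correct and rests on the same key input as the paper's: Lemma \ref{lem: Pull-back divisor}, which gives $K_{\P}\simeq f^{*}(\text{line bundle on }C)$ and hence triviality of $K_{\P}$ on each fiber, so that the Serre functor of $\mathsf{Perf}(\P)$ restricts to $\mathrm{Id}[4]$ on vertical objects. Where you diverge is in how the pointwise isomorphism $(-)\otimes K_{\P}\simeq\mathrm{id}$ is globalized: the paper pulls back a global section $\eta\in H^{0}(C,K_{C})$ to a class $\eta_{\P}\in HH_{4}(\mathsf{Perf}^{vert}(\P))$ (after first establishing smoothness and properness of $\mathsf{Perf}^{vert}(\P)$ so that Lemma \ref{lem: MapsPerf} applies) and checks non-degeneracy by restricting to a generic fiber, whereas you trivialize $K_{\P}$ on the finite support of each object and assemble these local choices. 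Your route is arguably cleaner on one point: a global section of $K_{C}$ may vanish at some $c\in C$, where the paper's pairing degenerates on objects supported over $c$, while your local trivializations never vanish; and the coherence worry you flag is in fact harmless, since objects supported on distinct fibers are mutually orthogonal (no nonzero morphisms between them), so the trivializations at distinct points of $C$ can be chosen independently without violating naturality. The price is that your construction depends on these choices (it is canonical only up to a locally constant scaling), whereas the paper's class is determined by $\eta$. Both arguments implicitly use that Hom-complexes in $\mathsf{Perf}^{vert}(\P)$ are perfect (properness), which the paper makes explicit and you should too if writing this up.
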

\begin{proof}
Note that $\mathsf{Perf}(\P)$ is smooth and proper as a dg-category. Moreover, for each $c\in C$, the fiber-wise dg-category $\mathsf{Perf}(X_c)$ is smooth and proper.
The sub-dg-category $\mathsf{Perf}^{vert}(\P)\subset \mathsf{Perf}(\P)$ is proper. This follows since, morally speaking it is a sum of proper categories supported on disjoint fibers. More precisely, let $E=i_{c*}G$ and $E'=i_{c'*}G'$ for $c,c'\in C$. Then, $RHom_{\mathbb{P}}(E,E')\simeq \emptyset$ unless $c=c'$ since they have disjoint supports. When $c=c'$, the cohomologies are finite-dimensional.
Since the objects are supported on fibers, with the corresponding stability argument (see Lemma \ref{stability}), we see it is smooth as a dg-category.
Thus, there exists a canonical evaluation map
$$ev_{\mathsf{Perf}^{vert}}:\mathsf{Perf}^{vert}(\mathbb{P})^{\vee}\otimes \mathsf{Perf}^{vert}(\mathbb{P})\rightarrow \mathsf{Vect}_k,$$
which possesses a continuous left-adjoint denoted $ev_{\mathsf{Perf}^{vert}}^{L}.$
Moreover, due to the smoothness of $\mathsf{Perf}^{vert}(\mathbb{P})$, by universal properties of the moduli of objects construction \cite{TV}, there exists a functor
$$F_{\mathsf{Perf}^{vert}}:\mathsf{Perf}^{vert}(\mathbb{P})\rightarrow \mathsf{Perf}\big(\mathcal{M}_{\mathsf{Perf}^{vert}(\mathbb{P})}\big),$$
 where $\mathcal{M}^{vert}(\mathbb{P})$ is the corresponding moduli of objects. This functor is representable. By \cite[Cor 2.5]{BD2}, the relative dualizing functor $\mathrm{Id}_{\mathsf{Perf}^{vert}}^!$ is adjoint to $ev^L(k)\in \mathsf{Perf}^{vert}(\mathbb{P})^{\vee}\otimes \mathsf{Perf}^{vert}(\mathbb{P}).$ In fact, it is the following composition:
\begin{align*}
  \mathrm{Id}_{\mathsf{Perf}^{vert}}^!:\mathsf{Perf}^{vert}(\mathbb{P})&\xrightarrow{\mathrm{Id}_{\mathsf{Perf}^{vert}}\otimes ev_{\mathsf{Perf}^{vert}}^L}\mathsf{Perf}^{vert}(\mathbb{P})\otimes \mathsf{Perf}^{vert}(\mathbb{P})^{\vee}\otimes\mathsf{Perf}^{vert}(\mathbb{P})  
  \\
  &\xrightarrow{ev_{\mathsf{Perf}^{vert}}\otimes \mathrm{Id}}\mathsf{Perf}^{vert}(\mathbb{P}),
\end{align*}
where we implicitly used the intermediate step of swapping the factors of $\mathsf{Perf}^{vert}(\mathbb{P})$ and its dual.
By definition,
$$HH(\mathsf{Perf}^{vert}(\mathbb{P}))=Tr(\mathrm{Id}_{\mathsf{Perf}^{vert}(\mathbb{P})})\simeq Hom_{\mathsf{Perf}^{vert \vee}\otimes \mathsf{Perf}^{vert}}(\mathrm{Id}_{\mathsf{Perf}^{vert}}^!,\mathrm{Id}_{\mathsf{Perf}^{vert}}).$$

By smooth and properness of $\mathsf{Perf}^{vert}(\P)$, by Lemma \ref{lem: MapsPerf}, it is now enough to construct a volume form on $\P$ and show via HKR (\ref{eqn: HKR}), that it induces a non-degenerate Serre pairing, that further identifies with the shift of the identity $\mathrm{Id}_{\mathsf{Perf}^{vert}(\P)}.$
For this, note that $HH_*^{vert}(\P):=HH_*(\mathsf{Perf}^{vert}(\P))$ maps to $HH_*(\mathsf{Perf}(\P))\simeq \bigoplus_i H^i(\mathbb{P},\wedge^iT_{\mathbb{P}})$, but this fails to be an injection in general. However, via Lemma \ref{lem: Pull-back divisor}, since $K_{\P}\simeq f^*K_C\otimes K_{\P/C}$, so fiber-wise $K_{\P/C}\simeq\mathcal{O},$ we have $K_{\P}\simeq f^*K_C$ and thus may lift a class in $H^0(C,K_C)$ to $\mathbb{P}.$ Namely, for $\eta\in H^0(C,K_C)$ set $\eta_{\P}:=f^*(\eta)\in H^0(\P,K_{\P})\simeq HH_4(\mathsf{Perf}(\P)),$ with $f:\P\rightarrow C$ the fibration.
Note that this element $\eta_{\P}$ is non-unique, but is canonically induced from the base. We claim any such choice yields a non-degenerate class in $HH_4^{vert}(\P).$ 
We obtain the restricted class via,
$$HH_*(\mathsf{Perf}(\P))\rightarrow HH_*(\mathsf{Perf}^{vert}(\P)),$$
defined by the restriction of the trace map to the subcategory of vertical perfect complexes.

We have that $\eta_{\P}$ defines the element $Tr(\eta_{\P}):\mathrm{Id}_{\mathsf{Perf}^{vert}(\P)}\rightarrow \mathrm{Id}_{\mathsf{Perf}^{vert}(\P)}[4],$ to be denoted the same
$$\eta_{\P}\in HH_4(\mathsf{Perf}^{vert}(\P))\simeq \mathrm{Hom}_{\mathsf{Perf}^{vert}(\P)\otimes\mathsf{Perf}^{vert}(\P)^{op}}(\mathrm{Id}_{\mathsf{Perf}^{vert}(\P)},\mathrm{Id}_{\mathsf{Perf}^{vert}(\P)}[4]),$$
such that there is an induced isomorphism
$$\eta_{\P}^{\sharp}:Hom_{\mathsf{Perf}^{vert}(\P)}(E,E)\xrightarrow{\simeq}Hom_{\mathsf{Perf}^{vert}(\P)}(E,E)^{\vee}[4],$$
for all objects $E\in \mathsf{Perf}^{vert}(\P),$ e.g. $E\simeq i_*G$ for $G\in \mathsf{Perf}(X_c),$ for some $c\in C.$
But this comes from 
$$HH_4^{vert}(\P)\subset H^0(\P,\wedge^4T_{\P})\simeq H^0(\P,K_{\P})\simeq H^0(C,f^*K_C).$$
Note the non-degeneracy follows from the fact that $\eta_{\P}$ restricts to a triviliziation of $K_{X_c}$ over a generic fiber, where we may use the clear fact that $\mathsf{Perf}(X_c)$ is a Calabi-Yau category of dimension $3.$ Thus, the Sere functor $\mathcal{S}_{\P}(E)\simeq E\otimes\mathcal{O}_{X_c}[4]\simeq E[4],$ for $E$ supported on a fiber $X_c$, since the restriction $K_{\P}|_{X_c}$ is trivial.
\end{proof}
We conclude with a remark.

\begin{rmk}
    Recall the argument for properness of $\mathsf{Perf}^{vert}(\P)$ in Lemma \ref{CY4-category}. In particular, we made use of disjointness of supports. Since $\mathsf{Perf}(\P)$ is idempotent-complete as a dg-category every object is a filtered colimit of perfect complexes, $\mathsf{Perf}^{vert}(\P)$ is closed under filtered colimits, so it is presentable. However, its sub-category of compact objects are only those supported on finitely many fibers. In fact $\mathsf{Perf}^{vert}(\P)^c\subset \mathsf{Perf}^{vert}(\P)$ contains finite direct sums of objects $i_*\mathcal{G}$ for $\mathbf{G}\in \mathsf{Perf}(X_c).$ Then, even in the case when $F=\bigoplus_{j=1}^Ni_{{c_j}*}G_j\in\mathsf{Perf}^{vert}(\P)$, with $G_c\in \mathsf{Perf}(X_c)$ and $c_i\neq x_j,i\neq j,$ we still see $\mathsf{Perf}^{vert}(\P)$ is proper, since
    $RHom_{\P}(i_{c_j*}G_j,i_{c_k*}G_k)\simeq\emptyset,j\neq k,$ so
    $$REnd_{\P}(F)\simeq RHom_{\P}(\bigoplus_{j=1}^ni_{c_j*}G_j,\bigoplus_{k=1}^ni_{c_k*}G_k)\simeq \bigoplus_{j}^NRHom_{\P}(i_{c_j*}G_j,i_{c_j*}G_j).$$
    In other words, $REnd_{\P}(F)$ is a finite direct sum of sheaves $REnd_{X_{c_j}}(G_j)$, where each summand is perfect. Thus is perfect itself.
\end{rmk}
\section{Global Lagrangian foliation}
\label{ssec: Relative lag}
We now prove existence of a \emph{global} shifted potential associated with our Fano $4$-fold moduli space. The existence of such a global potential is established by proving there exists a special type of globally defined derived Lagrangian foliation, which exist for  $\mathbb{R}$-valued symplectic structures which are strictly ($G$-)invariant and purely derived (see \ref{defn: Purely derived foliation}).
\begin{term}
\normalfont
Adopting the conventions of \cite{BKSY}, in what follows we use the terminology ``integrable distributions'' and ``derived foliations'', in place of what are normally called, for instance in \cite{TV1},\cite{TV2} ``derived foliations'' and ``rigid derived foliations'', respectively.
\end{term}

Note that our stacks of interest are of the type $[\![\mathbf{M}/G]\!]$ where $\mathbf{M}$ is a dg-manifold of finite type, in fact a derived Quot-dg manifold over a projective scheme, and $G$ is linearly reductive. In this way, we use the machinery from \cite{BKSY,BKSY2} to construct a relative analog of Lagrangian distributions on derived moduli stacks of (rigidified) perfect complexes 
arising in the context of Subsection \ref{lagrangian-moduli}.

\subsubsection{Integrable distributions and Lagrangian foliations}
Following the conventions of \cite{BKSY}, we recall some definitions.
\begin{defn}
\label{defn: Integrable distribution}
\normalfont
An \emph{integrable distribution} on a derived affine scheme $Spec(A^{\bullet})$ is a pair $(\Lambda^{\bullet,\bullet},\alpha)$ consisting of a graded mixed algebra $\Lambda^{\bullet,\bullet}$ over $\mathbb{C}$ with a morphism of graded mixed algebras
$\alpha:\mathsf{DR}(A^{\bullet})\rightarrow \Lambda^{\bullet,\bullet},$ satisfying that:
\begin{itemize}
    \item $\alpha_0:A^{\bullet}\rightarrow \Lambda^{0,\bullet}$ is an equivalence;

    \item $\Lambda^{1,\bullet}$ is a perfect dg-$A^{\bullet}$-module;
    \item As a dg-algebra (forget the mixed differential $\epsilon:\Lambda^{\bullet,\bullet}\rightarrow \Lambda^{\bullet+1,\bullet-1}$) one has that 
    $\Lambda^{\bullet,\bullet}\simeq \bigoplus_{n\geq 0}\mathrm{Sym}_{\Lambda^{0,\bullet}}^n(\Lambda^{1,0}).$
\end{itemize}
\end{defn}
An important example for practical use later concerns a special class of strict $-2$-shifted symplectic forms and integrable distributions on a well-presented derived affine scheme $S=Spec(A^{\bullet}).$ Specifically, the following example is useful to single out representatives of weak-equivalence classes of objects to give concrete presentations. 
\begin{ex}
\label{ex: Well-presented}
\normalfont 
Suppose $A^{\bullet}$ is well-presented i.e. $A^0$ is finite type and smooth $\mathbb{C}$-algebra and $A^{*}$ is freely generated as an $A^0$-algebra by a finite sequence $\{P^k\}_{k<0}$ of finitely generated projective $A^0$-modules where for every $k<0,$ we have $P^k\subseteq A^k.$ Its tangent complex is perfect with underlying graded module freely generated over $A^*$ by $\mathbb{T}_{A^0}$ and $E^{-k}:=\mathrm{Hom}_{A^0}(P^k,A^0),$ for $k<0.$ We may choose $m>0$ and sub-bundles $E_{-}^m\subseteq E^m$ for which the dg-submodule $(\mathbb{T}_{S})_-\subseteq \mathbb{T}_{S}$ is generated by $E_{-}^m\oplus \bigoplus_{k>m}E^k.$ If $P_{-}^{-m}\subseteq P^{-m}$ is the orthogonal compliment of $E_-^m$ we have $\mathbb{L}_S^-$ is a graded mixed ideal generated by $(P_-^{-m}\oplus \bigoplus_{k<m}P^{-k})[1].$ Denote corresponding quotients by this dg-ideal by $\mathbb{L}_{S}^+:=\mathbb{L}_{S}/\mathbb{L}_{S}^-.$
\end{ex}

Suppose $\omega_{S}^{\bullet}$ is $-2$-shifted symplectic, and let $\underline{\omega}_{S}$ be its underlying $2$-form (see Terminology \ref{term: Underlying form}). If $S$ is well-presented $\Omega_{S}^1$ has the homotopy-type of $\mathbb{L}_{S}$ and we denote by $\Omega_{S}^2$ the corresponding dg-module of $2$-forms.

\begin{term}
\normalfont 
Consider example \ref{ex: Well-presented}.
A $(-2)$-shifted sympelctic structure on a well-presented derived scheme $S$ is \emph{strict} if $\omega_{S}^{\bullet}=\underline{\omega}_S\in \Omega^2(S)$ and locally on $\mathrm{Spec}(A^0),$ 
$$\underline{\omega}_S=\sum_{1\leq i\leq m}d_{dR}x_i\wedge d_{dR}z_i+\sum_{1\leq j\leq n}d_{dR}y_j\wedge d_{dR}y_j,$$
with $x_i,y_j,z_i$ of cohomological degrees $0,-1,-2,$ respectively, and $\underline{\omega}_S$ defines an \emph{isomorphism} $\mathbb{T}_{S}\simeq \mathbb{L}_{S}[-2].$
\end{term}
As pointed out in \cite{BKSY}, having a strict form imposes some restrictions:
\begin{itemize}
\item the projective $A^0$-modules $\{P^k\}$ which generate $A^*$ over $A^0$ are non-trivial in degrees $-1,-2$ only;
\item $\underline{\omega}^2$ defines an $A^0$-linear isomorphism $\Omega_{A^0}^1\simeq (P^{-2})^{\vee}$ and defines a non-degenerate symmetric $A^0$-bilinear form on $(P^{-1})^{\vee}.$
\end{itemize}
Definition \ref{defn: Integrable distribution}, has an $A^{\bullet}$-linear dual formulation via Koszul duality and gives the structure of a differential-graded Lie-Rinehart algebra, whose anchor is denoted by
\begin{equation}
    \label{eqn: Anchor}
\rho:\mathcal{L}^{\bullet}:=\mathrm{Hom}(\Lambda^{1,\bullet},A^{\bullet}[1])\rightarrow \mathbb{T}_{\mathcal{A}^{\bullet}}.
\end{equation}
One may pull back integrable distributions over morphisms
of affine dg manifolds in a contravariantly functorial way to obtain an $\infty$-category $\mathfrak{L}(A^{\bullet}):=\mathfrak{L}(\mathrm{Spec}(A^{\bullet}))$ of integrable distributions, which Kan-extends in the usual way to derived stacks $\mathcal{X}$,
by 
\begin{equation}
\label{eqn: L(X)}
\mathfrak{L}(\mathcal{X}):=\underset{\mathrm{Spec}(A^{\bullet})\rightarrow \mathcal{X}}{\mathrm{holim}}\hspace{1mm}\mathfrak{L}(A^{\bullet}),
\end{equation}
computed in the category of simplicially enriched categories. 

We are interested in the maximal sub $\infty$-groupoid \cite[Proposition 1.16,1.20]{JT}, via the homotopy-coherent nerve construction $N_{\Delta}$. The latter is a right Quillen functor and one extracts the maximal  $\infty$-subgroupoid $\mathfrak{L}^{gr}(A^{\bullet})\subset \mathfrak{L}(A^{\bullet}).$

\begin{defn}
    \normalfont 
The composition $N_{\Delta}\circ \mathfrak{L}^{gr}(-)$, defines the \emph{stack of integrable distributions}, denoted for simplicity as in (\ref{eqn: L(X)}), by
$$\mathbf{\mathfrak{L}}:\mathsf{dAff}\rightarrow \infty\mathsf{Grpd}.$$
An \emph{integrable
distribution on a stack $\mathcal{X}$} is a point in the mapping space
$Maps(\mathcal{X},\mathfrak{L}).$ 
\end{defn}

In the special case of interest, for $[\![\mathbf{M}/G]\!],$ where $\mathbf{M}=\mathrm{Spec}(A^{\bullet})$ is an affine dg-manifold and $G$ a linearly reductive group, we construct as usual a simplicial diagram of affine dg-manifolds with terms $\{\mathbf{M}\times (G^{\times j})\}_{j\geq 0},$ for which $[\![\mathrm{Spec}(A^{\bullet})/G]\!]$ is defined as the homotopy-colimit, so via (\ref{eqn: L(X)}),
$$Maps([\![\mathbf{M}/G]\!],\mathfrak{L})\simeq \underset{j\geq 0}{\mathrm{holim}}\hspace{1mm}\mathfrak{L}\big(\mathbf{M}\times (G^{\times j})\big).$$
\begin{rmk}
    \label{rmk: Coherent equivalences}
In particular, a $0$-simplex can be
described as an integrable distribution on $Spec(A^{\bullet}),$ together with a coherent system of weak-equivalences between all possible pull-backs of this distribution to the $Spec(A^{\bullet})\times(G^{\times j}).$
\end{rmk}

When $\mathbf{M}$ is not necessarily affine, but has an atlas of $G$-invariant dg-affine manifolds such that $\mathbf{M}^0$ admits a good quotient, then for every $G$-invariant chart $U:=Spec(R^{\bullet})$ on $\mathbf{M}$, we obtain a simplicial set $\mathfrak{L}([\![U,G]\!])$ and look at the equivalence classes of integrable distributions on arbitrary $G$-invariant affine atlases. For formal reasons we may assume the corresponding simplicial diagram is objectwise cofibrant in a local model structure on the category of stacks \cite[Theorem 19.4.4]{Hi}. 

This leads to a sheaf $\mathfrak{L}_{G}(\mathbf{M})$ on the space $|M|$ of classical points of $[\![\mathbf{M}/G]\!],$ such that
\begin{equation}
    \label{eqn: global sections}
\Gamma(|M|,\mathfrak{L}_{G}(\mathbf{M}))\simeq \pi_0\big(Maps([\![\mathbf{M}/G]\!],\mathfrak{L})\big).
\end{equation}

We restrict to integrable distributions
that do not have non-trivial isotropy dg Lie subalgebras - called \emph{derived foliations} and a special class corresponding to injectivity condition on the anchors (\ref{eqn: Anchor}).

\begin{defn}
\normalfont
An integrable distribution on $Spec(A^{\bullet})$ is a \emph{derived foliation} if around every $\mathbb{C}$-point $[\![Spec(A)/G]\!]$ there is a minimal $G$-invariant chart $Spec(A_1)$ such that the distribution is quotient $\alpha_1:\Omega_{A_1}^{\bullet}\twoheadrightarrow \Lambda^{\bullet}.$
A derived foliation is \emph{strictly $G$-invariant} if moreover,
\begin{itemize}
\item it may be written on $Spec(A)$ itself in terms of an injective anchor;

\item the dg-ideals of $\Omega_{Spec(A)\times G}^{\bullet}$ corresponding to the pull-backs to $Spec(A)\times G$ agree;

\item the coherent system of homotopies as in Remark \ref{rmk: Coherent equivalences}, on the full simplicial diagram $\{Spec(A)\times (G^{\times j})\}_{j\geq 0}$ are given by the identities.
\end{itemize}
\end{defn}
There is a well-defined subsheaf $\mathfrak{F}_{G}(\mathbf{M})\subset \mathfrak{L}_G(\mathbf{M})$ consisting of equivalence classes of integrable distributions which Zariski locally are strictly $G$-invariant derived foliations.

Given a homotopically closed $2$-form $\omega$ of degree $d$, one can make sense of when a strictly $G$-invariant derived foliation is isotropic with respect to $\omega$ and what it means to have an isotropic structure on a strictly $G$-invariant isotropic derived foliation (see e.g. \cite[pg. 25]{BKSY}). 
Since being an isotropic distribution is a local condition (explained further in \emph{loc.cit.}), there exists a subsheaf 
$$\mathfrak{F}_G^{\omega}(\mathbf{M})\subset \mathfrak{F}_G(\mathbf{M}),$$
consisting of sections whose corresponding foliations are isotropic with respect to $\omega$. Choosing isotropic structures on isotropic foliations gives us another sheaf 
$\widetilde{\mathfrak{F}}_G^{\omega}(\mathbf{M}),$ which comes with a natural forgetful map
$\widetilde{\mathfrak{F}}_G^{\omega}(\mathbf{M})\rightarrow \mathfrak{F}_G^{\omega}(\mathbf{M}).$

Suppose that $\omega$ is symplectic. The condition of being Lagrangian is also a local condition, thus restricting to affine charts, note $\mathbb{T}_{[\![\mathbf{M}/G]\!]}^{\bullet}$ is a dg-module in degrees $\geq -1$ and for a strictly $G$-invariant derived foliation $\mathcal{L}^{\bullet}\hookrightarrow \mathbb{T}_{\mathbf{M}}^{\bullet},$ a given isotropic structure $\lambda$ is Lagrangian if there is a shifted weak equivalence
$$\mathrm{hofib}\big(\mathcal{L}^{\bullet}\rightarrow \mathbb{T}_{[\![\mathbf{M}/G]\!]}^{\bullet}\big)\xrightarrow{\simeq} (\mathcal{L}^{\bullet})^{\vee}[n].$$
Note that we have used the composite of $\mathcal{L}^{\bullet}$ along the canonical morphism of perfect dg-modules $\mathbb{T}_{\mathbf{M}}^{\bullet}\rightarrow \mathbb{T}_{[\![\mathbf{M}/G]\!]}^{\bullet}.$ For further details on pull-backs of these sheaves of strictly $G$-invariant objects to affine charts we refer to \cite[Proposition 10]{BKSY}.
\begin{notate}
    \label{notate: Lagrangian foliations}
\normalfont 
Given a not-necessarily affine dg-manifold $\mathbf{M}$ with an action by a reductive group $G$ with quotient $[\![\mathbf{M}/G]\!],$ and with $\omega$ a homotopically closed and non-degenerate $2$-form of cohomological degree $d$, we denote by $\mathfrak{N}_{G}^{\omega}(\mathbf{M})\subset \mathfrak{F}_{G}^{\omega}(\mathbf{M})$,
the subsheaf consisting of isotropic distributions and isotropic structures that are Lagrangian.
\end{notate}

\noindent\textbf{Dividing by the distribution.}
Given a derived scheme $S$ and an integrable distribution $\Lambda^{\bullet,\bullet},$ one can make sense of its quotient \cite{P}, denoted $[S/\Lambda^{\bullet,\bullet}],$ which is roughly speaking, a morphism of formal derived stacks
$$q:S\rightarrow [S/\Lambda^{\bullet,\bullet}],$$
solving the universal problem that, given any morphism of formal derived stacks $\phi:S\rightarrow S',$ and any morphism of integrable distributions $\mathsf{DR}(S/S')\rightarrow \Lambda^{\bullet,\bullet},$ there exists a unique morphism $[S/\Lambda^{\bullet,\bullet}]\rightarrow S',$ factorizing $\phi.$  

We are interested in forming the quotient by distributions that are the tangent distributions to some morphism of (formal) derived stacks. The next result tells us how to perform such a division.
\begin{lem}
\label{lem: Quotient}
Let $\varphi:S\rightarrow S'$ be a morphism of derived schemes and consider the tangential foliation $\mathbb{L}_{\varphi}$ and the corresponding relative derived de Rham complex $\mathsf{DR}(S/S').$ Then the unique morphism $$Q_{\varphi}:[S/\mathsf{DR}(S/S')]\rightarrow S'
\times_{S_{dR}'}^hS_{dR},$$ is an equivalence of formal derived stacks.
If $\varphi$ is a nil-isomorphism\footnote{That is, induces an isomorphism on the underling reduced objects $S_{red}\simeq S_{red}'.$}, then $S'\simeq [S/\mathsf{DR}(S/S')].$
\end{lem}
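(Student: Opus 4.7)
The plan is to construct $Q_\varphi$ from the universal property of the quotient by a distribution, and then verify that both sides represent the same functor on formal derived stacks under $S$. Throughout I will make use of Koszul duality between graded mixed cdgas over $\mathbb{C}$ and dg Lie-Rinehart algebras, identifying $\mathsf{DR}(S/S')$ with the datum of the relative tangent complex $\mathbb{T}_{S/S'}$ viewed as a sub dg Lie algebroid of $\mathbb{T}_S$.

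First I would build the canonical morphism $Q_{\varphi}$. Applying the universal property of $[S/\mathsf{DR}(S/S')]$ to the morphism $\varphi:S\to S'$ together with the identity map of distributions $\mathsf{DR}(S/S')\to \mathsf{DR}(S/S')$ yields a canonical factorization $[S/\mathsf{DR}(S/S')]\to S'$. To land in the fibre product with $S_{dR}$, I would use the canonical map $S\to S_{dR}$ and the observation that the full tangent distribution $\mathsf{DR}(S)$ surjects onto $\mathsf{DR}(S/S')$ (Koszul dual to $\mathbb{T}_{S/S'}\hookrightarrow \mathbb{T}_S$), so the map $S\to S_{dR}$ factors through $[S/\mathsf{DR}(S/S')]\to S_{dR}$. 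Functoriality of the de Rham stack applied to $\varphi$ ensures the two composites into $S'_{dR}$ are canonically equivalent, hence assemble into $Q_\varphi$.

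Second I would verify that $Q_\varphi$ is an equivalence by identifying both sides with the formal completion $\widehat{S'}_S$ of $S'$ along $\varphi$. Working Zariski locally where $\mathbf{M}=\mathrm{Spec}(A^\bullet)$ and $S'=\mathrm{Spec}(B^\bullet)$ are affine, the right-hand side $S'\times^h_{S'_{dR}}S_{dR}$ is by definition this formal completion and co-represents pairs consisting of a map $T\to S'$ together with a map $T_{red}\to S$ equalising in $S'_{dR}$. On the other hand, following Pantev's construction in \cite{P}, $[\mathrm{Spec}(A^\bullet)/\mathsf{DR}(A^\bullet/B^\bullet)]$ co-represents formal thickenings of $S$ on which the relative tangent distribution $\mathbb{T}_{S/S'}$ is annihilated, which is exactly the condition that the thickening is pro-nilpotent in the $S'$-direction only. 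Matching these functors of points term-by-term against the Čech nerve of $\varphi$ produces the required equivalence; naturality of the construction in $A^\bullet$ then allows gluing into a global equivalence.

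Finally, if $\varphi$ is a nil-isomorphism then $S_{red}\xrightarrow{\sim}S'_{red}$, so by the key property of the de Rham stack $S_{dR}\xrightarrow{\sim}S'_{dR}$, and the homotopy fibre product collapses to $S'\times^h_{S'_{dR}}S'_{dR}\simeq S'$. The main obstacle, which will require the most care, is not the formal manipulation but rather verifying the equivalence of the two universal properties at the level of graded mixed cdgas: one must show that a morphism of integrable distributions $\mathsf{DR}(S/S')\to \Lambda^{\bullet,\bullet}$ is the same data as a compatible pair of maps after Koszul dualising, which amounts to checking that the Chevalley–Eilenberg / de Rham complex computes the correct derived quotient in the shifted symplectic setting of interest. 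Once this is established, the remainder of the argument is essentially formal.
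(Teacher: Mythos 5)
The paper does not actually prove this lemma: it is stated as a recollection (the quotient construction is attributed to \cite{P}, and the statement itself is the standard identification, going back to Pant\-ev and To\"en--Vezzosi, of the quotient by the tangential foliation of $\varphi$ with the formal completion of $S'$ along $S$). So there is no in-paper argument to compare yours against, and your proposal has to be judged on its own.

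As a strategy it is sound and matches how the result is established in the literature: build $Q_\varphi$ from the universal property (the map to $S'$ from the identity of $\mathsf{DR}(S/S')$, the map to $S_{dR}$ from the quotient $\mathsf{DR}(S)\to\mathsf{DR}(S/S')$, compatibility from functoriality of $(-)_{dR}$), identify the target with the formal completion $\widehat{S'}_S$, and deduce the nil-isomorphism case from $S_{dR}\simeq S'_{dR}$. The one place where your write-up is genuinely thin is the middle step, which is where all the content lives: ``matching these functors of points term-by-term against the \v{C}ech nerve of $\varphi$'' defers exactly the statement being proved. A cleaner way to close this is to note that $\widehat{S'}_S\to S'$ is formally \'etale, being a homotopy pullback of $S_{dR}\to S'_{dR}$ and de Rham stacks having vanishing cotangent complex; hence $\mathbb{L}_{S/\widehat{S'}_S}\simeq\mathbb{L}_{S/S'}$ and $\mathsf{DR}(S/\widehat{S'}_S)\simeq\mathsf{DR}(S/S')$, so $S\to\widehat{S'}_S$ is a candidate quotient for the \emph{same} distribution. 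One is then reduced to checking that this candidate satisfies the universal property, which in the affine case is the relative derived Poincar\'e lemma: the realization of the graded mixed cdga $\mathsf{DR}(A^\bullet/B^\bullet)$ computes functions on the formal completion (this is where characteristic zero is used, and it is the precise form of the ``verification at the level of graded mixed cdgas'' you flag at the end). With that substitution your argument is complete; without it, the equivalence $[S/\mathsf{DR}(S/S')]\simeq\widehat{S'}_S$ is asserted rather than proved.
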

When $(S',w)$ is a derived scheme with a shifted potential function and $(S,\omega)$ is shifted symplectic that is moreover, equipped with a Lagrangian distribution, then Lemma \ref{lem: Quotient} allows us to reconstruct $S$ from $S'$ as 
$S\simeq \mathrm{dCrit}(w).$
\vspace{2mm}

%In our model example, we reduce to the case of considering quotient stacks $[\![Spec(A^{\bullet})/G]\!],$
%in the case when $Spec(A^{\bullet})$ is a finite-type dg-manifold and $G$ is reductive. Specifically we take $Spec(A^{\bullet})$ to be the derived Quot-scheme $\mathbf{DQuot}^{\bullet}$, as described in \cite[Section 1]{BKSY}, by giving a $GL(\mathbb{C})$-linearized dg-scheme fibered over $\mathrm{Gr}_{p,q}$, which with slight modification via \cite[Corollary 4.3.5]{?} is a principal $PGL(\mathbb{C})$-bundle over the quotient $\mathrm{Gr}_{p,q}/\!/ PGL(\mathbb{C})$ in the étale topology. The derived Quot scheme is is a dg-manifold of finite-type by \cite{BKSY2}. 

%\begin{rmk}
%On $[\![\mathbf{M}^{\bullet}/G]\!]$ such that $\mathbf{M}^0$ has a good quotient with respect to the $G$-action, use $\Omega_{2,d}^{\bullet}([\![Spec(R)/G]\!])$ to construct a sheaf $\Omega_{2,d}^{\bullet}$ of $\mathbb{C}$-linear cochain complexes on the topological space $|M|$ underlying $\mathbf{M}^{\bullet}/\!/G$. Then the hypercohomology $\mathbb{H}^0(|M|,\Omega_{2,d}^{\bullet})$ is the vector space of equivalence classes of homotopically closed $2$-forms of degree $d$ on our quotient stack.
%For such symplectic forms to be defined globally in the case of a not necessarily affine derived scheme, means that we have a chosen symplectic form
%on each derived affine chart, and these structures coherently glue on intersections. The hypercohomology is thus computed using Cech covers consisting of affine charts.
%\end{rmk}
\noindent\textbf{Lagrangians in the $\mathbb{R}$-linear setting.} To perform (homotopical) gluing arguments later, we need to switch from the geometry over $\mathbb{C}$ to the $C^{\infty}$-setting over $\mathbb{R}$. In particular, we may use partition of unity arguments. Underlying this switch is a change of topology from the usual étale topology to the stronger metric (e.g. Fréchet) topology on $C^{\infty}$-manifolds.

\begin{cons}
\label{cons: Derived to dg}
\normalfont
Given a derived $\mathbb{C}$-scheme $\mathsf{X}$ (e.g. an affine derived scheme $\mathsf{Spec}(A^{\bullet})$ with $A^{\bullet}$ a well-presented dg-algebra), we obtain dg-manifold $\mathsf{M}:=\underline{\mathsf{X}}$ as follows. Notice that to $\mathsf{X}$, we may associated $X:=\tau_0(\mathsf{X})\simeq Spec\big(H^0(\mathcal{O}_{\M^{vert}(\P)}^{\bullet})\big)$ and $X_0:=Spec(\mathcal{O}_{\M^{vert}(\P)}^0).$ One may identify 
$$\mathsf{X}|_{X}\simeq (X_0,\{\mathcal{O}_{X_0}^{-i}\}_{i<0}),$$
for a sequence of bundles $\mathcal{O}_{X_0}^{-i}$ in negative degrees over $X_0.$ Note that $X_0$ is a classical $\mathbb{C}$-scheme and we may look at 
its $\mathbb{C}$-manifold of closed points $j:\mathcal{M}\hookrightarrow X_0.$ Set 
$\mathcal{A}^{-i}:=j^*(\mathcal{O}_{X_0}^{-i}),i>0,$
we obtain a sequence of modules over $\mathcal{M}$ and by enlarging 
$\mathcal{O}_{\mathcal{M}}\hookrightarrow \mathcal{A}^0:=C^{\infty}(\mathcal{M},\mathbb{R}),$
we may view the sequence
$\{\mathcal{A}^{-i}\}_{i<0},$
as $C^{\infty}$-modules over the $C^{\infty}$-manifold $\mathcal{M}_0:=(\mathcal{M},\mathcal{A}^0).$

In particular, if $\mathsf{X}$ is a well-presented derived affine as in examples \ref{ex: Well-presented}, we have 
$U_0=Spec(A^0)$ with $A^0$ finite-type $\mathbb{C}$-algebra and with defining vector-bundles $\mathcal{O}_{\M^{vert}(\P)}^{-i}$ simply given by components $A^{-i},i>0$ as $A^0$-modules in negative degrees.
In this case, 
$$\mathsf{M}\simeq \big(M_0,\mathcal{O}_{M_0}^{\bullet}\big),\hspace{2mm}\mathcal{O}_{M_0}^{i}:=A^{i}|_{M_0\subset Spec(A^0)},i\leq 0,$$
where we always consider the components of the dg-structure sheaf over $M_0$ defined by the above restrictions to the scheme of closed points as being endowed with their induced $\mathbb{R}$-module structure.
\end{cons}

\begin{defn}
\label{defn: Purely derived foliation}
\normalfont 
An integrable distribution is said to be a \emph{purely derived foliation} if $H^{\leq 0}(\mathcal{L}^{\bullet})=0,$ and on a minimal chart around every classical point, there exists a representative in the derived category such that (\ref{eqn: Anchor}) is an injection.
It is \emph{negative definite} with respect to $\omega_{re}$ if the underlying $2$-form defines a negative definite $2$-form on $H^1(\mathcal{L}^{\bullet}).$
\end{defn}

Dually, as in \cite[Sec.3, pg. 27]{BKSY}, for a derived $C^{\infty}$ manifold $\mathbf{M}^{\bullet}$, Definition \ref{defn: Purely derived foliation} gives an integrable distribution $\alpha:\mathsf{DR}(\mathcal{O}_{\mathbf{M}})\rightarrow \Lambda^{\bullet,\bullet}$ such that $\alpha$ is a surjection (alias, strict) and $H^{\geq 0}(\Lambda^{1,\bullet})=0.$ On a minimal chart around every classical point of $\mathbf{M}$ we can therefore choose a representative of $\mathcal{L}^{\bullet}$ for which (\ref{eqn: Anchor})
is an inclusion of complexes.

Important for us is this so-called `realification' applied to the corresponding moduli stack of stable points in the derived Quot-scheme over the Fano variety $\P$ endowed with a relative Calabi-Yau structure of dimension $4$, with the action of a general linear group. This would be a generalization (to the relative setting) of the Calabi-Yau $4$ case treated in \cite[Proposition 12]{BKSY}. Let us recall the authors' main theorem.

%\begin{lem}
%\label{lem: BKSY LagFol}
 %   For any purely derived Lagrangian foliation on 
  %  $$[\![\mathbf{DQuot}(X)^{\bullet}/\mathrm{GL}_n(\mathbb{C})]\!],$$
  % with respect to $\omega_{im},$ and negative definite with respect to $\omega_{re},$ there exists a representative given by a globally defined sub-complex
  % $\mathcal{L}^{\bullet}\subseteq \mathbb{T}_{\mathbf{DQuot}}^{\bullet},$ which has generators in degrees $\geq 1,$ that is surjective in degrees $\geq 2.$
%\end{lem}
%From \autoref{lem: BKSY LagFol}, one may divide by this foliation, as in \autoref{lem: Quotient}. The result, is a $\mathrm{GL}(\mathbb{C})$-linearized bundle over $\mathrm{Gr}_{p,q},$ which comes with an invariant section of one half of the obstruction bundle. 
%The negative definiteness property on the cohomology can be made into
%negative definiteness on the fibers of $\mathbb{T}$‚ over closed points upon passing to an equivalent integrable distribution.

\begin{thm}\cite[c.f. Propositions 8-12]{BKSY}
\label{thm: GlobalPotential}
For any Calabi-Yau 4-fold and any choice of Hilbert polynomal, the moduli stack of stable points in the derived Quot-scheme with action by $\mathrm{Gl}(\mathbb{C})$ can be equipped with a globally defined invariant Lagrangian foliation with respect to the imaginary part and negative definite with respect to the real part of the $(-2)$-shifted symplectic form.
\end{thm}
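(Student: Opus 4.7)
The plan is to construct the foliation locally on minimal $G$-invariant Darboux charts around each classical point, observe that the two conditions (Lagrangian for $\omega_{im}$, negative-definite for $\omega_{re}$) cut out a non-empty contractible space of pointwise choices, and then globalize by a $G$-equivariant partition-of-unity argument performed after passing to the $C^{\infty}/\mathbb{R}$-realification of Construction \ref{cons: Derived to dg}.

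First I would fix a stable $\mathbb{C}$-point $[F]\in [\![\mathbf{DQuot}/G]\!]$ and choose a minimal well-presented $G$-invariant affine chart $\mathrm{Spec}(A^{\bullet})$ in the sense of Example \ref{ex: Well-presented}. For a CY $4$-fold, Serre duality identifies $\mathrm{Ext}^{3-i}(F,F)\simeq \mathrm{Ext}^{i}(F,F)^{\vee}$, so the tangent complex at $[F]$ has a three-term representative $\mathrm{Ext}^{1}\to \mathrm{Ext}^{2}\to \mathrm{Ext}^{3}$ and the strict $(-2)$-shifted form acquires the standard Darboux expression $\sum d_{dR}x_{i}\wedge d_{dR}z_{i}+\sum d_{dR}y_{j}\wedge d_{dR}y_{j}$ on the chart. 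Choosing a $G$-invariant Hermitian structure on the defining projective modules (possible because $G$ is reductive) compatible with Serre duality realifies $\omega$ as $\omega_{re}+i\,\omega_{im}$; the crucial non-trivial piece is the degree $-1$ part coming from $\mathrm{Ext}^{2}(F,F)$, where $\omega$ restricts to a complex symmetric pairing whose real part is a real symmetric form of split signature and whose imaginary part is a real symplectic form.

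Next, at each point of the chart I would perform a pointwise linear-algebraic construction: the set of real Lagrangian subspaces for $\omega_{im}$ on which $\omega_{re}$ is negative definite is the standard \emph{negative Siegel domain} inside the Lagrangian Grassmannian of the complex symplectic middle piece; it is non-empty, open, and contractible (and convex in affine Siegel coordinates). A $G$-invariant base section extends, using the freeness of the Darboux presentation and $G$-reductivity, to a strictly $G$-invariant purely derived foliation $\mathcal{L}_{\alpha}^{\bullet}\hookrightarrow \mathbb{T}^{\bullet}_{\mathrm{Spec}(A^{\bullet})}$ whose anchor \eqref{eqn: Anchor} is an injection and whose $H^{1}(\mathcal{L}^{\bullet})$ is exactly the chosen negative Lagrangian in the middle term. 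Integrability in the sense of Definition \ref{defn: Integrable distribution} is automatic in the purely derived regime, since $\mathcal{L}^{\bullet}$ lives in strictly positive cohomological degrees where no further bracket obstructions can appear, and an isotropic structure for $\omega_{im}$ is produced directly from the vanishing of the pullback of $\omega_{im}$ on the real Lagrangian.

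The main obstacle, and the heart of the argument, is the global assembly. Cover the classical Hausdorff space $|M|$ by a locally finite family of $G$-invariant minimal charts $\{U_{\alpha}\}$ carrying local sections $\mathcal{L}_{\alpha}$ of the sheaf $\mathfrak{N}_{G}^{\omega_{im}}(\mathbf{M})$ of Notation \ref{notate: Lagrangian foliations}. On overlaps, any two such sections are sections of the same fiber bundle modelled on the negative Siegel domain, so one may form the convex combination $\sum_{\alpha}\rho_{\alpha}\mathcal{L}_{\alpha}$ with respect to a $G$-invariant $C^{\infty}$ partition of unity $\{\rho_{\alpha}\}$ on $|M|$ — this is precisely where one is forced out of the algebraic and into the $C^{\infty}/\mathbb{R}$-linear category, justifying the realification step. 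Convexity of the negative Siegel domain guarantees the interpolated distribution is still a Lagrangian subspace of the correct rank with $\omega_{re}$ still negative definite, $G$-invariance persists because the partition of unity is chosen $G$-equivariantly, and the isotropic structures glue because they form a torsor over a contractible complex. The output is the desired global section of $\mathfrak{N}_{G}^{\omega_{im}}(\mathbf{M})$, which, combined with Lemma \ref{lem: Quotient}, sets the stage for realizing the stack as the derived critical locus of a globally defined shifted potential.
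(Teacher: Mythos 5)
Your proposal follows essentially the same route as the proof the paper relies on (the gluing argument of [BKSY, Props.~8--12], mirrored in the paper's own proof of Theorem~\ref{thm: Global LagFol}): local existence of a strictly $G$-invariant purely derived Lagrangian distribution on minimal invariant Darboux charts, pointwise non-emptiness and convexity/contractibility of the space of admissible choices, and globalization by a $C^{\infty}$ partition of unity after realification, with negative definiteness of $\omega_{re}$ being precisely what keeps the convex combinations admissible. One small correction: since the self-pairing of a $(-2)$-shifted form on the degree-one part of the tangent complex is \emph{symmetric} (Koszul sign rule, matching the symmetric Serre pairing $\mathrm{Ext}^2(F,F)\otimes \mathrm{Ext}^2(F,F)\to\mathbb{C}$), the imaginary part $\omega_{im}$ restricts there to a split real symmetric form rather than a real symplectic form, so your parameter space is an open convex subset of an isotropic (orthogonal) Grassmannian — concretely antisymmetric matrices $A$ with $\|A\|<1$ — rather than a Siegel domain in a Lagrangian Grassmannian; this does not affect the convexity and contractibility your gluing needs.
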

The proof of Theorem \ref{thm: GlobalPotential} relies on gluing distributions coming from different charts of the moduli stack of rigidified perfect complexes $\M(X)$ on a Calabi-Yau $4$-fold. The latter is possible since we have
allowed $C^{\infty}$-functions using standard partitions of unity argument. The negative
definiteness with respect to $\omega^{re}$ is the required condition for such gluing to hold.

\subsubsection{Stability and degenerate Poisson structures}
\label{ssec: Stability and Poisson} 
Prior to construction of the relative foliation structure on $\M(\P)$, we make a small digression to describe the existence of a degenerate Poisson shifted structure on $\M(P)$, which becomes non-degenerate on $\M_{vert}(\P)$, due to a stability condition on the objects of $\mathsf{Perf}^{vert}(\P)$ (Lemma \ref{stability}). There is a shifted symplectic isomorphism 
$\mathbb{L}_{\M_{vert}(\P)}\simeq \mathbb{L}_{\M_{vert}(\P)}^{\vee}[2],$ given by the relative Serre duality $\mathbb{E}\rightarrow \mathbb{E}^{\vee}[2],$ which follows from Lemma \ref{CY4-category} by the main result \cite[Theorem 5.5]{BD2}.
In the Appendix \ref{ssec: RelativeAtiyah} we prove the existence of the associated shifted-symplectic structure via the relative Atiyah class and the corresponding obstruction theory $\mathbb{E}$ for the non-derived moduli space $M(\P)$.

\begin{rmk}
In particular, we show that this Serre duality morphism, when lifted to the level of complexes, induces the derived Lagrangian structure, but only \emph{up to homotopy} (Lemma \ref{lem: Homotopy-canonicity}).\footnote{A similar technique of using chain-level Serre duality to realize a certain shifted-symplectic structure has appeared in a recent work of Cao, Toda and Zhao \cite[Lemma B.5]{CTZ}.} 
\end{rmk}

Let $\M:=\M^{vert}(\mathbb{P})$ denote the moduli stack of \emph{vertical} perfect complexes on $\mathbb{P}$, defined to be those which are scheme-theoretically supported along the fibers of the fibration $p:\mathbb{P}\rightarrow C.$ We denote by $i_c:X_c\hookrightarrow \mathbb{P}$ the inclusion of a fiber over $c\in C$ (c.f. Lemma \ref{lem: Pull-back divisor}). There is a well-defined map $\rho:\M\rightarrow C$ sending a perfect complex $\F^{\bullet}$ with support $S_{\mathcal{F}^{\bullet}}$ to $p(S_{\mathcal{F}^{\bullet}})\subset C.$ 
Set $\M(X_c)\subset \M$ to be the $(-1)$-shifted fiber-wise moduli stack of rigidified perfect complexes on the Calabi-Yau $3$-fold $X_c.$ 
Let $\pi_{\M}:\P\times \M\rightarrow \M$ be the projection.
Put $\M_{\mathbb{P}}$ the moduli stack of \emph{all} rigidified perfect complexes on $\mathbb{P}.$ 

Then, we view $\M\subset \M_{\mathbb{P}}$ as the sub-stack comprised of those objects $\F^{\bullet}\in \mathsf{Perf}(\P),$ the derived category of perfect complexes on $\mathbb{P}$, of the form $\F^{\bullet}\simeq i_*\mathcal{G}^{\bullet}$ for some perfect complex $\G^{\bullet}\in \mathsf{Perf}(X).$

\begin{notate}
\label{Notation1}
\normalfont
In what follows we set:  $\mathcal{X}:=X\times \M$ and $\mathcal{P}:=\mathbb{P}\times \M$.
Denote the corresponding embedding $i_{\mathcal{X}}:\mathcal{X}\hookrightarrow \mathcal{P}$, and denote by $\RHom_{\mathcal{X}}$ local derived Hom and by $R\underline{\mathrm{Hom}}_{\mathcal{X}},$ the global hom. 
\end{notate}
Consider the diagram where each $\rho_{\M}$ and $\pi_{\M}$ denote the projection from the product $\mathcal{X}$, $\mathcal{P}$ to the corresponding moduli space:
\begin{equation}
    \label{eqn: Diagram}
    \begin{tikzcd}
\M & \arrow[l,"\rho_{\M}"] \mathcal{X}\arrow[d,"\rho_X"] \arrow[r,"i_{\mathcal{X}}"] & \mathcal{P}\arrow[d,"\pi_{\mathbb{P}}"] \arrow[r,"\pi_{\M}"] & \M
\\
&  X\arrow[r,"i"] & \mathbb{P}\arrow[d,"p"] & 
\\
& & C & 
    \end{tikzcd}
\end{equation}
Assume $\widetilde{\mathcal{F}}$ is a universal perfect complex on $\mathcal{P}.$ By our assumptions, then
$\widetilde{\mathcal{F}}\simeq (i_{\mathcal{X}})_*\widetilde{\mathcal{G}},$ for a universal perfect complex $\widetilde{\mathcal{G}}$ on $\mathcal{X}.$ We may indeed assume this due to the following result.
\begin{lem}
\label{stability}Let $\M$ be defined as  moduli stack of perfect stable complexes wih respect to a polynomial stability condition (a special case is Gieseker stability for coherent sheaves).  Then any point $\mathcal{F}\in \M$ is a realized as rigidified simple complex as in Section \ref{ssec: Dg schemes of rigidified simple perfect complexes}. Moreover, for any such $\mathcal{F}\in \M$, the support of $\F$ is reduced and connected.
\end{lem}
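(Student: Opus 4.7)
The plan is to separately establish the two assertions of the lemma: first that every $\sigma$-stable $\F\in\M$ satisfies the conditions for a rigidified simple perfect complex as in Subsection \ref{ssec: Dg schemes of rigidified simple perfect complexes}, and second that its scheme-theoretic support is reduced and connected.

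For the rigidified simple property, I will use that a polynomial stability condition $\sigma=(Z,\mathcal{A})$ comes equipped with a bounded t-structure whose heart $\mathcal{A}$ contains all $\sigma$-semistable objects. Since $\F\in\mathcal{A}$, the vanishing $\mathrm{Ext}^i(\F,\F)=0$ for $i<0$ is immediate from the t-structure axioms. Simplicity $\mathrm{End}(\F)=\mathbb{C}$ follows from a standard Schur-type argument: any nonzero endomorphism $\phi\colon\F\to\F$ fits into a short exact sequence $0\to\ker\phi\to\F\to\mathrm{im}\,\phi\to 0$ in $\mathcal{A}$, and the $\sigma$-stability of $\F$ forces $\phi$ to be an isomorphism by comparing the reduced central charges (or, in the Gieseker case, the reduced Hilbert polynomials) of the sub- and quotient-objects. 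Thus $\mathrm{End}(\F)$ is a finite-dimensional division algebra over the algebraically closed field $\mathbb{C}$, hence equal to $\mathbb{C}$, and the trace map $\mathrm{Ext}^0(\F,\F)\to\mathbb{C}$ is an isomorphism. Rigidification is then performed by fixing a trivialization of $\det(\F)$ against a reference line bundle in the prescribed numerical class; this trivialization is well-defined up to the scalar $\mathbb{C}^*$-action on $\F$, which is exactly the $\mathbb{C}^*$-gerbe killed by the rigidification of \cite{STV}.

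Connectedness of the support will then follow from simplicity by contradiction: if $\mathrm{supp}(\F)=Z_1\sqcup Z_2$ decomposed as a disjoint union of closed subschemes, the idempotent $e\in\Gamma(\mathrm{supp}(\F),\mathcal{O})$ cutting out $Z_1$ would act on $\F$ as an endomorphism $e_{\F}\in\mathrm{End}(\F)$ with $0\neq e_{\F}\neq\mathrm{id}_{\F}$, contradicting $\mathrm{End}(\F)=\mathbb{C}$. For reducedness, the plan is to combine simplicity with the purity of $\F$ enforced by $\sigma$-stability. Any nilpotent section of the annihilator ideal of $\F$ acting globally nontrivially on $\F$ would produce a nonzero nilpotent endomorphism, contradicting $\mathrm{End}(\F)=\mathbb{C}$, while any such nilpotent that acts nontrivially on a cohomology sheaf of $\F$ but vanishes globally would produce a proper subsheaf of strictly smaller dimension of support, contradicting the purity that $\sigma$-stability forces on $\F$.

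The hardest part of this proof will be the reducedness step, which is genuinely subtle: there exist non-reduced projective schemes whose nilradical is a line bundle of negative degree on the reduction and therefore carries no global sections, so the global endomorphism contradiction via $\mathrm{End}(\F)=\mathbb{C}$ does not by itself close the argument. A satisfactory treatment must therefore be local: on an affine chart of $\P$ where the Fitting ideal defines the scheme-theoretic support of $\F$, one filters $\F$ by powers of the nilpotent radical of its annihilator and uses the purity enforced by $\sigma$-stability to force this filtration to terminate at the first step, so that the scheme-theoretic support coincides with its reduction. This is the key technical content beyond the standard Schur and idempotent arguments used for the other assertions.
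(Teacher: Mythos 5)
Your treatment of simplicity (Schur's lemma for a stable object of a polynomial stability condition) and of connectedness (no nontrivial idempotents in $\mathrm{End}(\F)=\mathbb{C}$) is fine and matches what the paper takes for granted. The problem is the reducedness step, where you correctly identify the difficulty but do not actually close it, and the route you sketch cannot work at the level of generality you state it. Stability plus purity does \emph{not} force the scheme-theoretic support to be reduced: generalized line bundles on ribbons, or more generally stable sheaves on non-reduced spectral curves, are pure, simple, stable, and have non-reduced support. Filtering $\F$ by powers of the nilradical $N$ of its annihilator produces a subsheaf $N\F\subset\F$ of the same dimension of support, but stability only constrains the \emph{reduced Hilbert polynomial} of that subsheaf, and there is no a priori inequality forcing $N\F=0$. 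So the ``key technical content'' you defer is not a technicality — without further geometric input the statement you are trying to prove is false.

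The input the paper uses, and which your proposal omits entirely, is that $\F$ is a \emph{vertical} object: its reduced support $X$ is (contained in) a fiber of $\P\to C$, hence a Cartier divisor whose normal bundle $\mathcal{O}_{\P}(X)|_X$ is trivial. Consequently $J=\mathcal{O}_{\P}(-X)$ restricts trivially to $X$, so $\F\otimes J$ is again stable with the \emph{same} Hilbert polynomial as $\F$. The natural map $\F\otimes J\to\F$ (from tensoring $0\to J\to\mathcal{O}_{\P}\to\mathcal{O}_X\to 0$ with $\F$) is then a morphism between stable sheaves of equal reduced Hilbert polynomial, hence zero or an isomorphism; it cannot be an isomorphism since its cokernel $\F\otimes\mathcal{O}_X$ is nonzero, so it is zero and $\F\xrightarrow{\ \sim\ }\F\otimes\mathcal{O}_X$, i.e.\ $\F$ is scheme-theoretically supported on the reduced fiber. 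This is where the fibration structure (triviality of the normal bundle of the fiber) enters irreplaceably; any correct proof must use it, and your local nilradical filtration would have to be supplemented by exactly this comparison of Hilbert polynomials to terminate at the first step.
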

\begin{proof}
Without loss of generality we specialize to the case of Gieseker stability of coherent sheaves. The treatment for the case of polynomial stability for perfect complexes is similar. The support of $\F$ is connected since $\F$ is stable. To see that the support of $\F$ is reduced, denote by $X$ the support of $\F$ with the reduced induced structure, and let $J=\mathcal{O}_{\mathbb{P}}(-X)$ be the ideal sheaf of $X$ in the fourfold $\mathbb{P}$. We have $ch(J\otimes\mathcal{O}_X)=1$, 
(restriction of ideal sheaf to the fiber X). Therefore, since $\F$ is supported on $X$, we see that $\F$ and $\F \otimes J$ have the same Hilbert polynomial. 
Tensoring the short exact sequence 
$$0\rightarrow J\rightarrow \mathcal{O}_{\mathbb{P}}\rightarrow \mathcal{O}_X\rightarrow 0,$$
by $\F,$ we get the exact sequence 
$\F\otimes J\rightarrow \F\rightarrow \F\otimes\mathcal{O}_X\rightarrow 0.$
By the stability of $\F$ and $\F\otimes J,$ the first map in the sequence is either an isomorphism or the zero map. But, the former is not possible (otherwise $\F \otimes \mathcal{O}_X = 0$), so we conclude that the second map in the sequence above is an isomorphism. This finishes the proof.
\end{proof}

Since $\P$ is a Fano $4$-fold  $\M(\P)$ is not shifted-symplectic. However,
 composing a morphism of derived stacks endowed with a Lagrangian structure with a Lagrangian fibration, one obtains an induced (relative) shifted symplectic structure. This fact appears in \cite[Proposition 1.10]{S3}, but must be adapted to the relative setting to apply to the fourfold moduli space. 

Specifically, the derived stack $\M(\mathbb{P})$ will generally only possess a shifted Poisson structure which is \emph{degenerate}. We claim it becomes non-degenerate on $\M^{vert}(\mathbb{P})\hookrightarrow \M(\mathbb{P})$, and corresponds to the desired shifted symplectic structure there.

This follows from the existence of the Lagrangian structure in Lemma \ref{lem: Lagrangian P to X}
and the following general result.

\begin{lem}
Let $Z$ be a derived Artin stack and consider a composition $L\xrightarrow{r}M\xrightarrow{p}C$ of derived Artin stacks over $Z.$ Assume they are locally of finite presentation, that $r$ has a $k$-shifted Lagrangian structure (relative to $Z$), and that $p$ has the structure of an $n$-shifted Lagrangian fibration. Then the composition $L\rightarrow C$ has a canonical induced $(k-1)$-shifted symplectic structure.
\end{lem}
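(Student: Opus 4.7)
The plan is to deduce this from the derived Lagrangian intersection theorem of \cite{PTVV}, combined with the extra structure imposed by a Lagrangian fibration. Note first that consistency of hypotheses forces $k=n$: the existence of a $k$-shifted Lagrangian structure on $r$ presupposes a $k$-shifted symplectic structure on $M$, while $p$ being an $n$-shifted Lagrangian fibration presupposes an $n$-shifted one, and the two must agree. The key geometric input is that an $n$-shifted Lagrangian fibration $p:M\to C$ makes every homotopy fibre $M_c := M \times^h_C \{c\}$ into an $n$-shifted Lagrangian in $M$, with isotropic structure canonically supplied by the fibration data; non-degeneracy there is precisely the fibration condition identifying $\mathbb{T}_{M/C}$ with $p^*\mathbb{L}_C[n-1]$.

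With this in hand, I would apply the PTVV intersection theorem fibrewise to the pair of $k$-shifted Lagrangians $r:L\to M$ and $M_c\hookrightarrow M$, yielding a canonical $(k-1)$-shifted symplectic structure on
\[
L_c \;\simeq\; L\times^h_M M_c \;\simeq\; L\times^h_C \{c\}.
\]
Concretely, the two isotropic null-homotopies of the respective pullbacks of $\omega_M$ combine in the closed-$2$-form complex of the intersection to produce a canonical closed form of degree $k-1$, whose non-degeneracy follows from the Lagrangian non-degeneracy of both inputs via the standard PTVV diagram chase. To globalise, I would relativise this pointwise construction over $C$: the Lagrangian structure on $r$ trivialises $r^*\omega_M$ inside $\mathcal{A}^{2,cl}(L,k)$, while the Lagrangian fibration datum for $p$ identifies $r^*\mathbb{T}_{M/C}$ as an isotropic subcomplex equipped with a canonical null-homotopy. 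Combining these two trivialisations in the relative de Rham complex $\mathsf{DR}(L/C)$ manufactures a closed $2$-form $\omega_{L/C}$ of degree $k-1$ on $L$ relative to $C$; since both input structures are defined over $Z$ by hypothesis, the output descends to a relative structure over $Z$.

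The hard part will be checking that these two null-homotopies genuinely assemble into a coherent \emph{closed} $2$-form in the relative de Rham complex $\mathsf{DR}(L/C)$, and that the induced morphism $\mathbb{T}_{L/C}\to \mathbb{L}_{L/C}[k-1]$ is a quasi-isomorphism. Pointwise these are routine, but global coherence along $C$ amounts to functoriality of the Lagrangian intersection construction in families, which in the $\infty$-categorical setting is technically delicate. I would handle this uniformly by passing to the framework of Lagrangian correspondences after Melani--Safronov, where the required coherences are packaged in the associativity of composition of Lagrangian correspondences: the relative $(k-1)$-shifted symplectic form on $L\to C$ then emerges directly as the symplectic form of the composed correspondence obtained from $r$ and $p$, which manifestly inherits all the compatibilities over the base $Z$.
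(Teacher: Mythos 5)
Your proposal is correct and runs on the same engine as the paper's proof --- the (relative) PTVV Lagrangian-intersection theorem, with one Lagrangian supplied by $r$ and the other manufactured from the fibration structure of $p$ --- and your observation that consistency forces $k=n$ is right (the statement is indeed loose on this point). The difference is in the packaging. You first prove the pointwise statement over each $c\in C$ and then flag the globalization as the delicate step, deferring it to the correspondence formalism; the paper instead performs the globalization in one stroke by exhibiting
\[
L\;\simeq\;(L\times_Z C)\times^{h}_{M\times_Z C}M
\]
as a Lagrangian intersection \emph{relative to $C$}: here $M\times_Z C$ carries the $k$-shifted form pulled back from $M$ and is regarded as symplectic over $C$ via the second projection, $L\times_Z C\to M\times_Z C$ is Lagrangian by base change of $r$, and $M\to M\times_Z C$ (the graph of $p$) is Lagrangian over $C$ precisely because $p$ is a Lagrangian fibration. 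This is exactly the composed correspondence you gesture at in your last paragraph, so once it is written down your fibrewise step becomes redundant --- it is recovered by base change along $\{c\}\to C$ --- and the coherence issues you worry about are absorbed into the relative form of the intersection theorem rather than needing a separate gluing argument. One small correction: the non-degeneracy condition for an $n$-shifted Lagrangian fibration identifies $\mathbb{T}_{M/C}$ with $p^*\mathbb{L}_C[n]$, not $p^*\mathbb{L}_C[n-1]$ (test against $T^*[n]C\to C$, whose relative tangent complex is $p^*\mathbb{L}_C[n]$); this slip does not affect the structure of your argument.
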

\begin{proof}
    Consider the Cartesian diagram in derived stacks,
    \[
    \begin{tikzcd}
L\arrow[d]\arrow[r]&  L\times_Z C\arrow[d]\arrow[r]& L\times C\arrow[d]
\\
M\arrow[r] & M\times_Z C\arrow[r] & M\times C.
    \end{tikzcd}
    \]
    Since $L\times_Z C\rightarrow M\times_ZC$ and $M\rightarrow M\times_Z C$ both have $n$-shifted Lagrangian structures, relative to $Z$, then since they are given as a Lagrangian intersection \cite{PTVV}, they posess an induced $(k-1)$-shifted symplectic structure.
\end{proof}
This result applies, in particular to composing the Lagrangian $r:\M(\mathbb{P})\rightarrow \mathrm{dCrit}(f)$ and the Lagrangian fibration $p:\mathrm{dCrit}(f)\rightarrow C$, since $\mathrm{dCrit}(f)$ is $(-1)$-shifted symplectic. We find $\M(\P)$ is endowed with a \emph{degenerate} $(-2)$-shifted Poisson structure. 

Indeed, since $r$ has a Lagrangian structure by Lemma \ref{lem: Lagrangian P to X}, we obtain a homotopy between $r^*\omega$ and $0$ in $\mathcal{A}^{2,cl}(\M(\mathbb{P}),-1)$ where $\omega$ is the $(-1)$-shifted symplectic form on the three-fold moduli stack. Thus, from the map $\mathbb{T}_{\M(\mathbb{P})}\rightarrow r^*\mathbb{T}_{\M(X)}[1],$ by dualization we obtain a morphism (to be interpreted as the anchor map)
$r^*\mathbb{L}_{\M(X)}[-1]\rightarrow \mathbb{L}_{\M(\mathbb{P})}.$
Thus, there is a composite morphism 
$$\varphi:\mathbb{T}_{\M(\mathbb{P})}\rightarrow r^*\mathbb{T}_{\M(X)}[1]\xrightarrow{r^*\omega}r^*\mathbb{L}_{\M(X)},$$ which induces, since $\mathbb{T}_{\M(X)}\simeq \mathbb{L}_{\M(X)}[-1],$ a morphism 
$r^*\mathbb{L}_{\M(X)}\rightarrow \mathbb{L}_{\M(\mathbb{P})}[2]$. This can be seen as defining the morphism 
$$\pi:\mathbb{L}_{\M(\mathbb{P})}\rightarrow \mathbb{T}_{\M(\mathbb{P})}[2],$$
giving the datum of a $(-2)$-shifted Poisson bivector i.e. a section $\pi\in \Gamma(\M(\mathbb{P}),\wedge^2\mathbb{T}_{\M(\mathbb{P})}[2]).$
It is not necessarily invertible, that is non-degenerate, since the above map may fail to be a quasi-isomorphism because $K_{\mathbb{P}}$ is not trivialized to $\mathcal{O}_{\mathbb{P}}.$ 

\subsection{Derived Quot-schemes and their shifted symplectic structures}
\label{ssec: Derived Quot-schemes}
For ease of construction, we focus on moduli stack of Gieseker stable coherent sheaves, with scheme theoretic support on fibers of $\mathbb{P}$. Once again we emphasize that all our constructions can be generalized easily to moduli spaces of rigidified perfect complexes (say stable pairs, or polynomial-stable perfect complexes) with scheme theoretic support on fibers of $\mathbb{P}$. We now work with derived Quot-schemes and their quotients, which 
 allows us to use results from Geometric Invariant Theory e.g. in the étale topology, stable loci of Quot-schemes are principal
$PGL_n(\mathbb{C})$-bundles over the good quotients. This extends to
derived Quot-schemes and when coupled with (the stacky version
of) the local Darboux theorem \cite{BBBJ} we reduce our initial problem of
constructing Lagrangian distributions on the stable loci of derived quotient
stacks to the problem of constructing such distributions on derived schemes.

In Lemma \ref{lem: C-linear -2} we prove they inherit the shifted symplectic structure from $\M^{vert}(\mathbb{P}).$ First we work in the $\mathbb{C}$-linear setting before proceeding to the $\mathbb{R}$-linear case (see Lemma \ref{lem: R-linear -2}).

\subsubsection{Vertical derived Quot-schemes}
Consider a smooth projective variety $\big(X,\mathcal{O}_{X}(1)\big)$ with fixed numerical polynomial $P\in \mathbb{Q}[t].$ We view the the Quot-scheme $\mathrm{Quot}(X,P)$ as the classical locus in a derived scheme $\mathrm{DQuot}_{p,q}(X,P)$ fibered over a certain Grassmannian $\mathrm{Gr}_{p,q},$ which for $i\in [p,q]$ and a coherent sheaf $\F$ on $X$, is 
$\prod_{p\leq i\leq q}\mathrm{Gr}(P(i),P_{\F}(i)),$ 
where $\mathrm{Gr}(P(i),P_{\F}(i))$ is the Grassmannian of $P(i)$-dimensional sub-spaces of an $P_{\F}(i)$-dimensional vector space $W_i:=\Gamma(X,\F(i)).$ 
Then, we would like to consider the corresponding quotient stack by the natural $GL(\mathbb{C})$-action.

By general principles necessary for the construction of derived Quot-schemes \cite{BKS,BKSY2}, for a fixed numerical polynomial $P\in \mathbb{Q}[t],$ we have for any smooth projective variety -- taken to be our Fano $4$-fold $\mathbb{P}$ -- a derived Quot-functor
\begin{equation}
    \label{eqn: DQuot functor}
\underline{\mathsf{DQuot}_P(\P)}:\mathsf{dAlg}_{\mathbb{C}}\rightarrow \mathbb{S},\hspace{1mm}C^{\bullet}\mapsto Maps(\mathrm{Spec}(C^{\bullet}),\mathrm{DQuot}_P(\P)).
\end{equation}
Functor (\ref{eqn: DQuot functor}) is a locally geometric derived stack, representable by the derived Quot $\mathbb{C}$-scheme $\mathrm{DQuot}(\P).$

Consider its quotient stack by the natural $\mathrm{GL}(\mathbb{C})$-action, $[\![\mathrm{DQuot}(\mathbb{P})/\mathrm{GL}(\mathbb{C})]\!],$ defined explicitly in (\ref{eqn: Hocolimit of DQuot stack}) below.
The latter admits a formally étale morphism to $\M(\mathbb{P})$,
\begin{equation}
\label{eqn: Psi_P}
\Psi_{\mathbb{P}}:[\![\mathrm{DQuot}(\mathbb{P})/\mathrm{GL}(\mathbb{C})]\!]\rightarrow \M(\mathbb{P}).
\end{equation}
Note (\ref{eqn: Psi_P}) maps to the derived moduli stack of \emph{all} perfect complexes on $\mathbb{P},$ 
which only possesses a shifted $\mathbb{C}$-bilinear map in the derived category of $\M(\mathbb{P})$, induced from the $2$-shifted symplectic form on $\mathbb{R}\mathbf{Perf}$ (see \cite[Theorem 2.12]{PTVV}). In particular, it does not satisfy the necessary non-degeneracy condition to give a shifted symplectic structure. 

Adapting this construction to the set-up (\ref{eqn: Diag}), one has $\mathsf{DQuot}^{vert}(\mathbb{P})$ a sub functor of (\ref{eqn: DQuot functor}) (the \emph{vertical derived Quot-functor}) defining  
$\M^{vert}(\mathbb{P})\hookrightarrow \M(\mathbb{P})$ as above. It is representable and represented by $\mathrm{DQuot}^{vert}(\P).$

\begin{lem}
\label{lem: C-linear -2}
Pull-back of the $(-2)$-shifted form on $\M(\P)$ to $[\![\mathrm{DQuot}^{vert}(\P)/\mathrm{GL}(\mathbb{C})]\!]$ and restricting to the stable locus,
the vertical derived Quot-scheme 
 $\mathrm{DQuot}^{vert}(\mathbb{P})$ has a natural $(-2)$-shifted symplectic structure. 
\end{lem}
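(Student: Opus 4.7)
The plan is to transport the $(-2)$-shifted symplectic form from the ambient moduli stack $\M^{vert}(\P)$ down to the derived Quot-scheme level through the formally étale morphism $\Psi_{\P}$ of (\ref{eqn: Psi_P}), using the standard fact that both closedness and non-degeneracy of shifted $2$-forms are preserved under formally étale pullback.

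First, I would establish the $(-2)$-shifted symplectic structure on $\M^{vert}(\P)$ itself. This is a direct consequence of Lemma \ref{CY4-category}: having shown that $\mathsf{Perf}^{vert}(\P)$ carries a Calabi-Yau dg-structure of dimension $4$, I would invoke the categorical PTVV-type construction (as recorded in \cite{BD2} and indicated in (\ref{eqn: HKR})) that produces from a $d$-Calabi-Yau dg-category $\mathcal{C}$ a $(2-d)$-shifted symplectic form on the moduli of perfect objects $\mathcal{M}_{\mathcal{C}}$. The class $\eta_{\P} \in HH_4(\mathsf{Perf}^{vert}(\P))$ built from $f^*\eta$ in the proof of Lemma \ref{CY4-category} provides the required Calabi-Yau cocycle, and its underlying $2$-form is precisely the Serre pairing $\mathbb{T}_{\M^{vert}(\P)} \xrightarrow{\sim} \mathbb{L}_{\M^{vert}(\P)}[-2]$ coming from $\mathcal{S}_{\M^{vert}(\P)} \simeq [4]$.

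Second, I would pull this form back along $\Psi_{\P}$. Since $\Psi_{\P}$ is formally étale (as is the analogous map for arbitrary $\P$ constructed in \cite{STV}, straightforwardly adapted to the vertical setting), it induces an equivalence $\Psi_{\P}^*\mathbb{L}_{\M^{vert}(\P)} \simeq \mathbb{L}_{[\![\mathrm{DQuot}^{vert}(\P)/\mathrm{GL}(\mathbb{C})]\!]}$ and hence of the associated de Rham complexes. Thus $\Psi_{\P}^*\omega_{\M^{vert}}$ is still a closed $(-2)$-shifted $2$-form, and its underlying form still induces a quasi-isomorphism $\mathbb{T} \to \mathbb{L}[-2]$, furnishing a $(-2)$-shifted symplectic structure on the quotient stack. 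Restricting to the stable locus via Lemma \ref{stability}, the rigidification reduces the $\mathrm{GL}(\mathbb{C})$-stabilizer to scalars acting trivially on the isomorphism classes, so the stable locus of $[\![\mathrm{DQuot}^{vert}(\P)/\mathrm{GL}(\mathbb{C})]\!]$ carries the desired structure as stated.

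The main obstacle I anticipate is ensuring that non-degeneracy really survives the transport from the ambient $\M(\P)$ through $\M^{vert}(\P)$. On the full moduli stack $\M(\P)$ the natural $(-2)$-shifted form is only a \emph{degenerate} Poisson bivector (cf. Subsection \ref{ssec: Stability and Poisson}), since $K_{\P}$ is not trivial; the vertical support condition, together with Lemma \ref{lem: Pull-back divisor} giving $K_{\P/C} \simeq \mathcal{O}$, is precisely what eliminates the degenerate directions and converts the Poisson bivector into a genuine symplectic form on the vertical sub-stack. Thus the subtle point is not the étale transport itself but verifying that the Hochschild class $\eta_{\P}$ genuinely induces the non-degenerate pairing on the tangent complex as computed in the Quot presentation, a check that reduces to the fiberwise Serre pairing established in the proof of Lemma \ref{CY4-category} combined with the relative HKR equivalence.
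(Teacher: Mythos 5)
Your proposal matches the paper's proof in all essentials: both transport the $(-2)$-shifted form on $\M^{vert}(\P)$ along the formally \'etale morphism $\Psi_{vert}$ (restriction of $\Psi_{\P}$ to the vertical locus), observe that de Rham closedness is preserved under pullback, and verify non-degeneracy locally around closed points via the induced equivalence of (co)tangent complexes, with the stable-locus/rigidification remark handling the $\mathrm{GL}(\mathbb{C})$-stabilizers. The only cosmetic difference is that you source the form on $\M^{vert}(\P)$ from Lemma \ref{CY4-category} together with the Brav--Dyckerhoff machinery, whereas the paper's proof cites the relative Atiyah-class construction of Lemma \ref{lem: Atiyah and -2 structure}; since the paper itself identifies these two descriptions in Subsection \ref{ssec: Stability and Poisson}, nothing substantive is changed.
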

\begin{proof}
Modify (\ref{eqn: Psi_P}) to obtain a morphism between $[\![\mathrm{DQuot}^{vert}(\mathbb{P})/\mathrm{GL}(\mathbb{C})]\!]$ and $\M^{vert}(\mathbb{P}).$ 
This is the induced morphism $\Psi_{vert}$ on the sub-moduli of vertical objects, endowed naturally with their inclusions 
\begin{equation}
    \label{eqn: DQuot square}
\begin{tikzcd}
    \mathrm{DQuot}^{vert}(\mathbb{P})\arrow[d]\arrow[r,"\Psi_{vert}"] & \M^{vert}(\mathbb{P})\arrow[d]
    \\
    \mathrm{DQuot}_{P}(\mathbb{P})\arrow[r,"\Psi"] & \M(\mathbb{P}).
\end{tikzcd}
\end{equation}
This morphism is used to pull-back the $(-2)$-shifted symplectic structure of Lemma \ref{lem: Atiyah and -2 structure} to $\mathrm{DQuot}(\mathbb{P})^{vert},$ by first pull-backing to the quotient stack and then along the natural quotient map,
\begin{equation}
    \label{eqn: dquot pb}
\mathrm{DQuot}(\mathbb{P})^{vert} \rightarrow [\![\mathrm{DQuot}(\mathbb{P})^{vert}/\mathrm{GL}(\mathbb{C})]\!]\xrightarrow{\Psi_{vert}}\M^{vert}(\P).
\end{equation}
This is possible since one may exploit the infinitesimal properties of the morphism
 $\Psi_{vert}$, by looking at the corresponding tangent complexes at a given closed point, that it induces a surjective morphism between cohomology groups of these tangent complexes. This fact directly implies it is formally étale \cite{BKS}. The result then follows by considering the cofiber sequence
$$\mathbb{T}_{\Psi_{vert}}\rightarrow \mathbb{T}_{\mathrm{DQuot}^{vert}(\mathbb{P})}\rightarrow \Psi_{vert}^*\mathbb{T}_{\M^{vert}(\mathbb{P})},$$
and the obvious induced maps of cofiber sequences,
\[
\begin{tikzcd}
\mathbb{T}_{\Psi_{vert}}\arrow[d]\arrow[r]& \mathbb{T}_{\mathrm{DQuot}^{vert}(\mathbb{P})}\arrow[d]\arrow[r] & \Psi_{vert}^*\mathbb{T}_{\M^{vert}(\mathbb{P})}\arrow[d]
\\
0\arrow[r] & \mathbb{T}_{\mathrm{DQuot}(\mathbb{P})}\arrow[r,"\simeq"] & \Psi^*\mathbb{T}_{\M(\mathbb{P})},
\end{tikzcd}
\]
giving $\mathbb{T}_{\Psi_{vert}}\simeq 0.$
Since one can pull back a differential form over
a morphism of stacks and the property of being de Rham closed
is preserved by such pull-backs, composition (\ref{eqn: dquot pb}) provides a de Rham closed $2$-form of degree $(-2)$ on $\mathrm{DQuot}^{vert}(\mathbb{P}).$
The question of its non-degeneracy is local
and is checked around every closed point $[\F]$. Namely, since $\mathbb{T}_{[\F],\M}\xrightarrow{\simeq}\mathbb{L}_{[\F],\M}[-2],$
using that $\Psi_{vert}$ is formally étale, we have a commutative diagram
\[
\begin{tikzcd}
    \mathbb{T}_{\mathrm{DQuot}^{vert}(\mathbb{P})}\arrow[d,"\simeq"] \arrow[r] & \mathbb{L}_{\mathrm{DQuot}^{vert}(\mathbb{P})}[-2]
    \\
    \Psi_{vert}^*\mathbb{T}_{\M}\arrow[r,"\simeq"] &\Psi_{vert}^*\mathbb{L}_{\M}[-2]\arrow[u,"\simeq"].
\end{tikzcd}
\]
The top horizontal map is clearly an equivalence and we conclude $\mathrm{DQuot}^{vert}(\mathbb{P})$ inherits the $\mathbb{C}$-valued $(-2)$-shifted symplectic form.
\end{proof}
\subsubsection{Associated $\mathbb{R}$-linear symplectic structures}
In order to perform gluing arguments and to prove the existence of a global Lagrangian foliation, we need to allow $C^{\infty}$-functions which permit the introduction of a certain sheaf of invariant Lagrangian foliations and make use of partitions of unity to prove the existence of global sections of this sheaf. This is achieved by proving its softness. 
In passing from the derived $\mathbb{C}$-algebraic setting to the derived $C^{\infty}$-setting, a non-degenerate $\mathbb{C}$-linear symplectic form splits into real and imaginary components. They are both $\mathbb{R}$-linear, as we now detail for our vertical Quot-scheme.
\vspace{2mm}

\noindent\textbf{Associated dg-$C^{\infty}$-manifold.}
Following construction \ref{cons: Derived to dg}, we obtain a derived $C^{\infty}$-manifold $\mathbf{DQuot}(\mathbb{P})^{vert}$ associated with $\mathrm{DQuot}^{vert}(\mathbb{P})$. By the main result of \cite{BKSY2}, it is a dg-manifold of finite-type.

\begin{notate}
\normalfont
    Throughout, we will implicitly fix $P\in \mathbb{Q}[t],$ referring to \cite[Section 1]{BKSY} for detailed explanations and we let $\underline{\mathrm{GL}}_n(\mathbb{C})$ denote the Lie
group of invertible $n\times n$-matrices with coefficients in $\mathbb{C}$, often writing $\underline{\mathrm{GL}}$, or simply $G.$ Similarly, we write $\underline{\mathrm{PGL}}.$ 
\end{notate}
The dg-manifold $\mathbf{DQuot}(\mathbb{P})^{vert}$ is a principal $\underline{\mathrm{PGL}}(\mathbb{C})$-bundle over the invariant quotient $\mathbf{DQuot}(\mathbb{P})^{vert}/\!/ \underline{\mathrm{PGL}}$, meaning we take global invariant functions. Denote the resulting dg-manifold by
$[\![\mathbf{DQuot}(\mathbb{P})^{vert}/\underline{\mathrm{GL}}]\!]$.

\begin{lem}
\label{lem: R-linear -2}
    The dg-$C^{\infty}$-manifold $\mathbf{DQuot}^{vert}(\P)$ associated to $\mathrm{DQuot}^{vert}(\mathbb{P})$ naturally inherits two $\mathbb{R}$-linear $(-2)$-shifted-symplectic structures $(\theta_{re}^{\bullet},\theta_{im}^{\bullet})$, corresponding to the real and imaginary parts of the $\mathbb{C}$-linear structure of Lemma \ref{lem: C-linear -2}.
\end{lem}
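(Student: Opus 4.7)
The plan is to pull back the $\mathbb{C}$-linear $(-2)$-shifted symplectic structure $\omega^{\bullet}$ of Lemma \ref{lem: C-linear -2} through the realification procedure of Construction \ref{cons: Derived to dg}, and then decompose the resulting complex-valued $\mathbb{R}$-linear closed $2$-form into its real and imaginary parts.

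First, I would instantiate Construction \ref{cons: Derived to dg} for $\mathsf{X}=\mathrm{DQuot}^{vert}(\P)$: the underlying classical locus is a $\mathbb{C}$-scheme $X_{0}$, and after restricting to the $\mathbb{C}$-manifold $\mathcal{M}\hookrightarrow X_{0}$ of closed points, the sequence of $\mathbb{C}$-bundles $\mathcal{O}_{X_{0}}^{-i}$ pulls back to $C^{\infty}$-modules $\mathcal{A}^{-i}$ over $\mathcal{M}_{0}$ which retain their fiberwise complex structure $J$ with $J^{2}=-\mathrm{Id}$. This gives the dg-$C^{\infty}$-manifold $\mathbf{DQuot}^{vert}(\P)$, whose graded mixed algebra of de Rham forms $\mathsf{DR}\big(\mathbf{DQuot}^{vert}(\P)\big)$ splits canonically into real and imaginary parts.

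Second, since the internal differential $d$, the de Rham differential $d_{dR}$, and the mixed differential $\epsilon$ of the graded mixed algebra are all $\mathbb{R}$-linear, they preserve this splitting. Writing $\omega^{\bullet}=(\omega^{0},\omega^{1},\ldots)$ as an element of $\mathcal{A}^{2,cl}(\mathbf{DQuot}^{vert}(\P),-2)$ and decomposing each $\omega^{m}=\theta_{re}^{m}+i\,\theta_{im}^{m}$, the cocycle relations $d\omega^{0}=0$ and $d\omega^{1+m}+d_{dR}\omega^{m}=0$ split component-wise, so $\theta_{re}^{\bullet}$ and $\theta_{im}^{\bullet}$ are each $\mathbb{R}$-linear $(-2)$-shifted closed $2$-forms.

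Third, I would establish non-degeneracy of each component separately. Fiberwise at every classical point, the non-degeneracy of $\omega^{\bullet}$ expresses that the pairing $\omega$ on the tangent complex is a $\mathbb{C}$-bilinear isomorphism $V\xrightarrow{\sim}V^{*}_{\mathbb{C}}$. The key elementary observation is that if $\mathrm{Re}\big(\omega(v,-)\big)=0$, then the identity $\omega(v,Jw)=i\,\omega(v,w)$ forces $\mathrm{Im}\big(\omega(v,-)\big)=0$, hence $\omega(v,-)=0$ and thus $v=0$. Therefore $\mathrm{Re}(\omega)$ and $\mathrm{Im}(\omega)$ are each non-degenerate $\mathbb{R}$-bilinear pairings on the realification $V_{\mathbb{R}}$. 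Applying this termwise upgrades $\Theta_{\omega^{\bullet}}$ to $\mathbb{R}$-linear quasi-isomorphisms $\Theta_{\theta_{re}^{\bullet}},\Theta_{\theta_{im}^{\bullet}}:\mathbb{T}_{\mathbf{DQuot}^{vert}(\P)}\to \mathbb{L}_{\mathbf{DQuot}^{vert}(\P)}[-2]$.

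The main obstacle is essentially bookkeeping: one must verify that the real/imaginary decomposition of the full sequence $\omega^{\bullet}$ — not merely of its underlying $2$-form $\underline{\omega}$ — is preserved by all the homotopy-coherent structures of the graded mixed de Rham complex under realification. Since every differential involved is $\mathbb{R}$-linear, this reduces to direct computation term by term.
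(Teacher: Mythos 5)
Your proposal is correct, and it reaches the conclusion by a somewhat different route than the paper. The paper's own proof is largely definitional: it presents the invariant quotient $[\![\mathbf{DQuot}^{vert}(\P)/G]\!]$ as a homotopy colimit of the simplicial diagram $\{\mathbf{DQuot}^{vert}(\P)\times G^{\times j}\}_{j\geq 0}$, computes the space of $(-2)$-shifted $2$-forms as the corresponding homotopy limit, and then simply \emph{defines} $\theta_{im}^{\bullet}=\frac{1}{2i}(\vartheta^{\bullet}-\overline{\vartheta^{\bullet}})$ and $\theta_{re}^{\bullet}=\frac{1}{2}(\vartheta^{\bullet}+\overline{\vartheta^{\bullet}})$ for the pulled-back form $\vartheta^{\bullet}=\Psi_{\P}^{vert*}(\omega_{\M^{vert}}^{\bullet})$; it never explicitly verifies that either component is non-degenerate. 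You instead route the argument through Construction \ref{cons: Derived to dg} on the derived scheme itself and supply the missing non-degeneracy check: the observation that $\omega(v,Jw)=i\,\omega(v,w)$ forces the vanishing of $\mathrm{Im}(\omega(v,-))$ whenever $\mathrm{Re}(\omega(v,-))$ vanishes (and vice versa) is exactly the right linear-algebra input, and applying it on the cohomology of the tangent complex at each classical point correctly upgrades the $\mathbb{C}$-linear quasi-isomorphism $\mathbb{T}\to\mathbb{L}[-2]$ to two $\mathbb{R}$-linear ones. What your approach buys is a complete justification of the word ``symplectic'' in the statement, which the later results (Theorem \ref{thm: Global LagFol}, where $\theta_{im}^{\bullet}$ serves as the symplectic form for the Lagrangian foliation and $\theta_{re}^{\bullet}$ must be negative definite on $H^1$ of the foliation) actually rely on; what the paper's presentation buys is the simplicial/cofibrant-replacement bookkeeping needed to make sense of these forms on the quotient \emph{stack} rather than on the dg-manifold alone, which you would still need to import when the forms are used $G$-equivariantly in the gluing arguments that follow.
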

\begin{proof}
The invariant quotient is given by a homotopy colimit of the simplicial diagram of dg-manifolds denoted $\big<\mathbf{DQuot}^{vert}(\mathbb{P})/G\big>,$ with terms $\{[\mathbf{DQuot}(\mathbb{P})^{vert}\times (G^{\times j})\}_{j\geq 0},$ i.e.
\begin{equation}
    \label{eqn: Hocolimit of DQuot stack}
[\![\mathbf{DQuot}(\mathbb{P})^{vert}/G]\!]=\mathrm{hocolim}\big<\mathbf{DQuot}^{vert}(\mathbb{P})/G\big>.
\end{equation}
Colimit (\ref{eqn: Hocolimit of DQuot stack}) is computed in the $\infty$-category of stacks.
One may compute the space of $d$-shifted $2$-forms by assuming cofibrancy in the local model structure on the category of stacks, and thus computing that
\begin{align*}
    \mathrm{Maps}\big([\![\mathbf{DQuot}^{vert}(\mathbb{P})/G]\!],\mathcal{A}^{2}_d)&\simeq \mathrm{holim}_n\mathrm{Maps}(\big<\mathbf{DQuot}^{vert}(\mathbb{P})/G\big>_n,\mathcal{A}^2_d)
    \\
    &\simeq \mathrm{holim}_n\mathcal{A}^2(\mathbf{DQuot}^{vert}(\mathbb{P})\times (G^{\times n});d).
\end{align*}

By Lemma \ref{lem: C-linear -2}, we denote the pull-back along vertical analog of (\ref{eqn: Psi_P}) by $\vartheta_{\M^{vert}}^{\bullet}:=\Psi_{\mathbb{P}}^{vert *}(\omega_{\M^{vert}}^{\bullet}),$ and naturally obtain two $\mathbb{R}$-linear forms:
\begin{equation}
\label{eqn: Imaginary}
\theta_{im}^{\bullet}:=\big[\Psi_{\mathbb{P}}^{vert*}(\omega_{\M^{vert}}^{\bullet})\big]_{im}:=\frac{1}{2i}(\vartheta_{\M^{vert}}^{\bullet}-\overline{\vartheta_{\M^{vert}}^{\bullet}}),
\end{equation}
corresponding to the imaginary part of the pull-back of the $\mathbb{C}$-linear $(-2)$-shifted form $\omega_{\M^{vert}}^{\bullet}$ on $\M^{vert}$, and the corresponding real part:
\begin{equation}
\label{eqn: Real}
\theta_{re}^{\bullet}:=\big[\Psi_{\mathbb{P}}^{vert*}(\omega_{\M^{vert}}^{\bullet})\big]_{re}:=\frac{1}{2}(\vartheta_{\M^{vert}}^{\bullet}+\overline{\vartheta_{\M^{vert}}^{\bullet}}).
\end{equation}
\end{proof}

\subsection{Existence of Lagrangian foliations}
Consider the $(-2)$-shifted dg-$C^{\infty}$-manifold $\big(\mathbf{DQuot}^{vert}(\P),\theta_{re}^{\bullet},\theta_{im}^{\bullet}\big).$ From now until the end of the section, we give all details proving the existence of a global Lagrangian foliation.

\begin{thm}
\label{thm: Global LagFol}
There exists a globally defined foliation on (the stable locus of) $[\![\mathbf{DQuot}^{vert}(\mathbb{P})/\underline{G}]\!]$ and an isotropic structure with respect to the imaginary part of the symplectic structure (\ref{eqn: Imaginary}), whose corresponding real component (\ref{eqn: Real}) is negative definite.
\end{thm}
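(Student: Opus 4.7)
The plan is to mimic the proof strategy of \cite[Propositions 8--12]{BKSY} (Theorem \ref{thm: GlobalPotential}), exploiting the fact that Lemma \ref{CY4-category} equips $\mathsf{Perf}^{vert}(\mathbb{P})$ with a Calabi--Yau structure of dimension $4$, so that the input required by the BKSY machinery is available for $\M^{vert}(\mathbb{P})$ (and hence, after pullback by $\Psi_{vert}$, for $\mathbf{DQuot}^{vert}(\mathbb{P})$). The overall strategy is to construct the desired distribution locally on $G$-invariant minimal charts, verify the strict $G$-invariance and purely derived properties on each chart, and then glue the resulting local data into a global section of the sheaf $\mathfrak{N}_{G}^{\theta_{im}^{\bullet}}(\mathbf{DQuot}^{vert}(\mathbb{P}))$ of Lagrangian foliations (Notation \ref{notate: Lagrangian foliations}) using partitions of unity in the $C^{\infty}$-topology, where the negative definiteness condition plays the decisive role in ensuring compatibility under convex combinations.

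First I would reduce to the local picture. By Lemma \ref{lem: R-linear -2}, the stable locus of $\mathbf{DQuot}^{vert}(\mathbb{P})$ carries the pair $(\theta_{re}^{\bullet},\theta_{im}^{\bullet})$ obtained as the real and imaginary parts of the pull-back via the formally \'etale morphism $\Psi_{vert}$ of the $(-2)$-shifted $\mathbb{C}$-linear symplectic form on $\M^{vert}(\mathbb{P})$. Around each classical point of $[\![\mathbf{DQuot}^{vert}(\mathbb{P})/G]\!]$ one may pick a $G$-invariant minimal well-presented chart $U = \mathrm{Spec}(A^{\bullet})$ in the sense of Example \ref{ex: Well-presented}, which by the local Darboux theorem \cite{BBBJ} can be chosen so that $(\theta_{re}^{\bullet},\theta_{im}^{\bullet})$ are strict. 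On such a chart the tangent complex sits in degrees $[-1,2]$, and the non-degenerate pairing on $H^1(\mathbb{T}_U)$ induced by $\underline{\theta}_{re}$ is a real symmetric bilinear form. Following \cite[Proposition 12]{BKSY} I would choose a maximal isotropic subbundle $E^{-}\subset E$ (where $E:=H^1(\mathbb{T}_U)$) with respect to $\underline{\theta}_{im}$ on which $\underline{\theta}_{re}$ restricts to a negative definite form; this is possible precisely because of the signature properties forced by Lemma \ref{CY4-category} (the CY-$4$ Serre duality giving a symmetric pairing on $H^1$). The subbundle $E^{-}$ then generates a strictly $G$-invariant purely derived foliation $\mathcal{L}_U^{\bullet}\hookrightarrow \mathbb{T}_U^{\bullet}$ with injective anchor, which is Lagrangian for $\theta_{im}^{\bullet}$ and negative definite for $\theta_{re}^{\bullet}$ in the sense of Definition \ref{defn: Purely derived foliation}.

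Next I would carry out the gluing step, which is the heart of the argument. One obtains a sheaf $\mathfrak{N}_{G}^{\theta_{im}^{\bullet}}\bigl(\mathbf{DQuot}^{vert}(\mathbb{P})\bigr)$ on the space of classical points of $[\![\mathbf{DQuot}^{vert}(\mathbb{P})/G]\!]$ whose sections over a $G$-invariant open $U$ are equivalence classes of strictly $G$-invariant Lagrangian distributions on $U$ satisfying the negative definiteness condition against $\theta_{re}^{\bullet}$. By (\ref{eqn: global sections}), a global section is exactly the datum sought. Working in the $C^{\infty}$-topology (Construction \ref{cons: Derived to dg}), one shows that this sheaf is soft: given two local sections $\mathcal{L}_1^{\bullet}$, $\mathcal{L}_2^{\bullet}$ whose $E^{-}_1$, $E^{-}_2$ are both negative definite for $\theta_{re}^{\bullet}$, any convex combination
\[
E^{-}_t := t E^{-}_1 + (1-t) E^{-}_2, \qquad t\in [0,1],
\]
remains negative definite (the negative definite cone is convex), hence remains maximal isotropic for $\theta_{im}^{\bullet}$ by a dimension count. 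A standard partition of unity argument subordinate to a cover by minimal $G$-invariant charts then patches the local distributions into a global strictly $G$-invariant Lagrangian foliation with the prescribed negative definiteness, yielding the claimed global section.

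I expect the main technical obstacle to be twofold. The first is verifying strict $G$-invariance and the cocycle conditions of Remark \ref{rmk: Coherent equivalences} on overlaps: one must check that the chosen isotropic structures, not just the underlying distributions, glue coherently across the full simplicial diagram $\{\mathbf{DQuot}^{vert}(\mathbb{P})\times G^{\times j}\}_{j\geq 0}$, and this requires ensuring the local Darboux charts and the chosen $E^{-}$ can be made $G$-equivariant; this uses linear reductivity of $G=\underline{\mathrm{GL}}$ to average. The second, more subtle point is that Lemma \ref{CY4-category} produces a CY-$4$ structure on $\mathsf{Perf}^{vert}(\mathbb{P})$ that is only \emph{relatively} of dimension $3$ over the base curve $C$; one must check that the induced pairing on the $H^1$ of the tangent complex of $[\![\mathbf{DQuot}^{vert}(\mathbb{P})/G]\!]$ indeed has the signature required for a negative definite Lagrangian to exist, which amounts to exhibiting a real structure on the Ext algebra of vertical perfect complexes compatible with Serre duality. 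Both issues are resolved by transporting the real structures from $\Psi_{vert}^{*}$ of the moduli of rigidified simple perfect complexes, where the corresponding statements for CY-$4$ moduli are established in \cite[Propositions 8--11]{BKSY}, and noting that formal \'etaleness of $\Psi_{vert}$ (as in the proof of Lemma \ref{lem: C-linear -2}) ensures that these structures pull back compatibly.
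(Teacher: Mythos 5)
Your overall strategy coincides with the paper's: construct the distribution on $G$-invariant $C^{\infty}$-charts following \cite[Propositions 8--12]{BKSY}, then glue with partitions of unity, with negative definiteness against $\theta_{re}^{\bullet}$ being what makes convex combinations of local choices admissible. The reduction to minimal standard form neighbourhoods via \cite{BBBJ} and the identification of the target sheaf $\mathfrak{N}_{G}^{\theta_{im}^{\bullet}}$ are also as in the paper.

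However, there is a genuine gap at the step where you assert that the subbundle $E^{-}$ ``generates a strictly $G$-invariant purely derived foliation \ldots which is Lagrangian for $\theta_{im}^{\bullet}$'' and that the local data then glue. The Lagrangian condition is the weak equivalence $\mathrm{hofib}\bigl(\mathcal{L}^{\bullet}\rightarrow \mathbb{T}\bigr)\simeq (\mathcal{L}^{\bullet})^{\vee}[n]$, and in the present setting this equivalence is induced by the Serre-duality morphism $\mathbb{E}_{\mathbb{P}}\rightarrow \mathbb{E}_{\mathbb{P}}^{\vee}[2]$, which is \emph{not canonical in universal families} (this is Observation \ref{obs: Canonicicty}): it depends on a choice of trivialization $\F\otimes K_{\P}\simeq\F$ fibrewise, and two such choices give chain maps that agree only up to homotopy. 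Consequently one cannot glue the local Lagrangian structures at the level of the subbundles $E^{-}\subset H^{1}(\mathbb{T}_U)$; one must lift everything to explicit complexes of locally free sheaves over each chart, show that any two representatives of the duality give homotopy-equivalent realizations of the cone identification (\ref{eqn: HoCone}), and then glue the resulting chain maps $f_1^{\bullet},f_2^{\bullet}$ (not the subbundles) by $\rho_{U_1}f_1+\rho_{U_2}f_2$. This is precisely the content of the paper's Lemma \ref{lem: Homotopy-canonicity} and the subsequent gluing of resolutions, which constitutes the bulk of the actual proof and is absent from your sketch. Relatedly, your formula $E^{-}_t:=tE^{-}_1+(1-t)E^{-}_2$ is not literally a convex combination of subbundles; the convexity argument must be applied to the maps whose graphs cut out the $E^{-}_i$ (equivalently to the anchors $\alpha_i^{\bullet}$), which is again why the chain-level formulation is needed. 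Your two flagged ``technical obstacles'' gesture at coherence of isotropic structures but do not identify this specific failure of canonicity, so as written the gluing step would not produce a well-defined global section of $\mathfrak{N}_{G}^{\theta_{im}^{\bullet}}(\mathbf{DQuot}^{vert}(\P))$.
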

We then prove the important Corollary \ref{shifted potential dCrit} and Theorem \ref{t-independence} below. 

\subsubsection{Proof of Theorem \ref{thm: Global LagFol}}
\begin{proof}
We prove the result in a few separate steps. To begin, recall the condition of a Lagrangian distribution being negative definite with respect to $\theta_{im}^{\bullet}$, as in Definition \ref{defn: Purely derived foliation}, is invariant with respect to equivalences of integrable distributions. This means it is a local property on the underlying space of classical points in $[\![\mathbf{DQuot}^{vert}(\mathbb{P})/\underline{\mathrm{GL}}]\!],$ and so defines a sub-sheaf 
\begin{equation}
\label{eqn: VerticalSoftSheaf}
\mathfrak{N}_{\underline{\mathrm{GL}}}^{\theta^{\bullet}}(\underline{\mathbf{DQuot}}^{vert}(\P)^{\bullet})\hookrightarrow \mathfrak{F}_{\underline{\mathrm{GL}}}^{\theta_{im}^{\bullet}}(\underline{\mathbf{DQuot}}^{vert}(\P)^{\bullet}),
\end{equation}
of all Lagrangian distributions (with respect to $\theta_{im}^{\bullet}$). We will prove that (\ref{eqn: VerticalSoftSheaf}) possesses global sections.

With this goal in mind, note that since softness of a sheaf is a local property, it is enough to show it in an arbitrary small neighborhood of each point $m\in |M|.$ It will thus suffice to work on a given $C^{\infty}$-atlas $\underline{U}$ and extend the result by gluing.

Considering the obstruction complex (\ref{eqn: ObsDecomp}) given by Lemma \ref{lem: Atiyah and -2 structure}, we now construct, over a chart $\underline{U}^{\bullet}\rightarrow \mathbf{DQuot}^{vert}(\P),$  a strictly invariant purely derived Lagrangian foliation with respect to 
(\ref{eqn: Imaginary}). It is explicitly represented by a complex $\alpha_U:\mathcal{L}^{\bullet}\rightarrow \mathbb{T}_{\underline{U}}^{\bullet},$ (over $\underline{U}^{\bullet})$.

Such an explicit description is extended, on the one hand from $\mathbb{T}_{\underline{U}^{\bullet}}\rightarrow \mathbb{T}_{\mathbf{DQuot}^{vert}(\P)},$ while on the other, from $\mathbb{T}_{\underline{U}^{\bullet}}\rightarrow \mathbb{T}_{[\![\underline{U}^{\bullet}/G]\!]}.$ In this way, we obtain a desired extension to the invariant quotient stack $[\![\mathbf{DQuot}^{vert}(\P)/G]\!],$ as depicted via:  
\[
\begin{tikzcd}
\mathcal{L}^{\bullet}\arrow[r,"\alpha_{U}"]& \mathbb{T}_{\underline{U}}^{\bullet}\arrow[dr,"q_U"]\arrow[r] & \mathbb{T}_{\mathbf{DQuot}(\mathbb{P})^{vert}}\arrow[r,"q"] & \mathbb{T}_{[\![\mathbf{DQuot}(\mathbb{P})^{vert}/G]]}
\\
& & \mathbb{T}_{[\![\underline{U}/G]\!]}\arrow[ur] & &
\end{tikzcd}
\]
where $q_{\underline{U}}$ and $q$ are the canonical quotient maps. 
By appropriately modifying the gluing construction of \cite{BKSY} to a $C^{\infty}$-atlas consisting of such charts $\underline{U}^{\bullet},$  we obtain a globally defined sub-complex 
$\alpha:\mathscr{L}^{\bullet}\rightarrow \mathbb{T}_{\M^{vert}(\mathbb{P})},$ for the derived moduli stack of vertical perfect complexes. Taking the quotient $\mathbb{T}_{\M}/\mathscr{L}^{\bullet},$ we claim that
\begin{equation}
    \label{eqn: HoCone}
\gamma:cone(\alpha:\mathscr{L}^{\bullet}\rightarrow \mathbb{T}_{\M})\simeq \mathscr{L}^{\vee},
\end{equation}
holds up to homotopy, \emph{globally} over $\M$.

Now, (\ref{eqn: HoCone}) would be globally true if Serre-duality was canonical in universal families, as mentioned in Observation \ref{obs: Canonicicty}. However, we modify the above gluing over a given affine chart $\underline{U}$  by working locally in the derived category where morphisms (e.g. those of the form (\ref{eqn: HoCone}) )can be lifted to homotopy
classes of maps of complexes of locally free sheaves\footnote{Here we use the fact that there exists one smooth quasi-projective scheme underlying
the derived Quot-scheme, which ensures there is enough locally free sheaves to produce $3$-term locally free resolutions of the virtual cotangent bundles (\ref{eqn: ObsDecomp}).}. This procedure shows that the required Serre-duality isomorphism, while non-canonical, intertwines the dualities at the level of representative by a complexes, up to homotopy equivalence. 

For such $\underline{U}^\bullet=Spec(R)$ we may look at one closed point corresponding to $Spec(R)\rightarrow [\![Spec(R)/G]\!].$ The cotangent complex looks like $\mathbb{L}_{R}\rightarrow \mathfrak{g}^*$ where the latter is trivial bundle over $Spec(R)$ with fiber $\mathfrak{g}^*.$ 

Roughly speaking, we lift the morphism induced by Serre-duality $\mathbb{E}_{\mathbb{P}}\xrightarrow{S}\mathbb{E}_{\mathbb{P}}^{\vee}[2],$ in $R\underline{\mathrm{Hom}}_{\M}(\mathbb{E},\mathbb{E}^{\vee}[2]),$ up to weak-equivalence at the level of complexes. Up to homotopy, this is essentially equivalent to the more explicit procedure of fixing an isomorphism. $a_1:\mathcal{F}\otimes K_{\P}\simeq \F$. In fact, for any such $a_1,a_2$ giving the required Serre-dualities $S_1,S_2$ for (\ref{eqn: ObsDecomp}), i.e.
$$Ext_{\P}^i(\F,\F)\simeq Ext_{\P}^{4-i}(\F,\F\otimes K_{\P})^{\vee}\simeq Ext_{\P}^{4-i}(\F,\F)^{\vee},\hspace{2mm} (\text{non-canonically}),$$
we obtain
explicit representative chain maps $a_1^{\bullet},a_2^{\bullet}$ which both realize the \emph{same} weak-equivalence $\gamma$ induced from $\alpha$. Then, their difference provides a homotopy equivalent realization of (\ref{eqn: HoCone}) at the level of complexes.
In this way, via the $(-2)$-shifted symplectic structure $\vartheta^{\bullet}=(\vartheta_{2}^0,\vartheta_{2}^1,\cdots)$ of Lemma \ref{lem: C-linear -2} (and Lemma \ref{lem: R-linear -2}), its free term $\vartheta_{2}^0$ (care is needed to properly restrict to $Spec(R)$, as in Remark \ref{rmk: Inheriting -2} below), is then a section of $\mathbb{L}_{R}.$ Crucially, it is the following term $\vartheta_{2}^1$ when restricted to the first degeneracy map $Spec(R)\rightarrow Spec(R)\times G$ in the simplicial presentation (\ref{eqn: Hocolimit of DQuot stack}), that is responsible for the Serre-pairing e.g. of $Ext^0$ and $Ext^4$.

In this way, and combined with (\ref{eqn: Psi_P}), as well as (\ref{eqn: DQuot square}) and (\ref{eqn: dquot pb}), we obtain a globally defined $Gl$-invariant Lagrangian foliation, mapping to all of $\M(\P)$ via the composition
$$\mathscr{L}\rightarrow [\![\mathbf{DQuot}(\mathbb{P})^{vert}/Gl]\!]\rightarrow \M^{vert}(\mathbb{P})\rightarrow \M(\P).$$
Dividing out on the basis of Lemma \ref{lem: Quotient}, we describe 
$[\![\mathbf{DQuot}^{vert}(\mathbb{P})/GL]\!]//\mathscr{L},$
as a stacky $Gl$-quotient of an obstructed finite-type dg-manifold.

\subsubsection*{Existence over a single chart}
For the remainder of this section, $G$ is taken to be $\underline{\mathrm{GL}}_n(\mathbb{C}),$ for appropriate $n$. Consider an atlas of $G$-invariant dg-affine manifolds such that $\mathbf{DQuot}^0(\mathbb{P})^{vert}$ admits a good quotient. For every $G$-invariant chart $U:=Spec(R^{\bullet})$ on $\mathbf{DQuot}^{\bullet}(\mathbb{P})^{vert}$, we obtain a simplicial set $\mathfrak{L}([\![U/G]\!])$ and look at the equivalence classes of integrable distributions on arbitrary $G$-invariant affine atlases.

This leads to a sheaf $\mathfrak{L}_{G}(\mathbf{DQuot}(\mathbb{P})^{vert})$ on the space $|M|$ of classical points of $[\![\mathbf{DQuot}^{\bullet}(\mathbb{P})^{vert}/G]\!],$ such that
\begin{equation}
    \label{eqn: global sections}
\Gamma(|M|,\mathfrak{L}_{G}(\mathbf{DQuot}(\mathbb{P})^{vert}))\simeq \pi_0\big(Maps([\![\mathbf{DQuot^{\bullet}(\mathbb{P})^{vert}}/G]\!],\mathfrak{L})\big).
\end{equation}
By (\ref{eqn: Hocolimit of DQuot stack}), we have
$$Maps([\![\mathbf{DQuot}(\mathbb{P})^{vert})/G]\!],\mathfrak{L})\simeq \underset{j\geq 0}{\mathrm{holim}}\hspace{1mm}\mathfrak{L}\big(\mathbf{DQuot}(\mathbb{P})^{vert})\times (G^{\times j})\big),$$
and in particular, a $0$-simplex is an integrable distribution on the underlying affine dg-manifold together with a coherent system of weak-equivalences between all possible pull-backs of this distribution to the simplices $\mathbf{DQuot}(\mathbb{P})^{vert}\times (G^{\times j}),$ for $j\geq 0.$

Consider the natural sub-sheaves  
\begin{equation}
    \label{Subshvs1}
\mathfrak{F}_{G}(\mathbf{DQuot}(\mathbb{P})^{vert})\subset \mathfrak{L}_G(\mathbf{DQuot}(\mathbb{P})^{vert}),
\end{equation}
consisting of equivalence classes of integrable distributions which Zariski locally are strictly $G$-invariant derived foliations, as well as 
\begin{equation}
    \label{Subshvs2}\mathfrak{F}_G^{\vartheta_{\M^{vert}}}(\mathbf{DQuot}(\mathbb{P})^{vert})\subset \mathfrak{F}_G(\mathbf{DQuot}(\mathbb{P})^{vert}),
    \end{equation}
consisting of sections whose corresponding foliations are isotropic with respect to $\vartheta_{\M}^{\bullet}$. Choosing isotropic structures on isotropic foliations gives us another sheaf 
$\widetilde{\mathfrak{F}}_G^{\vartheta_{\M^{vert}}}(\mathbf{DQuot}(\mathbb{P})^{vert}),$ which comes with a natural forgetful map
$$\widetilde{\mathfrak{F}}_G^{\vartheta_{\M^{vert}}}(\mathbf{DQuot}(\mathbb{P})^{vert})\rightarrow \mathfrak{F}_G^{\vartheta_{\M^{vert}}}(\mathbf{DQuot}(\mathbb{P})^{vert}).$$
We also denote by 
\begin{equation}
    \label{eqn: Sheaf of Lags}
    \mathfrak{N}_{G}^{\vartheta_{\M^{vert}}}(\mathbf{DQuot}(\mathbb{P})^{vert})\subset \mathfrak{F}_{G}^{\vartheta_{\M^{vert}}}(\mathbf{DQuot}(\mathbb{P})^{vert}),
    \end{equation}
the subsheaf consisting of isotropic distributions and isotropic structures that are Lagrangian.

\begin{lem}
\label{lem: Homotopy-canonicity}
Let $\underline{U}$ be a $C^{\infty}$ chart of our moduli space. Then using locally free resolutions over $\underline{U},$ there exists a complex of locally free sheaves $E^{\bullet}\xrightarrow{\simeq} \mathbb{E}_{\P},$ 
and a representation $L_U^{\bullet}\rightarrow \mathbb{T}_{\underline{U}},$ of the derived foliation, such that the isomorphism \emph{(\ref{eqn: HoCone})} holds up to homotopy.
\end{lem}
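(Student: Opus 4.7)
The plan is to work locally on a single $C^{\infty}$-chart $\underline{U}$ and exploit the fact that the derived Quot-scheme $\mathbf{DQuot}^{vert}(\P)$ sits over a smooth quasi-projective ambient scheme, so there are enough locally free sheaves to construct bounded locally free resolutions of the virtual (co)tangent bundles. First I would shrink $\underline{U}$ if necessary and pick a finite locally free resolution $E^{\bullet}\xrightarrow{\simeq}\mathbb{E}_{\P}$ of the obstruction complex supplied by Lemma \ref{lem: Atiyah and -2 structure}. Dualising and shifting, $(E^{\bullet})^{\vee}[2]$ simultaneously serves as a strict representative of $\mathbb{E}_{\P}^{\vee}[2]$, and the cofiber sequence defining the integrable distribution is modelled at chain level by a term-wise split monomorphism of locally free sheaves $\alpha_U\colon L_U^{\bullet}\hookrightarrow \mathbb{T}_{\underline{U}}$ over $\underline{U}$.

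Next I would lift the non-canonical Serre duality equivalence $S\colon \mathbb{E}_{\P}\simeq \mathbb{E}_{\P}^{\vee}[2]$ furnished by Lemma \ref{CY4-category} to an honest chain map $S^{\bullet}\colon E^{\bullet}\to (E^{\bullet})^{\vee}[2]$. Existence of such a lift follows from the identification $\operatorname{Hom}_{D(\underline{U})}(E^{\bullet},(E^{\bullet})^{\vee}[2])\simeq H^{0}\bigl(R\operatorname{Hom}(E^{\bullet},(E^{\bullet})^{\vee}[2])\bigr)$, using that our locally free representative is $K$-projective (up to further shrinking). The trivialisation $\eta_{\P}:=f^{*}\eta\in H^{0}(\P,K_{\P})$ produced by Lemma \ref{lem: Pull-back divisor} supplies, on each generic Calabi--Yau fibre $X_c$, a fibrewise identification $a\colon \F\otimes K_{\P}|_{X_{c}}\simeq \F$, and any two such trivialisations $a_{1},a_{2}$ differ by a nowhere-vanishing section, so the corresponding chain representatives $a_{1}^{\bullet},a_{2}^{\bullet}$ of $S$ agree in $H^{0}$ and are therefore connected by an explicit chain homotopy.

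With $S^{\bullet}$ fixed I would then construct a representative $\gamma^{\bullet}\colon \mathrm{cone}(\alpha_{U})\to (L_U^{\bullet})^{\vee}$ at chain level by composing the natural projection $\mathrm{cone}(\alpha_{U})\to \mathbb{T}_{\underline{U}}$ (suitably shifted) with the Serre duality $S^{\bullet}$ and the dual $\alpha_{U}^{\vee}$. Isotropicity of $L_U^{\bullet}$ with respect to the imaginary part $\theta_{im}^{\bullet}$ from \eqref{eqn: Imaginary} guarantees that the induced composite $L_U^{\bullet}\to (L_U^{\bullet})^{\vee}$ (with the appropriate shift) is nullhomotopic, so $\gamma^{\bullet}$ is well-defined up to such a nullhomotopy, and the non-degeneracy of the $(-2)$-shifted symplectic form forces $\gamma^{\bullet}$ to be a quasi-isomorphism. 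For bounded complexes of locally free sheaves over $\underline{U}$ this is the same as a chain homotopy equivalence, which yields the desired representative of \eqref{eqn: HoCone}.

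The main obstacle I expect is packaging the ambiguity in the lift $S^{\bullet}$ so as to remain $G$-equivariant and compatible with the simplicial diagram $\{\mathbf{DQuot}^{vert}(\P)\times G^{\times j}\}_{j\geq 0}$ in \eqref{eqn: Hocolimit of DQuot stack}, since the pull-back $\Psi_{\mathbb{P}}^{vert*}\omega_{\M^{vert}}^{\bullet}$ only delivers the Serre pairing between $\operatorname{Ext}^{0}$ and $\operatorname{Ext}^{4}$ at the level of the first degeneracy map $\operatorname{Spec}(R)\to \operatorname{Spec}(R)\times G$. To handle this I would demand the chain-level nullhomotopies above be chosen $G$-equivariantly and to descend along all simplicial degeneracies; this is exactly the compatibility needed so that the local homotopy equivalences $\gamma^{\bullet}$ glue, using negativity of $\theta_{re}^{\bullet}$ to produce the required partition-of-unity arguments, into a global section of the sheaf \eqref{eqn: VerticalSoftSheaf} demanded by Theorem \ref{thm: Global LagFol}.
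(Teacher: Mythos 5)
Your proposal follows essentially the same route as the paper's proof: choose locally free resolutions of $\mathbb{E}_{\P}$ over the chart, lift the non-canonical Serre duality $S:\mathbb{E}_{\P}\to\mathbb{E}_{\P}^{\vee}[2]$ to an explicit chain map, show that different choices of trivialisation yield chain-homotopic realizations, and then present $\gamma$ on $\mathrm{cone}(\alpha_U)$ at the level of complexes (the paper uses a sufficiently negative further resolution $\widetilde{E}^{\bullet}$ and roof diagrams where you invoke $K$-projectivity and a scalar-ambiguity argument, but these are interchangeable devices). One small imprecision: two trivialisations differ by a nonzero scalar, so the induced chain maps do not literally ``agree in $H^{0}$''---rather, both are quasi-isomorphisms realizing the same weak-equivalence class, which is what the homotopy-equivalence of the resulting cones actually requires; the gluing/equivariance issues you raise at the end belong to the subsequent partition-of-unity step of Theorem \ref{thm: Global LagFol}, not to the lemma itself.
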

As we may work locally around a chart $\underline{U}\subset \M,$ we represent morphisms in the derived category of perfect complexes as homotopy classes of maps of complexes of locally free sheaves.
Using the standard notation for complexes $E^{\bullet}:=\{\ldots \rightarrow E^i\rightarrow E^{i+1}\rightarrow\cdots\},$
with $E^i$ a locally free sheaf in degree $i$, we write the dual complex as $E_{\bullet}:=(E^{\bullet})^{\vee},$ reserving `bullet' notations e.g. $f^{\bullet},$ for morphisms of complexes. 

\begin{proof}[Proof of Lemma \ref{lem: Homotopy-canonicity}]
Given a $C^{\infty}$-chart $\underline{U}$, set $\mathcal{U}$ to be the underlying classical $C^{\infty}$-manifold. Since all derived $C^{\infty}$-manifolds are locally fibrant their dg-$C^{\infty}$-rings of functions are almost free. Thus, we may always find local representative complexes of a derived foliation
on $\underline{\mathbf{DQuot}}^{vert}(\mathbb{P})$ by working over the charts $\underline{U}^{\bullet}$ where such a representative exists.
Our first task is to construct a representative with the homotopy-theoretic properties we want, and then to prove they can be glued together. 

Assume there is an étale neighbourhood $\mathcal{U}\hookrightarrow \M$ (possibly shrinking, and trimming terms of the complex if needed) we get the $(-2)$-shifted structures given by Lemma \ref{lem: C-linear -2} (also, Lemma \ref{lem: R-linear -2}) descend to $\mathcal{U},$ via the methods of \cite{BBBJ}. 
Specifically, we make use of a well-known general result on the structure of derived
Artin stacks and their stabilisers, to pull-back forms to a suitable atlas $\mathcal{U},$ but as it plays an important part in our proof below,
we provide some brief clarifications.

\begin{rmk}
\label{rmk: Inheriting -2}
On a minimal standard form neighbourhood of $[\F]\in \M^{vert}$, \cite[Theorem 2.8]{BBBJ} gives a derived affine scheme $Spec(R)$ and a smooth morphism to $\M,$ such that $\mathbb{L}_{Spec(R)/\M}\simeq L,$ a line bundle on $Spec(R)$ implying at the level of dg-structure sheaves, this map alters only the degree $1$ component. The pull-back of the $(-2)$-form does not however descend to $Spec(R),$ since $H^3(\mathbb{T}_{Spec(R)},\delta)$ is non-vanishing, due to the fact we removed $L[-1]$ from $\mathbb{L}_{\M}$ but not its shifted dual $L^*[3]$. 
Using Theorem 2.10 loc.cit, we can replace $Spec(R)$ by another suitable derived affine $W$ for which $\mathbb{L}_{Spec(R)/W}\simeq\mathbb{L}_{Spec(R)/\M}^{\vee}[3]\simeq L^*[3].$ Then, possibly shrinking the neighbourhood of $[\F]$, the shifted form does descend to $W.$
\end{rmk}
The morphism $\mathbb{E}_{\mathbb{P}}\xrightarrow{S}\mathbb{E}_{\mathbb{P}}^{\vee}[2]$ in $R\underline{\mathrm{Hom}}_{\M}(\mathbb{E},\mathbb{E}^{\vee}[2]),$ 
can be represented at the level of complexes resolving the obstruction complex:
\begin{equation}
    \label{eqn: Compat1}
  a:E^{\bullet}\rightarrow \mathbb{E}_{\mathbb{P}},\hspace{3mm}\text{ such that }\hspace{1mm} 
\begin{tikzcd}
    E^{\bullet}\arrow[d,"a","\simeq"'] \arrow[r,"(-)^{\vee}"]& E_{\bullet}[2]
    \\
    \mathbb{E}_{\mathbb{P}}\arrow[r] & \mathbb{E}_{\mathbb{P}}^{\vee}[2],\arrow[u,"a^{*}","\simeq"']
\end{tikzcd}
\end{equation}
commutes up to homotopy, where $a^*:=a^{\vee}[2].$
This gives a natural morphism  
$a^{\vee}[2]\circ S\circ a:E^{\bullet}\rightarrow E_{\bullet}[2].$
It may not be a genuine map of complexes but upon resolving further by sufficiently negative complex 
$\psi^{\bullet}:\widetilde{E}^{\bullet}\rightarrow E^{\bullet},$
there is a genuine resolution $\widetilde{E}^{\bullet}$ of the cotangent complex of the moduli space, lifting the Atiyah class morphism given by the natural homotopy-commutative diagrams:
\begin{equation}
\label{eqn: Resoln}
\begin{tikzcd}
\widetilde{E}^{\bullet}\arrow[d,"\psi^{\bullet}"]\arrow[dr,"e^{\bullet}",dotted] & 
    \\
    E^{\bullet}\arrow[r,"a^*\circ S\circ a"]\arrow[d,"a"] & E_{\bullet}[2]
    \\
    \mathbb{E}_{\mathbb{P}}\arrow[r,"S"] & \mathbb{E}_{\mathbb{P}}^{\vee}[2]\arrow[u,"a^*"],
\end{tikzcd}\hspace{5mm} \begin{tikzcd}
\widetilde{E}^{\bullet}\arrow[d,"a\circ \psi^{\bullet}"] \arrow[dr,"\widetilde{e}^{\bullet}",dotted]&
    \\
    \mathbb{E}_{\mathbb{P}}\arrow[r,"(\ref{eqn: At_PAbs})"] & \mathbb{L}_{\M}.
\end{tikzcd}
\end{equation}
This gives a presentation of Serre-duality morphisms $S:\mathbb{E}_{\mathbb{P}}\rightarrow \mathbb{E}_{\mathbb{P}}^{\vee}[2],$ as morphism of complexes of locally free sheaves,
\begin{equation*}
\begin{tikzcd}
\widetilde{E}^{\bullet}\arrow[d,"\simeq"] \arrow[r,"(\psi^{\bullet})^*\circ e^{\bullet}"] & \widetilde{E}_{\bullet}[2]
\\
\mathbb{E}_{\mathbb{P}}\arrow[r] & \mathbb{E}_{\mathbb{P}}^{\vee}[2]\arrow[u,"(\psi^{\bullet})^*\circ a^*","\simeq"'],
\end{tikzcd}\hspace{2mm}\text{ with }\hspace{1mm}
(\psi^{\bullet})^*:=\psi_{\bullet}[2].
\end{equation*}
Understood via pull-back over $\underline{U}^{\bullet},$ the map $S|
_U$ is viewed as giving the chain-maps:
\[
\begin{tikzcd}
   \cdots\arrow[r] & \widetilde{E}^{-2}\arrow[d,"S"] \arrow[r] & \widetilde{E}^{-1}\arrow[d,"S"]\arrow[r] & \widetilde{E}^{0}\arrow[d,"S"]\arrow[r] & \cdots
    \\
    \cdots\arrow[r] & \widetilde{E}_0\arrow[r] & \widetilde{E}_1\arrow[r] &\widetilde{E}_2\arrow[r] & \cdots
\end{tikzcd}
\]
Any two representatives $\widetilde{E}_1^{\bullet},\widetilde{E}_2^{\bullet}$ of $S:\mathbb{E}_{\mathbb{P}}\rightarrow \mathbb{E}_{\mathbb{P}}^{\vee}[2],$ are related by a chain homotopy and give homotopy-equivalent representations of the same complex defining the Lagrangian foliation structure over $\underline{U}.$ To see this, assume we have two resolutions by locally free sheaves $E_1^{\bullet}\xrightarrow{\simeq}\mathbb{E}_{\P}\xleftarrow{\simeq}E_2^{\bullet}.$ Then, there exists a complex $E^{\bullet}$ such that
\[
\begin{tikzcd}
& \arrow[dl,"r_1^{\bullet}", "\simeq"'] E^{\bullet}\arrow[dr,"r_2^{\bullet}"] & 
\\
E_1^{\bullet}& &E_2^{\bullet}.
\end{tikzcd}
\]
Then there are two explicit representatives of $S:\mathbb{E}_{\P}\rightarrow \mathbb{E}_{\P}^{\vee}[2],$ given by the above construction as
$(r_1)_{\bullet}[2]\circ r_1^{\bullet}:E^{\bullet}\rightarrow E_{\bullet}[2],$ and $(r_2)_{\bullet}[2]\circ r_2^{\bullet}.$
They represent the same lift of Serre-duality map $S$ at the level of complexes in the derived category, so in particular, we also have for every $t\in \mathbb{C}$, 
$$s_{t}^{\bullet}:=t\big[(r_1)_{\bullet}[2]\circ r_1^{\bullet}\big]+(1-t)\cdot \big[(r_2)_{\bullet}[2]\circ r_2^{\bullet}\big]:E^{\bullet}\rightarrow E_{\bullet}[2],$$
does as well. 
Now, take the homotopy quotient of $\mathbb{T}_{\underline{U}^{\bullet}}$ by $L_U^{\bullet}.$ In particular, we prove the isomorphisms given by taking the homotopy kernel  
$$\gamma:hofib\big(L_{U}^{\bullet}\xrightarrow{\alpha}^{\bullet}\mathbb{T}_{\underline{U}^{\bullet}}\rightarrow \mathbb{T}_{[\![\underline{U}/G]\!]}\big)\simeq L_{\underline{U}^{\bullet}}^{\vee}[-3],$$
hold up to coherent homotopy on $\underline{U}^{\bullet}.$
It may be computed by taking a surjective representation $C^{\bullet}\twoheadrightarrow \mathbb{T}_{\underline{U}^{\bullet}}$ of $\alpha^{\bullet},$ and computing the ordinary kernel.
This is aided by the explicit resolutions above and the chain-level realization of Serre-duality morphism $S$ as a homotopy-commutative diagram to lift
$\gamma_{U}:\mathrm{Cone}(\alpha_U)\rightarrow L_{\underline{U}}^{\vee},$
to a (shifted) chain-level homotopy equivalence. That is, by the above resolutions over a given $\underline{U}$, we may represent 
$\gamma_{\underline{U}}:Cone(\alpha_{U})\rightarrow \mathbb{G}_{X}^{\vee}|_{U},$
as a composition 
\begin{equation}
    \label{eqn: Ho-canonical}
\gamma_{\underline{U}}\simeq r_{U}^{\bullet}\circ (l_{U}^{\bullet})^{-1},\hspace{2mm}\text{ where }\hspace{1mm}
\begin{tikzcd}
& \arrow[dl,"l_{U}^{\bullet}", "\simeq"'] C^{\bullet}\arrow[dr,"r^{\bullet}"] & 
\\
Cone(\alpha)\arrow[rr,"\gamma",dotted] & & \mathbb{G}_{X}^{\vee},
\end{tikzcd}
\end{equation}
and where the complex $C^{\bullet}$, is constructed from the sufficiently negative resolutions 
$\widetilde{L}^{\bullet}\rightarrow L^{\bullet}\rightarrow \mathbb{G}_X,$ and using (\ref{eqn: Compat1}) and (\ref{eqn: Resoln}):
\[
\begin{tikzcd}
    \widetilde{L}^{\bullet}\arrow[d]\arrow[r,"\widetilde{\alpha}^{\bullet}"] & \widetilde{E}^{\bullet}\arrow[d,
"e^{\bullet}"]\arrow[r] & \widetilde{C}^{\bullet} 
    \arrow[d]
    \\
L^{\bullet}\arrow[r,"\alpha^{\bullet}"]\arrow[d] & E^{\bullet} \arrow[d] \arrow[r] & C^{\bullet} \arrow[d]
    \\
    \mathbb{G}_X\arrow[r,"\alpha"] & \mathbb{T}_{U} \arrow[r] & Cone(\alpha)
\end{tikzcd}
\]
with $\widetilde{e}^{\bullet}:\widetilde{E}^{\bullet}\rightarrow \mathbb{T}_{\underline{U}}$ given by  (\ref{eqn: Resoln}) pulled back to $\underline{U}.$
We describe the computation of the homotopy quotients explicitly, and argue homotopic representative complexes give isomorphic quotients. 

To this end, note that locally on $\underline{U}$, since $\mathcal{O}_{\underline{U}}^{\bullet}$ is  almost free, we may explicitly choose generating bundles for our complexes: let 
$\mathbb{T}_{\underline{U}^{\bullet}}$ be generated by $\mathbb{T}_{\underline{U}^0},E',\{F^k\}_{k\geq 2}$ and $L_{\underline{U}^{\bullet}}$ by $\{E,E^{k\geq 2}\}.$ Let $E''$ be a compliment of $E$ in $E'.$
Then, $\mathbb{T}_{\underline{U}^{\bullet}}/L_{U}$ is generated by $\mathbb{T}_{\underline{U}^0}$ and $E'/E.$ 
Denoting by 
$C^{\bullet}$ the surjective homotopy kernel of $\mathbb{T}_{\underline{U}^{\bullet}}\xrightarrow{\gamma}\mathbb{T}_{\underline{U}^{\bullet}}/L_U,$ we may write explicit generating bundles as:
$\mathbb{T}_{\underline{U}^0}$ in degree $0$, $E\oplus E''\oplus \mathbb{T}_{\underline{U}^0}[-1]$ in degree $1$, $F^2\oplus E'/E[-1]$ in degree $2$ and $\{F^{k\geq 3}\},$ in higher degrees. The morphism
$\alpha_U:L_{\underline{U}^{\bullet}}\rightarrow \mathbb{T}_{\underline{U}},$ factors through the quasi-isomorphism $L_{\underline{U}^{\bullet}}\xrightarrow{\simeq} C^{\bullet}$ by its construction, with its canonical map to $\mathbb{T}_{\underline{U}^{\bullet}}$ sending $\mathbb{T}_{\underline{U}^0}$ and $E'/E[-1]$ to zero.

Let $E_1^{\bullet}\simeq L_U$ and $E_2^{\bullet}\simeq L_U,$ be any two explicit representatives of the Lagrangian foliation in the chart $\underline{U}^{\bullet}$. The above construction gives $C_1^{\bullet},C_2^{\bullet}$ corresponding to the homotopy kernels with corresponding canonical morphisms of \emph{complexes}, given by composition $E_i^{\bullet}\xrightarrow{\alpha_i^{\bullet}}\mathbb{T}_{\underline{U}^{\bullet}}$ for $i=1,2$ factoring in the same manner. Summarizing, we have the diagram of complexes,
\[
\begin{tikzcd}
    & C_1^{\bullet}\arrow[dr] & & \arrow[dl] C_2^{\bullet} & 
    \\
    E_1^{\bullet}\simeq L_U\arrow[ur,"\simeq"] \arrow[rr] & & \mathbb{T}_{\underline{U}^{\bullet}} & & \arrow[ll]\arrow[ul,"\simeq"] L_U\simeq E_2^{\bullet}.
\end{tikzcd}
\]
The chain homotopy $H^{\bullet}:E_1^{\bullet}\rightarrow E_2^{\bullet},$ induces another $h^{\bullet}:C_1^{\bullet}\rightarrow C_2^{\bullet},$ such that
\[
\begin{tikzcd}
C_1^{\bullet}\arrow[r,"h^{\bullet}"]& C_2^{\bullet}
    \\
    E_1^{\bullet}\arrow[u,"\simeq"] \arrow[r,"H^{\bullet}"] & E_2^{\bullet}\arrow[u,"\simeq"],
\end{tikzcd}
\]
is commutative. In particular, letting $f_i^{\bullet}:E_i^{\bullet}\xrightarrow{\simeq} C_i^{\bullet},i=1,2$ denote the quasi-isomorphisms lifting to chain-level maps which are denoted the same, and since $H^{\bullet}$ is a chain-homotopy equivalence, the required equivalence is
$$h^{\bullet}:=f_2^{\bullet}\circ H^{\bullet}\circ (f_1^{\bullet})^{-1}:C_1^{\bullet}\rightarrow C_2^{\bullet}.$$
\end{proof}
Lemma \ref{lem: Homotopy-canonicity} constructs locally free resolutions representing the desired foliation on a given chart i.e. (\ref{eqn: HoCone}) is satisfied up to coherent homotopy. We now make use of the $C^{\infty}$-structure, specifically the existence of partitions of unity, to glue the explicit representative chain-maps (\ref{eqn: Ho-canonical}) over a given $C^{\infty}$-atlas.

\subsubsection*{Gluing the patchwork of sub-complexes}
Let $|M|$ denote the underlying topological space of the dg-$C^{\infty}$-manifold $\underline{\mathbf{M}}:=\underline{\mathbf{DQuot}}^{vert}(\mathbb{P}).$ It is Hausdorff and second countable. 
Since softness of a sheaf is local, we can work in an arbitrary small neighbourhood of each point $m\in |M|.$

By Lemma \ref{lem: Homotopy-canonicity}, over a given
atlas $\underline{U}^{\bullet}$ we may find explicit quasi-isomorphic representatives $E^{\bullet}\xrightarrow{\simeq} L_{\underline{U}^{\bullet}}$ of the foliation, with map $\alpha_{\underline{U}}:E^{\bullet}\rightarrow \mathbb{T}_{\underline{U}^{\bullet}}$, for which Serre-duality intertwines the complexes such that (\ref{eqn: HoCone}), holds up to homotopy.

To glue, we require a canonical compliment of $L_{\underline{U}}$ in $\mathbb{T}_{\underline{U}^{\bullet}},$ which does not compromise this homotopy-equivalence .

  As in the proof of \cite[Proposition 8]{BKSY3}, we may choose a sequence of vector bundles $E_1,E_2,\cdots$ on $\underline{U}^0$ that freely generate $L_{\underline{U}}$ as a graded module, with $E_i$ in degree $i,$ and with $E_{k}$ a sub-bundle of the generating bundles $F_k$ of $\mathbb{T}_{\underline{U}^{\bullet}}.$ There are vector bundle isomorphisms $E_k\simeq F_k$ for $k\geq 2.$ Considering the module of cocycles $Z^1$ in $F_1,$ the canonical compliment $E_1^c$ of $E_1$ is obtained as the intersection with $Z^1$ and taking orthogonal compliment with respect to $\theta_{re}.$ The surjection between representatives is precisely the map forgetting the $E_1^c.$
The form $\theta_{im}$ defines a shifted isomorphism 
$\mathbb{T}_{[\![\underline{V}^{\bullet}/\underline{G}]\!]}\rightarrow \Omega_{[\![\underline{V}^{\bullet}/\underline{G}]\!]}^1$ and letting $\delta$ be the internal differential, in a neighbourhood of a closed point $p$ in $\underline{V}^0,$ the image of $\mathbb{T}_{\underline{G}}^0$ consists of $\delta$-cocycles. Thus, there is a representative of the $(-2)$-shifted form on $[\![\underline{V}^{\bullet}/\underline{G}]\!]$ such that its pull-back to the $1$-simplices in the simplicial model,
$$\underline{V}^{\bullet}\rightarrow \underline{V}^{\bullet}\times \underline{G}\rightarrow \underline{V}^{\bullet}\times \underline{G}^{\times 2}\rightarrow \cdots,$$
is closed. Then, for degree reasons, we let $\underline{V}^{\bullet}\simeq \underline{W}^{\bullet}\times (\mathfrak{gl}(1)[-3]),$ with $\mathcal{O}_{\mathfrak{gl}(1)[-3]}$ the dg-algebra of functions generated by two degree $-3$ functions.

Fix any two $C^{\infty}$-charts,
\[
\begin{tikzcd}
& \underline{U}_1^{\bullet}\simeq \underline{V}_1^{\bullet}\times\underline{\mathrm{PGL}}\arrow[d]
\\
\underline{V}_2^{\bullet}\times \underline{\mathrm{PGL}}\simeq\underline{U}_2^{\bullet}\arrow[r] & \underline{\mathbf{DQuot}}^{vert}(\mathbb{P}),
\end{tikzcd}
\]
and let $\mathcal{U}_i,i=1,2$ denote the corresponding $C^{\infty}$-open subset of $|\mathcal{M}|.$ For these charts, by applying Lemma \ref{lem: Homotopy-canonicity},
we have explicit representative complexes of the foliation, and there is a naturally defined map 
$L_{\underline{U}_1}^{\bullet}\twoheadrightarrow L_{\underline{U}_2}^{\bullet},$
over $\underline{U}_1\times\underline{U}_2.$
Projecting out the compliment results in two maps
$$f_1^{\bullet},f_2^{\bullet}:\mathcal{L}_{\underline{U}_1}^{\bullet}\rightarrow \mathbb{T}_{\underline{U}_{1}\cap \underline{U}_2}^{\bullet}.$$
By construction, $f_1^{\bullet}\simeq f_2^{\bullet}$ are homotopic and over $|U_{1}\cap U_2|,$ using partitions of unity subordinate to these charts i.e. $\{\rho_{U_1},\rho_{U_2}\}$ where $\rho_{U_1}+\rho_{U_2}=1$ on $U_1\cup U_2,$ with $supp(\rho_{U_i})\subset U_i,i=1,2,$ one may glue the difference $f_1-f_2,$ by putting 
$$\gamma_{U_1\cup U_2}:=\rho_{U_1}\cdot f_{1}+\rho_{U_2}\cdot f_2,$$
to define a sub-complex $\mathcal{L}^{\bullet}$ on $\underline{U}_1^{\bullet}\cup \underline{U}_2^{\bullet}.$ Then, since $L_1^{\bullet}\twoheadrightarrow L_2^{\bullet}$, we get 
\begin{equation}
    \label{eqn: Ho-canonical}
\gamma_{\underline{U}_1\cup\underline{U}_2}\simeq r_{U_{1,2}}^{\bullet}\circ (l_{U_{1,2}}^{\bullet})^{-1},\hspace{2mm}\text{ where }\hspace{1mm}
\begin{tikzcd}
& \arrow[dl,"l_{U_{1,2}}^{\bullet}", "\simeq"'] C_{1,2}^{\bullet}\arrow[dr,"r_{U_{1,2}}^{\bullet}"] & 
\\
Cone(\alpha_{U_1\cap U_2})\arrow[rr,"\gamma",dotted] & & L^{\vee},
\end{tikzcd}
\end{equation}
and where the complex $C_{1,2}^{\bullet}$, is constructed from the corresponding resolutions over each $\underline{U}_i,i=1,2$ by via the partition of unity applied to each diagram (\ref{eqn: Compat1}) and (\ref{eqn: Resoln}).
Explicitly,
$$\alpha_{1,2}^{\bullet}:=\rho_1f_1^{\bullet}-\rho_2 f_2^{\bullet}:L_{1,2}^{\bullet}\rightarrow \mathbb{T}_{\underline{U}_{1,2}},$$
where $f_i^{\bullet}$ are given by $\alpha_i^{\bullet}.$ Then $C_{1,2}^{\bullet}$ is a resolution for the cone of $\alpha_{U_1\cap U_2}$ since it is of the form
$$C_{1,2}^{\bullet}|_{|U_1\cap U_2|}\simeq \mathrm{Cone}(L_{1,2}^{\bullet}\xrightarrow{\alpha_{1,2}^{\bullet}}E^{\bullet}|_{|U_1\cap U_2|}),$$
with $\rho_i$ a partition of unity subordinate to $|U_i|\subset \underline{U}_i^{\bullet},i=1,2.$
It follows since the underlying $|M|$ of $\mathbf{DQuot}^{vert}(\mathbb{P})$ is second countable, we may continue gluing in this way for $k$-fold intersections $U_{i_1,\ldots,i_k}$ represent the homotopy-quotient by a genuine weakly-equivalent complex, $C_{i_1,\ldots,i_k}^{\bullet},$
By application of \cite[Proposition 10]{BKSY}, telling us how distributions behave under our restriction maps, we have that the restriction defines a natural sub-sheaf
$$\mathfrak{N}_{\underline{G}}^{\vartheta}\big(\mathbf{DQuot}(\mathbb{P})\big)|_{\mathcal{U}_i}\hookrightarrow \mathfrak{N}^{\theta}(\underline{V}_i)\hookrightarrow \mathfrak{F}^{\theta_{im}}(\underline{V}_i),i=1,2,$$
which upon taking global sections (\ref{eqn: global sections}),
$$\Gamma\big(\mathcal{U}_i,\mathfrak{N}_{\underline{G}}^{\vartheta}\big(\mathbf{DQuot}^{vert}(\mathbb{P}\big)|_{\mathcal{U}_i}\big)\hookrightarrow \Gamma\big(|M|,\mathfrak{N}^{\vartheta}(\underline{V}_i^{\bullet})\big),$$
with the latter given by the connected components of the simplicial set of purely derived $\mathrm{GL}$-invariant Lagrangian distributions on the simplicial dg-manifold corresponding to the stacky quotient i.e.  
$$\pi_0\big(Maps([\![\underline{V}_i^{\bullet}/\underline{G}]\!],\mathfrak{N}^{\vartheta}\big)\big)\simeq\pi_0\big(\mathrm{holim}_j\mathfrak{N}^{\vartheta}(\underline{V}_i^{\bullet}\times \underline{\mathrm{GL}}^{\times j})\big),i=1,2.$$

Such distributions are negative definite with respect to (\ref{eqn: Real}), and Lagrangian when
seen as strictly invariant distributions on $[\![\underline{V}_i^{\bullet}/\mathrm{GL}_1]\!].$ The forms do not define symplectic forms over the $\underline{V}_i^{\bullet}$, but only $(-2)$-shifted closed $2$-forms (c.f. Remark \ref{rmk: Inheriting -2}). 
This is resolved by application of \cite[Theorem 2.10]{BBBJ}, after passing to $C^{\infty}$-setting. In particular, one may construct a map $\underline{V}_i^{\bullet}\xrightarrow{\varphi_i}\underline{W}_i^{\bullet},i=1,2$ such that $\underline{V}_i^{\bullet}$ is a minimal standard form open neighbourhood i.e. $\theta_{im},\theta_{re}$ are the components of the pull-back of a $\mathbb{C}$-valued $(-2)$-shifted symplectic structure on $\underline{W}_i^{\bullet}.$ This also respects the splitting into the two $\mathbb{R}$-valued forms.
By pull-back, there is an induced morphism of sheaves
$$\mathfrak{F}^{\theta_{im}}(\underline{W}_i^{\bullet})\rightarrow \mathfrak{F}^{\theta_{im}}(\underline{V}_i^{\bullet}),i=1,2,$$
which is a subsheaf. Moreover, the condition of being a purely derived foliation, corresponding to sections of $\mathfrak{N}^{\vartheta}(\underline{V}_i^{\bullet})$ for $i=1,2$ implies further inclusions 
$\mathfrak{N}^{\vartheta}(\underline{V}_i^{\bullet})\hookrightarrow \mathfrak{F}^{\theta_{im}}(\underline{W}_i^{\bullet}),i=1,2.$ 
By its construction as a standard form neighbourhood in Darboux form, this corresponds to sections of $\mathfrak{N}^{\vartheta}(\underline{W}_i^{\bullet})$ i.e. purely derived foliations on $\underline{W}_i^{\bullet},i=1,2$ Lagrangian with respect to $\theta_{im}$ and negative definite with respect to (\ref{eqn: Real}).

This last fact concludes the proof of Theorem \ref{thm: Global LagFol}.
\end{proof}

This result has an important corollary, namely that the whole moduli space $\M(\P)$ may be recovered, locally, as the derived critical locus of a $(-1)$-shifted potential function.
One may then take the quotient by such a purely derived foliation $\mathscr{L}$, i.e. $(\mathbf{DQuot}^{vert}(\P)/G)/\mathscr{L}$. Due to the negative definiteness of the real-part of the $\mathbb{R}$-valued symplectic form in cohomology, the underlying coarse scheme given by the reduced classical part remains unchanged before and after the splitting of the obstruction bundle, denoted here by $E\simeq E^+\oplus E^-$. Then, in particular, $\vartheta_{re}$ induces an isomorphism $(E^-)^{\vee}\simeq E^+.$ This fact implies the following important result.

\begin{cor}
\label{shifted potential dCrit}
\cite{BKSY}. There exists a $\underline{G}$-linearized bundle $E^-$ over $\underline{\mathbf{DQuot}}_{vert}^0(\P)$ together with $G$-invariant section. Moreover, there exists a (shifted) section $\nu$ of the dual bundle $(E^-)^*$, such that $\mathrm{dCrit}(\nu)$ recovers the original quotient moduli stack.
\end{cor}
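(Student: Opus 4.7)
The plan is to combine Theorem \ref{thm: Global LagFol} with the divide--by--foliation principle of Lemma \ref{lem: Quotient}, and then to repackage the resulting shifted potential as a section of a genuine linearized vector bundle via the splitting of the obstruction bundle provided by the negative definiteness of the real part of the symplectic form. First, I would invoke Theorem \ref{thm: Global LagFol} to produce, on the stable locus of $[\![\mathbf{DQuot}^{vert}(\P)/\underline{G}]\!]$, a globally defined strictly $\underline{G}$-invariant purely derived foliation $\mathscr{L}\hookrightarrow \mathbb{T}_{[\![\mathbf{DQuot}^{vert}(\P)/\underline{G}]\!]}$ that is Lagrangian with respect to the imaginary part $\theta_{im}^{\bullet}$ of the $(-2)$-shifted symplectic structure of Lemma \ref{lem: R-linear -2}, and negative definite with respect to the real part $\theta_{re}^{\bullet}$. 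This is the input that will make everything else go through.

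Next, I would form the quotient by the foliation. Applying Lemma \ref{lem: Quotient} to the tangential foliation $\mathscr{L}$ yields a formal derived stack
\[
\mathsf{S}:=[\![\mathbf{DQuot}^{vert}(\P)/\underline{G}]\!]/\mathscr{L},
\]
together with a nil-isomorphism $q:[\![\mathbf{DQuot}^{vert}(\P)/\underline{G}]\!]\to \mathsf{S}$. Since $\mathscr{L}$ is Lagrangian inside a $(-2)$-shifted symplectic stack with respect to $\theta_{im}^{\bullet}$, the quotient $\mathsf{S}$ inherits a $(-1)$-shifted symplectic structure; equivalently, the Lagrangian $q$ is classified by a $(-1)$-shifted closed $1$-form, i.e.\ a shifted potential $\nu$ on $\mathsf{S}$. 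By the universal property in Lemma \ref{lem: Quotient}, the source is then recovered as
\[
[\![\mathbf{DQuot}^{vert}(\P)/\underline{G}]\!] \simeq \mathrm{dCrit}(\nu),
\]
at least in the formal sense. This is the conceptual content of the corollary; what remains is to exhibit $\nu$ as a section of an honest (shifted) $\underline{G}$-linearized vector bundle on the classical truncation $\underline{\mathbf{DQuot}}_{vert}^0(\P)$.

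To package $\nu$ as such a section, I would work in a $\underline{G}$-invariant local Darboux chart $\underline{V}^{\bullet}$ of the type produced in the proof of Theorem \ref{thm: Global LagFol}. In such a chart the virtual cotangent complex splits, as a direct consequence of the negative-definiteness of $\theta_{re}^{\bullet}$ on $H^1(\mathscr{L}^{\bullet})$ and the Serre-duality pairing provided by Lemma \ref{lem: Homotopy-canonicity}, into $E \simeq E^+ \oplus E^-$ with $\theta_{re}^{\bullet}$ inducing an isomorphism $(E^-)^{\vee}\simeq E^+$. The sub-bundle $E^-$ is precisely the one whose fibers realize the Lagrangian distribution $\mathscr{L}$, while $E^+$ encodes the transverse directions; both are $\underline{G}$-equivariant in view of the strict invariance of $\mathscr{L}$, so they glue across charts to give a $\underline{G}$-linearized bundle $E^-$ on $\underline{\mathbf{DQuot}}_{vert}^0(\P)$ (with $\underline{G}$-invariant defining section coming from the tautological inclusion of the distribution into $\mathbb{T}_{\underline{\mathbf{DQuot}}^{vert}(\P)}$). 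The local potential $\nu$, which by construction annihilates $E^-$ to first order, becomes a $\underline{G}$-invariant section of $(E^-)^{*}$, and the Koszul presentation of $\mathrm{dCrit}(\nu)$ reproduces the almost-free dg-structure sheaf of $\underline{\mathbf{DQuot}}^{vert}(\P)$ in the chart.

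Finally, I would glue these local pictures into a global statement. Since both the splitting $E\simeq E^+\oplus E^-$ and the section $\nu$ are canonically determined by $\mathscr{L}$ and $\theta^{\bullet}$ up to the same coherent homotopies analyzed in the proof of Theorem \ref{thm: Global LagFol}, the gluing reduces to repeating the partition-of-unity argument there, applied now to sections of $(E^-)^{*}$ rather than to sub-complexes of $\mathbb{T}$. The main obstacle I anticipate is precisely this last gluing step: the local potentials $\nu_{\underline{U}}$ agree only up to $\mathscr{L}$-closed correction terms, so checking that the resulting global $\nu$ is well-defined up to the equivalences that control $\mathrm{dCrit}$ will require careful book-keeping with the same homotopy-coherent data used in Lemma \ref{lem: Homotopy-canonicity}, together with an application of the $C^{\infty}$-softness of $\mathfrak{N}^{\vartheta}_{\underline{G}}(\mathbf{DQuot}^{vert}(\P))$ to trivialize the resulting cocycle.
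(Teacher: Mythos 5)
Your proposal follows essentially the same route as the paper, which itself defers to \cite{BKSY}: divide the stack by the global Lagrangian foliation of Theorem~\ref{thm: Global LagFol} via Lemma~\ref{lem: Quotient}, use negative definiteness of $\theta_{re}^{\bullet}$ to split the obstruction bundle as $E\simeq E^+\oplus E^-$ with $\theta_{re}$ inducing $(E^-)^{\vee}\simeq E^+$, and realize the resulting $(-1)$-shifted potential as a $\underline{G}$-invariant section $\nu$ of $(E^-)^{*}$ whose derived critical locus recovers the quotient moduli stack. One minor imprecision: the quotient of a $(-2)$-shifted symplectic stack by a Lagrangian foliation is not itself $(-1)$-shifted symplectic --- it carries a $(-1)$-shifted potential (equivalently, a degree $-1$ closed one-form), which is the formulation you in fact use correctly in the remainder of your argument.
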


\subsection{Local description of shifted potential function}
\label{ssec: Local models and shifted symplectic structures}
Let $f$ be a function on a smooth scheme $W$. Its derived
critical locus is represented by the Koszul algebra 
$(\mathrm{Sym}(T[1]), df)$. The cotangent complex is a restriction 
of the morphism $T_W \to \Omega_W$ obtained from the 
compositon of $df: T_W \to \mathcal{O}_W$ and the de
Rham differential $\mathcal{O}_W \to \Omega_W$. The
shifted symplectic structure is reduced to the statement 
that this complex is self-dual, which is a restatement of 
the fact that the matrix of second partial derivatives
is symmetric (along the vanishing locus of $df$).

\subsubsection{A local model for derived critical locus}
\label{sssec: A local model for derived critical locus}

We now look at the local structure of the moduli spaces
of sheaves on a smooth Calabi-Yau hypersurface $X \subset \P$.
Assume that a sheaf $\I$ is supported at $X$ and introduce the following notation:
\begin{equation}
    \label{eqn: Notation 1}
U = Ext^1_X(\I, \I), \quad W_1 = Ext^0_X(\I, \I \otimes K^\vee_\P), \quad
W_2 = Ext^1_X(\I, \I \otimes K^\vee_\P).
\end{equation}
Then by Serre duality we also have 
\begin{equation}
    \label{eqn: Notation 2}
U^\vee = Ext^2_X(\I, \I), \quad W_1^\vee = Ext^3_X(\I \otimes K^\vee_\P, \I), \quad
W_2^\vee = Ext^2_X(\I \otimes K^\vee_\P, \I).
\end{equation}
\subsubsection{The rigidified $Ext$-algebra on $X$}
The rigidified $Ext$-algebra $L^\bullet_X$ of $\I$ on $X$ is the homotopy differential graded Lie algebra i.e. $L_{\infty}$-algebra, possessing two non-zero components coming from (\ref{eqn: Notation 1}) and (\ref{eqn: Notation 2}), given by 
$$L^1_X:= U,\hspace{2mm} \text{ and } L^2_X := U^\vee.$$
Its $L_{\infty}$ products are determined by the corresponding family of symmetric 
polynomials $l_k: \mathrm{Sym}^k(U) \to U^\vee$, for $k\geq 2.$ Taking adjoints, we get linear maps
$l_k^\vee: U \to \mathrm{Sym}^k(U^\vee)$ which can be assembled into a single 
map 
\begin{equation}
    \label{eqn: Adjoint map X}
l_{X}^\vee := \sum_{k \geq 2} \frac{1}{k!}l_k^{\vee}: U \to \widehat{Sym}^*(U^\vee),
\end{equation}
with 
values in the completed symmetric algebra of $U^\vee$. 

The 
Koszul complex construction produces a dg-algebra with differential induced from (\ref{eqn: Adjoint map X}),
\begin{equation}
\label{eqn: Koszul X}
A_{X}^{\bullet} := \big(\bigwedge^\bullet(U[1]) \otimes \mathrm{Sym}^{\bullet}(U^\vee), d_{l_X^{\vee}}\big).
\end{equation}
Note that (\ref{eqn: Koszul X}) isomorphic to the dg-algebra of functions 
on the formal completion of $\M(X)$ at $\F$. 

\subsubsection{The rigidified $Ext$-algebra on $\mathbb{P}$}
We may obtain an analogous $L_{\infty}$ algebra for $\F$ viewed as a sheaf on $\mathbb{P}$, which is obtained by including the components which describe the normal deformations. Therefore, we set
$$L_\mathbb{P}^1 := U \oplus W_1,\hspace{2mm} \text{ and } L_\mathbb{P}^2 := U^\vee \oplus W_2.$$
Proceeding as above, one arrives at an analog of the map (\ref{eqn: Adjoint map X}), which we denote by $\Psi.$  
The corresponding Koszul dg-algebra in this case is given by
\begin{equation}
\label{eqn: Koszul P}
A_\mathbb{P}^{\bullet} := \big(\mathrm{Sym}^\bullet(U[1] \oplus W_2^\vee[1]) \otimes \mathrm{Sym}^{\bullet}(U^\vee \oplus W_1^\vee), 
d_{\Psi}\big).
\end{equation}
It is isomorphic to the dg-algebra of functions 
on the formal completion of $\M(\mathbb{P})$ at $\F=i_{*}\G$. 

Importantly, in the case of (\ref{eqn: Koszul X}), we have a further special feature which reflects the
Calabi-Yau property of $X$. Specifically, 
$$L_{X}^\bullet:=(L_{X}^1\oplus L_{X}^2,\{l_{k}\}_{k\geq 2}),$$ is a 
\emph{cyclic} $L_{\infty}$-algebra. This means that if we use $\{l_k\}_{k\geq 2}$ to produce vectors in 
$U^\vee \otimes \mathrm{Sym}^k(U^\vee)$, then they actually belong to the
subspace $\mathrm{Sym}^{k+1}(U^\vee)$. 

Then, in this datum we have a perfect $\mathbb{C}$-valued bilinear pairing, shifted by the dimension of the variety, which we denote by $\nu$. It is skew-symmetric and satisfies 
$$\nu\big(\ell_k(u_1,\ldots,u_k),u_{k+1}\big)=(-1)^{k+v_1\cdot (v_2+\ldots+v_{k+1})}\nu(\ell_k(v_2,\ldots,v_{k+1}),v_1),$$ for $v_i\in L_{X}^{\bullet},i=1,\ldots,k+1$ with $k\geq 2,$ whose signs simplify in our case of interest due to degree reasons, since our sheaves are simple.

Considering (\ref{eqn: Koszul P}) again, with underlying Ext-algebra $L_{\mathbb{P}}^{\bullet},$ let $f_{k+1}$ be the corresponding symmetric polynomials on $U$. Introducing the function $$f:= \sum_{k \geq 2} \frac{1}{(k+1)!}
f_{k+1}  \in \mathrm{Sym}(U^\vee),$$ on the 
formal completion of $U$ at the origin, 
we have the following result which follows directly from the definitions.
\begin{lem}
\label{lem: A of X}
The dg-algebra \emph{(\ref{eqn: Koszul X})} is isomorphic to the Koszul algebra of $df$, that is, the dg-algebra of functions on the derived critical locus of $f$:
$$A_{X}^{\bullet}\simeq \mathcal{O}(\mathrm{dCrit}_{X}(f)).$$
\end{lem}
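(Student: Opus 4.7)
The plan is to unpack both sides as explicit Koszul dg-algebras over the formal scheme $\widehat{U} = \mathrm{Spf}(\widehat{\mathrm{Sym}}(U^\vee))$ and to check that the differentials agree generator by generator. On the geometric side, $\mathrm{dCrit}_X(f)$ is by construction the derived zero locus of $df$ inside $T^*U$, and its dg-algebra of functions admits the standard Koszul presentation
\begin{equation*}
\mathcal{O}(\mathrm{dCrit}_X(f)) \;\simeq\; \bigl(\mathrm{Sym}_{\mathcal{O}_U}(T_U[1]),\ \iota_{df}\bigr) \;=\; \bigl(\bigwedge^{\!\bullet}(U[1]) \otimes \mathrm{Sym}(U^\vee),\ \iota_{df}\bigr),
\end{equation*}
where a degree $-1$ generator $u \in U$ is sent to the directional derivative $\partial_u f \in \mathrm{Sym}(U^\vee)$ and the rest of the differential is obtained by the Leibniz rule. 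Since $A_X^\bullet$ has the same underlying bigraded algebra, the entire statement reduces to the identity $\iota_{df} = d_{l_X^\vee}$, i.e.\ to the single linear identity
\begin{equation*}
df \;=\; l_X^\vee \;=\; \sum_{k \geq 2} \tfrac{1}{k!}\, l_k^\vee \ \colon\ U \longrightarrow \widehat{\mathrm{Sym}}(U^\vee).
\end{equation*}

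The heart of the proof is therefore to show that the cyclic (Calabi-Yau) structure on $L_X^\bullet$ is exactly the condition that turns the formal vector field $l_X^\vee$ into the gradient of an honest potential $f$, with the correct combinatorial normalization. By Serre duality the pairing $\nu$ identifies $L_X^2 \simeq (L_X^1)^\vee$, and cyclicity of the $L_\infty$-products asserts precisely that
\begin{equation*}
f_{k+1}(u_1,\ldots,u_{k+1}) \;:=\; \nu\bigl(l_k(u_1,\ldots,u_k),\, u_{k+1}\bigr)
\end{equation*}
is a \emph{totally} symmetric $(k{+}1)$-linear form on $U$, hence a well-defined element of $\mathrm{Sym}^{k+1}(U^\vee)$. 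A direct computation of partial derivatives of the polynomial $\tfrac{1}{(k+1)!} f_{k+1}$, viewed via its polarization, gives
\begin{equation*}
\partial_{v}\!\left(\tfrac{1}{(k+1)!}\, f_{k+1}\right)(v,\ldots,v) \;=\; \tfrac{1}{k!}\, l_k^\vee(v),
\end{equation*}
so summing over $k \geq 2$ produces exactly $df = l_X^\vee$. Extending by the Leibniz rule then identifies the Koszul differentials, giving the required isomorphism $A_X^\bullet \simeq \mathcal{O}(\mathrm{dCrit}_X(f))$ as dg-algebras.

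The main thing to be careful about is not conceptual but combinatorial: one must fix once and for all the convention distinguishing $f_{k+1}$ as a symmetric multilinear form from $f_{k+1}$ as a homogeneous polynomial of degree $k+1$, and track how the factors $\tfrac{1}{(k+1)!}$, $\tfrac{1}{k!}$, and the $(k+1)$ coming from Euler's identity interact. Once the cyclic symmetry guarantees existence of the primitive $f$ and the normalizations are aligned, the rest of the argument is purely formal, as already suggested by the remark in the text that the lemma follows from the definitions.
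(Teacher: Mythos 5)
Your proposal is correct and follows essentially the same route as the paper's proof: both reduce the isomorphism to the single identity $df = l_X^\vee$ on generators, define $f_{k+1}$ via the cyclic pairing $\nu\bigl(\ell_k(u_1,\ldots,u_k),u_{k+1}\bigr)$, and use total symmetry to cancel the factor $(k+1)$ against $\tfrac{1}{(k+1)!}$ in the derivative computation. The only cosmetic difference is that you make the identification of the underlying bigraded algebras and the Leibniz-rule extension explicit, which the paper leaves implicit.
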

\begin{proof}
By construction of the derived critical locus of $f$, the differential is determined by insertion with $df.$ We prove the claim by showing that the function $df$ agrees with the function defining the differential of $A_{X}^{\bullet}.$

Since $L_{X}^1,L_{X}^2$ are the only non-zero terms the perfect pairing $\nu$ may be viewed as a map $\nu:U\rightarrow U^{\vee}\simeq (U^{\vee})^{\vee},$ and we may define a formal potential function 
$q:U\rightarrow \mathbb{C},$ on generators $u\in U,$ by
$q(u)=\sum_{k\geq 2}\frac{1}{(k+1)!}\nu(\ell_k(u,\ldots,u),u).$ We claim $dq$ coincides with $l_X^{\vee}.$ 
For simplicity, let us set
$$Q_k(u)=\frac{1}{k!}\ell_k(u,\ldots,u).$$

Due to the cyclic structure, $Q_k$ may be viewed as a symmetric polynomial on $U$ for each $k\geq 2.$ Since $\ell_k^{\vee}(u)\in L_{X}^2\simeq U^{\vee},$ by duality, when viewed as a function on $x\in U$ it is
$\ell_k(u)(x)=\frac{1}{k!}\nu(x,\ell_k(u,\ldots,u)).$ Considering the formal function $q$ above we may compute
\begin{eqnarray*}
\frac{df_k}{du}(x)&=&\sum_{i=1}^{k+1}\frac{1}{(k+1)!}\nu\big(\ell_k(u,\ldots,x,\ldots,u),u\big)
\\
&=&(k+1)\frac{1}{(k+1)!}\nu\big(u,\ell_k(u,\ldots,u)\big)
\\
&=& \frac{1}{k!}\nu(x,\ell_k(u,\ldots,u)\big),
\end{eqnarray*}
for $x\in U$, using the cyclic property. Thus $\frac{d}{du}f_k=\ell_k^{\vee}(u),k\geq 2.$
These computations directly imply  $dq=\ell_X^{\vee}$ and thus the result by noting that $q$ defined our $f.$ 
\end{proof}
Since $\P$ is not
Calabi-Yau, $L^\bullet_\P$ does not have the shifted self-duality 
property and the corresponding property obtained for (\ref{eqn: Koszul X}) fails for (\ref{eqn: Koszul P}). However, one can still recover the Koszul algebra $A_X^{\bullet}$ from terms induced by $A_\P^{\bullet}$. 

To that end, introduce the graded vector space $L^\bullet_+$
with the only nonzero components given by 
\begin{equation}
\label{eqn: L plus terms 1}
L_+^1 := U \oplus W_1 \oplus W_2^\vee = 
 Ext^1_X(\I, \I) \oplus Ext^0_X(\I, \I \otimes K^\vee_\P) \oplus Ext^2_X(\I \otimes K^\vee_\P, \I),
 \end{equation}
 and 
\begin{equation}
\label{eqn: L plus terms 2} 
L_+^2 := U^\vee \oplus W_2 \oplus W_1^\vee = 
 Ext^2_X(\I, \I) \oplus Ext^1_X(\I, \I \otimes K^\vee_\P) \oplus Ext^3_X(\I \otimes K^\vee_\P, \I).
\end{equation}
This object sees all contributions coming from the normal bundle of $X$ in $\P.$
It also 
possesses natural $L_{\infty}$-products, with the only nonzero ones given by
\begin{equation}
    \label{eqn: Ext-alg on P products}
\begin{cases}
\mathrm{Sym}^k(U) \to U^\vee, 
\\
\mathrm{Sym}^{k-1}(U) \otimes W_1 \to W_2,
\\
\mathrm{Sym}^{k-1}(U) \otimes W_2^\vee \to W_1^\vee, 
\\
\mathrm{Sym}^{k-2}(U) \otimes W_2^\vee \otimes W_1 \to U^\vee, 
\end{cases}
\end{equation}
for $k \geq 2,$
where the first map is borrowed from $Ext^\bullet_X(\I, \I)$
and the last three can be viewed as partial derivatives of a function 
\begin{equation}
    \label{eqn: g}
 g: U \oplus W_1 \oplus 
W_2^\vee  \to \mathbb{C},
\end{equation}
which is linear in the last two arguments. 
\begin{lem}
\label{lem: Products from g}
The first two products in \emph{(\ref{eqn: Ext-alg on P products})} corresponding to
$\mathrm{Sym}^k(U) \to U^\vee,$ and $\mathrm{Sym}^{k-1}(U) \otimes W_1 \to W_2$ with $k \geq 2,$ can be chosen to describe the canonical $L_{\infty}$ algebra
structure on $L^\bullet_\P = Ext^\bullet_\P(\I, \I)$. Moreover, the last two series of products are uniquely determined by the existence of \emph{(\ref{eqn: g})}.
\end{lem}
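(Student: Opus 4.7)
The plan rests on exploiting the fact that the enlarged graded vector space $L_+^{\bullet}$ carries a perfect pairing
$$\langle -,-\rangle : L_+^1\otimes L_+^2\to\mathbb{C},$$
assembled from the three Serre-duality pairings on the Calabi--Yau threefold $X$: $U\otimes U^{\vee}\to\mathbb{C}$, $W_1\otimes W_1^{\vee}\to\mathbb{C}$, and $W_2^{\vee}\otimes W_2\to\mathbb{C}$. This identifies $L_+^2\simeq (L_+^1)^{\vee}$ and gives $L_+^{\bullet}$ the structure of a cyclic graded module, so that any compatible family of operations $\ell_k:\mathrm{Sym}^k(L_+^1)\to L_+^2$ can be encoded by a single formal potential $h:L_+^1\to\mathbb{C}$ whose $k$-th symmetric derivative recovers $\ell_k$. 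The lemma then amounts to dissecting $h$ along the multi-degree decomposition of $L_+^1=U\oplus W_1\oplus W_2^{\vee}$.

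For Part 1, I would begin from the Koszul triangle $N_{X/\P}^{\vee}[1]\to Li^*i_*\mathcal{O}_X\to\mathcal{O}_X$ attached to the divisor embedding $i:X\hookrightarrow\P$. Combined with the $i_*\dashv i^!$ adjunction and Grothendieck duality as already used in the paper (namely $i^!\mathcal{O}_\P\simeq i^*K_\P^{\vee}[1]$), this gives at the graded level
$$Ext^k_\P(\I,\I)\simeq Ext^k_X(\I,\I)\oplus Ext^{k-1}_X(\I,\I\otimes K_\P^{\vee}),$$
supplying the decompositions $L_\P^1=U\oplus W_1$ and $L_\P^2=U^{\vee}\oplus W_2$. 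Applying the homotopy transfer theorem to the canonical dg-Lie structure on $R\mathrm{Hom}_\P(\I,\I)$ yields a minimal $L_{\infty}$-model whose products a priori have the form $\mathrm{Sym}^a(U)\otimes\mathrm{Sym}^b(W_1)\to U^{\vee}\oplus W_2$ for all $a+b=k$. I would then show that the transfer data (contracting homotopies) can be chosen so that only the two components $\mathrm{Sym}^k(U)\to U^{\vee}$ and $\mathrm{Sym}^{k-1}(U)\otimes W_1\to W_2$ remain. Under the splitting above, the pure-$U$ component inherits, by functoriality of transfer along $i_*$, from the minimal cyclic $L_{\infty}$-structure on $Ext^{\bullet}_X(\I,\I)$, hence is identified with the products encoded by the potential $f$ of \autoref{lem: A of X}; the mixed component implements the first-order normal deformation of $\I$ along $K_\P^{\vee}|_X$.

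For Part 2, with $L_+^{\bullet}$ cyclic, the plan is to decompose the encoding potential as $h=f(u)+g(u,w_1,w_2^{\vee})$, with $f$ the pure-$U$ part already identified in Part 1. Since the list (\ref{eqn: Ext-alg on P products}) contains no operations involving $\mathrm{Sym}^{\geq 2}(W_1)$ or $\mathrm{Sym}^{\geq 2}(W_2^{\vee})$, and since any such operation would correspond to a monomial in $h$ of total bi-degree $\geq 2$ in $W_1$ or $W_2^{\vee}$, the cyclic symmetry forces $g$ to be separately linear in $w_1$ and in $w_2^{\vee}$. Under this constraint the three mixed families recover as partial derivatives,
\begin{align*}
\mathrm{Sym}^{k-1}(U)\otimes W_1\to W_2 \;&\longleftrightarrow\; \partial_{w_2^{\vee}}g,\\
\mathrm{Sym}^{k-1}(U)\otimes W_2^{\vee}\to W_1^{\vee} \;&\longleftrightarrow\; \partial_{w_1}g,\\
\mathrm{Sym}^{k-2}(U)\otimes W_2^{\vee}\otimes W_1\to U^{\vee} \;&\longleftrightarrow\; \partial_u g,
\end{align*}
and their mutual compatibility (the cross-partial-derivative identities) is automatic from the existence of a single scalar $g$. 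Consequently, once $g$ is fixed, products (3) and (4) are determined with no residual freedom; moreover $\partial_{w_2^{\vee}}g$ must coincide with the mixed product already obtained in Part 1, which pins down $g$ and thereby enforces the asserted uniqueness.

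The main obstacle I anticipate is the gauge-fixing step in Part 1: concretely verifying that transfer data can be chosen so that all $\mathrm{Sym}^{\geq 2}(W_1)$-components together with the two cross components $\mathrm{Sym}^k(U)\to W_2$ and $\mathrm{Sym}^{k-1}(U)\otimes W_1\to U^{\vee}$ vanish simultaneously. I expect this to follow by inductively absorbing such components into contracting homotopies supplied by the splitting of the Koszul triangle, but the degree-by-degree combinatorics requires care, since transfer along $i_*$ does not preserve cyclic structures strictly functorially, and one must check compatibility of the chosen gauge with the cyclic pairing used in Part 2.
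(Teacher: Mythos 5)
The formal bookkeeping in your Part~2 --- encoding the three mixed products as the partial derivatives of a single function $g$ linear in $w_1$ and $w_2^\vee$, and deducing uniqueness of the last two families once the $\mathrm{Sym}^{k-1}(U)\otimes W_1\to W_2$ component is pinned down --- is consistent with what the paper intends. The gap is in Part~1. The step you yourself flag as ``the main obstacle'' (choosing transfer data so that all components $\mathrm{Sym}^a(U)\otimes\mathrm{Sym}^b(W_1)\to U^\vee\oplus W_2$ with $b\geq 2$, together with the cross terms $\mathrm{Sym}^k(U)\to W_2$ and $\mathrm{Sym}^{k-1}(U)\otimes W_1\to U^\vee$, all vanish) is not a gauge-fixing technicality: it is the mathematical content of the lemma, and it is false for a general divisor in a general ambient variety. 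The Koszul triangle $N_{X/\P}^{\vee}[1]\to Li^*i_*\mathcal{O}_X\to\mathcal{O}_X$ only controls the first-order neighborhood of $X$ in $\P$ and hence only the additive splitting of the Ext-groups; the higher $L_{\infty}$-products see the formal neighborhood to all orders, and if that neighborhood carries nontrivial higher-order gluing data, genuine $\mathrm{Sym}^{\geq 2}(W_1)$-components appear that no choice of contracting homotopy can remove. You never invoke any cohomological vanishing on $X$, so the claim that the transfer data ``can be chosen'' is unsupported.

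The paper supplies exactly this missing input. By Kodaira vanishing, $H^{i}(X,\mathcal{H}om(\mathrm{Sym}^k K_\P^\vee|_X,K_\P^\vee|_X))=0$ for $i>0$, so by Grothendieck's argument the formal completion of $\P$ along $X$ is isomorphic, \emph{uniquely}, to the completion of the total space $\P'=K_\P^\vee|_X$ along its zero section. After this linearization, replacing $\mathcal{O}_X$ by its Koszul resolution on $\P'$ exhibits $R\mathrm{Hom}_{\P'}(\I,\I)$ as an extension of $R\mathrm{Hom}_X(\I,\I)$ by the Lie module obtained by tensoring with $K_\P^\vee[-1]$; linearity in the $W_1$- and $W_2^\vee$-directions --- hence the restricted list of products and the existence of $g$ --- is then automatic from the module structure rather than from a gauge choice, and the uniqueness clause of the lemma comes from the uniqueness of the linearizing isomorphism, to which your transfer-theoretic setup has no access. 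If you want to retain the homotopy-transfer framing, you must first prove this linearization statement; at that point you will have reproduced the paper's argument.
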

\begin{proof}
To prove the assertion we can replace $\P$ by 
its formal completion along $X$, which is canonically 
isomorphic to 
$Spec(\mathrm{Sym}(K_\P^\vee\vert_X))$. Indeed, by Kodaira vanishing 
$$
H^i(X, Hom(\mathrm{Sym}^k K^\vee_\P\vert_X, K^\vee_P \vert_X)) = 0,
\quad i > 0.
$$
By Grothendieck's theorem
all obstructions to constructing a filtered isomorphism
between the completion of $\P$ along $X$ and a similar 
completion of $K^\vee_\P|_X$ along its zero section, belong
to the groups $H^2(X, Hom(\mathrm{Sym}^k K^\vee_\P\vert_X, K^\vee_P \vert_X))$, while different choices of step-by-step liftings
of finite level isomorphisms are parameterized by the 
first cohomology groups. Since both vanish, a required
isomorphism exists and is unique. 

Therefore we can replace $\P$ by the total space of the 
normal bundle  $\P':= K^\vee_\P|_X \to X$. The $\I$ may be viewed
as a derived restriction of its pullback $P^\bullet$
to  $K^\vee_\P|_X \to X$. Hence $Ext^\bullet_X(\I, \I)$
and $Ext^\bullet_{\P'}(\I, \I)$ may be viewed as 
cohomology of $RHom_{\P'}(P^\bullet, P^\bullet) \otimes 
\mathcal{O}_X$ and $RHom_{\P'}(P^\bullet \otimes 
\mathcal{O}_X, P^\bullet \otimes 
\mathcal{O}_X)$, respectively. Replacing 
$\mathcal{O}_X$ by its Koszul resolution on $\P'$ we
see that the second sheaf of Lie algebras is an extension of the
first by a sheaf of Lie modules obtained from it 
by tensoring with $K^\vee_\P [-1]$. Taking cohomology we obtain 
the assertion. 
\end{proof}
From Lemma \ref{lem: Products from g} and the description of the dg-algebra (\ref{eqn: Koszul P}), it follows that the dg-algebra 
\begin{equation}
\label{eqn: A plus}
A_{\P^+}^{\bullet}  :=  \mathrm{Sym}^{\bullet}(U[1] \oplus W_2^\vee[1] \oplus W_1[1]) 
\otimes \mathrm{Sym}^{\bullet}(U^\vee \oplus W_1^\vee \oplus W_2),
\end{equation}
may be viewed as an algebra over (\ref{eqn: Koszul P}), that is moreover given by the symmetric algebra on the dg-module structure
$$
W_1[1] \otimes A_\P^{\bullet} \to W_2 \otimes A_{\P}^{\bullet}
.$$
The corresponding differential is obtained from the
linear map $W_1 \to W_2 \otimes \mathrm{Sym}^{\bullet}(U^\vee)$ induced by the $g':W_1\rightarrow U^{\vee}\otimes W_2,$ obtained from (\ref{eqn: g}), by extending it to the completed symmetric algebra over $U^{\vee}.$ 

In another point of view we shall use below, the dg-algebra (\ref{eqn: A plus}) is a symmetric algebra of
a perfect complex $\mathcal{W}$ on the affine dg-scheme
$$Spec(A_{\P}^{\bullet})=Spec(\mathrm{Sym}^{\bullet}\big((U\oplus W_2^{\vee})[1]\oplus (U^{\vee}\oplus W_1^{\vee})\big).$$
\begin{lem}
\label{lem: f+g}
Locally around a fiber $X_{t}\subset \mathbb{P}, t\in \mathbb{A}^1$ the dg algebra \emph{(\ref{eqn: Koszul X})} is quasi-isomorphic to the dg-algebra of functions $\mathcal{O}_{\mathrm{dCrit}(\mathbb{W})}$ of the derived critical locus of the function $\mathbb{W}:=f\cdot g$ on $U \oplus W_2^\vee \oplus W_1$.
\end{lem}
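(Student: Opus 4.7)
The plan is to first invoke Lemma \ref{lem: A of X}, which identifies $A_X^\bullet$ with $\mathcal{O}(\mathrm{dCrit}_U(f))$; the task thus reduces to producing a quasi-isomorphism between $\mathcal{O}(\mathrm{dCrit}_U(f))$ and $\mathcal{O}(\mathrm{dCrit}_{U \oplus W_2^\vee \oplus W_1}(\mathbb{W}))$. The strategy is to identify the latter with the dg-algebra $A_{\P^+}^\bullet$ of \eqref{eqn: A plus} using the Leibniz expansion of $d\mathbb{W}$, and then exhibit $A_X^\bullet \hookrightarrow A_{\P^+}^\bullet$ as a quasi-isomorphism via a Koszul contraction that pairs off the auxiliary $W$-generators.

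First I would write the Koszul differential on $\mathcal{O}_{\mathrm{dCrit}(\mathbb{W})}$ explicitly via $d\mathbb{W} = g\cdot df + f\cdot dg$, yielding
\begin{equation*}
\partial_u \mathbb{W} = g\,\partial_u f + f\,\partial_u g, \qquad \partial_{w_1}\mathbb{W} = f\,\partial_{w_1} g, \qquad \partial_{w_2^\vee}\mathbb{W} = f\,\partial_{w_2^\vee} g.
\end{equation*}
Invoking Lemma \ref{lem: Products from g}, the partial derivatives of $g$ reproduce precisely the $W$-mixed $L_\infty$-products of \eqref{eqn: Ext-alg on P products} on $L_+^\bullet$, while those of $f$ recover the pure-$U$ Calabi-Yau 3 products on $L_X^\bullet$. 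Assembling these contributions gives a direct identification $\mathcal{O}_{\mathrm{dCrit}(\mathbb{W})} \cong A_{\P^+}^\bullet$ as dg-algebras.

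Next I would construct the desired quasi-isomorphism as the natural inclusion $A_X^\bullet \hookrightarrow A_{\P^+}^\bullet$, defined by sending the auxiliary Koszul generators $W_1[1]\oplus W_2^\vee[1]$ (and their polynomial duals $W_1^\vee, W_2$) to zero. To verify this is a quasi-isomorphism, I would filter $A_{\P^+}^\bullet$ by total degree in these auxiliary $W$-generators and analyze the induced spectral sequence. On the $E_1$ page, the pure-$U$ Calabi-Yau 3 differential survives unchanged (giving $A_X^\bullet$), while the induced differentials on the auxiliary part come from combining the bilinear product $\mathrm{Sym}^{k-2}(U) \otimes W_2^\vee \otimes W_1 \to U^\vee$ with the complementary products $\mathrm{Sym}^{k-1}(U)\otimes W_1 \to W_2$ and $\mathrm{Sym}^{k-1}(U) \otimes W_2^\vee \to W_1^\vee$. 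These pair the auxiliary Koszul generators into subcomplexes which I expect to be acyclic, so the spectral sequence collapses on $E_2$ to $A_X^\bullet$.

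The principal obstacle is establishing non-degeneracy of the pairing between $W_1$ and $W_2^\vee$ on $E_1$: this amounts, under the dictionary of Lemma \ref{lem: Products from g}, to a Kodaira--Spencer-type condition that the first-order normal deformation of $X_t$ in $\P$ identifies $W_1$ with its Serre-dual partner via the bilinear component of $g$. Working locally around a smooth fiber $X_t$ of the good degeneration, the transverse smoothness of $\P$ supplies exactly this non-degeneracy, so the acyclic Koszul cancellation goes through and the inclusion $A_X^\bullet \hookrightarrow A_{\P^+}^\bullet$ is a quasi-isomorphism, as claimed.
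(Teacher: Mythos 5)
Your overall architecture --- reduce via Lemma \ref{lem: A of X} to a comparison of derived critical loci, identify $\mathcal{O}_{\mathrm{dCrit}(\mathbb{W})}$ with $A_{\P^+}^{\bullet}$, then contract the auxiliary $W$-generators --- is considerably more explicit than the paper's own two-sentence argument, which only invokes the cyclicity of $L_+^{\bullet}$ and ``the discussion above.'' However, two of your steps do not go through as written.

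First, your Leibniz expansion $d\mathbb{W}=g\,df+f\,dg$ is incompatible with the identification $\mathcal{O}_{\mathrm{dCrit}(\mathbb{W})}\cong A_{\P^+}^{\bullet}$ that you assert immediately afterwards. The differential on $A_{\P^+}^{\bullet}$ is built from the products (\ref{eqn: Ext-alg on P products}), which Lemma \ref{lem: Products from g} exhibits as the \emph{separate} partial derivatives of $f$ and of $g$; these are the components of $d(f+g)$, not of $d(f\cdot g)$. In particular $\partial_{w_1}(f\cdot g)=f\,\partial_{w_1}g$ differs from the product $\mathrm{Sym}^{k-1}(U)\otimes W_1\to W_2$ by the extra factor of $f$, and the $U$-component $g\,\partial_u f+f\,\partial_u g$ is not the sum of the first and fourth products in (\ref{eqn: Ext-alg on P products}). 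The paper avoids this by never expanding $d\mathbb{W}$: it appeals to the cyclicity of $L_+^{\bullet}$ so that $A_{\P^+}^{\bullet}$ \emph{is} a derived critical locus by the same computation as in Lemma \ref{lem: A of X}. (There is a tension in the source between the ``$f\cdot g$'' of the statement and the additive structure of the products; your own formulas make that tension visible, and you should have resolved or at least flagged it rather than asserting the identification.)

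Second, the Koszul cancellation you propose on the $E_1$ page needs a nondegenerate \emph{linear} component of the differential pairing $W_1[1]$ with $W_2$ and $W_2^{\vee}[1]$ with $W_1^{\vee}$. No such component exists: every product in (\ref{eqn: Ext-alg on P products}) has $k\geq 2$, so the couplings $\mathrm{Sym}^{k-1}(U)\otimes W_1\to W_2$ and $\mathrm{Sym}^{k-1}(U)\otimes W_2^{\vee}\to W_1^{\vee}$ vanish at the origin of $U$ (the $L_{\infty}$-structure is minimal, $\ell_1=0$). Consequently the induced differential on the auxiliary part of your associated graded is zero at the closed point, and the spectral sequence does not collapse to $A_X^{\bullet}$ by the mechanism you describe. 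The cancellation the paper has in mind is of a different nature: $A_{\P^+}^{\bullet}$ is $\mathrm{Sym}_{A_{\P}^{\bullet}}(\mathcal{W})$ for the two-term perfect complex $W_1[1]\otimes A_{\P}^{\bullet}\to W_2\otimes A_{\P}^{\bullet}$ over $Spec(A_{\P}^{\bullet})$, and the comparison with $A_X^{\bullet}$ rests on the extension of $Ext_X^{\bullet}(\F,\F)$ by its $K_{\P}^{\vee}[-1]$-twist established in the proof of Lemma \ref{lem: Products from g} (via the Koszul resolution of $\mathcal{O}_X$ on the total space of $K_{\P}^{\vee}|_X$), i.e.\ on the relation between the deformation theories of $\M(X)$ and $\M(\P)$, not on a pointwise linear-algebra pairing. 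Your ``Kodaira--Spencer nondegeneracy'' is the right intuition, but it enters through these connecting maps rather than through a linear term of the Koszul differential, and your proof as written does not supply that input.
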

\begin{proof}
Note that we may view the function (\ref{eqn: g}) accounting for the normal directions as an element of the algebra $\mathrm{Sym}(U^\vee \oplus W_1^\vee \oplus W_2)$. Then, the result follows from the discussion above and the observation that dg-Lie algebra $L^\bullet_+$ corresponding to (\ref{eqn: A plus}) is 
a cyclic $L_{\infty}$-algebra.
\end{proof}
\begin{rmk}The above lemma shows that the complex $\mathcal{W}$ is quasi-isomorphic to the 
structure sheaf of the derived critical locus of the global (-1)-shifted potential function in Corollary \ref{shifted potential dCrit}. Moreover it provides the local description of perfect complex $\mathcal{W}$ on $\mathcal{M}(\P)$ such that we have a quasi-isomorphism of dg-algebras $\mathcal{O}_{\mathcal{M}(X_{t})}\simeq \mathrm{Sym}_{\mathcal{O}^\bullet_{\mathcal{M}(\P)}} (\mathcal{W})\mid_{t}, t\in \mathbb{A}^1$. 
\end{rmk}
\begin{rmk}
\label{rmk: W1W2}
The terms $W_1$ and $W_2$ in (\ref{eqn: Notation 1}) can be stated in terms of $(\P, \I)$ only, without appealing to a choice of $X$ containing the
support of $\I$. Indeed, we have that
$$
W_1 = Ext^0_X(\I, \I \otimes K^\vee_\P) = Ext^0_\P(\I, \I \otimes K^\vee_\P).
$$
Similarly, $W_2 = Ext^1_X(\I, \I \otimes K^\vee_\P)$ may be rephrased
as
$$
Ker\big[Ext^1_\P(\I, \I \otimes K^\vee_\P) \to 
Ext^0_X(\I, \I \otimes (K^\vee_\P)^{\otimes 2}) 
= Ext^0_\P(\I, \I \otimes (K^\vee_\P)^{\otimes 2})\big].
$$
In such a description, while the morphism depends on the choice of $X$, notice that the source and target do not. 
\end{rmk}

We achieve the following result as a corollary of Theorem \ref{thm: Global LagFol} and the local description given above for free.

\begin{thm}
\label{t-independence}
    The function $f$ is globally defined over $\mathbb{A}^1$. In particular, it is independent of $t\in \mathbb{A}^1.$
\end{thm}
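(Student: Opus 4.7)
The plan is to derive the statement from the global existence of the $(-1)$-shifted potential $\mathbb{W}$ established in Corollary \ref{shifted potential dCrit}, coupled with the local factorization $\mathbb{W} = f \cdot g$ from Lemma \ref{lem: f+g}. The core idea is that all the $t$-dependence of $\mathbb{W}$ along the fibration $\P \to \mathbb{A}^1$ is absorbed into the normal-direction factor $g$, so that $f$ descends to a $t$-independent function.

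First, I would recall that Corollary \ref{shifted potential dCrit}, together with Theorem \ref{thm: Global LagFol}, furnishes a globally defined $\underline{G}$-invariant section on $\underline{\mathbf{DQuot}}^0_{vert}(\P)$ whose induced $(-1)$-shifted potential on $\M^{vert}(\P)$ is $\mathbb{W}$. Restricting to a formal neighborhood of any closed point $\F = i_{t*}\G \in \M^{vert}(\P)$ over a fiber $X_t\subset \P$, Lemma \ref{lem: f+g} gives the factorization $\mathbb{W} = f\cdot g$ on $U \oplus W_1 \oplus W_2^\vee$, where $f$ depends only on $U = Ext^1_{X_t}(\F,\F)$ and $g$ is polynomial of degree at most one in each of the normal coordinates $w_1 \in W_1$, $w_2^\vee \in W_2^\vee$.

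Second, I would establish that this factorization is essentially canonical. The splitting $L_+^\bullet = L_X^\bullet \oplus (\text{normal part})$ from \eqref{eqn: L plus terms 1}--\eqref{eqn: L plus terms 2} is intrinsic to the pair $(\P,\F)$ by Remark \ref{rmk: W1W2}, so it makes sense without choosing a fiber. The first two series of $L_\infty$-products feeding into $f$ are, by Lemma \ref{lem: Products from g}, the restrictions of the canonical products on $L_\P^\bullet$, while the remaining products encoded by $g$ are uniquely determined by the existence of a single function as in \eqref{eqn: g}. Consequently the extraction of $f$ from $\mathbb{W}$ is independent of the local chart, and the local $f$'s glue into a globally defined function over the family $\M(X) \to \mathbb{A}^1$ induced by the fibration $\P \to \mathbb{A}^1$.

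Third, to deduce $t$-independence, I would argue that all the parameter dependence of $\mathbb{W}$ (i.e. the change of the embedding $X_t \hookrightarrow \P$) is absorbed into $g$: the normal factor encodes the normal bundle $K_\P^{-1}\vert_{X_t}$ and the obstructions to deforming $\F$ off the fiber, which carry the $t$-direction. The function $f$, determined by the fiber-internal cyclic $L_\infty$-structure on $L_X^\bullet$ via Lemma \ref{lem: A of X}, is then the component of $\mathbb{W}$ that lives purely in the fiber-tangent direction $U$. Under any local trivialization of $\P \to \mathbb{A}^1$ compatible with the Calabi-Yau fiber structure provided by Lemma \ref{lem: Pull-back divisor} (so that $K_{\P/C} \simeq \mathcal{O}$ fiberwise), $f$ is constant in the $t$-coordinate of this trivialization.

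The main obstacle I expect is verifying the canonicity of the factorization $\mathbb{W} = f\cdot g$ across overlapping charts with enough precision to conclude that $f$ itself is unambiguous, and not merely its equivalence class modulo the normal-direction data. This ultimately reduces to a compatibility check between the homotopy-canonical Serre-duality representatives of Lemma \ref{lem: Homotopy-canonicity} and the global $(-2)$-shifted symplectic structure from Lemma \ref{lem: C-linear -2}: once one shows that the chain-level models of the cyclic pairing on $L_X^\bullet$ are induced from those on $L_+^\bullet$ in a way compatible with the gluing construction of Theorem \ref{thm: Global LagFol}, the uniqueness clause in Lemma \ref{lem: Products from g} forces $f$ to be pulled back from a base-independent object on the moduli of rigidified simple perfect complexes on a generic Calabi--Yau fiber.
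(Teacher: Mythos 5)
Your overall skeleton matches the paper's: globality of $\mathbb{W}$ from Theorem \ref{thm: Global LagFol} and Corollary \ref{shifted potential dCrit}, the local factorization $\mathbb{W}=f\cdot g$ from Lemma \ref{lem: f+g}, gluing over $\mathbb{A}^1$ using smoothness of $\M^{vert}(\P)\rightarrow C$, and the observation that the $t$-dependence sits in the normal factor $g$. Where you diverge is at the crux: why the factor $f$ itself, and not merely the product $f\cdot g$, is unambiguously determined by $\mathbb{W}$. The paper closes this by observing that $f_t$ is determined by $\mathbb{W}$ only \emph{up to a quadratic ambiguity}, and then invoking the contractibility of the quotient of the Darboux stack $\mathsf{Darb}_{\M(X_t)}$ by the stack of quadratic bundles $\mathsf{Quad}_{\M(X_t)}$ from Hennion--Holstein--Robalo (see Remark \ref{QuadRmk} and the discussion preceding it). You instead try to argue outright canonicity of the factorization via the intrinsic description of $W_1,W_2$ in Remark \ref{rmk: W1W2} and the uniqueness clause of Lemma \ref{lem: Products from g}.

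This substitute does not quite close the gap. Lemma \ref{lem: Products from g} says the products $\mathrm{Sym}^k(U)\to U^{\vee}$ ``can be chosen'' to be the canonical ones and that the \emph{remaining} products (those feeding into $g$) are uniquely determined by the existence of the function \eqref{eqn: g}; it does not assert that the fiber-tangent products defining $f$ are rigid. The residual freedom is exactly a change of minimal-model/Darboux presentation, which modifies $f$ by a nondegenerate quadratic form --- the ambiguity the paper explicitly flags. Likewise, Remark \ref{rmk: W1W2} only makes the spaces $W_1,W_2$ chart-independent, not the splitting of the potential, and the chain-level Serre-duality compatibility of Lemma \ref{lem: Homotopy-canonicity} governs the gluing of the Lagrangian foliation (hence of $\mathbb{W}$), not the extraction of $f$ from $\mathbb{W}$. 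So you have correctly located the obstacle but pointed at the wrong tools to remove it; the missing ingredient is the $\mathsf{Quad}$-quotient contractibility argument (or some equivalent statement that quadratic modifications act trivially on the data you care about), without which ``$f$ is independent of $t$'' should be weakened to ``$f$ is independent of $t$ up to quadratic ambiguity,'' which is in fact all the paper itself establishes and all that is needed for the downstream statements about $\mathsf{MF}(f_t)$ and $HP$.
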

\begin{proof}
    By Theorem \ref{thm: Global LagFol} and Corollary \ref{shifted potential dCrit}, $\mathbb{W}$ is globally defined. By Lemma \ref{lem: f+g}, it is of the form $f\cdot g$, locally for any $t\in\mathbb{A}^1$. Thus, since $\M^{vert}(\P)\rightarrow C$ is smooth $f_t\cdot g_t$ glue over $\mathbb{A}^1.$ In particular, $\mathbb{W}$ is $t$-invariant, and $f_t$ is determined by $\mathbb{W}$ up to a quadratic ambiguity (see Remark \ref{QuadRmk} below), $f_t$ itself is $t$-independent. In fact, as $X_t$-varies smoothly in the family, with $t\neq 0,$ we obtain $\mathsf{Perf}(X_t)$ define a family of smooth proper Calabi-Yau $3$-categories whose moduli of objects vary smoothly as well.
\end{proof}

\section{Gauss-Manin connection on fiberwise periodic cyclic homology}
\label{ssec: Matrix factorizations and GM-connection}
Summarizing the above, we have a scheme $\M^{vert}(\P)$ with a global (-1)-shifted potential function $\mathbb{W}$ such that locally on a formal neighborhood, $\mathcal{U}_{t}$ of a point $\mathcal{F}\in \M^{vert}(\P)$ supported on $X_{t}\subset \mathbb{P}$,  
$\mathbb{W}(t) \mid_{\mathcal{U}} = f_t\cdot g_t$ 
a priori depending on the choice of section $s(t) \in H^0(\P\times\mathbb{A}^1, K^\vee_\P\boxtimes\mathcal{O}_{\mathbb{A}^1})$, such that the derived critical locus of each $f_t$ is quasi-isomorphic to the 
moduli dg-scheme of rigidified perfect complexes $\mathcal{M}(X_t)$, and the quasi-isomorphism is compatible 
with fiberwise $(-1)$-shifted symplectic structure. By Theorem \ref{t-independence}, it does not depend on $t\in \mathbb{A}^1.$

\subsubsection{Fiberwise Quasi-BPS categories}
Following the terminology of \cite{PT}, we define the \emph{quasi-BPS category} $\mathsf{MF}(f_t):=\mathsf{MF}(\M(X_t),f_t),$
of $\M(X_{t})$ as the $\mathbb{Z}_2$-graded triangulated category of 
matrix factorizations associated with the potential $f_{t},t\in \mathbb{A}^1.$ 

By \cite[Section 2.2]{E}, one defines $\mathsf{MF}(f_t)$
explicitly as the dg-category 
of $\mathbb{Z}_2$-graded locally free sheaves 
$E := E^0 \oplus E^1$ on $\M(X_{t})$ with an odd differential $\delta$
that satisfies $\delta^2 = f_{t} \cdot Id$. The 
complexes of morphisms are defined by taking standard
Hom-complexes (the differential in this case does square to zero). Denoting such objects succinctly by $E_{\bullet}=(E_0,E_1,\delta_0,\delta_1),$
the shift functor is given by $E_{\bullet}[1]:=(E_1,E_0,-\delta_0,-\delta_1).$
For two objects defined in an \'{e}tale open neighbourhood $U\subset \M(X_t),$ 
\begin{equation*}
E_{\bullet}=
 \begin{tikzcd}
            E_0\arrow[r, shift left=.75ex, "\delta_0"{name=G}] & E_1,\arrow[l, shift left=.75ex, "\delta_1"{name=F}]       
        \end{tikzcd}      
\hspace{2mm}\text{and}\hspace{4mm} Q_{\bullet}=
 \begin{tikzcd}
            Q_0\arrow[r, shift left=.75ex, "\lambda_0"{name=G}] & Q_1,\arrow[l, shift left=.75ex, "\lambda_1"{name=F}]       
        \end{tikzcd}
\end{equation*}
the internal homomorphisms
$\mathrm{Hom}_{\mathrm{MF}(U,f_t|_{U})}(E_{\bullet},Q_{\bullet})$ are given by the  $2$-periodic chain complex with:
\[
\begin{tikzcd}
\mathrm{Hom}(E_0,Q_1)\oplus \mathrm{Hom}(E_1,Q_0) \arrow[r, shift left=.75ex, "h_0"{name=G}] & \mathrm{Hom}(E_0,Q_0)\oplus \mathrm{Hom}(E_1,Q_1)
\arrow[l, shift left=.75ex, "h_1"{name=F}],
\end{tikzcd}
\]
where $h_0(-):=\lambda \circ (-)-(-)\circ \delta$ and $h_1(-):=\delta\circ (-)+(-)\circ \lambda.$
The homotopy $1$-category $\mathrm{Ho}\mathrm{MF}$ has as its objects the matrix factorisations $E_{\bullet}$ and morphisms are given by taking homotopy classes of maps in the classical sense. 

An important result to mention concerns the idempotent completion of this homotopy $1$-category. Namely, by \cite[Theorem 5.7]{D}, we have an equivalence
$$\mathrm{Ho}(\mathrm{MF}(U,f_t))^{idem}\simeq \mathrm{Ho}\big(\mathrm{MF}(\hat{U}_{Z_t},TE(f_t))\big),$$
where $\hat{U}_{Z_t}$ is the formal completion of $Z_t:=\mathrm{Crit}(f_t)$, the ordinary critical locus restricted to $U\subset \M(X_t)$ and $TE(f_t)$ is the Taylor expansion of $f_t$, given by Lemma \ref{lem: A of X}, along $\mathrm{Crit}(f_t)$.

\begin{rmk}
In particular, it was observed in \cite{HHR} that such categories associated with a Landau-Ginzburg pair $(U,f_t:U\rightarrow \mathbb{A}^1)$, consisting of a scheme and a regular function, depends only on $\hat{U}_Z$ and $TE(f)$, not $U.$ In other words, it is determined by the underlying
\emph{formal} LG-pair $(\hat{U}_Z,TE(f_t)),$ associated to $(U,f_t)$.
\end{rmk}

Therefore if $f_t:U\rightarrow \mathbb{A}^1$ denotes the restricted degree zero potential $f_t$ to $U$, the category of matrix factorizations depends only on the formal completion, and is moreover, independent of the choice of étale local model. 
\begin{lem}
The map $q$ in \emph{(\ref{eqn: ExactLag})} furnishes an equivalence 
$\mathrm{dCrit}(\hat{f}_t)\simeq \mathrm{dCrit}(f_t)$ of $(-1)$-shifted exact derived schemes, where $\hat{f}_t:=f_t\circ \varphi$ and $\varphi:\hat{U}_{Z_t}\rightarrow U$ is the inclusion of the formal completion.
\end{lem}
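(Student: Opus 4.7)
The plan is to exploit the fact that the derived critical locus is always set-theoretically supported on the classical critical locus, so its defining dg-algebra is already concentrated in an arbitrarily small neighbourhood of $Z_t := \mathrm{Crit}(f_t)$. Concretely, we have a Koszul presentation
\[
\mathrm{dCrit}(f_t) \simeq \mathrm{RSpec}\bigl(\mathrm{Sym}_{\mathcal{O}_U}(\mathbb{T}_U[1]),\, \iota_{df_t}\bigr),
\]
whose underlying classical scheme is precisely $Z_t$. Hence the structure sheaf is insensitive to replacing $U$ by any open neighbourhood of $Z_t$, or by its formal completion $\varphi: \hat{U}_{Z_t} \to U$ along $Z_t$.

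First I would observe that $\varphi$ is flat and induces a weak equivalence $\varphi^* \mathbb{L}_U \simeq \mathbb{L}_{\hat{U}_{Z_t}}$, since formal completion does not alter the cotangent complex in characteristic zero. Consequently, $\varphi^*$ applied to the Koszul complex defining $\mathrm{dCrit}(f_t)$ yields the Koszul complex of $\iota_{d\hat{f}_t}$, and the adjunction $(\varphi^*, \varphi_*)$ produces a canonical morphism
\[
q: \mathrm{dCrit}(\hat{f}_t) \longrightarrow \mathrm{dCrit}(f_t),
\]
whose classical truncation is the identity on $Z_t$. To promote this into an equivalence of derived schemes, I would verify that $q$ is étale: both sides have equivalent cotangent complexes at every closed point of $Z_t$ because $f_t$ and $\hat{f}_t$ have identical Taylor expansions there. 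Combined with the fact that $q$ is an isomorphism on underlying classical schemes, Nakayama plus flatness forces $q$ to be an equivalence.

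To upgrade this to an equivalence of $(-1)$-shifted \emph{exact} derived schemes, I would invoke the canonical exact $(-1)$-shifted symplectic structure on the derived critical locus of any regular function. Explicitly, the $(-1)$-shifted symplectic form $\omega$ admits the canonical primitive $\theta$ arising as the image of the Liouville form on $T^*U$ under the derived zero-section/graph-of-$df_t$ comparison, so that $\omega = d_{dR}\theta$ with $\theta$ being (up to the relevant twist) the pullback of $f_t$ itself. Because $\hat{f}_t = f_t \circ \varphi$ by definition, the pair $(\omega, \theta)$ is natural in the LG-datum, and $q$ identifies the two primitives tautologically.

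The main obstacle, to be fully rigorous, will be exhibiting the naturality of the exact structure not merely in the derived category but at the level of the cyclic/negative-cyclic complex of the Koszul dg-algebra, in order to get a coherent homotopy rather than a plain isomorphism of primitives; this is handled by the standard functoriality of the HKR-type identification $\mathsf{DR}(\mathrm{dCrit}) \simeq (\mathrm{Sym}\mathbb{T}_U[1]\otimes\Omega_U^\bullet, \iota_{df_t} + d_{dR})$, which is manifestly natural in flat base-change and therefore respects $\varphi^*$. Putting these ingredients together yields the stated equivalence.
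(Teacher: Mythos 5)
Your proposal is correct and follows essentially the same route as the paper: the paper's proof is the exact Lagrangian correspondence $T^*\hat{U}_{Z_t}\leftarrow T^*U\times U\rightarrow T^*U$ (so the Liouville primitives agree) together with the observation that $\varphi$ is (formally) étale, hence so is $q$, which is exactly your combination of the Koszul/flat-base-change comparison and the naturality of the Liouville primitive. Your remark that $\varphi^*\mathbb{L}_U\simeq\mathbb{L}_{\hat{U}_{Z_t}}$ is in fact the justification for the paper's terse claim that $\varphi$ is étale, so your write-up supplies slightly more detail but no new idea.
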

\begin{proof}
There is an exact Lagrangian correspondence\footnote{That is, one where the symplectic structures as well as the exact structures are homotopic.},
$$T^*\hat{U}_{Z_t}\xleftarrow{\simeq} T^*U\times U\rightarrow T^*U,$$
which implies that the pull-back of the canonical Liouville forms agree.
Consequently, there is an induced exact Lagrangian correspondence,
\begin{equation}
\label{eqn: ExactLag}
\begin{tikzcd}
& \arrow[dl,"\simeq", labels= above left] \mathrm{dCrit}(f_t|_U)\times_U^h\hat{U}_{Z_t}\arrow[dr,"q"] & 
\\
\mathrm{dCrit}(\hat{f}_t) & & \mathrm{dCrit}(f_t).
\end{tikzcd}
\end{equation}
The result follows since $\varphi$ is  étale, thus so is $q$.
\end{proof}
Let $\mathsf{MF}(\mathcal{M}(X_t), f_t)$ denote the quasi-BPS category associated with the potential $f_t$. We first remark that the possible presentations of $\mathcal{M}(X_t)$ as a derived critical locus are parametrized by the Darboux stack $\mathsf{Darb}_{\M(X_t)}$, introduced in \cite{HHR}. This stack carries an action of the stack of quadratic bundles $\mathsf{Quad}_{\M(X_t)}$, which, roughly speaking, acts via $f_t \mapsto f_t + q_t$ for a quadratic function $q_t$. By \cite[Theorem 5.4.1]{HHR}, one can construct an appropriate quotient of $\mathsf{Darb}_{\M(X_t)}$ by $\mathsf{Quad}_{\M(X_t)}$ and show that it is contractible. This resolves the quadratic ambiguity, meaning that each fiberwise moduli stack $\mathcal{M}(X_t)$ is determined by the function $f_t$ constructed above, up to a possible quadratic modification.
 In their subsequent work \cite{HHR2}, the authors generalize the global analysis of gluing DT-invariants by formalizing the gluing of two (big and small) $\infty$-categories of matrix factorizations \cite[Theorem 5.3.5]{HHR2}. In our setting, their results apply over a fixed dg-fiber $\M(X_t).$

The following remark, to be used below which elaborates a fundamental feature of $t$-independence which is central to our main result in the following section.

\begin{rmk}
\label{QuadRmk}
 As highlighted in Remark \ref{rmk: Shifted sings}, if a working definition of $\mathsf{MF}(\M^{vert}(\P),\mathbb{W})$ were available, one should seek to extend the results of \cite{HHR,HHR2} to the $(-2)$-shifted case with $(-1)$-shifted potentials, rather than the $0$-shifted setting. In our context, invariance is most critical in the \emph{horizontal} (i.e. $t$-)direction, rather than in the vertical (fiberwise) direction, where invariance is implied by Theorem \ref{thm: Global LagFol} and Corollary \ref{shifted potential dCrit} ($\mathbb{W}$ is global). 

 Local $t$-independence can be argued as follows. Since $\mathbb{W}$ is globally defined and locally takes the form $f\cdot g,$ this local description depends, as above, on a choice of Darboux chart, which we explicity have constructed in 
  Lemma \ref{lem: A of X} and Lemma \ref{lem: f+g}. However, an additional ambiguity persists: whether the conjectural category $\mathsf{MF}(\mathbb{W}_t)$ remains invariant under fiberwise choices of local presentation. For instance, while $f$ and $g$ may vary locally, their product should remain unchanged.
  
Suppose there exists local presentations of the same global $(-1)$-shifted potential function. That is, in neighbourhoods $\mathcal{U}_t,\mathcal{U}_t'$ of a point $\F\in \M^{vert}(\mathbb{P}),$ supported on $X_t\subset \mathbb{P}$, we have
$$\mathbb{W}(t):=\mathbb{W}|_{\mathcal{U}_t}=(f\cdot g)(t)=f_t\cdot g_t,\hspace{2mm}\text{ and }\mathbb{W}|_{\mathcal{U}_{t}'}=(f'\cdot g')(t).$$
Then, by Lemma \ref{lem: f+g} and the main result of \cite{HHR}, we obtain local equivalences of $(-1)$-shifted symplectic stacks,
$\M(X_t)\simeq \mathrm{dCrit}(f_t),$ and $\M(X_t)\simeq \mathrm{dCrit}(f_t')$, up to possible quadratic contributions. This induces further local equivalences between $\mathrm{dCrit}(f_t)$ and $\mathrm{dCrit}(f_t').$ 
Over $\mathbb{A}^1\subset C$, following observation holds: in a neighbourhood of  $t\in \mathbb{A}^1$ and an associated neighbourhood $\mathcal{U}_t$ in $\M^{vert}(\P)$ of its preimage under $\mathbb{P}\rightarrow C,$ the function 
$$t\mapsto dim \big(HP_{\bullet}(\mathsf{MF}(\mathbb{W}|_{\mathcal{U}_t}))\big),$$
is locally constant. Using results of \cite{E}, and that $\mathsf{MF}(\mathbb{W}|_{\mathcal{U}_t})\simeq \mathsf{MF}(f_t)$, these categories sheafify over $C.$ However, in the absence of a global definition of $\mathsf{MF}(\mathbb{W})$, we can not assert that $\mathsf{MF}(\mathbb{W})|_{\mathcal{U}_t}\simeq\mathsf{MF}(\mathbb{W}|_{\mathcal{U}_t}).$
\end{rmk}
%We recall one additional fact that will be used in our application. Namely, for $\mathsf{T}\in \mathsf{dgCat}_{\infty,k}^{idem,2-per}$ one may consider
%$$\mathrm{HP}_{2-per}(\mathsf{T}).$$
%This $2$-periodic cyclic homology is a vector bundle over the $\mathsf{Spf}\big(k(\!(v)\!)\big)$ with $v$ a parameter of degree $2$ with a canonical flat (Gauss-Manin) connection $\nabla_v^{\mathrm{GM}}.$
%This story applies to $\mathsf{Sing}^{2-per}(U,f)$, which has the property that $h\mathsf{Sing}(U_0)\simeq \mathsf{D}_{sing}(U_0)^{idem},$ where the latter is the triangulated category of the (idempotent completion of) the Verdier quotient $\mathsf{D}^b(Coh(U_0))/\mathsf{D}_{perf}(U_0).$

\subsection{Periodic cyclic homology of curved DG-categories} 
\label{ssec: Periodic cyclic homology of curved DG-categories}
We are interested in considering homology of categories and it is 
convenient to develop the formalism in the setting of a 
$\mathbb{Z}_2$-graded \textit{curved
dg-category} $\mathcal{D}$. 
Here and below, we review the construction of periodic cyclic homology for curved dg-categories. When the curvature term is trivial, we recover the uncurved case. 
According to \cite[Section 2.3]{E}, this means
that the sets of morphisms in $\mathcal{D}$ are $\mathbb{Z}_2$-graded
vector spaces with odd `differentials' $d$ and even 
curvature elements $h_F \in Hom_{\mathcal{D}}(F, F)$ for every object $F\in \mathcal{D}.$ 

It is moreover required that: 
\begin{enumerate}
\item The map $d$ satisfies the Leibniz rule with respect to the composition of 
morphisms;
\item For objects $F,G$ in $\mathcal{D}$ and $f \in Hom_{\mathcal{D}}(F, G),$ we have that $d^2(f) = h_G f - f h_F;$
\item For any $F$ we have $d h_F = 0;$
\item For any $F$, the morphism $\mathrm{id}_F$ has degree zero (which implies $d\hspace{1mm} \mathrm{id}_F = 0$). 
\end{enumerate}
In addition, by formally adding a closed identity morphism $e_F$, for all objects $F$ in $\mathcal{D}$, we set
$$
\begin{cases}
Hom_{\mathcal{D}^e}(F, F) := Hom_\mathcal{D}(F, F) \oplus 
\mathbb{C} e_F,\hspace{2mm} \text{for } F=G,
\\
Hom_{\mathcal{D}^e}(F, G) := Hom_\mathcal{D}(F, G),\hspace{2mm} \text{ for } F\neq G.
\end{cases}
$$

Finally, for a graded vector space $V$ denote by $sV$ the shift of grading
by $1$. We then make use of two versions of Hochschild homology complexes, corresponding to the uncurved and curved cases defined as follows. 
First, suppose that $\mathcal{D}$ is uncurved. Then all $h_F$ are zero, and we set 
\begin{align}
\label{eqn: UncurvedHoch}
\mathrm{Hoch}(\mathcal{D}):=&\bigoplus_{F \in \mathcal{D}}  Hom_{\mathcal{D}}(F, F)
\oplus \notag
\\
&\bigoplus_{n \geq 1 \atop F_0, \ldots F_n\in \mathcal{D}}
Hom_{\mathcal{D}^e} (F_n, F_0) \otimes s Hom_{\mathcal{D}}(F_{n-1}, F_n) 
\otimes \ldots \otimes sHom_{\mathcal{D}}(F_0, F_1).
\end{align}
Write $(f_n, f_{n-1}, \ldots f_0)$ for $f_n \otimes s f_{n-1} \otimes \ldots
\otimes s f_0$.
Then the Hochschild differential defining the complex (\ref{eqn: UncurvedHoch}) is  $b := b_1 + b_2,$ given explicitly on elements $(f_n,f_{n-1},\ldots,f_0)$ by,
\begin{align}
\label{eqn: b1b2}
b_1 (f_n, f_{n-1}, \ldots, f_0) &:= 
\sum_{i = 0}^{n} (-1)^{\mu_i}(f_n, \ldots, df_i, \ldots, f_0),\nonumber 
\\
b_2 (f_n, f_{n-1}, \ldots, f_0) &:= 
\sum_{i = 0}^{n-1} (-1)^{\varepsilon_i} (f_n, \ldots, f_{i+1} f_i, \ldots, f_0) 
+ (-1)^{\varepsilon_n} (f_0 f_n, f_{n-1}, \ldots, f_1),
\end{align}
where the signs in (\ref{eqn: b1b2}) are given, for each $i=0,\ldots,n-1$, by
$$
\varepsilon_i = \sum_{k = i+1}^n \vert s f_k\vert + 1,
n-1,\hspace{3mm} \varepsilon_n = \vert sf_0 \vert (\vert f_n\vert + \sum_{k = 1}^{n-1} 
\vert s f_k \vert),\hspace{2mm} \text{ and }\hspace{1mm} \mu_i = \sum_{k = i+1}^n \vert s f_k\vert. 
$$
The Connes differential $B$ is given by 
\begin{equation}
\label{eqn: Connes differential}
B(f_n, f_{n-1}, \ldots, f_0) = \sum_{i = 0}^n (-1)^{(\sum_{i = 0}^{i-1} \vert s f_k \vert)(\sum_{ l = i}^n \vert s f_l \vert)}(e_{X_i}, f_{i-1}, 
\ldots, f_0, f_n, \ldots, f_i),
\end{equation}
for $f_n \in Hom_{\mathcal{D}}(F_n, F_0)$, and is zero for $f_n \in \mathbb{C} e_{F_0}$.

Directly from (\ref{eqn: b1b2}) and (\ref{eqn: Connes differential}) one may verify the mixed complex identities 
$$
b^2 = B^2 = Bb + b B. 
$$
In the curved case, the definition must be adjusted in two 
places. First, we replace direct sum by direct product 
\begin{align}
    \label{eqn: CurvedHoch}
\mathrm{Hoch}^{\Pi}(\mathcal{D}):=&\bigoplus_{F \in \mathcal{D}}  Hom_{\mathcal{D}}(F, F)
\oplus \notag
\\
&\prod_{n \geq 1}\bigoplus_{F_0, \ldots F_n\in \mathcal{D}}
Hom_{\mathcal{D}^e} (F_n, F_0) \otimes s Hom_{\mathcal{D}}(F_{n-1}, F_n) 
\otimes \ldots \otimes sHom_{\mathcal{D}}(F_0, F_1).
\end{align}
Then, the differential for (\ref{eqn: CurvedHoch}) is modified accordingly as $b:= b_0 + b_1 + b_2$, where the additional 
term $b_0$ is given by 
$$
b_0 (f_n, \ldots, f_0): = \sum_{i = 0}^n (-1)^{\sum_{k=i}^n \vert s f_k \vert}(f_n, \ldots, f_i, h_{F_i}, 
f_{i-1}, \ldots, f_0).
$$
The Connes differential $B:\mathrm{Hoch}^{\Pi}(\mathcal{D})\rightarrow \mathrm{Hoch}^{\Pi}(\mathcal{D}),$ is defined by the same formula (\ref{eqn: Connes differential}).

Given any mixed-complex $(C,b,B)$, taken in our case to be (\ref{eqn: UncurvedHoch}), one may form a new mixed complex  
$$\big(\mathrm{Hoch}(\mathsf{MF}(f_t))(\!(u)\!),b+uB\big),$$
where $\mathsf{MF}(f_t):=\mathsf{MF}\big(\M(X_t),f_{s(t)}\big),s(t)\in H^0(\mathbb{P}\times \mathbb{A}^1,K_{\mathbb{P}}^{-1}\boxtimes \mathcal{O}_{\mathbb{A}^1}),$
and $u$ is a formal even variable with
\begin{equation}
    \label{eqn: u-Hoch}
\mathrm{Hoch}(\mathsf{MF}(f_t))(\!(u)\!):=\mathrm{colim}_n u^{-n}\cdot \prod_{m\geq 0}u^m\cdot \mathrm{Hoch}(\mathsf{MF}(f_t)).
\end{equation}
Then a $u$-connection on such a mixed-complex is a $\mathbb{C}$-linear operator  $\nabla_u$ acting from (\ref{eqn: u-Hoch}) to itself which takes the form $\nabla_u=\frac{d}{d u}+A(u),$ with $A(u)$ a $\mathbb{C}(\!(u)\!)$-linear operator with $[\nabla_u,b+uB]=\frac{1}{2u}(b+uB).$

The periodic cyclic homology $HP_{\bullet}(\mathsf{MF}(f_t))$ of the matrix factorization 
category $\mathsf{MF}(f_t)$ is defined as,
\begin{equation}\label{HPt}
HP_{\bullet}(\mathsf{MF}(f_t)):=H^{\bullet}\big(\mathrm{Hoch}(\mathsf{MF}(f_t))(\!(u)\!), b + u B)\big).    
\end{equation}

\begin{thm}
\label{thm: GrDim of HP(S)}
Consider
a good degeneration $\P: X \to X_1 \cup X_2$. Then there exists a vector bundle over $\mathbb{A}^1$ with a flat connection, whose fiber is  $HP(\mathsf{MF}(f_{t}))$, for $t \in \mathbb{A}^1$. In particular, the graded dimension of $HP(\mathsf{MF}(f_{t}))$ is constant in the family.
\end{thm}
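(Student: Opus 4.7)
The plan is to leverage the crucial $t$-independence of the local potential function established in Theorem \ref{t-independence}, combined with Efimov's identification of periodic cyclic homology for matrix factorizations with twisted de Rham cohomology, to recognize the fiberwise $HP(\mathsf{MF}(f_t))$ as the fibers of the pushforward along $\pi: \M^{vert}(\P) \to \mathbb{A}^1$ of a globally defined relative twisted de Rham complex. The Gauss-Manin connection will then arise from the Katz-Oda construction adapted to the twisted setting.

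First, I would invoke Efimov's theorem (in the form stated as Theorem \ref{thm: HP(S)} below for $\mathsf{MF}(\mathbb{W}_t)$, but here applied fiberwise to the $0$-shifted potential $f_t$ on $\M(X_t)$) to obtain, for each $t \in \mathbb{A}^1$, a quasi-isomorphism
$$HP\big(\mathsf{MF}(f_t)\big) \simeq H^\bullet\big(\Omega^\bullet_{\M(X_t)}(\!(u)\!),\, -df_t + u\, d_{DR}\big),$$
intertwining the natural $u$-connections on each side. Since by Theorem \ref{t-independence} the potential $f$ is globally defined on $\M^{vert}(\P)$ and $t$-invariant, it can be viewed as a $\pi$-relative function, and I would assemble these fiberwise identifications into the relative twisted de Rham complex $\big(\Omega^\bullet_{\M^{vert}(\P)/\mathbb{A}^1}(\!(u)\!),\, -df + u\, d_{DR}\big)$ on $\M^{vert}(\P)$, whose derived pushforward
$$\mathcal{H}^\bullet := R\pi_*\big(\Omega^\bullet_{\M^{vert}(\P)/\mathbb{A}^1}(\!(u)\!),\, -df + u\, d_{DR}\big)$$
has fiber at $t$ canonically isomorphic to $HP(\mathsf{MF}(f_t))$ by flat base change (using $\pi$-smoothness of $\M^{vert}(\P)$ and Efimov's theorem applied fiberwise).

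Next, I would construct the Gauss-Manin connection $\nabla_{GM}: \mathcal{H}^\bullet \to \Omega^1_{\mathbb{A}^1} \otimes \mathcal{H}^\bullet$ via the Katz-Oda filtration $F^p\Omega^\bullet_{\M^{vert}(\P)} := \pi^*\Omega^{\geq p}_{\mathbb{A}^1} \wedge \Omega^{\bullet - p}_{\M^{vert}(\P)}$ on the absolute de Rham complex. The quotient $F^0/F^1$ recovers the relative complex, and the connecting morphism in the short exact sequence $0 \to F^1/F^2 \to F^0/F^2 \to F^0/F^1 \to 0$ yields $\nabla_{GM}$ after taking cohomology. Crucially, because $\partial f/\partial t = 0$, the twist $-df$ commutes with passage through this filtration, so the construction descends to the twisted setting without introducing new curvature terms. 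Flatness $\nabla_{GM}^2 = 0$ follows from the standard argument, namely the acyclicity of associated graded pieces of $F^\bullet$ past the second step together with the cocycle condition for the connecting maps.

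The main obstacle is showing that $\mathcal{H}^\bullet$ is in fact a genuine vector bundle of finite rank and that fiber dimensions are constant. For this I would appeal to the noncommutative Hodge-to-de-Rham degeneration for smooth and proper dg-categories, in its curved form due to Kaledin (and extended to families by To\"en-Vezzosi), applied to the family $\{\mathsf{MF}(f_t)\}_{t\in \mathbb{A}^1}$. Smoothness and properness of this family follow from the constructions of Section \ref{ssec: Derived Quot-schemes} together with compactness of the critical loci. Degeneration then implies that $\mathcal{H}^\bullet$ is locally free over $\mathbb{C}(\!(u)\!) \otimes \mathcal{O}_{\mathbb{A}^1}$, and the flat connection forces the graded rank to be locally constant in $t$; since $\mathbb{A}^1$ is connected, the graded dimension of the fibers is therefore constant, completing the proof.
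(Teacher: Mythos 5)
Your proposal is correct and follows the same overall strategy as the paper's (very terse) proof: both rest on the $t$-independence of the local potential from Theorem \ref{t-independence} together with Efimov's identification of $HP$ of a matrix factorization category with twisted de Rham cohomology, and both conclude by exhibiting a flat Gauss--Manin connection on the resulting sheaf over $\mathbb{A}^1$. Where you differ is in how the connection and the local freeness are produced. The paper simply sheafifies the curved dg-categories and cites Efimov's Proposition 3.24 for an ``isomorphism of bundles with connections''; its explicit connection only appears later, in the proof of Theorem \ref{thm: GrDim of HP(S)-W}, via Getzler's formula $\nabla = d^t - u^{-1}d^t(\mathbb{W})$, which in the $t$-independent situation collapses to $d^t$ --- exactly the connection your Katz--Oda filtration argument yields, so the two constructions agree. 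Your treatment of local freeness is the genuinely new content: the paper never addresses why the pushforward is a vector bundle of constant graded rank. Two caveats on that step. First, the attribution of a ``curved'' Hodge-to-de-Rham degeneration to Kaledin is loose --- Kaledin's theorem is for $\mathbb{Z}$-graded smooth proper dg-algebras, and for $\mathbb{Z}/2$-graded matrix factorization categories the relevant freeness statement (that the twisted de Rham cohomology is free over $\mathbb{C}(\!(u)\!)$) is really the freeness of the Brieskorn lattice (Sebastiani, Malgrange, Sabbah--Saito); you should cite that instead. Second, the degeneration is not actually needed for constancy of the fiber dimension: once $R\pi_*$ of the relative twisted de Rham complex is coherent (properness of the critical loci, which you already invoke) and carries an integrable connection over the smooth curve $\mathbb{A}^1$, it is automatically locally free, and base change then gives constancy of the graded rank. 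So your argument is sound but can be streamlined at the final step.
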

\begin{proof}
By Theorem \ref{thm: Global LagFol}, Corollary \ref{shifted potential dCrit} and Theorem \ref{t-independence} the periodic cyclic homology associated to (fiberwise) matrix factorization category of $f$ is globally defined over $\mathbb{A}^1,$  and is independent of $t.$ Consider Equations \eqref{eqn: u-Hoch} and \eqref{HPt}. One may sheafify the curved dg-categories and use \cite[Proposition 3.24]{E} which provides an isomorphism of bundles with connections over the punctured formal disk  between the corresponding derived global section functors. 
\end{proof}

\subsubsection{Matrix factorization category (for total space) over $\M^{vert}(\P)$}
Theorem \ref{thm: GrDim of HP(S)} above provides a derived geometric analog of deformation invariance of categorified DT invariants on a Tyurin degeneration family of Calabi Yau threefolds. The construction in the section below is not essential to our article, and one may skip to Section \ref{def-quant} for discussions on computation of categorified DT invariants over the special fiber of the degenerating family. \\

We have proven through methods of \cite{BKSY}, that over $\M^{vert}(\P)$ there exists a global $(-1)$-shifted potential function. It is then natural to consider the associated category of matrix factorizations \cite{E} over the total moduli space of stable vertical perfect complexes on the Fano variety $\mathbb{P}$, but one must proceed with caution.

\begin{rmk}
\label{rmk: Shifted sings}
    The matrix factorization category of a $(-1)$-shifted global function is not defined in the usual sense. However, an analog of Orlov's construction in the shifted-geometric context, due to Toën-Vezzosi \cite{TV3}, has appeared recently which reflects singularities of
shifted functions over the loci that they are not quasi-smooth. Roughly speaking, we consider the pair ($\M^{vert}(\P),\mathbb{W}),$ with the constructed potential $\mathbb{W}$, viewed as a $(-1)$-shifted function $\mathbb{W}:\M^{vert}(\P)\rightarrow \mathbb{A}^1[-1]:=\mathbf{Spec}(k[\epsilon_{-1}])$, where $\epsilon_{-1}$ sits in degree $-1.$ This provides a morphism of $\mathcal{O}_{\M^{vert}(\P)}$-dg algebras $\mathcal{O}_{\M^{vert}(\P)}[-\epsilon_{-1}]\rightarrow \mathcal{O}_{\M^{vert}(\P)}$, thus one may ask for when an object of $\mathsf{Perf}(\M^{vert}(\P))$ is perfect as a $\mathcal{O}_{\M^{vert}(\P)}[-\epsilon_{-1}]$-module. The category of \emph{shifted-singularities} is the quotient by this sub-$\infty$-category of $\mathbb{W}$-perfect complexes $\mathsf{Perf}_{\mathbb{W}}(\M^{vert}(\P))$:
\begin{equation}
    \label{eqn: ShiftSing}
\mathsf{Sing}(\M^{vert}(\P),\mathbb{W}):=\mathsf{Perf}(\M^{vert}(\P))/\mathsf{Perf}_{\mathbb{W}}(\M^{vert}(\P)).
\end{equation}
As a quasi-coherent stack of dg-categories on $\M^{vert}(\P)$, \emph{(\ref{eqn: ShiftSing})} is supported on
the (closed) locus where the function is not quasi-smooth.

Letting $Z$ be the zero-locus of the potential function, by  \cite[Remark. 2.3.3.(3)]{TV3}, one may observe \emph{(\ref{eqn: ShiftSing})} is closely related to the Kiem-Li cosection localization \cite{KL}. In particular, there is a canonical $\infty$-functor $\mathsf{Sing}(\M^{vert}(\P),\mathbb{W})\rightarrow \mathsf{Coh}_Z(\M^{vert}(\P))$ given by periodic cyclic homology $\underline{\mathcal{H}om}_{\mathcal{O}_{\M^{vert}(\P)}[\epsilon_{-1}]}(\mathcal{O}_{\M^{vert}(\P)},-)[u^{-1}].$
\end{rmk}
With Remark \ref{rmk: Shifted sings} in mind, together with Theorem \ref{thm: Global LagFol}, the global $(-1)$-shifted potential function specializes by Lemma \ref{lem: f+g} to a function of the form $\mathbb{W}_t:=f_t\cdot g_t,$ depending (locally) on a section $s(t).$ 
From this local function, in Theorem \ref{thm: GrDim of HP(S)} we constructed periodic cyclic homology for the fiber-wise matrix factorization category, which exists in the familiar sense e.g. $f_t$ is degree zero. This leads naturally to make the following conjecture about relationship between matrix factorization category of functions $f_{t}$ and $\mathbb{W}_{t}$. 

\begin{conjecture}\label{conjecture} The periodic cyclic homology associated to the category of shifted-singularities of $\M^{vert}(\P)$ in \emph{(\ref{eqn: ShiftSing})} can be obtained as a flow, in the $t$ direction (induced by the function $g$), of the period cyclic homology associated to the fiber-wise matrix factorization categories $\mathsf{MF}(\M(X_t),f_t)$, induced by the function $f$.
\end{conjecture}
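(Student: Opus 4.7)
The approach I would take rests on combining three ingredients: the explicit local factorization $\mathbb{W}_t=f_t\cdot g_t$ from Lemma \ref{lem: f+g}, the Toën--Vezzosi framework for shifted singularity categories sketched in Remark \ref{rmk: Shifted sings}, and the Gauss--Manin flow from Theorem \ref{thm: GrDim of HP(S)}. The plan is to prove the conjecture by constructing an explicit morphism of mixed complexes computing $HP(\mathsf{Sing}(\M^{vert}(\P),\mathbb{W}))$ from the fiberwise twisted de Rham complexes $\big(\Omega^\bullet(\!(u)\!),-df_t+u\cdot d_{DR}\big)$ of Theorem \ref{thm: HP(S)}, in which the parameter $t$ of the fibration $\mathbb{P}\to C$ evolves along the Hamiltonian-like flow associated to the function $g$.

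First, I would make the local decomposition global along the lines of Corollary \ref{shifted potential dCrit}: the perfect complex $\mathcal{W}$ over $\mathcal{M}(\mathbb{P})$ whose Koszul algebra recovers $\mathcal{O}_{\mathrm{dCrit}(\mathbb{W})}$ fits in an exact triangle whose fiberwise restriction recovers $\mathcal{O}_{\mathrm{dCrit}(f_t)}\simeq A_{X_t}^{\bullet}$. Dualizing, this produces a canonical two-step filtration on $\mathsf{Sing}(\M^{vert}(\P),\mathbb{W})$ whose associated graded is, locally along $X_t\subset \P$, given by the Thom--Sebastiani tensor product of $\mathsf{MF}(f_t)$ with the matrix-factorization-type category attached to the linear-in-$(W_1,W_2^\vee)$ piece $g_t$. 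The latter is homotopically trivial in the periodic cyclic sense because $g_t$ is linear, so after passing to $HP$ (and using Efimov's HKR computation adapted by Toën--Vezzosi to the $(-1)$-shifted setting) only the $f_t$-contribution survives in each fiber.

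Second, I would identify the residual $g$-dependence with an infinitesimal symmetry. Since $\M^{vert}(\P)$ carries a $(-2)$-shifted Poisson bivector (Subsection \ref{ssec: Stability and Poisson}) and $g$ pairs the normal-direction $Ext$-groups $W_1$ and $W_2^\vee$ to $\mathbb{C}$, the Hamiltonian associated to $g$ via this bivector acts on $\mathsf{Perf}(\M^{vert}(\P))$ by an odd derivation whose contraction with the Connes differential produces precisely the $u$-connection of Theorem \ref{thm: GrDim of HP(S)}. Integrating this derivation in the $t$-direction yields the advertised flow: the parallel transport of $HP(\mathsf{MF}(f_t))$ along $\mathbb{A}^1$ matches the specialization of $HP(\mathsf{Sing}(\M^{vert}(\P),\mathbb{W}))$ to the $t$-fiber. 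Compatibility with the Gauss--Manin connection, and hence $t$-independence of the graded dimension, would follow from Theorem \ref{t-independence} together with Theorem \ref{thm: GrDim of HP(S)-W}.

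The main obstacle is foundational: a rigorous definition of $\mathsf{MF}$ and its Hochschild/periodic cyclic invariants for $(-1)$-shifted potentials is not yet available beyond the Toën--Vezzosi proposal \cite{TV3}, and the Thom--Sebastiani step requires proving a shifted analog of Preygel/Efimov's equivalence $\mathsf{MF}(X\times Y, f\boxplus g)\simeq \mathsf{MF}(X,f)\otimes\mathsf{MF}(Y,g)$ in the $(-2)$-shifted symplectic setting. Establishing this shifted Thom--Sebastiani principle, together with a shifted HKR isomorphism identifying $HP(\mathsf{Sing}(\M^{vert}(\P),\mathbb{W}))$ with the twisted de Rham complex of $\mathbb{W}$, is where the genuine new input is needed; once available, the remainder of the argument reduces to the explicit local computation sketched above combined with the flatness of the connection already constructed in Section \ref{ssec: Matrix factorizations and GM-connection}.
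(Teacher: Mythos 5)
This statement is a \emph{conjecture} in the paper: no proof is given, and the surrounding text (Remark \ref{rmk: Shifted sings} and the paragraph preceding the conjecture) makes clear that the authors regard even the definition of $\mathsf{MF}$ for a $(-1)$-shifted potential as unsettled, relying only on the To\"en--Vezzosi proposal. So there is no paper proof to compare against; what you have written is a strategy sketch for an open problem, and you are right to flag the foundational gaps (a working definition of $\mathsf{MF}(\mathbb{W})$ and its $HP$) as the genuine missing input. Your second paragraph, relating the $g$-direction to the $(-2)$-shifted Poisson bivector and the Gauss--Manin connection of Theorems \ref{thm: GrDim of HP(S)} and \ref{thm: GrDim of HP(S)-W}, is consistent in spirit with how the paper motivates the conjecture.

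However, there are two concrete problems in your sketch beyond the acknowledged foundational ones. First, Lemma \ref{lem: f+g} gives $\mathbb{W}=f\cdot g$, a \emph{product} of potentials on a single space $U\oplus W_2^{\vee}\oplus W_1$, not a Thom--Sebastiani sum $f\boxplus g$ on a product of spaces. The Preygel--Efimov equivalence $\mathsf{MF}(X\times Y,f\boxplus g)\simeq \mathsf{MF}(X,f)\otimes\mathsf{MF}(Y,g)$ does not apply to products of functions: the critical locus of $f\cdot g$ contains the whole intersection $\{f=0\}\cap\{g=0\}$ and the singularity category behaves completely differently. Any "shifted Thom--Sebastiani" step would first require recasting $\mathbb{W}$ in split form, which the local model does not provide. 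Second, your claim that the $g_t$-factor is "homotopically trivial in the periodic cyclic sense because $g_t$ is linear" overstates what Lemma \ref{lem: f+g} says: $g$ is linear only in the last two arguments $W_1,W_2^{\vee}$, but carries the full tower of products $\mathrm{Sym}^{k-1}(U)\otimes W_1\to W_2$ etc.\ from (\ref{eqn: Ext-alg on P products}), i.e.\ a nontrivial $U$-dependence encoding the normal deformations of $X_t$ in $\P$. It is not a nondegenerate quadratic form that Kn\"orrer periodicity can strip away, and indeed the conjecture asserts precisely that this $g$-dependence survives as a flow in $t$ rather than cancelling. A correct attack would have to quantify that residual contribution rather than discard it.
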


Now let us assume that $\mathsf{MF}\big(\M^{vert}(\P),\mathbb{W}(t)\big)$ exists and consider its induced periodic cyclic homology. 
\begin{thm}
\label{thm: HP(S)}
Consider the pair $\big(\M^{vert}(\P),\mathbb{W}(t)\big)$ as above, and the category of matrix factorizations $\mathsf{MF}(\mathbb{W}_t)$ with $s(t)\in H^0(\mathbb{P}\times\mathbb{A}^1,K_{\mathbb{P}}^{-1}\boxtimes\mathbb{A}^1).$ Then in the Zariski topology, 
$HP_{\bullet}(\mathsf{MF}(\mathbb{W}_t))$ is isomorphic to the cohomology of the complex 
$(\Omega^\bullet(\!(u)\!), - d\mathbb{W}(t) + u\cdot d_{DR})$. In fact, there is a quasi-isomorphism between the Hochschild complex of $\mathsf{MF}(\mathbb{W}_t)$ and the twisted de Rham complex and this quasi-isomorphism is compatible with the natural $u$-connections.
\end{thm}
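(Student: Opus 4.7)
The plan is to extend Efimov's quasi-isomorphism \cite{E}, established for ordinary (degree zero) potentials, to the $(-1)$-shifted setting, via the Toën-Vezzosi framework for shifted singularity categories recalled in Remark \ref{rmk: Shifted sings}. The key conceptual input is that although $\mathbb{W}$ itself is $(-1)$-shifted and globally defined by Corollary \ref{shifted potential dCrit}, by Lemma \ref{lem: f+g} there exists a Zariski cover $\{\mathcal{U}_t\}$ of $\M^{vert}(\P)$ such that locally $\mathbb{W}_t|_{\mathcal{U}_t} \simeq f_t \cdot g_t$ where $g_t$ accounts for the normal directions encoded in $W_1 \oplus W_2^{\vee}$. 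This lets us import Efimov's construction on each chart and glue.

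First I would set up the Hochschild complex of $\mathsf{MF}(\mathbb{W}_t)$ using the curved dg-category conventions of Subsection \ref{ssec: Periodic cyclic homology of curved DG-categories}, where the curvature of an object $E_{\bullet} = (E^0 \oplus E^1, \delta)$ is given by $h_E = \delta^2 = \mathbb{W}_t \cdot \mathrm{id}_E$, understood via the Toën-Vezzosi presentation as a module structure over $\mathcal{O}_{\M^{vert}(\P)}[-\epsilon_{-1}]$. Next, I would construct the HKR-type map from the curved Hochschild complex $\mathrm{Hoch}^{\Pi}(\mathsf{MF}(\mathbb{W}_t))$ to the twisted de Rham complex $(\Omega^{\bullet}(\!(u)\!), -d\mathbb{W}(t) + u \cdot d_{DR})$, following the tensor structure: the composition-part differential $b_2$ maps to $0$, the differential $b_1$ to $u \cdot d_{DR}$, and the crucial curvature term $b_0$ to the insertion with $-d\mathbb{W}(t)$. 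Compatibility with $u$-connections follows from tracking how the Getzler-Gauss-Manin connection acts on both sides under this identification.

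The local verification on a chart $\mathcal{U}_t$ reduces to the established case of Efimov by viewing $\mathbb{W}_t = f_t \cdot g_t$ as parametrized deformations of the fiberwise function $f_t$, so that $HP_{\bullet}(\mathsf{MF}(\mathbb{W}_t|_{\mathcal{U}_t}))$ admits a description in terms of the twisted de Rham cohomology of $(\Omega^{\bullet}_{\mathcal{U}_t}(\!(u)\!), -d(f_t g_t) + u d_{DR})$. Global gluing over the Zariski topology then follows from the sheaf property of both the Hochschild complex of the sheafified curved dg-category (as in \cite[Proposition 3.24]{E}) and of the twisted de Rham complex, since both are defined by presheaves that satisfy Zariski descent after taking global sections of the appropriate derived sheaf.

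The hard part will be making precise the curved dg-structure on $\mathsf{MF}(\mathbb{W}_t)$ in the $(-1)$-shifted context, because the Toën-Vezzosi framework of shifted-singularities (\ref{eqn: ShiftSing}) is defined as a quotient of $\infty$-categories rather than via an explicit Koszul-type model, so one must verify that the resulting periodic cyclic homology agrees with the naive Hochschild-theoretic definition used in the statement. A second delicate issue is the compatibility of the $u$-connection under the local-to-global procedure, since the connection on the twisted de Rham side depends on $\mathbb{W}(t)$ globally, so one must check that the locally-defined connections agree on overlaps via the homotopy data provided by Theorem \ref{thm: Global LagFol} rather than only up to local equivalence. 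Once these technical points are resolved, the convergence of the HKR spectral sequence and the isomorphism on cohomology follow by the standard completeness argument in the $u$-adic filtration.
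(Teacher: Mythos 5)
Your overall strategy---run Efimov's curved-HKR argument on $\mathbb{W}(t)$, check it affine-locally, and glue in the Zariski topology---is the same as the paper's, and you correctly identify where each piece of the curved Hochschild differential should land ($b_2\mapsto 0$, $b_1\mapsto u\,d_{DR}$, $b_0\mapsto$ contraction with $-d\mathbb{W}(t)$). However, there is a genuine gap at the central step: the HKR formula $\epsilon:(a_0,\ldots,a_n)\mapsto \frac{1}{n!}a_0\,da_1\wedge\cdots\wedge da_n$ does not make sense directly on $\mathrm{Hoch}^{\Pi}(\mathsf{MF}(\mathbb{W}_t))$, whose chains are built from morphism complexes between $\mathbb{Z}_2$-graded locally free sheaves with odd differentials, not from elements of a commutative algebra. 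The paper's proof first replaces $\mathsf{MF}(\mathbb{W}_t)$ by the one-object curved algebra $(\mathcal{O}_{\M^{vert}(\P)},-\mathbb{W}(t))$ via the intermediate qdg-category $(\mathcal{O},\mathbb{W}(t))\text{-}\underline{\mathrm{mod}}_{l.f}^{qdg}$ and the Polishchuk--Positselski pseudo-equivalences (diagram (\ref{cdg-cats})), each of which induces a quasi-isomorphism of curved Hochschild complexes as mixed complexes; this is also where the sign flip to $-\mathbb{W}$ (via the Yoneda embedding) enters, and why the final step of the proof needs the intertwining operator acting by $(\pm 1)$ on even/odd forms. Without this Morita-type reduction your map is not defined, so the step ``construct the HKR-type map from $\mathrm{Hoch}^{\Pi}(\mathsf{MF}(\mathbb{W}_t))$'' cannot be carried out as written.

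A secondary point: your local reduction ``view $\mathbb{W}_t=f_t\cdot g_t$ as parametrized deformations of the fiberwise function $f_t$'' is both unnecessary and not quite coherent---a product is not a deformation of one of its factors, and Efimov's theorem applies to any regular function on a smooth affine chart, so one simply applies the curved HKR quasi-isomorphism to $-\mathbb{W}(t)|_{\mathcal{U}}$ directly (with the contracting homotopy $H=-(\mathbb{W}\,d/2u)\epsilon$ and a spectral sequence in the $u$-adic filtration); the paper only invokes Theorem \ref{t-independence} to justify sheafifying over $t$, and Lemma \ref{lem: f+g} plays no role in this particular proof. Your concerns about the $(-1)$-shifted Toën--Vezzosi formalism are legitimate but are bracketed by the paper, which explicitly assumes the existence of $\mathsf{MF}(\mathbb{W}_t)$ with the curved dg-model described in Section \ref{ssec: Periodic cyclic homology of curved DG-categories} before running the argument.
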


\begin{proof}
We will show  that for any $t\in \mathbb{A}^1$,
$$HP_{\bullet}(\mathsf{MF}(\mathbb{W}_t))\simeq H^{\bullet}\big(\Omega^{\bullet}(\!(u)\!),-d\mathbb{W}(t)+ud_{dR}),\nabla_u^{DR}:=\frac{d}{du}+\frac{\Gamma}{u}+\frac{\mathbb{W}(t)}{u^2}\big),$$
by briefly repeating the key features of the proof of \cite[Theorem 5.3]{E}. 
By Theorem \ref{t-independence} the construction is independent of $t\in \mathbb{A}^{1}$, thus we may sheafify 
the matrix factorization category $\mathsf{MF}(\mathbb{W}_t)$ in the
Zariski topology, denoted $\underline{\mathsf{MF}(\mathbb{W}_t)}.$\\

Similarly, one considers
the pair $(\mathcal{O}_{\M^{vert}(\P)}, \mathbb{W}(t))$ as 
a sheaf of curved dg-categories with one object and with $d=0$. 
By \cite[Proposition 5.1]{E}, the Hochschild complex of the dg-category
of matrix factorizations is quasi-isomorphic 
as a mixed complex to the hypercohomology
of a sheaf of mixed complexes obtained by sheafifying $\mathcal{D}(t)$. 

To compare matrix factorizations with the sheaf of curved algebra $(\mathcal{O}_{\M^{vert}(\P)}, \mathbb{W}(t))$, we consider the
intermediate sheafified curved dg-category defined for each open subset $U$ by 
$$(\mathcal{O}_{\M^{vert}(\P)},\mathbb{W}(t))-\underline{\mathrm{mod}}_{l.f}^{qdg}(U):=(\mathcal{O}_U,\mathbb{W}(t)|_{U})-\mathrm{mod}_{l.f.}^{qdg},$$
by considering locally free sheaves over $U$ (as opposed to finitely generated projective modules), as sheaves of `qdg' categories, as in \cite[Definition 2.10]{E}.
We identify such objects with
 $\mathbb{Z}_2$-graded locally free sheaves
$E = E^0 \oplus E^1$ over $U$ with an odd differential $\delta$ and no 
restrictions on $\delta^2$. The $Hom$ spaces in this category have
a "differential" that does not square to zero, so it is only a curved
dg-category. Yet it is a target for two obvious functors from the 
categories we are interested in, namely the presheaves of curved dg-categories
\begin{equation}
\label{cdg-cats}
\begin{tikzcd}[column sep=1.0em]
\mathrm{MF}^{nv}(\M^{vert}(\P),\mathbb{W}(t))\arrow[dr,shorten >=3.0ex] & & \arrow[dl,shorten >=3.0ex](\mathcal{O}_{\M^{vert}(\P)},-\mathbb{W}(t))
\\
& (\mathcal{O}_{\M^{vert}(\P)},\mathbb{W}(t))-\underline{\mathrm{mod}}_{l.f}^{qdg}&
\end{tikzcd}
\end{equation}
The factor of $(-1)$ appears in front of the 
potential in (\ref{cdg-cats}) since it 
is given by the Yoneda functor. While there is no good notion of a
quasi-isomorphism for curved dg-categories there is a notion
of a pseudo-equivalence (see \cite[Definition 2.12]{E}) and 
by \cite{PP} the two functors in (\ref{cdg-cats}) are indeed
pseudo-equivalences. Moreover, any pseudo-equivalence
induces a quasi-isomorphism of Hochschild complexes (\ref{eqn: CurvedHoch}) as mixed complexes.
We obtain a diagram,
\begin{equation}
\adjustbox{scale=.89}{
\begin{tikzcd}[column sep=0.005em, row sep=2.0em]
\mathrm{Hoch}^{\Pi}\big(\underline{\mathrm{MF}}^{nv}(\M^{vert}(\P),\mathbb{W}(t))\big)\arrow[dr, ,shorten >=5.0ex] & & \arrow[dl,shorten >=5.0ex]\mathrm{Hoch}^{\Pi}(\mathcal{O}_{\M^{vert}(\P)},-\mathbb{W})
\\
& \mathrm{Hoch}^{\Pi}\big((\mathcal{O}_{\M^{vert}(\P)},\mathbb{W}(t))-\underline{\mathrm{mod}}_{lf}^{qdg}\big)&
\end{tikzcd}}
\end{equation}
and a natural map 
$$\mathrm{Hoch}^{\Pi}(\mathcal{O}_{\M^{vert}(\P)},\mathbb{W})\rightarrow (\Omega^{\bullet},-d\mathbb{W}(t),d_{dR},\nabla),$$ with $\nabla$ the $u$-connection.

It now suffices to 
establish a quasi-isomorphism of the sheafified version of the complex 
$[\mathrm{Hoch}^{\Pi}(\mathcal{O}_{\M^{vert}(\P)}, - \mathbb{W}(t)), b, B]$
and $[\Omega^\bullet(\!(u)\!), - d\mathbb{W}(t), d_{DR}]$.
There is an explicit morphism from the former complex to 
$[\Omega^\bullet(\!(u)\!), d\mathbb{W}(t), d_{DR}]$
given by the standard HKR formula $\epsilon:(a_0, \ldots, a_n) \mapsto \frac{1}{n!}
a_0 da_1 \wedge \ldots \wedge da_n$. The classical (uncurved) HKR 
theorem and a spectral sequence argument gives the required
quasi-isomorphism property in the affine case. The general case follows
since affine open subsets form a basis in the Zariski topology. Note moreover, that there exists a contracting homotopy given by $H=-(\mathbb{W}d/2u)\epsilon.$ 

Finally, it remains to notice that the operator intertwining pairs of 
 operators given as $[d\mathbb{W}(t), d_{DR}]$ and
$[ - d\mathbb{W}(t), d_{DR}]$ acts by $(-1)$ on forms of odd degree and by $(+1)$ on forms of even degree. 
\end{proof}

By identification with periodic cyclic homology as in Theorem \ref{thm: HP(S)}, we may
apply \cite[Proposition 3.1]{G}. This gives an explicit formula for the Gauss-Manin connection in the presence of formal deformations. Furthermore it shows that this connection, defined on the periodic cyclic bar complex of the deformed object has a differential which is covariant constant, and thus induces a connection on the periodic cyclic homology. This connection has the important property that its curvature is chain homotopic
to zero.

Applying these facts to our situation, we conclude the following result in the case of a good degeneration (c.f. Terminology \ref{term: Good degeneration}).

\begin{thm}
\label{thm: GrDim of HP(S)-W}
Consider
a good degeneration $\P: X \to X_1 \cup X_2$, and Theorem \emph{\ref{thm: HP(S)}}.
Then there exists a vector bundle over $\mathbb{A}^1$ with a flat connection, whose fiber is  $HP(t)$, for $t \in \mathbb{A}^1$. In particular, the graded dimension of $HP(t)$ is constant in the family.
\end{thm}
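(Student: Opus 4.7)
The plan is to adapt the argument used for Theorem \ref{thm: GrDim of HP(S)} to the total-space setting, now using Theorem \ref{thm: HP(S)} as the key computational input. Recall that theorem identifies $HP(\mathsf{MF}(\mathbb{W}_t))$ with the cohomology of the twisted de Rham complex $(\Omega^\bullet(\!(u)\!), -d\mathbb{W}(t) + u\cdot d_{DR})$, and states that this quasi-isomorphism is compatible with the natural $u$-connection $\nabla_u^{DR} = \frac{d}{du} + \frac{\Gamma}{u} + \frac{\mathbb{W}(t)}{u^2}$. The strategy is to upgrade this fiberwise identification to a family-wise identification over $\mathbb{A}^1$, and then invoke a Gauss-Manin connection construction of Getzler's type to produce the required flatness.

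First, I would assemble the family-wise data. By Theorem \ref{thm: Global LagFol} together with Corollary \ref{shifted potential dCrit}, the $(-1)$-shifted potential $\mathbb{W}$ is defined globally on $\M^{vert}(\P)$, and by Theorem \ref{t-independence} its local decomposition $\mathbb{W}_t = f_t\cdot g_t$ glues coherently over $\mathbb{A}^1$ with $f_t$ itself $t$-independent up to the quadratic ambiguity of Remark \ref{QuadRmk}. Consequently, the HKR-style quasi-isomorphism of Theorem \ref{thm: HP(S)} sheafifies in the Zariski topology of $\M^{vert}(\P)\times \mathbb{A}^1$, giving a family of twisted de Rham complexes $\bigl(\Omega^\bullet_{\M^{vert}(\P)/\mathbb{A}^1}(\!(u)\!), -d\mathbb{W}(t) + u\cdot d_{DR}\bigr)$ whose fiberwise hypercohomology at $t\in \mathbb{A}^1$ computes $HP(\mathsf{MF}(\mathbb{W}_t))$.

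Second, I would invoke Getzler's construction of the Gauss-Manin connection for periodic cyclic homology of flat families, as in \cite[Proposition 3.1]{G}. Applied to the sheafified Hochschild complex of $\mathsf{MF}(\mathbb{W}_t)$ viewed as a curved dg-category over $\mathbb{A}^1$, this gives an explicit formula for a connection $\nabla^{GM}$ on the periodic cyclic bar complex whose differential is covariant constant and whose curvature is chain-homotopic to zero. Transporting along the quasi-isomorphism of Theorem \ref{thm: HP(S)}, which as emphasized there intertwines the natural $u$-connections, $\nabla^{GM}$ descends to a connection on the cohomology sheaf of the twisted de Rham complex. The chain-level homotopy triviality of the curvature guarantees honest flatness on cohomology. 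Since a coherent sheaf on $\mathbb{A}^1$ equipped with a flat connection is locally free, and since $\mathbb{A}^1$ is connected, the graded rank of the resulting vector bundle is constant, which is the asserted invariance of the graded dimension of $HP(t)$.

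The main obstacle will be verifying that Getzler's formalism, originally developed for uncurved $A_\infty$-algebras and their deformations, extends robustly to the curved dg-setting carrying the $(-1)$-shifted potential $\mathbb{W}(t)$. Concretely, one must track the interaction of the curvature term $\mathbb{W}(t)/u^2$ in $\nabla_u^{DR}$ with the derivation along $t\in \mathbb{A}^1$, and check that the Cartan-type identities underlying the chain-homotopy triviality of the curvature of $\nabla^{GM}$ continue to hold in the curved setting. This relies on the conjectural framework of \cite{TV3} for shifted matrix factorizations being compatible with Efimov's HKR-type identification, which we assume throughout; once granted, the remaining verifications are formal adaptations of the arguments underlying Theorem \ref{thm: GrDim of HP(S)}.
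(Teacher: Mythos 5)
Your proposal follows the same overall strategy as the paper: identify $HP(\mathsf{MF}(\mathbb{W}_t))$ with the cohomology of the twisted de Rham complex via Theorem \ref{thm: HP(S)}, then produce a Gauss--Manin connection in the $t$-direction and conclude local freeness over $\mathbb{A}^1$. You also correctly flag the genuine difficulty: Getzler's \cite[Proposition 3.1]{G} is formulated for the uncurved case, and something must be said about the curvature term $\mathbb{W}(t)$. The problem is that you stop there --- you defer the resolution to ``formal adaptations'' and to compatibility with the conjectural framework of \cite{TV3}, which leaves the central step of the proof unexecuted. The paper closes exactly this gap not by extending Getzler's chain-level formalism to curved dg-categories, but by working directly on the twisted de Rham side, where the connection can simply be written down: letting $d^t$ denote the de Rham differential in the deformation parameters, one sets
$$\nabla := d^t - u^{-1}\, d^t(\mathbb{W})$$
on $\Omega^{\bullet}_{\M^{vert}(\P)}[t_1,\ldots,t_k](\!(u)\!)$, and verifies by an elementary graded-commutator computation that $[-d\mathbb{W} + u\,d_{DR},\,\nabla] = 0$ and $\nabla^2 = 0$. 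The first identity shows $\nabla$ descends to the cohomology of the twisted de Rham differential (which by Theorem \ref{thm: HP(S)} is $HP(t)$), and the second gives flatness outright --- no chain-homotopy-triviality of curvature is needed. So the missing idea in your write-up is this explicit formula; once you have it, the worry about extending Getzler to the curved setting evaporates, because the only input from \cite{E} and \cite{G} is the identification of the twisted de Rham complex with the sheafified Hochschild complex, after which the connection and its flatness are checked by hand.
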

\begin{proof}
By \cite[Proposition 3.21]{E}, the cohomology
of the twisted de Rham complex is computed by 
a sheafified Hochschild complex with the standard
mixed structure, and therefore by \cite[Proposition 3.1]{G}, we obtain an
 explicit formula for the Hochschild complex 
level Gauss-Manin connection which agrees with the 
cyclic structure. 

Since \textit{loc. cit.} is formulated 
in the uncurved case, let us provide a direct argument for the curved setting. 
Assuming that in general the section $s$ involved in the definition 
of the potential $\mathbb{W}(t)$ depends on a point in 
$\mathbb{A}^k$ with coordinates $(t_1, \ldots, t_k)$. In our 
case of interest, $k = 1$ and $s = s_0 t + (1-t) s_1$. 
In this way, we think of $\mathbb{W}$ as depending on 
$(t_1\ldots, t_k)$, i.e. it is understood as the extension of the original potential $\mathbb{W}$ to a function on the product $\M^{vert}(\P)\times \mathbb{A}^k$. 
Denote by $d^t$ the de Rham differential in the $t$-variables and consider
the operator 
$$
\nabla := d^t - u^{-1} d^t(\mathbb{W}),
$$
defined on $\Omega^\bullet_{\M^{vert}(\P)} [t_1, \ldots, t_k](\!(u)\!)$. 
Computing the graded commutator 
of $\nabla$ with $-d \mathbb{W} + ud_{DR},$ we immediately see that,
$$
[-d \mathbb{W} + ud_{DR}, \nabla] = - u^{-1} [d\mathbb{W},  d^t \mathbb{W}] - \big([d_{DR}, d^t \mathbb{W}] + [d \mathbb{W}, d^t] \big)
+ u [d_{DR}, d^t] = 0.
$$
Hence $\nabla$ induces an operator on the cohomology of the 
differential $-d\mathbb{W} + u d_{DR}$, which is moreover a $\mathbb{C}[t_1, \ldots, t_k]-$module with fiber $HP(s(t))$ 
over $t = (t_1, \ldots, t_k) \in \mathbb{A}^k$.
On the other
hand, obviously $\nabla^2 = 0$ hence $\nabla$ induces a flat connection 
in the $t$ direction. 
\end{proof}
\begin{rmk}
It is important to emphasize that by Theorem \ref{thm: GrDim of HP(S)} $HP(\mathsf{MF}(f_t))$, does not jump in dimension and moreover, by assuming the matrix factorization category exists for the pair $(\M^{vert}(\P),\mathbb{W}(t))$, Theorem \ref{thm: GrDim of HP(S)-W} specifies the flat connection of Theorem \ref{thm: GrDim of HP(S)} and shows how it is induced by the de Rham complex of $\M^{vert}(\P)$. In particular, it identifies with a suitable Gauss-Manin connection on periodic cyclic homology induced by $\mathsf{MF}\big(\M^{vert}(\P),\mathbb{W}(t)\big)$ in Conjecture \ref{conjecture}.
\end{rmk}

 \begin{rmk}
 %\normalfont
When the critical locus of the function $\mathbb{W}$ is 
a subset of its vanishing locus the 
twisted de Rham complex appearing in Theorem \ref{thm: GrDim of HP(S)-W} is quasi-isomorphic to 
the sheaf of vanishing cycles on $\mathbb{W}$. This implies that its cohomology gives
the `vector space level' categorification of Donaldson-Thomas invariants. Moreover, we may
then view the $\mathbb{Z}_2$-graded dg-categories $\mathsf{MF}(\mathbb{W}_t)$
as the higher i.e. categorified DT-invariants for the Calabi-Yau 4 category structure underlying $\M^{vert}(\P)$.  
\end{rmk}

\section{Computation of DT cohomology groups over special fiber}\label{def-quant}
The derived critical locus of a function $\mathbb{W}: \mathsf{X} \to \mathbb{A}^1$ can be viewed as a derived intersection, in the total 
space of the cotangent bundle $T^* \mathsf{X}$, of the zero section 
with the graph of $d \mathbb{W}$. Note that both are Lagrangians
in $T^* \mathsf{X}$. We give a model for computing this derived Lagrangian intersection via deformation quantization and spectral sequence arguments, to be applied to the setting of Subsection \ref{lagrangian-moduli}. In particular in what follows below for us, $\mathsf{X}$ is given by $\M(X_{0})$ with the above constructed potential $f_{0}$, with the Lagrangians given as in Theorem \ref{Lagrangian}.

\subsection{Deformation quantization approach}
\label{ssec: Deformation quantization approach}
Working in the holomorphic setting, Gunningham and Safronov consider
the case of two smooth Lagrangians $L_1, L_2$ in a holomorphic 
symplectic manifold $S$ ($=T^* \mathsf{X}$ in our case). It is also 
assumed that $L_1, L_2$ are equipped with chosen square roots of the canonical 
bundles $K_{L_1}^{1/2}, K_{L_2}^{1/2}$, respectively, so that $L_1\cap L_2$ is oriented. 
\begin{rmk}
In the paper \cite{GS}, the authors use a choice of a scalar in $\mathbb{C}/\mathbb{Z}.$ We assume
it to be trivial. 
\end{rmk}

The setup can be quantized: there is a $\mathbb{C}[\![\hbar]\!]$-algebroid
(a "twisted version of a sheaf of associative algebras")
denoted by $\mathbf{W}_S(0)$ \cite{GS} such that $\mathbf{W}(0)/\hbar$
is isomorphic to $\mathcal{O}_S$ as a sheaf of Poisson algebras. We can fix the choice
of such an algebroid by requiring it to be the canonical quantization. The chosen square roots of the canonical bundles admit
a quantization to modules $\mathcal{L}_1, \mathcal{L}_2$
over $\mathbf{W}_S := \mathbf{W}_S (0) 
\otimes_{\mathbb{C}[\![\hbar]\!]} \mathbb{C}(\!(\hbar)\!)$. The perverse sheaf
of vanishing cycles of a function has a well-known generalization to a certain DT-sheaf $\phi_{L_1 \cap L_2}$ associated to an intersection of Lagrangians
supplied with a square root of the canonical bundle.

This approach follows closely results due to Kashiwara and Schapira \cite{KS,KS2}, which develop a theory of deformation quantization (DQ)-modules on complex symplectic manifolds $S.$ Here, holonomical DQ-modules $\mathcal{L}^{\bullet}$ are supported on (not necessarily smooth) complex Lagrangians $L$ in $S.$ Moreover, if $L$ is a smooth, closed complex Lagrangian, fixing a square root $K_L^{1/2}$, there exists a simple holonomic DQ-module $\mathcal{L}^{\bullet}$ supported on $L$ (see \cite{DS}).
Moreover, given two simple holonomic DQ-modules $\mathcal{L}_i^{\bullet},$ supported on Lagrangians $L_i,i=1,2,$ Kashiwara-Schapira prove that, up to a shift,  $R\mathcal{H}om(\mathcal{L}_1^{\bullet},\mathcal{L}_2^{\bullet})$ is a perverse sheaf on $S$ (over $\mathbb{C}(\!(\hbar)\!)$ ), supported on $L_1\cap L_2.$

The main result of 
\cite{GS} is stated, with the above notations, as follows.

\begin{thm}
\label{thm: GS}
There is an isomorphism $
\mathcal{L}_1 \otimes_{\mathbf{W}_S}^{L} \mathcal{L}_2 
\simeq \phi_{L_1 \cap L_2} \otimes_{\mathbb{C}} \mathbb{C}(\!(\hbar)\!),
$ of $\mathbb{C}(\!(\hbar)\!)$-linear perverse sheaves.
\end{thm}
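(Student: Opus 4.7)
The plan is to prove the isomorphism first in a local model around points of $L_1\cap L_2$, and then argue that the local identification glues to a global isomorphism of perverse sheaves. The key enabling tool is a holomorphic version of Weinstein's Lagrangian neighborhood theorem: around any point of $L_1\cap L_2$ the pair $(S,L_1)$ is symplectomorphic to a neighborhood of the zero section in $T^*L_1$. Under this identification $L_2$ becomes a Lagrangian section, hence locally the graph of an exact holomorphic $1$-form $df$ for some potential $f$ on an open subset of $L_1$. Consequently the scheme-theoretic intersection $L_1\cap L_2$ is identified with $\mathrm{Crit}(f)$, and the DT-sheaf $\phi_{L_1\cap L_2}$ reduces (up to a canonical shift twisted by the chosen square roots of canonical bundles) to Deligne's vanishing cycle sheaf $\phi_f(\mathbb{C}_{L_1})$.

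First I would compute the quantum side in this local model. The canonical deformation quantization of $T^*L_1$ identifies $\mathbf{W}_S$ with (the $\hbar$-Rees completion of) the sheaf of differential operators $\mathcal{D}_{L_1}[\![\hbar]\!][\hbar^{-1}]$. The simple holonomic module $\mathcal{L}_1$ attached to the zero section with its square root $K_{L_1}^{1/2}$ corresponds to $\mathcal{O}_{L_1}\otimes K_{L_1}^{1/2}$, while $\mathcal{L}_2$, attached to the graph of $df$ with its square root $K_{L_2}^{1/2}$, corresponds to the exponential $\mathbf{W}$-module $e^{f/\hbar}\mathcal{O}_{L_1}\otimes K_{L_2}^{1/2}$; the graph identification gives a canonical isomorphism $K_{L_2}^{1/2}|_{L_1\cap L_2}\simeq K_{L_1}^{1/2}|_{L_1\cap L_2}$. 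Computing the derived tensor product by taking a Spencer/Koszul resolution of $\mathcal{L}_1$ as a $\mathbf{W}_S$-module then yields the $\hbar$-twisted de Rham complex $(\Omega^\bullet_{L_1}(\!(\hbar)\!),\, d+\hbar^{-1}df\wedge)$ concentrated on $\mathrm{Crit}(f)$.

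Next I would invoke the Sabbah--Kapranov-type comparison (in the precise form needed here, cf. \cite{E}) identifying this twisted de Rham complex with $\phi_f(\mathbb{C}_{L_1})\otimes_{\mathbb{C}}\mathbb{C}(\!(\hbar)\!)$ as a perverse sheaf on $\mathrm{Crit}(f)$. This settles the theorem in the local model, with the chosen square roots of canonical bundles providing exactly the orientation data needed to trivialize the sign ambiguity inherent in both the DT-sheaf and the simple holonomic DQ-modules.

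The hard part will be globalization. The local identification depends on two non-canonical choices: the Weinstein trivialization $(S,L_1)\simeq(T^*L_1,L_1)$ and the potential $f$ with $L_2=\{df=0\}$. On the vanishing cycle side, this ambiguity is controlled by the $\mathbb{Z}/2$-gerbe of orientations on $L_1\cap L_2$, which is trivialized by $K_{L_1\cap L_2}^{1/2}$ (compatibly with the framework of Brav--Bussi--Dupont--Joyce--Szendr\H{o}i). On the quantum side, the corresponding ambiguity lives in the gerbe governing simple holonomic $\mathbf{W}_S$-modules, which is trivialized precisely by the data of $K_{L_1}^{1/2}$ and $K_{L_2}^{1/2}$. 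The main task is to verify that these two trivializations are intertwined by the local isomorphism produced above, coherently on double and triple overlaps of Weinstein charts. I would handle this by checking the statement on first-order formal neighborhoods using the canonical d-critical locus structure on $L_1\cap L_2$, which provides a single source for both orientation data and pins down the global isomorphism uniquely up to contractible choice.
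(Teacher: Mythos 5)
This statement is not proved in the paper at all: it is quoted verbatim as the main result of Gunningham--Safronov \cite{GS}, so there is no internal proof to compare your attempt against. What you have written is a plausible reconstruction of the strategy actually used in \cite{GS} (local Weinstein reduction to $(T^*L_1,\,\text{zero section},\,\Gamma_{df})$, identification of $\mathbf{W}_S$ with $\hbar$-differential operators and of $\mathcal{L}_2$ with an exponential module $e^{f/\hbar}$, computation of the derived tensor product as the $\hbar$-twisted de Rham complex, Sabbah's comparison with $\phi_f$, and then a descent argument over the groupoid of such local presentations). So as an outline your proposal is pointed in the right direction.

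As a proof, however, it has two genuine gaps, both of which you flag but neither of which you close. First, the local step rests entirely on the comparison between $(\Omega^{\bullet}_{L_1}(\!(\hbar)\!),\ \hbar d + df\wedge)$ and $\phi_f(\mathbb{C}_{L_1})\otimes\mathbb{C}(\!(\hbar)\!)$ \emph{as perverse sheaves on} $\mathrm{Crit}(f)$, not merely at the level of hypercohomology; this is a nontrivial theorem (Sabbah), and moreover you must check that the isomorphism it produces is independent of the pair (Weinstein chart, potential $f$) up to exactly the ambiguity that the square roots $K_{L_i}^{1/2}$ are supposed to absorb. You assert that the two relevant gerbes --- the $\mathbb{Z}/2$ orientation gerbe of \cite{BBDJS} on the vanishing-cycle side and the gerbe of simple holonomic $\mathbf{W}_S$-modules on the quantum side --- are identified by the local isomorphism, but this identification is the actual content of the globalization and cannot be reduced to ``checking on first-order formal neighborhoods'': the descent data live on double and triple overlaps of charts in which \emph{both} the symplectomorphism and the potential change, and controlling how the local isomorphism transforms under a change of potential $f\mapsto f\circ\varphi + (\text{quadratic term})$ is where the stabilization machinery of \cite{BBDJS} (and its quantum counterpart in \cite{GS}) is really needed. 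Second, even granting the local model, one must first know that both sides are perverse sheaves supported on $L_1\cap L_2$ --- on the left this is the Kashiwara--Schapira constructibility/perversity theorem for holonomic DQ-modules \cite{KS} --- since it is perversity that lets you glue a locally defined isomorphism from local data plus coherence on overlaps; your write-up uses this implicitly without stating it. In short: correct skeleton, but the two theorems doing all the work are cited in spirit rather than applied, and the descent argument is a plan rather than an argument.
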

The right hand side of the isomorphism in Theorem \ref{thm: GS} is the differential version of the perverse sheaf 
associated to the Lagrangian intersection \cite[Definition 2.5]{GS}.

We would like to consider a slightly different setting. First, assume that 
the canonical quantization $\mathbf{W}_S(0)$ is untwisted, i.e. given 
by a Zariski sheaf of associative algebras $\mathcal{O}_\hbar$. Further, 
we assume that the Lagrangians $L_1,L_2$ are smooth and projective and 
their canonical bundles admit a choice of square roots $K_{L_1}^{1/2}, K_{L_2}^{1/2}$.
Eventually we will exhibit a model for the perverse sheaf which does not appeal
to the quantization at all, just to certain differential operators on 
the orientation bundles.

Since we are dealing with the canonical quantization, by the main 
result of \cite{BGKP}, the square roots admit a deformation to left
 $\mathbb{C}[\![\hbar]\!]$-flat 
modules $\mathcal{L}_1,\mathcal{L}_2$
over $\mathcal{O}_\hbar$ that reduce to $K_{L_1}^{1/2}, K_{L_2}^{1/2}$ modulo 
$\hbar$. Denote the dual objects by
\begin{equation}
\label{eqn: Dual object}
\mathcal{L}_i^\vee := R\mathcal{H}om_{\mathcal{O}_\hbar}
(\mathcal{L}_i, \mathcal{O}_\hbar)[\dim S/2], \hspace{2mm} i=1,2.
\end{equation}
The sheaf (\ref{eqn: Dual object}) is a
right $\mathcal{O}_\hbar$-module quantizing the same line bundle $K_{L_i}^{1/2}$ for $i=1,2.$ 

We seek a reasonably explicit model for the complex computing
the Tor sheaves $Tor_\bullet^{\mathcal{O}_\hbar} (\mathcal{L}_1, \mathcal{L}_2^{\vee})$.
Since their supports $L_1,L_2$ may have non transversal intersection, we use
reduction to the diagonal and consider 
instead $\mathcal{L}_1 \otimes_{\mathbb{C}[\![\hbar]\!]} \mathcal{L}_2^\vee$ as a sheaf of left modules over the deformation quantization $\mathcal{O}_\hbar^e: = \mathcal{O}_\hbar 
\otimes_{\mathbb{C}[\![\hbar]\!]} \mathcal{O}^{op}_\hbar$ of the Cartesian square
$S \times S$ (also denoted $\mathcal{O}_{\hbar}\hat{\otimes}\mathcal{O}_{\hbar}^{op}$). Consider $\mathcal{O}_\hbar^{op}$ as a \textit{right} module
over $\mathcal{O}_\hbar^e$. Then we have a canonical isomorphism in the
derived category:
\begin{equation}
\label{eqn: DQ-tensor-Isom}
\mathcal{L}_1 \otimes^L_{\mathcal{O}_\hbar} \mathcal{L}_2^{\vee}
\simeq \mathcal{O}_\hbar^{op} \otimes^L_{\mathcal{O}_\hbar^e}
\big(\mathcal{L}_1 \otimes_{\mathbb{C}[\![\hbar]\!]} \mathcal{L}_2^\vee\big).
\end{equation}
Indeed, if the left hand side is computed using the bar resolution of $\mathcal{L}_1$
and the right hand side using the bar resolution of $\mathcal{O}_\hbar^{op}$
we will obtain isomorphic complexes (not just quasi-isomorphic). 
On the other hand, we would like to compute the right hand side of (\ref{eqn: DQ-tensor-Isom}) using the
bar resolution of the \textit{second} factor. This is achieved by introducing an appropriate filtration on the bar complex resolving $\mathcal{L}_1 \otimes_{\mathbb{C}[\![\hbar]\!]} \mathcal{L}_2^\vee$.
\vspace{2mm}

\noindent\textbf{Filtration.}
Let $J_{L_1} \subset \mathcal{O}_S$ and $J_{L_2}\subset \mathcal{O}_S$, 
be the ideal sheaves corresponding to $L_1$ and $L_2$, respectively. Denote by $\mathcal{J}_{L_1}$,
resp. $\mathcal{J}_{L_2}$, their preimages in $\mathcal{O}_\hbar$
with respect to $\hbar \mapsto 0$. By a slight abuse of notation, write
\begin{equation}
    \label{eqn: hbar-Ideal sheaf}
\mathcal{J}_{L_1} = \hbar \mathcal{O}_\hbar + J_{L_1},
\end{equation}
and similarly for $L_2$.

Considering the natural map $\ell:\mathcal{J}_{L_1} \cdot \mathcal{L}_1 \to \hbar \mathcal{L}_1$, it follows that since $\hbar$ is not a zero divisor on its image, we
can divide by it to obtain a map 
$$
\mathcal{J}_{L_1} \cdot \mathcal{L}_1 \to \mathcal{L}_1.
$$
Similarly, dividing by $\hbar^k$ in the target we obtain maps
$$
\ell_k:\mathcal{J}^k_{L_1} \cdot \mathcal{L}_1 \to \mathcal{L}_1,\hspace{2mm} \text{ for }\hspace{1mm} k\geq 2.$$
For a sheaf $\mathcal{F},$ denote by $\mathcal{D}iff(\mathcal{F})$ the sheaf of differential operators on $\mathcal{F}$, and by $\mathcal{D}iff^{\leq k}(\mathcal{F})$ those of order $\leq k,$ coming from its natural order-filtration.
\begin{lem}
The above maps, $\ell,\ell_k,k\geq 2,$ descend to give quotient maps
$$
(\mathcal{J}_{L_1}/\mathcal{J}^2_{L_1}) \cdot K_{L_1}^{1/2} \to K_{L_1}^{1/2},\hspace{2mm}\text{ and }\hspace{1mm}
(\mathcal{J}^k_{L_1}/\mathcal{J}^{k+1}_{L_1}) \cdot K_{L_1}^{1/2} \to K_{L_1}^{1/2},
$$
which induce isomorphisms 
$$
\mathcal{J}^k_{L_1}/\mathcal{J}^{k+1}_{L_1} \simeq \mathcal{D}iff^{\leq k}(K_{L_1}^{1/2}), \quad k \geq 0.
$$
The isomorphisms are compatible with the multiplicative structures on 
both sides, and for $k=1$ they are compatible with the Lie brackets on both sides. Similar
statements hold for the right $\mathcal{O}_\hbar$-modules \emph{(\ref{eqn: Dual object})}.

\end{lem}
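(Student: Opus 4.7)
The claim is local on $L_1$, so I would begin by passing to a Darboux coordinate chart $(x_1,\ldots,x_n,p_1,\ldots,p_n)$ in which $L_1$ is identified with the zero section of $T^*L_1 \subset S$. Since we have assumed the canonical quantization $\mathbf{W}_S(0)$ to be the untwisted sheaf $\mathcal{O}_\hbar$, I may arrange locally that $\mathcal{O}_\hbar$ is the Moyal--Weyl star product: in particular $\hbar$ is central, and the deformation $\mathcal{L}_1$ provided by the cited result of \cite{BGKP} identifies locally with $\mathcal{O}_{L_1}[\![\hbar]\!] \cdot (dx_1 \wedge \cdots \wedge dx_n)^{1/2}$ with action $p_i \cdot s = \hbar \partial_{x_i}s$ and $x_i \cdot s = x_is$.

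The first step is an inductive verification that $\mathcal{J}_{L_1}^k \cdot \mathcal{L}_1 \subseteq \hbar^k \mathcal{L}_1$, which simultaneously justifies the division defining $\ell_k$ and the descent to the quotient. The base case $k=1$ reduces to the observation that, modulo $\hbar$, an element of $\mathcal{J}_{L_1}$ acts on $\mathcal{L}_1/\hbar \mathcal{L}_1 = K_{L_1}^{1/2}$ through its class in $J_{L_1} \subset \mathcal{O}_S$, which annihilates $K_{L_1}^{1/2}$. The inductive step follows from the centrality of $\hbar$. Applied one step further, $\mathcal{J}_{L_1}^{k+1} \cdot \mathcal{L}_1 \subseteq \hbar^{k+1} \mathcal{L}_1$, so elements of $\mathcal{J}_{L_1}^{k+1}$ are sent by $\ell_k$ into $\hbar \mathcal{L}_1$ and thus become zero after reducing modulo $\hbar$. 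This produces a well-defined sheaf morphism $\varphi_k : \mathcal{J}_{L_1}^k/\mathcal{J}_{L_1}^{k+1} \to \mathcal{D}iff^{\leq k}(K_{L_1}^{1/2})$, where the target is the natural recipient since $s \mapsto \ell_k(a \cdot s) \bmod \hbar$ is manifestly a $\mathbb{C}$-linear differential operator of order at most $k$ in the chosen chart.

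To verify that $\varphi_k$ is an isomorphism, I use the local model: the ideal $\mathcal{J}_{L_1}$ is generated over $\mathcal{O}_\hbar$ by $\hbar, p_1, \ldots, p_n$, so $\mathcal{J}_{L_1}^k/\mathcal{J}_{L_1}^{k+1}$ is freely generated as an $\mathcal{O}_{L_1}$-module by the classes of the monomials $\hbar^i p^\alpha$ with $i + |\alpha| = k$. Such a monomial acts on $\mathcal{L}_1$ as $\hbar^i \cdot \hbar^{|\alpha|} \partial^\alpha = \hbar^k \partial^\alpha$, so $\varphi_k$ sends its class to the differential operator $\partial^\alpha$ of order $|\alpha| \leq k$ on $K_{L_1}^{1/2}$. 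As $(i,\alpha)$ ranges subject to $i + |\alpha| = k$, the resulting $\partial^\alpha$ exhaust an $\mathcal{O}_{L_1}$-basis of $\mathcal{D}iff^{\leq k}(K_{L_1}^{1/2})$, so $\varphi_k$ is a local isomorphism. Naturality of the construction under the action of symplectic automorphisms preserving $L_1$ implies independence of the chart and furnishes the global isomorphism. The analogous argument for right modules is obtained by the evident reversal of the adjoint action. For multiplicativity, if $a \in \mathcal{J}^k_{L_1}$ and $b \in \mathcal{J}^l_{L_1}$ the identity $(ab \cdot s)/\hbar^{k+l} \equiv (a/\hbar^k) \cdot ((b/\hbar^l) \cdot s) \pmod{\hbar}$ shows that $\bigoplus_k \varphi_k$ is a filtered algebra homomorphism. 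For the Lie bracket compatibility at $k=1$, centrality of $\hbar$ gives $[a,b] \in \hbar\mathcal{O}_\hbar$ for $a,b \in \mathcal{J}_{L_1}$, and $\hbar^{-1}[a,b] \bmod \mathcal{J}_{L_1}^2$ defines the bracket on $\mathcal{J}_{L_1}/\mathcal{J}_{L_1}^2$; via the Moyal expansion the first-order term in $\hbar$ is exactly the Poisson bracket, so its image under $\varphi_1$ matches the commutator of the corresponding first-order operators on $K_{L_1}^{1/2}$.

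The main obstacle I anticipate is ensuring that the local construction of $\mathcal{L}_1$ furnished by \cite{BGKP} is compatible across Darboux patches in a way that preserves the identification $\varphi_k$; this amounts to checking that the transition data for the deformation $\mathcal{L}_1$ of $K_{L_1}^{1/2}$ respects the filtration by powers of $\mathcal{J}_{L_1}$. Because $\mathbf{W}_S(0)$ is assumed untwisted and $\hbar$ is central, this compatibility follows formally from naturality, but the bookkeeping with signs in the Lie bracket case (where the interaction of the star product with the square root $K_{L_1}^{1/2}$ produces the Lie derivative on sections rather than a bare vector field) is the most delicate step and must be carried out explicitly in coordinates to confirm the matching.
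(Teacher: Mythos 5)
The paper states this lemma and immediately moves on to the bar resolution; no proof is given in the text, so there is nothing to compare your argument against --- you are supplying the missing details. Your argument is correct and is the standard one: reduce to the local normal form in which $\mathcal{O}_\hbar$ is the Moyal--Weyl algebra and $\mathcal{L}_1$ is $K_{L_1}^{1/2}[\![\hbar]\!]$ with $p_i$ acting as $\hbar\partial_{x_i}$, check $\mathcal{J}_{L_1}^k\cdot\mathcal{L}_1\subseteq\hbar^k\mathcal{L}_1$ by induction using centrality of $\hbar$, and match the monomial basis $\hbar^i p^\alpha$, $i+|\alpha|=k$, of $\mathcal{J}_{L_1}^k/\mathcal{J}_{L_1}^{k+1}$ with the basis $\partial^\alpha$, $|\alpha|\le k$, of $\mathcal{D}iff^{\leq k}(K_{L_1}^{1/2})$. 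Two remarks, both in the direction of simplification rather than of gaps. First, the gluing issue you flag at the end is not actually present: the maps $\ell_k$ are defined globally and intrinsically as division by $\hbar^k$ (no Darboux chart enters their definition), so the induced $\varphi_k:\mathcal{J}_{L_1}^k/\mathcal{J}_{L_1}^{k+1}\to \mathcal{E}nd_{\mathbb{C}}(K_{L_1}^{1/2})$ is a single global map, and the assertions that it lands in $\mathcal{D}iff^{\leq k}$ and is an isomorphism onto it are local properties of a global map --- no compatibility of transition data needs to be checked. Second, the Lie-bracket compatibility at $k=1$ does not require the Moyal expansion or any sign bookkeeping: it follows formally from associativity and centrality of $\hbar$, since $[\varphi_1(\bar a),\varphi_1(\bar b)](s)=([a,b]\cdot s)/\hbar^2 \bmod \hbar=\varphi_1(\hbar^{-1}\overline{[a,b]})(s)$, which is exactly the induced bracket on $\mathcal{J}_{L_1}/\mathcal{J}_{L_1}^2$ (here one uses that $L_1$ is coisotropic so that $\hbar^{-1}[a,b]$ again lies in $\mathcal{J}_{L_1}$); this is subsumed by the multiplicativity identity you already established. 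The only step deserving slightly more explicit justification is that the star-product powers $\mathcal{J}_{L_1}^k$ agree with the commutative ones up to the filtration (so that the monomials $\hbar^ip^\alpha$ really are a free $\mathcal{O}_{L_1}$-basis of the associated graded); this holds because star-commutators of the generators lie one step deeper in the filtration, and it is also where the identification $N_{L_1}^\vee\simeq T_{L_1}$ via the symplectic form enters to make the ranks match.
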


To compute the Tor-sheaves over $\mathcal{O}_\hbar$, we use the sheafified bar resolution, 
$$\mathcal{B}ar(\mathcal{L}_1 \otimes_{\mathbb{C}[\![\hbar]\!]} \mathcal{L}_2^\vee),$$ of 
$\mathcal{L}_1 \otimes_{\mathbb{C}[\![\hbar]\!]} \mathcal{L}_2^\vee.$ It has the form 
$$
\cdots\rightarrow (\mathcal{O}_{\hbar} \otimes \mathcal{O}_\hbar^{op})^{\otimes 3}
 \otimes
(\mathcal{L}_1 \otimes \mathcal{L}_2^\vee) \to 
(\mathcal{O}_{\hbar} \otimes \mathcal{O}_\hbar^{op})^{\otimes 2}
 \otimes 
(\mathcal{L}_1 \otimes \mathcal{L}_2^\vee) \to 
(\mathcal{O}_{\hbar} \otimes \mathcal{O}_\hbar^{op}) \otimes
  (\mathcal{L}_1 \otimes \mathcal{L}_2^\vee) \to 0,$$
  where all tensor products are taken over $\mathbb{C}[\![\hbar]\!]$.

  The sheaf $\mathcal{L}_1 \otimes \mathcal{L}_2^\vee$ is endowed with its natural decreasing
  filtration 
  $$\{F^k(\mathcal{L}_1\otimes\mathcal{L}_2^{\vee}):=\hbar^{k} (\mathcal{L}_1\otimes \mathcal{L}_2^\vee)\}_{k\geq 0},$$
  while $\mathcal{O}_{\hbar} \otimes \mathcal{O}_\hbar^{op}$
  has a filtration by the powers of the ideal annihilating 
  $K_{L_1}^{1/2} \otimes K_{L_2}^{1/2}$:
  $$
  \mathcal{J}_{L_1,L_2} := \mathcal{J}_{L_1} \otimes \mathcal{O}_\hbar^{op}
  + \mathcal{O}_\hbar \otimes \mathcal{J}_{L_2}.
  $$
 This induces a decreasing filtration on the bar complex by subcomplexes of sheaves
  \begin{equation}
  \label{eqn: Bar-filtration}
  \mathcal{B}ar(\mathcal{L}_1 \otimes \mathcal{L}_2^\vee) = 
  \mathcal{B}ar^0(\mathcal{L}_1 \otimes \mathcal{L}_2^\vee) \supset
  \mathcal{B}ar^1(\mathcal{L}_1 \otimes \mathcal{L}_2^\vee) \supset
  \mathcal{B}ar^2(\mathcal{L}_1 \otimes \mathcal{L}_2^\vee) \supset \cdots.
  \end{equation}
Filtration (\ref{eqn: Bar-filtration}) induces a spectral sequence abutting to the 
Tor sheaves $Tor_\bullet^{\mathcal{O}_\hbar} (\mathcal{L}_1, \mathcal{L}_2^{\vee})$

\begin{thm}\label{spectral} Assume that either of the smooth Lagrangians are projective. Then there exists a spectral sequence of sheaves, converging to $
Tor_\bullet^{\mathcal{O}_\hbar} (\mathcal{L}_1, \mathcal{L}_2^{\vee}),$ with first page $$E_{1}^{p,q}\simeq Tor_{p}^{\O_{S}}(\bigwedge^{q}(\mathcal{J}_{\mathcal{L}_2}/\mathcal{J}^{2}_{\mathcal{L}_2})\otimes K_{\mathcal{L}_2}^{1/2}, \hbar^{q}\bigwedge^{q}(\mathcal{J}_{\mathcal{L}_1}/\mathcal{J}^{2}_{\mathcal{L}_1})\otimes K_{\mathcal{L}_1}^{1/2})[\![\hbar]\!],$$ where the exterior powers and tensor products are taken over $\O_{\mathcal{L}_2}$ and $\O_{\mathcal{L}_1}$ respectively. The differential is induced by the products
$$\mathcal{J}_{\mathcal{L}_2}/\mathcal{J}^{2}_{\mathcal{L}_2}\otimes K_{\mathcal{L}_2}^{1/2}\to \hbar K_{\mathcal{L}_2}^{1/2}, \hspace{2mm}\text{ and } \bigwedge^{2}(\mathcal{J}_{\mathcal{L}_2}/\mathcal{J}^{2}_{\mathcal{L}_2})\to \hbar(\mathcal{J}_{\mathcal{L}_2}/\mathcal{J}^{2}_{\mathcal{L}_2}),$$
given by the quasi-classical part of the action on $\mathcal{L}_2$, and commutation in $\mathcal{J}_{\mathcal{L}_2}$, respectively. Similar operators are used for $\mathcal{J}_{\mathcal{L}_1}/\mathcal{J}^{2}_{\mathcal{L}_1}$ and $K_{\mathcal{L}_1}^{1/2}$. Moreover, the spectral sequence of sheaves degenerates at the second page.  
\end{thm}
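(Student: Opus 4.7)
The plan is to extract the spectral sequence from the $(\mathcal{J}_{L_1,L_2},\hbar)$-adic filtration (\ref{eqn: Bar-filtration}), identify the $E_1$-page by taking symbols, and then establish degeneration at $E_2$ by combining the Gunningham--Safronov identification with the perverse sheaf of vanishing cycles together with Hodge-theoretic rigidity for smooth projective Lagrangians.

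First, I would form the $E_0$-page by passing to the associated graded of the bar complex. On each tensor factor $\mathcal{O}_{\hbar}\otimes\mathcal{O}_{\hbar}^{op}$, the filtration by powers of $\mathcal{J}_{L_1,L_2}$ is compatible with the $\hbar$-filtration coming from (\ref{eqn: hbar-Ideal sheaf}), and the already-established identification $\mathcal{J}_{L_i}^k/\mathcal{J}_{L_i}^{k+1}\simeq \mathcal{D}\mathit{iff}^{\leq k}(K_{L_i}^{1/2})$ allows us to recognise the graded pieces as principal symbols. The decomposition $\mathcal{J}_{L_i}/\mathcal{J}_{L_i}^{2}\simeq \hbar\cdot \mathcal{O}_{L_i}\oplus J_{L_i}/J_{L_i}^{2}$ in which the $\hbar$-factor sits in odd bar-degree collapses the symmetric powers arising from PBW into the exterior powers $\bigwedge^{q}(\mathcal{J}_{L_i}/\mathcal{J}_{L_i}^{2})$ appearing in the statement; this is the Koszul--HKR reorganisation relative to the deformation parameter $\hbar$.

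Second, the $E_1$-differential is computed by reading off the two pieces of the bar differential that survive after quotienting by higher filtration: the module action, divided by $\hbar$, yields the quasi-classical map $(\mathcal{J}_{L_i}/\mathcal{J}_{L_i}^{2})\otimes K_{L_i}^{1/2}\to K_{L_i}^{1/2}$, and the commutator in $\mathcal{O}_\hbar$ modulo $\hbar^{2}$ yields the Lie-algebroid/Chevalley differential on $\bigwedge^{\bullet}(\mathcal{J}_{L_i}/\mathcal{J}_{L_i}^{2})$. Assembling these for both Lagrangians and identifying the remaining tensor product with the derived tensor product over $\mathcal{O}_S$, one recovers exactly the claimed $E_1^{p,q}$. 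Convergence is automatic because the filtration is exhaustive and $\hbar$-adically complete, and smoothness of the $L_i$ controls the relevant pro-coherent sheaves.

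Third, and this is the main obstacle, is establishing $E_2$-degeneration. My strategy is to lift the problem out of local coordinate data by using Theorem \ref{thm: GS}: the derived tensor product $\mathcal{L}_1\otimes_{\mathcal{O}_\hbar}^{L}\mathcal{L}_2^{\vee}$ is identified with $\phi_{L_1\cap L_2}\otimes_{\mathbb{C}}\mathbb{C}(\!(\hbar)\!)$, a perverse sheaf on $L_1\cap L_2$. I would then use projectivity of the $L_i$ to produce a $\mathbb{G}_m$-equivariant rescaling of $\hbar$ giving a weight grading on the bar filtration, and argue in the spirit of Deligne's degeneration of the Hodge-to-de Rham spectral sequence: the higher differentials $d_r$ for $r\geq 2$ are obstructed by classes which, after the semiclassical identification, live in cohomology groups forced to vanish by the perverse/Hodge symmetry of $\phi_{L_1\cap L_2}$. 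Concretely, one obtains a formal splitting of (\ref{eqn: Bar-filtration}) at the level of the associated perverse sheaf, which transports back to collapse at $E_2$ for the filtered complex of sheaves. Projectivity enters precisely in this final step, as it is what guarantees the requisite Hodge symmetry and the vanishing of the obstruction classes.
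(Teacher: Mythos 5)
Your first two steps reproduce the paper's construction: the paper likewise filters the bar resolution of $\mathcal{L}_1\otimes_{\mathbb{C}[\![\hbar]\!]}\mathcal{L}_2^{\vee}$ (viewed over $\mathcal{O}_\hbar\hat{\otimes}\mathcal{O}_\hbar^{op}$ after reduction to the diagonal) by powers of $\mathcal{J}_{L_1,L_2}$, identifies the successive quotients $\mathcal{B}ar^{i}/\mathcal{B}ar^{i+1}$ with Tor sheaves over $\mathcal{O}_S$ of tensor powers of $\mathcal{J}_{L_j}/\mathcal{J}^2_{L_j}$ against the half-canonical bundles by contracting the interior bar factors, and then passes to exterior powers via the antisymmetrization quasi-isomorphism (classical HKR). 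Your description of this as a ``PBW/Koszul reorganisation'' is looser than the paper's explicit contraction-plus-antisymmetrization argument, but it is the same mechanism, and your reading of the $E_1$-differential as the quasi-classical action plus the induced bracket agrees with the paper.

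The genuine gap is in your third step. Knowing the abutment via Theorem \ref{thm: GS} gives you no control whatsoever over the differentials $d_r$, $r\geq 2$, of a spectral sequence arising from a \emph{specific} filtration on a \emph{specific} complex: two filtered complexes can have isomorphic abutments while one degenerates at $E_2$ and the other does not. Your proposed $\mathbb{G}_m$-rescaling of $\hbar$ is not available here: the modules $\mathcal{L}_i$ are fixed $\hbar$-adically complete deformations of $K_{L_i}^{1/2}$ over the canonical quantization, not graded objects, and rescaling $\hbar$ does not act on them compatibly with the filtration (\ref{eqn: Bar-filtration}); hence no weight grading splits the filtration. Finally, ``perverse/Hodge symmetry of $\phi_{L_1\cap L_2}$ forces the obstruction classes to vanish'' names no obstruction class and no vanishing theorem, so it is an appeal to an analogy rather than an argument. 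The route actually indicated in the paper (in the remark immediately following the theorem) is more elementary and local: the $E_1$-differential is realized as cap product with $\hbar\, e_M$, where $e_M\in Ext^1(K_{L_2}^{1/2},K_{L_2}^{1/2})$ is the extension class of $0\to\hbar K_{L_2}^{1/2}\to\mathcal{L}_2/\hbar^2\mathcal{L}_2\to K_{L_2}^{1/2}\to 0$, and $e_M^2=0$ because the filtration on $\mathcal{L}_2/\hbar^3\mathcal{L}_2$ splices the two extensions $e_{L_2}$ and $\hbar e_{L_2}$; it is this nilpotence, not projectivity-driven Hodge theory, that kills the higher differentials. You would need to either carry out that argument or supply a genuinely different, complete one; as written, degeneration at $E_2$ is not established.
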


\begin{proof}
Note that since $\hbar\O_{\mathcal{L}_1}\subset \mathcal{J}_{\mathcal{L}_2}$ the quotient $\mathcal{J}_{\mathcal{L}_2}/\mathcal{J}^{2}_{\mathcal{L}_2}$ is a sheaf of $\O_{S}$ modules and in fact a locally free sheaf of $\O_{\mathcal{L}_2}$ modules. Endowed with the bracket induced by $\{-,-\}/\hbar$ (the commutator in $\O_{\hbar}$), it becomes isomorphic to the sheaf $\mathcal{D}iff^{\leq 1}(K_{\mathcal{L}_2}^{1/2}, K_{\mathcal{L}_2}^{1/2})$ of order $\leq 1$ differential operators on $K_{\mathcal{L}_2}^{1/2}$. The latter is an extension of $T_{\mathcal{L}_2}$ by $\O_{\mathcal{L}_2}$ with a Lie bracket lifting the bracket of vector fields. 
    In particular, we will show all terms in $E_{1}^{p,q}$ can be described without appealing to quantization at all, only to the natural Lie bracket on $\mathcal{D}iff^{\leq 1}$ of line bundles $K_{\mathcal{L}_2}^{1/2}, K_{\mathcal{L}_1}^{1/2}$ and their natural actions on the line bundles themselves.

We proceed first by observing that $Tor_\bullet^{\mathcal{O}_\hbar} (\mathcal{L}_1, \mathcal{L}_2^{\vee})$ can be computed using the technique of reduction to the diagonal i.e. as $Tor_\bullet^{\mathcal{O}_\hbar\hat{\otimes} \O_{\hbar}^{op}} (\O^{\Delta}_{\mathcal{L}_1}, \mathcal{L}_1 \otimes \mathcal{L}_2^{\vee})$ computed for sheaves on the symplectic variety $S\times S$ with the form $(\omega, -\omega)$ and quantization of functions $\mathcal{O}_\hbar\hat{\otimes} \O_{\hbar}^{op}$. Indeed, using the bar resolution for the diagonal sheaf of bi-modules $\O^{\Delta}_{\mathcal{L}_1}$ for $\mathcal{L}_1$, we obtain a complex which is naturally isomorphic to one obtained by writing the bar resolution for $\mathcal{L}_1$ and then tensoring it with (\ref{eqn: Dual object}). However, we will use the $\mathcal{O}_\hbar\hat{\otimes} \O_{\hbar}^{op}$-bar resolution of $\mathcal{L}_1\boxtimes \mathcal{L}_2^{\vee}$ as above, denoting it simply by $\mathcal{B}ar^{\bullet}$. 

First, we compute $E_{0}^{p,q}$ by looking at successive quotients $\mathcal{B}ar^{i}/\mathcal{B}ar^{i+1},$ arising in the filtration (\ref{eqn: Bar-filtration}). We then replace these by smaller quasi-isomorphic complexes. For $i=0$, the quotient complex is isomorphic to the complex obtained from the bar resolution of $K_{\mathcal{L}_1}^{1/2}\boxtimes K_{\mathcal{L}_2}^{1/2}$ by extending scalars to $\mathbb{C}[\![\hbar]\!]$. In particular it is quasi-isomorphic to $K_{\mathcal{L}_1}^{1/2}\boxtimes K_{\mathcal{L}_2}^{1/2}[\![\hbar]\!].$ This part of $E_{0}$ computes $Tor_\bullet^{\mathcal{O}_\hbar\hat{\otimes} \O_{\hbar}^{op}} (\O^{\Delta}_{\mathcal{L}_1}, K_{\mathcal{L}_1}^{1/2}\boxtimes K_{\mathcal{L}_2}^{1/2})[\![\hbar]\!]$. Since $\hbar$ acts by zero on $K_{\mathcal{L}_2}$ and $\O^{\Delta}_{\hbar}$ is flat over $\mathbb{C}[\![\hbar]\!]$, using the change of rings spectral sequence we see that the same sheaf can be computed as$$Tor_\bullet^{\mathcal{O}_{S}\boxtimes \mathcal{O}_{S}} (\O^{\Delta}_{S}, K_{\mathcal{L}_1}^{1/2}\boxtimes K_{\mathcal{L}_2}^{1/2})[\![\hbar]\!].$$ 
Then, the part of $E_{0}$ corresponding to $\mathcal{B}ar^{0}/\mathcal{B}ar^{1}$ is given by $Tor_\bullet^{\mathcal{O}_{S}} (K_{\mathcal{L}_1}^{1/2}, K_{\mathcal{L}_2}^{1/2})[\![\hbar]\!]$. For $i=1$ the quotient complex $\mathcal{B}ar^{1}/\mathcal{B}ar^{2}$ has terms of the type $$\O_{\mathcal{L}_1}^{\otimes l_{0}}\otimes_{\mathbb{C}}\mathcal{J}_{\mathcal{L}_1}/\mathcal{J}^{2}_{\mathcal{L}_1}\otimes_{\mathbb{C}}\O_{\mathcal{L}_1}^{\otimes l_{1}}\otimes_{\mathbb{C}}K_{\mathcal{L}_1}^{1/2},$$for the first copy of $S$ in $S\times S$, and similar terms for the second factor. Contracting the bar resolution of $\mathcal{J}_{\mathcal{L}_1}/\mathcal{J}^{2}_{\mathcal{L}_1}$ we can replace the above tensor product by $\mathcal{J}_{\mathcal{L}_1}/\mathcal{J}^{2}_{\mathcal{L}_1}\otimes_{\mathbb{C}}\O_{\mathcal{L}_1}^{\otimes l_{1}}\otimes_{\mathbb{C}}K_{\mathcal{L}_1}^{1/2}$ with the standard bar differential.The latter complex computes $\mathcal{J}_{\mathcal{L}_1}/\mathcal{J}^{2}_{\mathcal{L}_1}\otimes^{L}_{\O_{\mathcal{L}_1}}K_{\mathcal{L}_1}^{1/2}$ using the bar resolution however, since both factors are locally free over $\O_{\mathcal{L}_1}$ the complex may be replaced by $\mathcal{J}_{\mathcal{L}_1}/\mathcal{J}^{2}_{\mathcal{L}_1}\otimes_{\O}K_{\mathcal{L}_1}^{1/2}$. A similar argument directly applies to the other side of the tensor product, involving the factors of $\mathcal{J}_{\mathcal{L}_2}/\mathcal{J}^{2}_{\mathcal{L}_2}, K_{\mathcal{L}_2}^{1/2}$.

Therefore, the part of the $E_{0}$ page corresponding to $\mathcal{B}ar^{1}/\mathcal{B}ar^{2}$ computes 
\begin{align*}
&Tor_\bullet^{\mathcal{O}_{\mathcal{L}_1}\hat{\otimes} \mathcal{O}_{\mathcal{L}_1}} (\O^{\Delta}_{\hbar}, \mathcal{J}_{\mathcal{L}_1}/\mathcal{J}^{2}_{\mathcal{L}_1} \otimes_{\O_{\mathcal{L}_1}}K_{\mathcal{L}_1}^{1/2}\boxtimes K_{\mathcal{L}_2}^{1/2}\otimes_{\O_{\mathcal{L}_2}}(\mathcal{J}_{\mathcal{L}_2}/\mathcal{J}^{2}_{\mathcal{L}_2}))\notag\\
&\simeq Tor_\bullet^{\mathcal{O}_{S}} ((\mathcal{J}_{\mathcal{L}_1}/\mathcal{J}^{2}_{\mathcal{L}_1}) \otimes_{\O_{\mathcal{L}_1}}K_{\mathcal{L}_1}^{1/2}, (\mathcal{J}_{\mathcal{L}_2}/\mathcal{J}^{2}_{\mathcal{L}_2})\otimes_{\O_{\mathcal{L}_2}}K_{\mathcal{L}_2}^{1/2}).
\end{align*}
In general, the quotients $\mathcal{B}ar^{i}/\mathcal{B}ar^{i+1}$ lead to tensor factors of the type$$(\mathcal{J}^{K_{1}}_{\mathcal{L}_1}/\mathcal{J}^{K_{1}+1}_{\mathcal{L}_1})\otimes_{\O_{\mathcal{L}_1}}(\mathcal{J}^{K_{2}}_{\mathcal{L}_1}/\mathcal{J}^{K_{2}+1}_{\mathcal{L}_1})\otimes_{\O_{\mathcal{L}_1}}\cdots \otimes_{\O_{\mathcal{L}_1}} (\mathcal{J}^{K_{s}}_{\mathcal{L}_1}/\mathcal{J}^{K_{s}+1}_{\mathcal{L}_1})\otimes_{\O_{\mathcal{L}_1}}K_{\mathcal{L}_1}^{1/2},$$with $K_{1}+K_{2}+\cdots+K_{s}=i$ and $K_{1}, \cdots, K_{s}\geq 1$. The differential on the $E_{0}$ page is then induced from both the product$$(\mathcal{J}^{K_{i}}_{\mathcal{L}_1}/\mathcal{J}^{K_{i}+1}_{\mathcal{L}_1})\times (\mathcal{J}^{K_{j}}_{\mathcal{L}_1}/\mathcal{J}^{K_{j}+1}_{\mathcal{L}_1})\to (\mathcal{J}^{K_
{i}+K_{j}}_{\mathcal{L}_1}/\mathcal{J}^{K_{i}+K_{j}+1}_{\mathcal{L}_1}),$$ and the zero action of $(\mathcal{J}^{K}_{\mathcal{L}_1}/\mathcal{J}^{K+1}_{\mathcal{L}_1})$ on $K_{\mathcal{L}_1}^{1/2}$.\\
Next we identify the $E_{1}$ page of the spectral sequence. To this end, note that the terms of $E_{0}$ page corresponding to $\mathcal{B}ar^{i}/\mathcal{B}ar^{i+1}$ contain the terms $(\mathcal{J}_{\mathcal{L}_1}/\mathcal{J}^{2}_{\mathcal{L}_1})^{\otimes i}\otimes_{\O_{\mathcal{L}_1}}K_{\mathcal{L}_1}^{1/2}$ and we can map $\bigwedge^{i}(\mathcal{J}_{\mathcal{L}_1}/\mathcal{J}^{2}_{\mathcal{L}_1})$ to those terms using the antisymmetrization product. 
This induces an isomorphism on cohomology of the $E_{0}$ page, since the anti-symmetrization is a quasi-isomorphism and in particular, there is an isomorphism supplied by the anti-symmetrization 
$$\epsilon_n:M\otimes_A \Omega_{A/k}^n\rightarrow H_n(A,M),$$
which in particular, reads for $M=A$ as $a\otimes a_1\wedge\ldots a_n\mapsto a\cdot\epsilon_n(a_0,\ldots,a_n).$
It is well-defined due to the fact that, for example, each element 
$$\epsilon_n(ax,y,a_3,a_4,\ldots,a_n)+\epsilon_n(ay,x,a_3,\ldots,a_n)-\epsilon_n(a,xy,a_3,\ldots,a_n),$$
is a boundary for the corresponding Hocschild differential.
It follows from the usual isomorphism $\Omega_{\mathrm{Sym}(V)}^i\simeq \mathrm{Sym}(V)\otimes \bigwedge^iV,$ with $V$ a finite-dimensional vector space, that there is an identification
$\bigwedge^i V\simeq Bar(\mathrm{Sym}(V)),$
due to the simple observation that $Bar_i(\mathrm{Sym}(V))\simeq \mathrm{Sym}(V)\otimes_{\mathrm{Sym}(V)}\bigwedge^i \mathrm{Sym}(V),$ 
and the inverse isomorphism is given by mapping a homology class in the bar complex i.e. an element $[a_0\otimes \cdots\otimes a_k]$ to $a_0 da_1\wedge\ldots \wedge da_k.$ 
\end{proof}
\begin{rmk}
Using the action 
$$
Tor_l^{\mathbb{W}_S(0)}(\mathcal{L}_1(0), K_{L_2}^{1/2})
\times Ext^k_{\mathbb{W}(0)} (K_{L_2}^{1/2}, K_{L_2}^{1/2})
 \to
Tor_{l-k}^{\mathbb{W}_S(0)}(\mathcal{L}_1(0),
K_{L_2}^{1/2}),
$$
on $\mathcal{L}_1(0) \otimes_{\mathbb{W}_S(0)} K_{L_2}^{1/2}$, by \cite{BG} 
we can realize the differential of the $E^1$ term as $\hbar e_M$
where $e_M$ is the extension class in $Ext^1_{\mathbb{W}(0)} (K_{L_2}^{1/2}, K_{L_2}^{1/2})$ of 
$$
0 \to \hbar K_{L_2}^{1/2}  \to \mathcal{L}_2/\hbar^2 \mathcal{L}_2 
\to K_{L_2}^{1/2} \to 0.
$$
Note that $e_M^2$ vanishes in $Ext^2_{\mathbb{W}(0)} (K_{L_2}^{1/2}, K_{L_2}^{1/2})$
due to existence of the natural filtration on $\mathcal{L}_2/\hbar^3 \mathcal{L}_2$
which pastes together the two extensions corresponding to $e_{L_2}$
and $\hbar e_{L_2}$, respectively. 
\end{rmk}

\subsection{Special cases and homotopy BV-structures} 
\label{ssec: SpecialCase}
This final section discusses the algebraic structures which exist on the (quantized) derived Lagrangian intersection given by Theorem \ref{spectral}. Roughly speaking, it amounts to describing the quantization of a shifted Poisson structure.

To this end, we recall from \cite{CPTVV} that an $n$-shifted Poisson structure on a derived stack $S$
is given by a Poisson bracket $\pi: \mathbb{L}_S \to \mathbb{T}_S[-n]$
which is a Maurer-Cartan section of the  
sheaf of dg-Lie 
algebras $\mathrm{Pol}(S,n)^{\bullet}:=\mathrm{Hom}(\mathrm{Sym}^{\bullet}(\mathbb{T}_S [-n-1]))[n+1]$ with the
bracket induced from the bracket of vector fields. 
In more detail, the commutator extends to the Schouten-Nijenhuis bracket 
$$[\![-,-]\!]:\mathrm{Pol}(S,n)\times \mathrm{Pol}(S,n)\rightarrow \mathrm{Pol}(S,n)[-1-n],$$
which moreover acts by $[\![F^i\mathrm{Pol}(S,n)^{\bullet},F^j\mathrm{Pol}(S,n)^{\bullet}]\!]\subset F^{i+j-1}\mathrm{Pol}(S,n)^{\bullet}$, on the natural decreasing filtration $F^i\mathrm{Pol}(S,n)^{\bullet}:=\mathrm{Hom}_{\mathcal{O}_S}(\bigoplus_{j\geq i}\mathrm{Sym}^j(\mathbb{L}_S[-n-1],\mathcal{O}_S).$ Letting $\mathrm{Pol}(S,n)^{q}$ denote the $q$-th component of the complex of polyvectors, an $n$-shifted Poisson structure is a certain Maurer-Cartan element, given by an infinite sum $\pi=\sum_{i\geq 2}\pi_i\in MC(F^2\mathrm{Pol}(S,n)^{n+1}),$
with $\pi_i$ viewed as a linear map from $\mathrm{Sym}_{\mathcal{O}_S}^i(\mathbb{L}_S[-n-1])$ to $\mathcal{O}_S$ of degree $(-n-2)$, satisfying 
$$d\pi_i+\frac{1}{2}\sum_{j+k=i+1}[\![\pi_i,\pi_j]\!]=0,$$
and an important non-degeneracy condition. In particular $\pi$ defines an $L_{\infty}$-algebra structure. See for example \cite[Proposition 1.31, 
Theorem 1.32]{S1}, and references therein for details. 

A Lagrangian structure on $L \to S$ induces a $0$-shifted Poisson 
structure on $S$, a $(-1)$-shifted Poisson structure on $L$ and a Poisson
morphism $\mathcal{O}_S\to Z(\mathcal{O}_L)$ to the (derived) 
Poisson center of $L$ (see \cite{S1}). 
This data is called the \textit{coisotropic structure}
on $L \to S$. Since this data eventually induces other algebraic structures 
associated to a pair of Lagrangians, we would like to describe it as explicitly 
as possible. 

\begin{rmk} 
Note that in full generality the morphism 
$\mathcal{O}_S \to Z(\mathcal{O}_L)$ (and its source and target) are defined
between appropriate 
replacemenst in the $\infty$-category of sheaves of algebras over an operad. 
\end{rmk}

Assume that $S$ is a smooth algebraic symplectic variety 
with the bivector field $\pi$ corresponding to the symplectic form on $S$
and that the smooth Lagrangian $L$ is the zero scheme of a section $s$ of 
a vector bundle $E$ on $S$. In what follows, we will also be imposing and additional
condition on $s$ that ensures that its zero scheme is at least coisotropic. 

Being the zero scheme of a section $s$, we can replace the smooth scheme $L$ by a quasi-isomorphic 
dg-scheme $K(s)$ given by the exterior algebra $\Lambda^\bullet E^\vee$ 
with its Koszul differential $\delta_s$ given by contraction with $s$. 

In the classical setting the coisotropic property of $L$ can be defined by 
requiring that its ideal sheaf $J_L$ is closed with respect to the Poisson 
bracket on functions $(f, g) \mapsto \pi (df, dg)$. In this section we 
impose a \textit{stronger condition}, namely the following:
\begin{assumption}
\label{assumption: Stronger condition}
	\normalfont We assume that the Koszul dg-algebra of $K(s)$ i.e. $\mathcal{O}_{K(s)}^{\bullet}=\Lambda^{\bullet}E^{\vee}$ has a $(-1)$-shifted
	Lie bracket $[-,-]:\Lambda^k E^\vee \times \Lambda^l E^\vee \to \Lambda^{k+l+1} 
	E^\vee$ which is compatible with the differential $\delta_s$ and is also a biderivation in each of its arguments.
\end{assumption}

We then have the following result.
\begin{lem}
	With the Assumptions \emph{\ref{assumption: Stronger condition}}, a $(-1)$-shifted Poisson structure on $K(s)$  is equivalent to a Lie
	algebroid structure given by a Lie bracket $[-,-]: E^\vee \times E^\vee 
	\to E^\vee$ and an anchor map $a: E^\vee \to T_L$ such that 
	$$
	s [\sigma_1, \sigma_2] = a (\sigma_1) (d_{DR} (s \sigma_2)) - 
	a(\sigma_2) (d_{DR}(s \sigma_1)),
	$$
	where $\sigma_1, \sigma_2$ are sections of $E^\vee$ and $d_{DR}$
	is the de Rham differential $\mathcal{O}_M \to \Omega^1_M$, and
 the right hand side involves the action of vector fields on 1-forms
 by Lie derivative. 
\end{lem}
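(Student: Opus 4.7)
The plan is to translate the datum of a $(-1)$-shifted Poisson structure on the Koszul resolution $K(s)$ directly into Lie-algebroid data on $E^{\vee}$, exploiting the strictness built into Assumption \ref{assumption: Stronger condition}. Under this assumption the Poisson structure is by hypothesis a graded biderivation Lie bracket $\{-,-\}$ of degree $+1$ on $\mathcal{O}_{K(s)}^{\bullet}=\Lambda^{\bullet}E^{\vee}$ compatible with the Koszul differential $\delta_{s}$, so no higher $L_{\infty}$-brackets need be considered.

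First I would use that $\Lambda^{\bullet}E^{\vee}$ is freely generated as a graded-commutative $\mathcal{O}_{S}$-algebra by $E^{\vee}$ placed in degree $-1$, so by graded Leibniz $\{-,-\}$ is uniquely determined by its values on generators. Degree considerations force $\{f,g\}\in\mathcal{O}_{K(s)}^{1}=0$ for $f,g\in\mathcal{O}_{S}$, identifying $\{\sigma,f\}\in\mathcal{O}_{K(s)}^{0}=\mathcal{O}_{S}$ with a derivation in $f$, which defines a bundle map $a\colon E^{\vee}\to T_{S}$ by $a(\sigma)(f):=\{\sigma,f\}$, and identifying $\{\sigma_{1},\sigma_{2}\}\in\mathcal{O}_{K(s)}^{-1}=E^{\vee}$ with a skew $\mathcal{O}_{S}$-bilinear bracket $[\sigma_{1},\sigma_{2}]\in E^{\vee}$.

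Next, I would apply the graded Jacobi identity to triples of generators to recover the Lie-algebroid axioms. The $(f,g,\sigma)$-case is the derivation property of $a(\sigma)$; the $(f,\sigma_{1},\sigma_{2})$-case yields the anchor compatibility $a([\sigma_{1},\sigma_{2}])=[a(\sigma_{1}),a(\sigma_{2})]$; and the $(\sigma_{1},\sigma_{2},\sigma_{3})$-case gives the Jacobi identity for $[-,-]$. Compatibility with the Koszul differential,
$$\delta_{s}\{\sigma_{1},\sigma_{2}\}=\{\delta_{s}\sigma_{1},\sigma_{2}\}+(-1)^{|\sigma_{1}|+1}\{\sigma_{1},\delta_{s}\sigma_{2}\},$$
evaluated via $\delta_{s}\sigma_{i}=\langle s,\sigma_{i}\rangle\in\mathcal{O}_{S}$ and the anchor rules $\{\sigma,f\}=a(\sigma)(f)$, $\{f,\sigma\}=-a(\sigma)(f)$, produces exactly the claimed identity $s[\sigma_{1},\sigma_{2}]=a(\sigma_{1})(d_{DR}(s\sigma_{2}))-a(\sigma_{2})(d_{DR}(s\sigma_{1}))$ after identifying $a(\sigma)(f)=\iota_{a(\sigma)}d_{DR}f$. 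This same identity forces $a(\sigma)$ to preserve the ideal $J_{L}$ modulo $\delta_{s}$-boundaries, so that $a$ descends after restriction to a genuine anchor valued in $T_{L}$.

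Conversely, starting from $(a,[-,-])$ satisfying the stated compatibility, I would extend $\{-,-\}$ to $\Lambda^{\bullet}E^{\vee}$ by graded-Leibniz extension of the prescribed values on generators and verify that Jacobi and $\delta_{s}$-compatibility on generators propagate to the full algebra. The main obstacle is precisely this propagation step: ensuring that biderivations of a free graded-commutative dg-algebra extend uniquely and that Jacobi and $\delta_{s}$-compatibility indeed lift from generators is a notationally heavy graded-Leibniz computation. I would short-circuit the bookkeeping by invoking the standard small-model result for free graded-commutative dg-algebras (as in the shifted Poisson calculus of \cite{CPTVV}), which identifies shifted Poisson structures on such algebras with Lie-algebroid-valued Maurer--Cartan elements, and then checking by direct inspection on generators that the $\delta_{s}$-compatibility term matches the displayed formula.
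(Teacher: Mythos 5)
Your proposal is correct and follows essentially the same route as the paper: both reduce the degree-$(+1)$ biderivation to its components $E^\vee\otimes E^\vee\to E^\vee$ and $E^\vee\otimes\mathcal{O}\to\mathcal{O}$ on generators, read off the anchor and bracket by degree count, extract the algebroid axioms from the graded Jacobi identity, and obtain the displayed identity from compatibility of the bracket with $\delta_s$ via $\delta_s\sigma=\langle s,\sigma\rangle$. The only (cosmetic) divergence is in the converse direction, where the paper verifies well-definedness of the Leibniz extension directly through the algebroid identity $[f\sigma_1,\sigma_2]=[f,\sigma_2]\sigma_1+f[\sigma_1,\sigma_2]$ while you defer to a general small-model statement; either suffices.
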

\begin{proof} 
This is a version of a result originally due to A. Vaintrob 
\cite{V}. 
Due to the biderivation property, the bracket on the Koszul algebra
can be uniquely recovered from its components
$
E^\vee \otimes E^\vee \to E^\vee,$ and $E^\vee \otimes \mathcal{O}_M \to \mathcal{O}_M.$
For degree reasons, the second operation is $\mathcal{O}$-linear in the $E^\vee$
argument and is a derivation in the other. Hence it is given by
a map $a: E^\vee \to T_M$ such that $[\sigma, f] = a(\sigma) (df)$.
Writing out the Jacobi identity for two sections $\sigma_1, \sigma_2$ of $E^\vee$ and
a section $f$ of $\mathcal{O}$, one sees explicitly that $a$ intertwines the brackets on $E^\vee$ and $T.$

Conversely, given a bracket on $E^\vee$ and an anchor map, the Poisson identity 
determines the extension uniquely, as long as it is well defined. That
reduces to 
$$
[f \sigma_1, \sigma_2] = [f, \sigma_2] \sigma_1 + f [\sigma_1, \sigma_2]
,$$
which is built into the Lie algebroid axioms. Finally, compatibility with the 
differential $\delta_s$ holds as long as
$$
d[\sigma_1, \sigma_2] = [d \sigma_1, \sigma_2] +[\sigma_1, d \sigma_2].
$$
The proof is concluded by noting this is exactly our assumption on the anchor map. \end{proof}

The Poisson center of $K(s)$ has a strict model 
$Z_{str}(K(s)$
given by the symmetric algebra $\mathrm{Sym}^\bullet_{K(s)} \mathbb{T}_{K(s)}$ 
for the sheaf of $k$-linear derivations $\mathbb{T}_{K(s)}$ with the 
differential induced by $s$ (see \cite{S2}). More explicitly, the derivations which are
$\mathcal{O}_M$-linear are uniquely determined by their values on 
$E^\vee$, and they can be realized as the kernel of the map from 
$\mathbb{T}_{K(s)}$ to derivations $\mathcal{O}_M \to K(s)$, i.e. to 
$T_M \otimes_{\mathcal{O}_M} K(s)$. Choosing local connections on 
$E^\vee$, we can see that this gives a short exact sequence 
$$
0 \to E \otimes_{\mathcal{O}} 
\Lambda^\bullet(E^\vee) \to 
\mathbb{T}_{K(s)}\to 
T_M \otimes_{\mathcal{O}} 
\Lambda^\bullet(E^\vee)  \to 0,
$$
with $T_M$ placed in homological degree zero and $E$ placed in homological
degree 1. We note that in homological degree zero this reduces to the standard
Atiyah algebra extension. 

Then $Z_{str}(K(s))$ has a commutative product given by the usual symmetric
algebra product and bracket obtained from the bracket on $\mathbb{T}_{K(s)}$
via the bi-derivation property. 

The differential of $Z_{str}(K(s))$ is the sum $d_s + [\pi_{E^\vee}, \cdot ]$
where $d_s$ is the commutator with $\delta_s$ and $\pi_{E^\vee}$ is the 
biderivation of $K(s)$ giving the shifted Lie bracket on it (it is a section of 
$\mathrm{Sym}^2_{K(s)} \mathbb{T}_{K(s)})$. 
In this setting, we can describe explicitly the coisotropic structure morphism
$$
\mathcal{O}_M \to Z_{str} (K(s)), 
$$
as follows. First consider 
$\pi_{E^\vee}$ as a section of $\mathbb{T}_{K(s)} \otimes_{K(s)} \mathbb{T}_{K(s)}$
and let 
$$
\Pi:  \mathrm{Sym}^\bullet_{K(s)} \mathbb{T}_{K(s)} \to \mathrm{Sym}^{\bullet + 1}_{K(s)} \mathbb{T}_{K(s)} 
$$
be the operator obtained from $\pi_{E^\vee}$  by taking the Lie bracket with the right 
factor of $\mathbb{T}_{K(s)} \otimes_{K(s)} \mathbb{T}_{K(s)}$ and then the 
symmetric algebra product 
with the left factor. Now let 
$$
f: \mathcal{O}_M \to \mathrm{Sym}^\bullet_{K(s)}(\mathbb{T}_{K(s)} ),
$$
be the composition of the natural isomorphism of $\mathcal{O}_M$ with 
$\mathrm{Sym}^0$, followed by $exp(\Pi)$.
\begin{lem}
	The morphism $f$ is a (strict) morphism of Poisson algebras 
	$\mathcal{O}_M \to Z_{str} (K(s))$.
\end{lem}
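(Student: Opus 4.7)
The plan is to verify directly the three defining properties of a strict morphism of Poisson algebras $f: \mathcal{O}_M \to Z_{str}(K(s))$: compatibility with the commutative products, with the differentials, and with the Poisson brackets. The underlying mechanism is that $\exp(\Pi)$ functions as a ``gauge transformation'' built from the bivector $\pi_{E^\vee}$, so each of the three axioms will reduce, in a structured way, to the Maurer--Cartan equation satisfied by $\pi_{E^\vee}$ in the dg Lie algebra of polyvectors on $K(s)$.

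\emph{Multiplicativity.} First I would check that $\Pi$ is a graded derivation of the commutative symmetric-algebra product on $\mathrm{Sym}^\bullet_{K(s)} \mathbb{T}_{K(s)}$. This follows directly from its definition: with $\pi_{E^\vee} = \sum \pi^{(1)} \otimes \pi^{(2)}$ (symmetrized) inside $\mathbb{T}_{K(s)}^{\otimes 2}$, the assignment $F \mapsto \sum \pi^{(1)} \cdot [\pi^{(2)}, F]$ is a derivation because the Schouten--Nijenhuis bracket is a biderivation and symmetric multiplication is a derivation in each slot. It follows formally that $\exp(\Pi)$ is a homomorphism of commutative algebras, and restricting to $\mathrm{Sym}^0 = \mathcal{O}_M$ gives $f(\phi \psi) = f(\phi) \cdot f(\psi)$; well-definedness of the series is automatic since $\Pi^n$ raises symmetric degree by $n$, so the expansion is filtered.

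\emph{Differential compatibility via Maurer--Cartan.} The central step is to show that $(d_s + [\pi_{E^\vee}, -]) \circ f = 0$ on $\mathcal{O}_M$. I would first establish that $\pi_{E^\vee}$ is a Maurer--Cartan element of the dg Lie algebra $(\mathrm{Sym}^\bullet_{K(s)} \mathbb{T}_{K(s)}, d_s, [-,-]_{\mathrm{SN}})$, i.e.\ that $d_s(\pi_{E^\vee}) + \tfrac12 [\pi_{E^\vee}, \pi_{E^\vee}]_{\mathrm{SN}} = 0$. The first summand vanishes because the shifted Lie bracket on $K(s)$ is a chain map with respect to $\delta_s$, a fact built into Assumption~\ref{assumption: Stronger condition}; the second vanishes because the graded Jacobi identity for the Lie algebroid structure on $E^\vee$ translates precisely to $[\pi_{E^\vee}, \pi_{E^\vee}]_{\mathrm{SN}} = 0$. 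Given this MC identity, one verifies by induction on $n$ that $(d_s + [\pi_{E^\vee}, -]) \cdot \tfrac{\Pi^n(\phi)}{n!}$ telescopes against the order $(n-1)$ contribution coming from the MC equation, producing the desired cocycle condition for every $\phi \in \mathcal{O}_M$.

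\emph{Bracket compatibility and the main obstacle.} Finally, to verify $\{f(\phi), f(\psi)\}_{Z_{str}} = f(\{\phi, \psi\}_S)$, I would match $\Pi$-orders on both sides: the leading term recovers the Poisson bracket on $S$ via contraction of $\pi_{E^\vee}$ with $d\phi \wedge d\psi$ (through the anchor), while higher-order corrections collect into expressions that cancel by the derivation property of the Schouten bracket together with the Maurer--Cartan identity from the previous step. The principal obstacle is the bookkeeping in the gauge-transformation identity of Step~2: explicitly controlling the cancellation of $d_s(\pi_{E^\vee})$ against $\tfrac12[\pi_{E^\vee}, \pi_{E^\vee}]_{\mathrm{SN}}$ at every symmetric-algebra degree requires careful tracking of signs in the $(-1)$-shifted setting and either a clean inductive argument or a generating-function (Cartan-type) presentation of $\exp(\Pi)$. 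Once this identity is in place, both the differential and the bracket compatibility follow from it, completing the verification.
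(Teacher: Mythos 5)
Your Step 1 (multiplicativity) is correct and complete: $\Pi$ is a derivation of the symmetric-algebra product, hence $\exp(\Pi)$ is an algebra endomorphism and $f$ is multiplicative. The genuine gap is in Step 2, and it is visible in your own wording. You correctly observe that the two summands of the Maurer--Cartan expression vanish \emph{separately}: $d_s\pi_{E^\vee}=0$ because the bracket on $K(s)$ is a chain map for $\delta_s$, and $[\pi_{E^\vee},\pi_{E^\vee}]_{\mathrm{SN}}=0$ by the Jacobi identity. But then there is no ``order $(n-1)$ contribution coming from the MC equation'' for anything to telescope against. Concretely, since $\Pi$ agrees up to a factor and sign with $[\pi_{E^\vee},-]$, these two vanishings force $\Pi$ to graded-commute with the twisted differential $D=d_s+[\pi_{E^\vee},-]$, so that
\[
D\big(\exp(\Pi)\phi\big)\;=\;\exp(\Pi)\big(D\phi\big)\;=\;\exp(\Pi)\big([\pi_{E^\vee},\phi]\big),
\]
and $[\pi_{E^\vee},\phi]\in\mathrm{Sym}^1_{K(s)}\mathbb{T}_{K(s)}$ is precisely the derivation $[\phi,-]$, which is nonzero whenever the anchor $a$ is nontrivial (already for the zero section of $T^*\mathbb{A}^1$, with $E=\mathcal{O}$, $s=p$, $a(\epsilon)=\partial_x$, one computes $D(\exp(\Pi)\phi)=(\partial_x\phi)\,\partial_\epsilon\neq 0$). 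So the termwise cocycle condition $D\circ f=0$ that your Step 2 sets out to prove is not what holds, and no amount of sign bookkeeping in the ``Cartan-type presentation of $\exp(\Pi)$'' will produce the missing cancellation. Step 3 is built on Step 2 and inherits the problem.

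What this shows is that ``strict morphism of Poisson algebras into $Z_{str}(K(s))$'' cannot be verified as the naive compatibility of $f$ with $D$ and the bracket term by term, and the paper's proof takes a different route precisely for this reason: it unpacks $f$ into its Taylor components $f_k:\mathcal{O}_M\to\mathrm{Sym}^k_{K(s)}\mathbb{T}_{K(s)}$, rewrites them as multilinear maps $f_k'(a;r_1,\ldots,r_k)$, and identifies the morphism property with the statement that these components assemble into $L_\infty$-brackets $\ell_{k+1}$ on the one-sided bar construction $T_{\bullet}(\mathcal{O}_M[1])\otimes K(s)$. In that formulation the first component $f_1'(\phi;-)=[\phi,-]$ --- exactly the term obstructing your Step 2 --- is not an anomaly to be cancelled but one of the structure maps (the operation $\ell_2$ mixing a bar input with a $K(s)$ input), and the compatibilities become the $L_\infty$ relations among the $\ell_{k+1}$, which do follow from $d_s\pi_{E^\vee}=0$, the Jacobi identity and the biderivation property. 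To repair your argument you would need to replace the chain-map condition by this bar-construction (equivalently, $\mathbb{P}_{n+1}$-morphism) formulation of a map to the Poisson center; as written, the direct verification cannot close.
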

\begin{proof}
In other words, $f$ is a $\mathbb{P}_{n+1}$-algebra morphism and the claim may be seen explicitly by unpacking the components of $f$ to give
$$f_{k}:\mathcal{O}_M\rightarrow \mathrm{Sym}_{K(s)}^k\mathbb{T}_{K(s)}\simeq \mathrm{Hom}_{K(s)}\big(\mathrm{Sym}^k\mathbb{L}_{K(s)}[n],K(s)\big),$$
for $k\geq 0$  with $f_0=f.$
They are equivalently described by setting
$$f_k':\mathcal{O}_M\otimes K(s)^{\otimes k}\rightarrow K(s)[-nk],\hspace{1mm} f_k'(a;r_1,\ldots,r_k):=f_k(a)(r_1,\ldots,r_k),$$
for $a\in \mathcal{O}_M$ and $r_1,\ldots,r_k\in K(s).$ They satisfy symmetry, derivation properties and compatibility with the differential, product and bracket structures in the standard manner.
In particular, if $(a_1\otimes\ldots\otimes a_k\otimes r)$ denotes an element in $\widetilde{K}(s):= T_{\bullet}(\mathcal{O}_M[1])\otimes K(s),$ these individual components $\{f_k'\}_{k\geq 0}$ uniquely determine $L_{\infty}$-brackets on the one-sided Bar construction via the multi-operations 
$$\ell_{k+1}\big([a_1|\ldots|a_p|1],[r_1],\ldots,[r_k]\big):=(-)^{\epsilon_p(n,k)}\big[a_1|\ldots|a_{p-1}|f_{k}'(a_p;r_1,\ldots,r_k)\big],$$
with sign $\epsilon_p(n,k):=\sum_{q=1}^p(|a_q|+p)(1-nk),$ and by
$$\ell_{2}\big([r_1],[r_2]\big):=\big[[r_1,r_2]\big],$$
using the bracket $[-,-]$ in $K(s)$ via the Assumptions \ref{assumption: Stronger condition}. 
\end{proof}
\begin{rmk}
In other words, this (shifted) $L_\infty$-structure is
compatible with the differential of the bar construction, and the product obtained
by using shuffles on the tensor space of $\mathcal{O}_M[1]$ and the 
standard product of $K(s)$. 
\end{rmk}

With the above necessary generalities, we turn to our situation of interest. Namely, via Theorem \ref{Lagrangian} proving that $\M(X_{i}), i=1,2$  are realized as derived Lagrangian DG schemes in the moduli space of the restriction of perfect complexes to the middle divisor, $\M(S)$, we consider the algebraic structures on the resulting derived Lagrangian intersection $\M(X_{1})\cap \M(X_{2})\subset \M(S)$, using the idea of geometric quantization of universal orientation bundles obtained in Theorem \ref{spectral}.

To this end, let us denote the two quantization modules which correspond to the above Lagrangians (in the context of Subsec. \ref{ssec: Deformation quantization approach}, were given by $L_1,L_2$),  by $\mathcal{L}_1, \mathcal{L}_2$, and note further they are defined by two sections $s_1, s_2$. Then, assuming the Koszul dg-algebras
$K(s_1)$, $K(s_2)$ have shifted brackets as above, we obtain a shifted Poisson 
bracket on the derived intersection of $\mathcal{L}_1$ and $\mathcal{L}_2$ (c.f. Assumptions \ref{assumption: Stronger condition}).

We elaborate this structure explicitly as follows. For simplicity, put $\mathcal{O}_S:=\mathcal{O}_{\M(S)}$. 
 Then, the one-sided Bar constructions $\widetilde{K}(s_i):=T_{\bullet}(\mathcal{O}_S[1])\otimes K(s_i)$ for $i=1,2$ are left dg-$T_{\bullet}(\mathcal{O}_S[1])$-comodules, which are cofree as graded comodules (forgetting the differential).
In the category of complexes the dg-coaction maps
\begin{equation}
\label{eqn: coactions}
\nu_i:\widetilde{K}(s_i)\rightarrow T_{\bullet}(\mathcal{O}_S[1])\otimes \widetilde{K}(s_i), i=1,2,
\end{equation}
define a natural cotensor product
$
\widetilde{K}(s_1)\boxtimes_{T_{\bullet}(\mathcal{O}_S[1])}\widetilde{K}(s_2),$
via the coequalizer 
$$\mathrm{coeq}\bigg(K(s_1)\otimes T_{\bullet}(\mathcal{O}_S[1])\otimes K(s_2)\otimes T_{\bullet}(\mathcal{O}_S[1])\rightrightarrows \widetilde{K}(s_1)\otimes T_{\bullet}(\mathcal{O}_S[1])\otimes \widetilde{K}(s_2)\bigg),
$$
of the two maps (\ref{eqn: coactions}).
The cotensor product is isomorphic to the derived intersection $K(s_1)\otimes_{\mathcal{O}_S[1]}^{L}K(s_2).$ This fact may be seen by observing the coalgebra structure on $T_{\bullet}(\mathcal{O}_S[1]),$ given by the coproduct $\Delta,$ provides an equivalence 
$T_{\bullet}(\mathcal{O}[1])\simeq \mathrm{coeq}(\Delta\otimes Id,Id\otimes \Delta),$ from which we obtain a morphism 
\begin{align}
id_{K(s_1)}\otimes \Delta\otimes id_{K(s_2)}:K(s_1)\otimes^LK(s_2)&=K(s_1)\otimes T_{\bullet}(\mathcal{O}_S[1])\otimes K(s_2)
\nonumber
\\
&\rightarrow K(s_1)\otimes T_{\bullet}(\mathcal{O}_S[1])\otimes T_{\bullet}(\mathcal{O}_S[1])\otimes K(s_2).
\end{align}
The latter map extends to induce the desired isomorphism.

\begin{lem}
The sheaf $RHom_{\mathcal{O}_S} (B_\bullet(\mathcal{O}_S, \mathcal{O}_{\mathcal{L}_2}), \mathcal{O}_{\mathcal{L}_1})$ has the structure of a sheaf of 
homotopy BV modules
over the homotopy Gerstenhaber algebra of the derived intersection $K(s_1)\otimes^LK(s_2)$.
\end{lem}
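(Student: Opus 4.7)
The plan is to proceed in three main steps, exploiting the strict Koszul models $K(s_1), K(s_2)$ together with the quantizations $\mathcal{L}_1, \mathcal{L}_2$ introduced in Subsection \ref{ssec: Deformation quantization approach}. First, I would upgrade the shifted Poisson structures on each $K(s_i)$, guaranteed by Assumptions \ref{assumption: Stronger condition}, to a homotopy Gerstenhaber structure on their derived tensor product. Concretely, using the cotensor product presentation $K(s_1)\otimes^L_{\mathcal{O}_S} K(s_2)\simeq \widetilde{K}(s_1)\boxtimes_{T_\bullet(\mathcal{O}_S[1])}\widetilde{K}(s_2)$ and the two coisotropic morphisms $f_i:\mathcal{O}_S\to Z_{str}(K(s_i))$ of the lemma immediately above, the brackets on $K(s_i)$ descend to a single shifted $L_\infty$-bracket on the cotensor product, while the commutative product is inherited from shuffles on $T_\bullet(\mathcal{O}_S[1])$ together with the multiplications of $K(s_i)$. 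Compatibility of these operations (Leibniz, Jacobi, and compatibility with the internal bar differential) then follows by naturality of the construction of $f_i$ and the bi-derivation property assumed on each $K(s_i)$.

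Second, I would identify $RHom_{\mathcal{O}_S}(B_\bullet(\mathcal{O}_S,\mathcal{O}_{\mathcal{L}_2}),\mathcal{O}_{\mathcal{L}_1})$ with a computable model and construct the Gerstenhaber module structure. The one-sided bar construction $B_\bullet(\mathcal{O}_S,\mathcal{O}_{\mathcal{L}_2})$ is a semi-free resolution of $\mathcal{O}_{\mathcal{L}_2}$, so the Hom-complex computes a refinement of $Ext^\bullet_{\mathcal{O}_S}(\mathcal{O}_{\mathcal{L}_2},\mathcal{O}_{\mathcal{L}_1})$ related to the Tor computation of Theorem \ref{spectral}. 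The Gerstenhaber module action is then defined by cup product: elements of the derived intersection act on cocycles by pre- and post-composition after applying the diagonal comultiplication of $T_\bullet(\mathcal{O}_S[1])$, and the bracket action is transferred through the shifted Lie brackets on $K(s_i)$ via the usual formulas for Hochschild-type brackets, checking compatibility with the bar differential at the $E_0$-page of the filtration (\ref{eqn: Bar-filtration}).

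Third, and this is the main obstacle, I would construct the BV differential $\Delta$ making the complex into a homotopy BV module. Following the strategy of \cite{BG} adapted to our setting, the idea is to use the second-order deformation data $\mathcal{L}_i/\hbar^2\mathcal{L}_i$ of the orientation bundles $K_{L_i}^{1/2}$. Namely, the extension class $e_{\mathcal{L}_i}\in Ext^1_{\mathbf{W}_S(0)}(K_{L_i}^{1/2},K_{L_i}^{1/2})$ introduced after Theorem \ref{spectral} furnishes a degree $(-1)$ endomorphism on the Hom-complex by the bimodule action, and the vanishing $e_{\mathcal{L}_i}^2=0$ at the level of $\mathcal{L}_i/\hbar^3\mathcal{L}_i$ gives $\Delta^2=0$ up to explicit chain-level homotopies. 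The central identity $\Delta(ab)-\Delta(a)b-(-1)^{|a|}a\Delta(b)=\{a,b\}$, governing the homotopy BV axiom, is then verified by computing the failure of $\Delta$ to be a derivation and matching it with the Gerstenhaber bracket constructed in step one; this matching reduces, via the anchor map $a:E^\vee\to T_M$, to the classical formula $s[\sigma_1,\sigma_2]=a(\sigma_1)d_{DR}(s\sigma_2)-a(\sigma_2)d_{DR}(s\sigma_1)$.

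The hard part will be the homotopy-coherent verification of the BV axiom, since on the nose one only obtains a strict BV structure when the two Lagrangians are transversal; in the general (non-transverse) case one must upgrade all identities to coherent homotopies controlled by an appropriate $BV_\infty$-operad, and ensure these homotopies descend from the quantization $\mathcal{O}_\hbar$ to the classical level without obstruction. I expect this to require a careful use of the spectral sequence of Theorem \ref{spectral} together with the obstruction-theoretic argument from \cite{BGKP} guaranteeing that the relevant higher extension classes vanish on $K_{L_i}^{1/2}$, thereby extending the strict BV-module structure to a homotopy one globally on $S$.
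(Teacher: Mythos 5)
The paper does not actually supply a proof of this lemma: it is asserted and then followed only by the one-line indication (in the ``Interpretations'' environment) that the multiplicative part of the module structure comes directly from the $L_\infty$-brackets constructed in the preceding lemma, while ``the BV differential needs to be extracted by describing the corresponding components of the two maps $\mathcal{O}_{\M(S)} \to Z_{str}(K(s_1))$, $\mathcal{O}_{\M(S)} \to Z_{str}(K(s_2))$.'' Your steps one and two reproduce exactly that intended construction — the cotensor-product model of the derived intersection, the shuffle product, and the transferred $L_\infty$-brackets coming from the coisotropic morphisms $f_i$ — so on the Gerstenhaber side you are following the paper's route. Where you genuinely diverge is in step three: you build the candidate BV operator out of the extension classes $e_{\mathcal{L}_i}\in Ext^1(K_{L_i}^{1/2},K_{L_i}^{1/2})$ of the second-order truncations $\mathcal{L}_i/\hbar^2\mathcal{L}_i$, i.e.\ out of the quantization data, whereas the paper points to the components $f_k$ of the strict coisotropic maps. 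These two descriptions should agree — the class $e_{\mathcal{L}_i}$ is precisely the first nontrivial Taylor coefficient of the quantization, which is what the degree-one component of $f_i$ records — and your version has the advantage of connecting the BV differential of this lemma to the $E^1$-differential of Theorem \ref{spectral} and to the vanishing $e_{\mathcal{L}_i}^2=0$ witnessed by the filtration on $\mathcal{L}_i/\hbar^3\mathcal{L}_i$, which the paper states in a remark but never ties back to the BV axiom. Your closing caveat is the honest one: neither the paper nor your outline verifies the homotopy-coherent compatibility $\Delta(ab)-\Delta(a)b-(-1)^{|a|}a\Delta(b)=\{a,b\}$ up to controlled higher homotopies in the non-transverse case, and this is the genuine content that remains to be written down in either approach; your proposal at least names the ingredients (the $BV_\infty$ coherences and the obstruction-vanishing from \cite{BGKP}) that would be needed to close it.
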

In particular, taking cohomology, there is an explicit $\mathbb{C}$-linear differential $d$ on the $Ext$-groups, that is moreover compatible with the homotopy Gerstenhaber structure on $Tor_{\bullet}^{\mathcal{O}_{S}[1]}(\mathcal{O}_{\mathcal{L}_2},\mathcal{O}_{\mathcal{L}_1}).$

\begin{interpret}
\normalfont
The above complex is thought of as representing the categorified DT invariant associated with Lagrangians $L_1,L_2$ and quantization modules $\mathcal{L}_1,\mathcal{L}_2$, as in Theorem \ref{spectral}. Note the
multiplicative part of the module structure is directly obtained via the $L_\infty$-brackets, but the BV differential needs to be extracted by describing the corresponding components of the two maps
$\mathcal{O}_{\M(S)} \to Z_{str} (K(s_1))$,  $\mathcal{O}_{\M(S)} \to Z_{str} (K(s_2))$. 
\end{interpret}

\appendix

\section{Relative Atiyah class}
In this Appendix we establish that the $(-2)$-shifted degenerate Poisson structure arising in Subsection \ref{ssec: Stability and Poisson} corresponds to a $(-2)$-shifted symplectic structure on the sub-loci of vertical perfect complexes, which realizes the $(-2)$-symplectic foliation associated to the $(-2)$-Poisson structure.
\subsection{Relative Atiyah classes and obstruction theory on $M(\P)$}
\label{ssec: RelativeAtiyah}
By restriction to the sub-category of perfect complexes supported on the fibers and Lemma \ref{lem: Pull-back divisor}, suggests that $\M(\P)$ exhibits a relative shifted symplectic structure.
\begin{lem}
\label{lem: Atiyah and -2 structure}
Consider the diagram \emph{(\ref{eqn: Diagram})} and let $\widetilde{\F}$ be the universal perfect complex. 
The map $\mathbb{E}_{\mathbb{P}}:=R\pi_{\M*}\RHom_{\mathcal{P}}(\widetilde{\F},\widetilde{\F}\otimes \mathcal{K}_{\mathcal{P}})[3]\rightarrow \mathbb{L}_{\M^{vert}(\P)}$ is a relative obstruction theory for the derived moduli stack of vertical perfect complexes $\M^{vert}(\P)$, where $\mathcal{K}_{\mathcal{P}}:=\pi_{\P}^*K_{\P}.$ Moreover, Serre-duality and the fact that $K_{\P}=p^*K_C$ hold fiberwise, endows $\M^{vert}(\P)$ with a fiber-wise $(-2)$-shifted symplectic structure.
\end{lem}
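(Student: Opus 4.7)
The strategy has two parts: first, construct the obstruction-theory morphism $\Theta_{\P}:\mathbb{E}_{\P}\to\mathbb{L}_{\M^{vert}(\P)}$ from the relative Atiyah class of the universal complex; second, produce the fiber-wise $(-2)$-shifted symplectic structure by combining Serre duality on $\P$ with the Calabi--Yau $4$ category structure on $\mathsf{Perf}^{vert}(\P)$ established in Lemma \ref{CY4-category}. The two constructions are then matched via the standard trace-compatibility of the Atiyah class.

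For the first part, I would start from the Atiyah class $\mathrm{At}(\widetilde{\F}):\widetilde{\F}\to\widetilde{\F}\otimes\mathbb{L}_{\mathcal{P}}[1]$ on $\mathcal{P}=\P\times\M^{vert}(\P)$. Projecting to the $\pi_{\M}^{*}\mathbb{L}_{\M^{vert}(\P)}$-summand of $\mathbb{L}_{\mathcal{P}}$, contracting with the dual of $\widetilde{\F}$, tensoring with $\mathcal{K}_{\mathcal{P}}$, and pushing forward by $R\pi_{\M*}$ together with the shift $[3]$ produces $\Theta_{\P}$; this is the $4$-dimensional analog of the Huybrechts--Thomas construction, where the $[3]$-shift encodes integration over the $3$-dimensional fibers and the twist by $K_{\P}$ is precisely the datum required by Grothendieck--Serre duality along $\pi_{\P}$. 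To verify $\Theta_{\P}$ is a quasi-isomorphism I would argue pointwise: for any $F=i_{c*}G\in\M^{vert}(\P)$ supported on a fiber $X_{c}$, Lemma \ref{lem: Pull-back divisor} forces $K_{\P}|_{X_{c}}\simeq\mathcal{O}_{X_{c}}$, so that $R\mathrm{Hom}_{\P}(F,F\otimes K_{\P})\simeq R\mathrm{Hom}_{\P}(F,F)$, and Serre duality on the smooth projective $4$-fold $\P$ yields
$$
\mathbb{E}_{\P}|_{F}\;=\;R\mathrm{Hom}_{\P}(F,F)[3]\;\simeq\;R\mathrm{Hom}_{\P}(F,F)^{\vee}[-1]\;\simeq\;\mathbb{L}_{\M^{vert}(\P)}|_{F}.
$$
Naturality of the Atiyah class and its compatibility with the trace pairing force this pointwise identification to agree with the morphism $\Theta_{\P}$.

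For the shifted symplectic structure, the non-degenerate Serre pairing $\mathbb{T}_{\M^{vert}(\P)}\simeq\mathbb{L}_{\M^{vert}(\P)}[-2]$ given pointwise by the isomorphism above must be upgraded to a closed element of $\mathcal{A}^{2,cl}(\M^{vert}(\P),-2)$. I would do this by feeding the Calabi--Yau class $\eta_{\P}=f^{*}\eta\in HH_{4}(\mathsf{Perf}^{vert}(\P))$ of Lemma \ref{CY4-category}, associated to a local trivialisation $\eta\in H^{0}(C,K_{C})$, through the HKR map (\ref{eqn: HKR}). Equivalently, one applies \cite[Theorem 5.5]{BD2} to the relative Calabi--Yau $4$ structure on $\mathsf{Perf}^{vert}(\P)$ produced in Lemma \ref{CY4-category}; this automatically delivers a closed $2$-form of degree $-2$ whose underlying $2$-form is the Serre pairing constructed above, hence is non-degenerate, and is compatible with $\Theta_{\P}$.

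The main obstacle is not the non-degeneracy but the \emph{closedness} of the $2$-form in the negative cyclic sense. Because the Calabi--Yau $4$-class on $\mathsf{Perf}^{vert}(\P)$ is built from $K_{\P}\simeq p^{*}K_{C}$, it depends on a section $\eta\in H^{0}(C,K_{C})$ that in general exists only locally on $C$; this is precisely what forces the shifted symplectic structure to be defined only \emph{fiber-wise} over $C$, as claimed in the statement. Checking that distinct local trivialisations of $K_{C}$ produce symplectic structures that differ by a consistent rescaling, so the family of $(-2)$-shifted symplectic forms sheafifies correctly over the base curve, together with verifying that $\Theta_{\P}$ intertwines these forms in the derived category, is the delicate step: it requires careful bookkeeping of Serre duality in families and controls the extent to which the construction globalises on $C$.
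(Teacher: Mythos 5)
Your proposal is correct in outline and lands on the same three pillars as the paper's proof: the relative Atiyah class of $\widetilde{\F}$ dualized along $\pi_{\M}$ gives the obstruction-theory morphism, the fiberwise triviality $K_{\P}|_{X_c}\simeq\mathcal{O}_{X_c}$ (from $K_{\P}\simeq p^*K_C$ and $K_{\P/C}|_{X_c}\simeq K_{X_c}\simeq\mathcal{O}_{X_c}$) gives the Serre self-duality $\mathbb{E}_{\P}|_{\F}\simeq\mathbb{E}_{\P}^{\vee}|_{\F}[2]$, and the closedness of the resulting $2$-form is delegated to the Calabi--Yau $4$ structure of Lemma \ref{CY4-category} via \cite[Theorem 5.5]{BD2}. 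The genuine difference is in how the self-duality is exhibited. You argue pointwise directly on $\P$: $R\mathrm{Hom}_{\P}(F,F\otimes K_{\P})\simeq R\mathrm{Hom}_{\P}(F,F)$ plus $4$-fold Serre duality. The paper instead runs the conormal triangle $\widetilde{\G}\otimes\mathcal{N}_{i_{\mathcal{X}}}^{\vee}[1]\to Li_{\mathcal{X}}^*i_{\mathcal{X}*}\widetilde{\G}\to\widetilde{\G}$ through Grothendieck--Verdier duality along $i_{\mathcal{X}}$ and then along $\pi_{\M}$, producing the splitting $\mathbb{F}_{\P}^{\bullet}\simeq\mathbb{G}_X^{\bullet}\oplus(\mathbb{G}_X^{\bullet})^{\vee}[-3]$ of the obstruction theory into a tangential (threefold) piece and its shifted dual (normal) piece. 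This costs more bookkeeping but buys the decomposition that the rest of the paper actually consumes: it is what underlies the $U\oplus W_1\oplus W_2^{\vee}$ local model, the $\mathbb{W}=f\cdot g$ factorization of the potential, and the Lagrangian-foliation construction of Theorem \ref{thm: Global LagFol}. Your route proves the lemma as stated but would leave those downstream inputs to be re-derived. Two smaller points: your appeal to ``naturality of the Atiyah class and compatibility with the trace pairing'' to match the pointwise identification with $\Theta_{\P}$ is the right idea but is exactly the non-canonicity issue the paper flags in Observation \ref{obs: Canonicicty} and resolves only up to homotopy in Lemma \ref{lem: Homotopy-canonicity}, so it should not be presented as automatic; and your closing worry about gluing the structure over $C$ is well placed --- the paper likewise only claims the symplectic structure fiberwise and handles the globalization separately through the foliation machinery rather than inside this lemma.
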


\begin{proof}
Let $\mathbb{L}_{\M}^{\bullet}$ denote the cotangent complex of $\M.$
Let us first describe the relative Atiyah class of $\widetilde{\F}$, corresponding to an element in 
$R\underline{\mathrm{Hom}}_{\mathcal{P}}(\widetilde{\F},\widetilde{\F}\otimes^L\pi_{\M}^*\mathbb{L}_{\M}^{\bullet})[1].$
In fact, by the canonical isomorphism
$$R\underline{\mathrm{Hom}}_{\mathcal{P}}(\widetilde{\F},\widetilde{\F}\otimes^L\pi_{\M}^*\mathbb{L}_{\M})[1]\simeq R\underline{\mathrm{Hom}}_{\mathcal{P}}\big(R\mathcal{H}om_{\mathcal{P}}(\widetilde{\F},\widetilde{\F}),\pi_{\M}^*\mathbb{L}_{\M}\big)[1],$$
we will describe it via Grothendieck-Verdier duality as a morphism in the derived category of $\M^{vert}(\mathbb{P})$, of the form 
\begin{equation}
    \label{eqn: RelAt1}
R\pi_{\M*}\RHom_{\mathcal{P}}(\widetilde{\F},\widetilde{\F})\otimes \omega_{\pi_{\M}}[d_{\pi_{\M}}]\rightarrow \mathbb{L}_{\M}^{\bullet}[1],
\end{equation}
where $\omega_{\pi_{\M}}$ is fiber-wise given by $\mathcal{K}_{\mathcal{P}}:=\pi_{\mathbb{P}}^*K_{\mathbb{P}},$ and $d_{\pi_{\M}}$ is the relative dimension of $\pi_{\M}$.

To this end, let $\mathcal{N}_{i_{\mathcal{X}}}^{\vee}$ denote the conormal of $i_{\mathcal{X}},$ and consider the sequence $\widetilde{\G}\otimes\mathcal{N}_{i_{\mathcal{X}}}^{\vee}[1]\rightarrow Li_{\mathcal{X}}^*i_{\mathcal{X}*}\widetilde{\mathcal{G}}\rightarrow \widetilde{\mathcal{G}},$ in the derived category over $\mathcal{X}$. Applying $\RHom_{\mathcal{X}}(-,\widetilde{\G}),$ we see by adjunction that the following identity holds
 \begin{align}
 \label{eqn: VirDecomp}
&R\mathcal{H}om_{\mathcal{P}}(\widetilde{\F},\widetilde{\F}\otimes^L\pi_{\M}^*\mathbb{L}_{\M}^{\bullet}[1])\simeq i_{\mathcal{X}*}R\mathcal{H}om_{\mathcal{X}}(\widetilde{\G},\widetilde{\G}\otimes^L\rho_{\M}^*\mathbb{L}_{\M}^{\bullet})[1] \nonumber
\\
&\oplus i_{\mathcal{X}*}R\mathcal{H}om_{\mathcal{X}}(\widetilde{\G}\otimes \rho_X^*\mathcal{N}^{\vee}[1],\widetilde{\G}\otimes^L\rho_{\M}^*\mathbb{L}_{\M}^{\bullet})[1].
\end{align}
Since $\mathcal{X}$ is a divisor in $\mathcal{P}$, we write $\mathcal{O}_{\mathcal{X}}(\mathcal{X})$ for $\mathcal{N},$ reserving the notation of $\mathcal{N}$ to mean $K_{\mathbb{P}}^{-1}|_{X},$ when we work with a single perfect complex e.g. fiber-wise, rather than in the family. In particular, fiber-wise it is the trivial line bundle on $X,$ since it is Calabi-Yau.

Then, (\ref{eqn: VirDecomp})
is canonically equivalent to 
 \begin{align}
 \label{eqn: VirDecomp2}
&R\mathcal{H}om_{\mathcal{P}}\big(R\mathcal{H}om_{\mathcal{P}}(\widetilde{\F},\widetilde{\F}),\pi_{\M}^*\mathbb{L}_{\M}[1])\simeq i_{\mathcal{X}*}R\mathcal{H}om_{\mathcal{X}}\big(R\mathcal{H}om_{\mathcal{X}}(\widetilde{\G},\widetilde{\G}),\rho_{\M}^*\mathbb{L}_{\M}[1]\big)\nonumber\\
&\oplus i_{\mathcal{X}*}R\mathcal{H}om_{\mathcal{X}}\big(R\mathcal{H}om_{\mathcal{X}}(\widetilde{\G},\widetilde{\G}\otimes \mathcal{O}_{\mathcal{X}}(\mathcal{X})),\rho_{\M}^*\mathbb{L}_{\M}\big).
\end{align}
We now take global sections over $\mathcal{P}$. Noticing that, for example 
$$R\Gamma(\mathcal{P},i_{\mathcal{X}*}\RHom_{\mathcal{X}}(\widetilde{\G},\widetilde{\G}\otimes^L\rho_{\M}^*\mathbb{L}_{\M})[1])\simeq R\underline{\mathrm{Hom}}_{\mathcal{X}}(\widetilde{\G},\widetilde{\G}\otimes^L\rho_{\M}^*\mathbb{L}_{\M}[1]),$$
and similarly for the second component in (\ref{eqn: VirDecomp2}), we get
$$R\underline{\mathrm{Hom}}_{\mathcal{P}}(\widetilde{\F},\widetilde{\F}\otimes^L\pi_{\M}^*\mathbb{L}_{\M})[1]\simeq R\underline{\mathrm{Hom}}_{\mathcal{X}}(\widetilde{\G},\widetilde{\G}\otimes^L\rho_{\M}^*\mathbb{L}_{\M})[1]
\oplus R\underline{\mathrm{Hom}}_{\mathcal{X}}(\widetilde{\G}\otimes \mathcal{N}^{\vee},\widetilde{\G}\otimes^L\rho_{\M}^*\mathbb{L}_{\M}).$$

Recall that the left-hand side is written:
$$R\underline{\mathrm{Hom}}_{\mathcal{P}}(\widetilde{\F},\widetilde{\F}\otimes^L\pi_{\M}^*\mathbb{L}_{\M})[1]\simeq R\underline{\mathrm{Hom}}_{\mathcal{P}}\big(R\mathcal{H}om_{\mathcal{P}}(\widetilde{\F},\widetilde{\F}),\pi_{\M}^*\mathbb{L}_{\M}\big)[1].$$
Then, the relative Atiyah-class is identified with a map 
$$\mathrm{At}_{\mathcal{P}/\mathbb{P}}:R\mathcal{H}om_{\mathcal{P}}(\widetilde{\F},\widetilde{\F})\rightarrow \pi_{\M}^*\mathbb{L}_{\M}^{\bullet}[1].$$

Using Grothendieck-Verdier duality along $\pi_{\M}$, we obtain that (c.f. \ref{eqn: RelAt1}),
$$R\underline{\mathrm{Hom}}_{\mathcal{P}}(R\mathcal{H}om_{\mathcal{P}}(\widetilde{\F},\widetilde{\F}),\pi_{\M}^*\mathbb{L}_{\M})[1]\simeq R\underline{\mathrm{Hom}}_{\M}\big(R\pi_{\M*}R\mathcal{H}om_{\mathcal{P}}(\widetilde{\F},\widetilde{\F}\otimes \pi_{\mathbb{P}}^*K_{\mathbb{P}}),\mathbb{L}_{\M}\big)[1-d_{\mathbb{P}}].$$

This gives the \emph{universal relative Atiyah class},
\begin{equation}
    \label{eqn: At_PAbs}
    \mathsf{At}_{\mathbb{P}}:=\mathrm{At}_{\mathcal{P}/\mathbb{P}}(\widetilde{\F}):R\pi_{\M*}R\mathcal{H}om_{\mathcal{P}}(\widetilde{\F},\widetilde{\F}\otimes \pi_{\mathbb{P}}^*K_{\mathbb{P}})[d_{\mathbb{P}}-1]\rightarrow \mathbb{L}_{\M}^{\bullet}.
\end{equation}
The map (\ref{eqn: At_PAbs}) is an obstruction theory for the moduli stack of vertical perfect complexes $\M.$ We remark now, but come back to this important point later, that fiber-wise $\widetilde{\mathcal{F}}$ is of the form $\F\simeq i_*\G$ and in this case, $K_{\P}\simeq p^*K_C,$ is a pull-back divisor. In this case, (\ref{eqn: At_PAbs}) inherits the appropriate shifted duality via the fiber-wise Serre-duality isomorphism.

There is also another obstruction theory. Use the same argument as above to obtain a relative Atiyah class for $\widetilde{\G}$. Namely, in a similar fashion as for (\ref{eqn: RelAt1}), we look at
$$\mathrm{At}_{\mathcal{X}/X}(\widetilde{\G})\in R\underline{\mathrm{Hom}}_{\mathcal{X}}(\widetilde{\G},\widetilde{\G}\otimes^L\rho_{\M}^*\mathbb{L}_{\M}^{\bullet})[1],$$
using (\ref{eqn: VirDecomp2}). In particular, via Grothendieck-Verdier duality along $i_{\mathcal{X}}$ and then along $\pi_{\M}$, we can compare the two as morphisms in the derived category of $\M.$ Let us first rewrite $\rho_{\M}^*\mathbb{L}_{\M}$ in terms of $\pi_{\M}^*\mathbb{L}_{\M}.$

To this end, note 
$$\widetilde{\F}\otimes^L\pi_{\M}^*\mathbb{L}_{\M}^{\bullet}\simeq i_{\mathcal{X}*}\big(\widetilde{\G}\otimes^Li_{\mathcal{X}}^*\pi_{\M}^*\mathbb{L}_{\M}^{\bullet}\big)\simeq i_{\mathcal{X}*}\big(\widetilde{\G}\otimes^L\rho_{\M}^*\mathbb{L}_{\M}^{\bullet}\big),$$
as perfect complexes on $\mathcal{P}$ via the projection formula and the diagram (\ref{eqn: Diagram}). We have
$$i_{\mathcal{X}}^!\pi_{\M}^*\mathbb{L}_{\M}\simeq \mathcal{O}_{\mathcal{X}}(\mathcal{X})\otimes \rho_{\M}^*\mathbb{L}_{\M}[-1].$$
%Similar equivalence holds for $i_{\mathcal{X}_C}$ as in (\ref{eqn: Diagram}), but note that by working with $i_{\mathcal{X}_C}^!,$ we have that $\mathcal{X}_C$ is codimension $2$, as mentioned above.

Applying Grothendieck-Verdier duality along 
$i_{\mathcal{X}}:\mathcal{X}\hookrightarrow \mathcal{P},$ we write (\ref{eqn: VirDecomp2}) as 
\begin{align*}
&R\mathcal{H}om_{\mathcal{P}}\big(R\mathcal{H}om_{\mathcal{P}}(\widetilde{\F},\widetilde{\F}),\pi_{\M}^*\mathbb{L}_{\M}[1]\big)\simeq 
R\mathcal{H}om_{\mathcal{P}}\big(i_{\mathcal{X}*}(R\mathcal{H}om_{\mathcal{X}}(\widetilde{\G},\widetilde{\G}\otimes \mathcal{O}_{\mathcal{X}}(\mathcal{X})),\pi_{\M}^*\mathbb{L}_{\M}[2]\big)
\\
&\oplus R\mathcal{H}om_{\mathcal{P}}\big(i_{\mathcal{X}*}(R\mathcal{H}om_{\mathcal{X}}(\widetilde{\G},\widetilde{\G}),\pi_{\M}^*\mathbb{L}_{\M}[1]\big).\end{align*}
In order to write this in the derived category of perfect complexes on $\M$, we may finally apply Grothendieck-Verdier duality along $\pi_{\M}:\mathcal{P}\rightarrow \M,$ 
which gives that
\begin{eqnarray}
    \label{eqn: VirDecomp3}
R\underline{\mathrm{Hom}}_{\M}\big(&R\pi_{\M*}&R\mathcal{H}om_{\mathcal{P}}(\widetilde{\F},\widetilde{\F}\otimes \pi_{\mathbb{P}}^*K_{\mathbb{P}}),\mathbb{L}_{\M}\big)[1-d_{\mathbb{P}}]\nonumber
\\
&\simeq &R\underline{\mathrm{Hom}}_{\M}\big(R\rho_{\M*}R\mathcal{H}om_{\mathcal{X}}(\widetilde{\G},\widetilde{\G}\otimes \mathcal{O}_{\mathcal{X}}(\mathcal{X})\otimes \omega_{\pi_{\M}}),\mathbb{L}_{\M}^{\bullet})[2-d_{\mathbb{P}}]
    \nonumber\\
    &\oplus& R\underline{\mathrm{Hom}}_{\M}\big(R\rho_{\M*}R\mathcal{H}om_{\mathcal{X}}(\widetilde{\G},\widetilde{\G}\otimes \omega_{\pi_{\M}}),\mathbb{L}_{\M}^{\bullet}\big)[1-d_{\mathbb{P}}].
\end{eqnarray}

In summary, and specializing the dimension to our case of interest, we have obtained 
that 
\begin{align*}
&R\pi_{\M*}\RHom_{\mathcal{P}}(\widetilde{\F},\widetilde{\F}\otimes \mathcal{K}_{\mathcal{P}})[3]\simeq R\rho_{\M*}\RHom_{\mathcal{X}}(\widetilde{\G},\widetilde{\G}\otimes \mathcal{O}_{\mathcal{X}}(\mathcal{X})\otimes \mathcal{K}_{\mathcal{P}})[2]
\\
&\oplus R\rho_{\M*}\RHom_{\mathcal{X}}(\widetilde{\G},\widetilde{\G}\otimes \mathcal{K}_{\mathcal{P}})[3].
\end{align*}
Let us put
$$\mathbb{F}_{\mathbb{P}}^{\bullet}:=\RHom_{\pi_{\M}}(\widetilde{\F},\widetilde{\F}\otimes \mathcal{K}_{\mathcal{P}})[3]\xrightarrow{\phi_{\mathbb{P}}} \mathbb{L}_{\M}^{\bullet}.$$
Dually we set
$$\mathbb{E}_{\mathbb{P}}^{\bullet}:= \big(R\pi_{\M*}\RHom_{\mathcal{P}}(\widetilde{\F},\widetilde{\F})[-1])^{\vee}.$$
%We want to establish there is an equivalence
%$$\theta\in R\underline{\mathrm{Hom}}_{\M}(\mathbb{E}_{\mathbb{P}}^{\bullet},\mathbb{E}_{\mathbb{P}}^{\bullet\vee}[2]),\hspace{2mm} i.e.\hspace{1mm} \theta:\mathbb{E}_{\mathbb{P}}^{\bullet}\xrightarrow{\simeq} \mathbb{F}_{\mathbb{P}}^{\bullet}.$$
For the fiber-wise situation, for a particular vertical perfect complex $\mathcal{F}=i_*\mathcal{G},$
from the above decompositions we may write
\begin{equation}
    \label{eqn: ObsDecomp}
    \mathbb{F}_{\mathbb{P}}^{\bullet}|_{\mathcal{F}}\simeq \mathbb{G}_{X}^{\bullet}|_{\mathcal{G}}\oplus \mathbb{D}_{X/\mathbb{P}}^{\bullet}|_{\mathcal{G}},
\end{equation}
where the right-hand side is viewed as a direct sum of perfect complexes on $\mathbb{P},$ via $i_*.$ We omit the restriction to the point $\mathcal{F}.$
Then, we have that
$$\mathbb{G}_X^{\bullet}=\RHom_{\rho_{\M}}(\widetilde{\G},\widetilde{\G}\otimes \mathcal{K}_{\mathcal{P}})[3]\xrightarrow{\phi_{X}}\mathbb{L}_{\M}^{\bullet},$$
and where $\mathbb{D}_{X/\mathbb{P}}^{\bullet}$ is similarly, and given by the remaining term in (\ref{eqn: VirDecomp3}) above.  
Fiber-wise analysis shows, for our given perfect complex $\G,$ via Serre duality by dualizing the decomposition (\ref{eqn: ObsDecomp}) in the derived category of perfect complexes on $\mathbb{P}$, we obtain that 
\begin{equation}
    \label{eqn: Decomp}
\mathbb{F}_{\mathbb{P}}^{\bullet}\simeq \mathbb{G}_X^{\bullet}\oplus (\mathbb{G}_X^{\bullet})^{\vee}[-3].
\end{equation}
This holds due to the fact that we work locally around a point in the fiber, and exploit that fact that in this case, $\mathcal{O}_{\mathcal{X}}(\mathcal{X})$ is identified with $K_{\mathbb{P}}^{-1}|_X,$ and consequently, 
$$\mathcal{O}_{\mathcal{X}}(\mathcal{X})\otimes \mathcal{K}\simeq K_{\mathbb{P}}^{-1}|_X\otimes K_{\mathbb{P}}|_{X}\simeq \mathcal{O}_X.$$

 In particular, Serre-duality holds canonically fiber-wise. Explicitly, put $X_c:=\pi^{-1}(c),c\in C$ we get $K_{X_c}\simeq \mathcal{O}_{X_c}$ and since $\mathbb{P}$ is a Fano $4$-fold, its canonical bundle is anti-ample and from the canonical bundle formula we get
$K_{\mathbb{P}/C}|_{X_c}\simeq K_{X_c}\simeq \mathcal{O}_{X_c}.$
Indeed, $K_{X_c}\simeq K_{\mathbb{P}}|_{X_c}\otimes \pi^*K_C^{-1}|_{X_c}=K_{\mathbb{P}/C}|_{X_c}\equiv \omega_{\mathbb{P}/C}|_{X_c}.$
Since $\F$ is supported on fibers, we see
$$\F\otimes K_{\mathbb{P}}=\F\otimes (\pi^*K_C\otimes K_{\mathbb{P}/C}),$$
and the fiber-wise behaviour simplifies due to triviality of $\omega_{\mathbb{P}/C}|_{X_c}$ to give
$(\F\otimes K_{\mathbb{P}})|_{X_c}\simeq \F|_{X_c}\otimes \pi^*K_C|_{X_c},$ using that $K_{\mathbb{P}/C}|_{X_c}\simeq\mathcal{O}_{X_c}.$ Thus, we have canonical duality for vertical perfect complexes, fiberwise.
\end{proof}
\begin{obs}
\label{obs: Canonicicty}
Though we have fiber-wise Serre-duality, it may not be true globally over the entire family. Indeed, there are obstructions to the existence of a global canonical isomorphism in the derived category of $\M,$
\begin{equation}
    \label{eqn: Global isom}
\mathcal{O}_{\mathcal{X}}(\mathcal{X})\otimes \mathcal{K}_{\mathcal{P}}\rightarrow \mathcal{O}_{\mathcal{X}}.
\end{equation}
Using explicit resolutions in a given $C^{\infty}$-chart of the derived moduli stack, \emph{(\ref{eqn: Global isom})} holds and moreover, glues up to homotopy to an equivalence over the entire moduli stack. 
\end{obs}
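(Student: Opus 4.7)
The plan is to locate the obstructions in a computable Picard/cohomology group over the family, then to realize their vanishing after passing to the $C^{\infty}$-setting and assemble the local trivializations via partition-of-unity arguments compatible with the gluing scheme already used in the proof of Theorem \ref{thm: Global LagFol}.

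First, I would identify the obstructions precisely. Fiberwise, the adjunction formula together with the Calabi-Yau condition $K_{X_c}\simeq \mathcal{O}_{X_c}$ gives a canonical isomorphism $\mathcal{O}_{X_c}(X_c)\otimes K_{\mathbb{P}}|_{X_c}\simeq \mathcal{O}_{X_c}$, well-defined up to the scalars $H^0(X_c,\mathcal{O}_{X_c}^*)=\mathbb{C}^*$. The obstruction to globalizing this datum over $\mathcal{X}$ is therefore the class of the (fiberwise-trivial) line bundle $\mathcal{L}:=\mathcal{O}_{\mathcal{X}}(\mathcal{X})\otimes \mathcal{K}_{\mathcal{P}}$ in $\operatorname{Pic}(\mathcal{X})/\rho_{\M}^{*}\operatorname{Pic}(\M)$; equivalently, by the see-saw principle, it is the obstruction to $\mathcal{L}$ being pulled back from $\M$. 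This class can be nontrivial in general, reflecting the fact that the fiberwise identifications need not assemble coherently into a global algebraic section.

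Second, I would pass to the dg-$C^{\infty}$-manifold associated with $\M$ via Construction \ref{cons: Derived to dg}, where the stalk sheaves $\mathcal{A}^{-i}$ are soft. In the $C^{\infty}$-category, any line bundle that is fiberwise trivial over a smooth family over the Hausdorff second-countable base $|M|$ is $C^{\infty}$-trivial, because partitions of unity annihilate the obstruction classes in Cech cohomology valued in $C^{\infty}$-units. Over each chart $\underline{U}^{\bullet}$ of a $G$-invariant $C^{\infty}$-atlas, I would then choose an explicit trivialization $\tau_{\underline{U}}:\mathcal{L}|_{\underline{U}}\xrightarrow{\simeq}\mathcal{O}_{\underline{U}}$ that agrees with the adjunction identification at every classical point, and lift $\tau_{\underline{U}}$ to a chain-level homotopy equivalence between the chain-level Serre pairings coming from the decomposition $\mathbb{F}_{\mathbb{P}}^{\bullet}\simeq\mathbb{G}_{X}^{\bullet}\oplus(\mathbb{G}_{X}^{\bullet})^{\vee}[-3]$ of \eqref{eqn: Decomp} and the abstract Serre duality in $R\underline{\mathrm{Hom}}_{\M}$ of Lemma \ref{lem: Atiyah and -2 structure}, using sufficiently negative locally free resolutions in the spirit of Lemma \ref{lem: Homotopy-canonicity}.

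Third, I would glue the local trivializations on overlaps. On $\underline{U}_{1}\cap \underline{U}_{2}$, the two chain-level presentations differ by a null-homotopic endomorphism, so averaging via a subordinate partition of unity $\{\rho_{\underline{U}_{i}}\}$ produces a globally defined homotopy equivalence $\tau:\mathcal{L}\simeq \mathcal{O}_{\mathcal{X}}$ in the derived category of the $C^{\infty}$-enhancement of $\M$, precisely mirroring the gluing construction for the Lagrangian foliation in Theorem \ref{thm: Global LagFol}. Second-countability of $|M|$ then allows one to extend by induction to higher intersections, producing a coherent system of homotopies. The main obstacle will be to ensure that this partition-of-unity averaging descends compatibly with the derived dualities underlying the decomposition \eqref{eqn: Decomp}, so that the resulting equivalence $\tau$ genuinely intertwines the Serre-pairings at chain level and defines a morphism in the homotopy category of perfect complexes on $\M$, not just at the level of the underlying topological space. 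As in Lemma \ref{lem: Homotopy-canonicity}, this is handled by working with explicit locally free resolutions and observing that different choices of $\tau_{\underline{U}}$ differ by chain homotopies whose convex combinations remain chain homotopies, thereby yielding the homotopy-canonical equivalence claimed in the observation.
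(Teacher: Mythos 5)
Your route is essentially the paper's. The Observation is justified in the paper precisely by the machinery you invoke: fiberwise triviality of $\mathcal{O}_{\mathcal{X}}(\mathcal{X})\otimes\mathcal{K}_{\mathcal{P}}$ from the adjunction/canonical-bundle computation at the end of Lemma \ref{lem: Atiyah and -2 structure} (using $K_{\P}\simeq f^*K_C$ from Lemma \ref{lem: Pull-back divisor}), chart-level chain realizations of the Serre pairing via sufficiently negative locally free resolutions as in Lemma \ref{lem: Homotopy-canonicity}, and the partition-of-unity gluing over a second-countable $C^{\infty}$-atlas carried out in the proof of Theorem \ref{thm: Global LagFol}. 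Your identification of the obstruction as the failure of $\mathcal{O}_{\mathcal{X}}(\mathcal{X})\otimes\mathcal{K}_{\mathcal{P}}$ to be (canonically) pulled back from $\M$, via see-saw, is a reasonable sharpening of the paper's unspecified ``obstructions,'' and your closing discussion of convex combinations of chain homotopies mirrors the paper's own gluing argument.

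One step of your proposal is, however, incorrect as stated: the claim that any line bundle which is trivial on the fibers of a smooth family over the Hausdorff, second-countable base $|M|$ is $C^{\infty}$-trivial ``because partitions of unity annihilate the obstruction classes in \v{C}ech cohomology valued in $C^{\infty}$-units.'' Partitions of unity make the \emph{additive} sheaf of smooth functions fine, but the sheaf of smooth \emph{units} is not acyclic: by the exponential sequence, smooth complex line bundles are classified by $H^{2}(|M|;\mathbb{Z})$, so a bundle of the form $\rho_{\M}^{*}N$ can remain topologically nontrivial even though it is trivial on every fiber. Fortunately this claim is not load-bearing for you or for the paper: the Observation only asserts (i) local validity of \eqref{eqn: Global isom} over individual charts, where any line bundle is trivializable, and (ii) a gluing \emph{up to homotopy} of the resulting chart-level chain realizations of Serre duality, not a global trivialization of the line bundle itself. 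If you delete the global-triviality assertion and keep only the chart-level trivializations feeding into your steps 2--3, your argument coincides with the paper's intended justification.
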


%----REFERENCES-----

\end{document}